\newtheorem{assumption}{Assumption}
\newtheorem{theorem}{Theorem}[section]
\newtheorem{lemma}[theorem]{Lemma}
\newtheorem{remark}{Remark}[section]
\newtheorem{proposition}{Proposition}[section]
\theoremstyle{definition}
\DeclareMathAlphabet\mathbfcal{OMS}{cmsy}{b}{n}
\newcommand{\cha}{\mathbb{1}_{\Omega_0}}
\newcommand{\om}{{\Omega_{0}}}
\newcommand{\R}{\mathbb{R}}
\newcommand{\tbu}{\tilde{\mathbf{u}}}
\newcommand{\bbu}{\bar{\mathbf{u}}}
\newcommand{\bu}{\mathbf{u}}
\newcommand{\bU}{\mathbf{U}}
\newcommand{\bydef}{\stackrel{\mbox{\tiny\textnormal{\raisebox{0ex}[0ex][0ex]{def}}}}{=}}
\newcommand{\bgam}{\text{\boldmath{$\gamma$}}}
\newcommand{\bGam}{\text{\boldmath{$\Gamma$}}}
\newcommand{\bpi}{\text{\boldmath{$\Pi$}}}
\newcommand{\obpi}{\overline{\bpi}}
\newcommand{\og}{\overline{\bgam}}
\newcommand{\ogd}{\overline{\bgam}^\dagger}
\newcommand{\oGG}{\overline{\bGam}}
\newcommand{\oGGD}{\overline{\bGam}^\dagger}
\newcommand{\out}{\mathbb{1}_{\mathbb{R}\setminus\Omega_0}}
\newcommand{\mbf}[1]{\mathbf{#1}}
\newcommand{\mbb}[1]{\mathbb{#1}}
\title{Proving the existence of localized patterns and saddle node bifurcations in 1D activator-inhibitor type models
}
\author{
Dominic Blanco \footnote{McGill University, Department of Mathematics and Statistics, 805 Sherbrooke Street West, Montreal, QC, H3A 0B9, Canada. {\tt dominic.blanco@mail.mcgill.ca}}
\and
Matthieu Cadiot
\footnote{CMAP, CNRS, Ecole polytechnique, Institut Polytechnique de Paris, 91120 Palaiseau, France. {\tt matthieu.cadiot@polytechnique.edu}}
\and
Daniel Fassler \footnote{Concordia University, Department of Mathematics and Statistics, 1400 De Maisonneuve Blvd. W. Montreal, QC H3G 1M8 Canada. {\tt daniel.fassler@mail.concordia.ca}}}
\begin{document}

\maketitle
\begin{abstract}
    In this paper, we develop a methodology for constructively proving the existence of saddle-node bifurcations on unbounded domains. In particular, we combine several previous results in order to formulate a Newton-Kantorovich argument to obtain our results. We demonstrate our approach on a class of 1D activator-inhibitor models. The first ingredient is for us to prove the existence of stationary localized 1D solutions. Our approach relies on having an approximate, compactly supported solution, $\overline{\mathbf{u}}$. We construct this solution using Fourier series and then define a well-chosen fixed point map that is contracting on a neighborhood around $\overline{\mathbf{u}}$. For this matter, we construct an approximate inverse of the linearization around $\overline{\mathbf{u}}$ and establish sufficient conditions under which the contraction is achieved. The second ingredient is to derive an augmented zero finding problem whose solutions correspond to a saddle-node bifurcations. Extending the methodology for proving the existence of localized solutions to this augmented problem, we are able to constructively obtain the existence of saddle-nodes on unbounded domains. The third ingredient is for us to verify that we obtain a non-degenerate saddle-node bifurcation. To do so, we control the spectrum of the linearization around the solution to the augmented zero finding problem. This allows us to rule out a degenerate Hopf bifurcation, as well as controlling exactly the number of unstable directions at the bifurcation. Quantitative explicit estimates as well as the requisite codes for the rigorous numerics are provided.
\end{abstract}
\begin{center}
{\bf \small Key words.} 
{ \small Localized stationary patterns, Activator-Inhibitor models,  saddle-node bifurcations, stability of solutions, Computer-Assisted Proofs}
\end{center}
\section{Introduction}\label{sec : introduction}
 In this paper, we consider the following class of activator-inhibitor Partial Differential Equations (PDEs) posed on the real line:
\begin{equation}\label{eq:original_system}
    \begin{split}
        \partial_t u &= \delta^2\Delta u - cu + dv + u^2v + a,  \\
        \partial_t v &= \Delta v + hu -jv -u^2v + b \\
    \end{split}
    \quad ~~ (u,v) = \left(u(x,t), v(x,t)\right), ~~ x \in \R,
\end{equation}
where $\delta$, $c$, $d$, $h$, $j$, $a$, and $b$ are positive  parameters, and $\Delta$ is the Laplacian operator. 
These equations arise naturally in various fields such as biology and chemistry, where they model a chemical reaction between two substances $U$ and $V$. The first species is the activator ($U$) with concentration $u$ and the other one is an inhibitor ($V$) with concentration $v$~\citep{GrayScott1985}. This system is set in the regime $0 < \delta \ll 1$, where the inhibitor diffuses more than the activator. This class of PDEs encompasses many reaction-diffusion systems that have been studied extensively in the literature, such as the Schnakenberg model~\citep{schankenberg_og} ($c = 1, d= h=j=0$), the Sel'kov Schnakenberg model~\citep{selkov_schankenberg_og} ($c = 1, d= j, h=0$), the Brusselator model~\citep{brusselator_og}($h = c-1 > 0, d=j, h=0$), the Root hair model~\citep{root_hair_og}($a = 0, d = j, h > c > 0$), and the Glycolysis model~\citep{glycolysis_og}($c = 1, d=j, a=h=0$). 
In this paper, we wish to answer some of the questions asked by the authors of \citep{AlSaadi_unifying_framework} regarding the class of activator-inhibitor models \eqref{eq:original_system}, and the existence of saddle-node bifurcations for localized states. For that matter, we develop a general computer-assisted methodology for constructively proving the existence of stationary localized solutions and (localized) saddle-node bifurcations in \eqref{eq:original_system}. Our approach relies on combining various ingredients, including computer-assisted analysis on unbounded domains, augmented maps for proving bifurcations and a control of the spectrum of linearized operators. Combining these features will allow us to derive our framework and to produce novel results in the field of localized patterns. Before delving into the details of our analysis, we present a non-exhaustive review of related results on localized solutions, as well as a review on computer-assisted techniques for differential equations on unbounded domains.

\subsection{Stationary localized solutions}

The study of pattern formation originates in the seminal work of Alan Turing~\citep{turing_morphogenesis}. In this paper, Turing demonstrated how patterns can emerge from perturbations of a steady-state solution through what is now referred to as a Turing instability, or diffusion-driven instability. In one spatial dimension, this mechanism typically manifests as a supercritical pitchfork bifurcation from the steady-state solution~\citep{Meron2015}.
Reaction–Diffusion (RD) systems have attracted significant interest due to their physical relevance, with applications spanning biology~\citep{root_hair_og, selkov_schankenberg_og}, chemistry~\citep{GrayScott1985, schankenberg_og, brusselator_og}, and ecology~\citep{SITEUR201481, ZELNIK201727}. These systems are particularly compelling because they often exhibit rich and complex behaviors while maintaining a relatively simple mathematical structure. Their nonlinearities are typically polynomial, as in~\eqref{eq:original_system}, which includes a simple auto-catalytic term of the form $\pm u^2v$ in each equation. RD systems have been studied extensively using a variety of analytical approaches. For example, geometric perturbation theory and spatial dynamics have been employed to establish the existence of localized and periodic solutions on long one-dimensional domains in the Gray–Scott model~\citep{Doelman1997}, later extended to higher dimensions~\citep{MuratovOsipov2000} and to more general RD systems~\citep{doelman_geometric}. The stability of localized patterns can be investigated using asymptotic expansions, wherein the solution is analyzed through an inner expansion near the pattern and a far-field expansion in the tail. Matching these expansions provides approximations of the eigenvalues of the linearized PDE operator, yielding stability criteria. This method has been successfully applied in the one-dimensional Gierer–Meinhardt model~\citep{iron_semistrong}, the Schnakenberg model~\citep{Iron2004}, and the Gray–Scott model~\citep{muratov_semistrong}. Another central phenomenon in RD systems is the connection between localized pattern formation and homoclinic snaking arising from a Hamiltonian–Hopf bifurcation, which has been investigated in~\citep{knobloch_dissipative, VERSCHUEREN2021132858}.

In recent work, Champneys et al.\citep{Champneys_Bistability} outlined several mechanisms for pattern formation in RD systems. They investigated the bifurcation structure of the Brusselator model, constructing approximate solutions via matched asymptotic expansions and numerically continuing them to reveal homoclinic snaking. A similar analysis was carried out for the Schnackenberg model in~\citep{alsaadi_spikes}, where the same approach was employed to build approximate solutions and to numerically observe the system’s snaking behavior, alongside a stability analysis of spike solutions. It is important to note that the validity of matched asymptotic expansions is restricted to specific parameter regimes: for the Brusselator, this requires $\delta \ll 1$, while for the Schnackenberg model the appropriate scaling is $a = \epsilon\alpha$, $b = \epsilon^2 \beta$, with $\epsilon \ll 1$. Building on these results,~\citep{AlSaadi_unifying_framework} extended the analysis to demonstrate the ubiquity of such behavior in systems of the form~\eqref{eq:original_system}. Using weakly nonlinear analysis, they characterized the bifurcation structure of~\eqref{eq:original_system}, identifying parameter regions where homoclinic snaking is expected to occur for both large and small parameter values. They then applied matched asymptotic expansions to numerically explore the snaking region and further examined the stability of solutions within this region as the diffusion coefficient $\delta$ increases. It should be emphasized, however, that while their theoretical analysis is posed on $\mathbb{R}$, the corresponding numerical results were obtained on long one-dimensional domains.

In their conclusion, the authors of~\citep{AlSaadi_unifying_framework} raise the critical open question of providing a rigorous proof for their results. Addressing this challenge is essential, as it bridges the gap between formal asymptotics, numerical exploration, and mathematical rigor. In the next subsection, we present a computer-assisted approach that delivers such a rigorous justification, thereby validating and strengthening their numerical observations. Constructive existence proofs of patterns, similar to Figure~\ref{fig : intro solutions}, as well as a constructive proof of a saddle node will be provided. In particular, our method is based on the computer-assisted analysis presented in~\citep{gs_cadiot_blanco}. As these models arise from physical context, the question of the stability of these solutions is also studied by rigorously controlling the spectrum of the jacobian of the linearization around an approximate solution using the approach of~\citep{cadiot2025stabilityanalysislocalizedsolutions}

\begin{figure}[H]
    \centering
    \begin{subfigure}[b]{0.49\textwidth}
        \centering
        \epsfig{figure=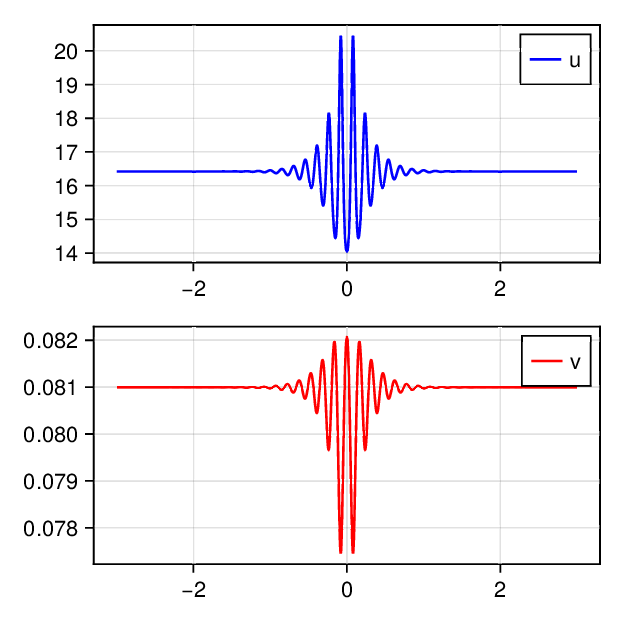, width=\textwidth}
        \caption{Approximation of a localized solution in the Brusselator equation}\label{fig : multi spike brus intro}
    \end{subfigure}
    \hfill
    \begin{subfigure}[b]{0.49\textwidth}
        \centering
        \epsfig{figure=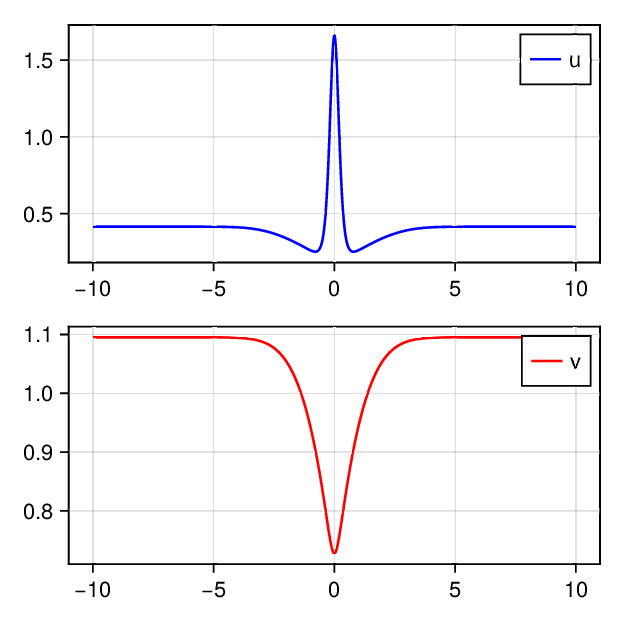, width=\textwidth}
        \caption{Approximation of a saddle node bifurcation  in the Glycolysis equation}\label{fig : saddle gly intro}
    \end{subfigure}
    \caption{Approximation of a steady-state and saddle node bifurcation of the activator-inhibitor models for different parameter regimes.}\label{fig : intro solutions}
\end{figure}

\subsection{A Computer-assisted approach}

One way to approach solving semilinear PDEs on $\mathbb{R}^m$ is to use computer-assisted proofs (CAPs) techniques. CAPs can help derive a constructive methodology that relies on a Newton-Kantorovich argument. It involves the computation of specific bounds, which can be evaluated rigorously on the computer by the use of interval arithmetic for instance \citep{julia_interval}.  Traditionally, computer-assisted methods have split into two main categories. On one hand, spectral approaches (like Fourier series on toroidal domains or Chebyshev polynomials on intervals) have provided highly accurate constructive proofs in non-trivial problems. Even though the spectral accuracy is always desirable, its applicability might be compromise by the complexity of the PDE domain. On the other hand, finite-elements approaches have provided highly versatile means to tackle PDE problems posed on complex geometries, at the cost of a limited accuracy.  In what follows, we present a literature review of both worlds, highlighting the flexibility offered for deepening PDE analysis.

\par We begin on bounded domains with periodic boundary conditions. Here, the theory is well developed. As the solution is periodic, one can represent it using Fourier series. One will turn the PDE is into a zero finding problem $F(U) = 0$ where $U$ is in a sequence space. One can then compute an approximate solution $\overline{U}$ using Fourier sequences. In order to handle $DF(\overline{U})^{-1}$, one can exploit the
fact that the Fréchet derivatives are (asymptotically) diagonally
dominant. Indeed, the linear part
$L = DF(0)$ dominates for high-order modes. As a result, the tail of $DF(\overline{U})$
can be estimated as a diagonally-dominant infinite dimensional matrix. Since compactness justifies approximating infinite dimensional operators with matrices, the common approach is to approximate $DF(\overline{U})^{-1}$ as a finite matrix stored on the computer and a diagonal tail which can be controlled theoretically. We refer the interested reader to  \citep{JB_symmetries_1,JB_symmetries_2,breden2022computer,period_kuramoto,MR3633778,Sander_equilibrium,gabriel_pfc,lessard_1,lessard_2}
for some illustrations. There are also approaches when the tail is not diagonally dominant, and we refer the interested reader to \citep{cyranka2018construction,MR4379799,MR3392647} for examples. Additionally, the approach is applicable when the nonlinear part is nonpolynomial in certain cases. We refer the interested reader to the thesis \citep{marco_thesis} for a general summary, and the recent papers \citep{maxime_paper_continuation,olivier_kevin_paper,matthieu_lindsay,lindsay_suspensionbrdige} for illustrations. Note that the methods for handling $DF(\overline{U})^{-1}$ are unchanged as a nonpolynomial nonlinearity would only affect the nonlinear part.
\par Still on bounded domains, but we now move to those with non-periodic boundary conditions. Many
different approaches have been used to establish the norm of $D\mathbb{F}^{-1}(u_{0}):Y\to X$
or its weak formulation where $X$ and $Y$ are Banach spaces. The first method we discuss was developed by the authors of \citep{nakao1990numerical,nakao2001numerical}. The authors approximate the inverse of $DF(u_{0})$
by the inverse of a finite-dimensional projection on a finite elements. Under certain assumptions on the finite element space, the authors justify inversion of the linearization and then use CAPs to perform rigorous proofs. This method was recently improved through the use of the Schur complement by the authors of
\citep{MR4182090}. Another extension of this result was developed by the authors of  \citep{watanabe2005verify_invert}. In particular, the authors
developed specific tools in order to invert $D\mathbb{F}(u_{0})$
in the case of an elliptic PDE. The authors use a finite dimensional subspace of
Sobolev spaces to obtain invertibility criteria and
the value of the upper bound. The works \citep{watanabe2015estimation,watanabe2019improved,watanabe2013posteriori,plum_wata2016norm}
present results which further extend the method and give sharper
criteria regarding the inverse. Also, on bounded domains,
the authors of  \citep{liu_self_adjoint,liu_inverse,liu_laplacian,liu_elliptic_boundary} developed a method to rigorously estimate
eigenvalues of the linearized operator.
By again considering a finite dimensional subspace, the eigenvalues of the
linearized operator are bounded, from above and below. This is done through the use of the eigenvalues
of a finite-dimensional operator. Then, using the theory of symmetric
operators, the norm of $D\mathbb{F}(u_{0})^{-1}$ is determined through
the minimal eigenvalue. This method is particularly efficient if $D\mathbb{F}(u_{0})$
is self-adjoint itself. It can also be applied to a general operator
by considering $D\mathbb{F}(u_{0})^{*}D\mathbb{F}(u_{0})$ where $D\mathbb{F}(u_{0})^{*}$
is the adjoint operator. Finally, the method by Plum et al. \citep{plum_numerical_verif,plum1990eigenvalue_homotopy,plum1991bounds} follows a similar approach to those done in \citep{liu_self_adjoint,liu_inverse,liu_laplacian,liu_elliptic_boundary}. Indeed, the authors use an approximation of the
eigenvalues of $D\mathbb{F}(u_{0})$ to compute the norm of its inverse.
Moreover, as $\Omega$ is bounded, they use the compactness of the
resolvant to build a basis of eigenfunctions of the resolvant. They are able to find a lower bound for the minimal eigenvalue $|\lambda|$ and use results on equivalence of norms to get a bound for the norm of $D\mathbb{F}^{-1}(u_{0}):L^{2}(\Omega)\to H^{2}(\Omega)\bigcap H_{0}^{1}(\Omega)$.

In the present paper, we choose to develop a spectral approach for studying \eqref{eq:original_system}, and the existence of saddle-node bifurcations.  For that matter, we rely on the analysis developed in \citep{unbounded_domain_cadiot} for semi-linear problems posed on $\R^m$, providing highly accurate existence proofs. Before delving into the details of our set-up, we present a focused review of the existing CAP approaches for studying PDEs on unbounded domains.

\par For the  ODE case, a major framework for studying localized states is  the parameterization method \citep{MR1976079, MR1976080, MR2177465}. This method relies on the rigorous parametrization of invariant manifolds at the zero solution. Then, this allows one to formulate a projected boundary value problem which one can treat using Chebyshev series or splines. The approach can also be applied to nonautonomous ODEs through the use of automatic differentiation (cf. \citep{automatic_differentiation}) and was demonstrated in \citep{miguel_soliton} for the Schrodinger equation. Additionally, the approach has been used to prove branches of solutions, and has been recently used to prove homoclinic snaking in the 1D Swift Hohenberg equation by the authors of \citep{gabriel_snaking}. Still for ODEs, the analysis derived in \citep{breden2025solutionsdifferentialequationsfreudweighted, breden2024constructiveproofssemilinearpdes} allows to produce constructive existence proofs in a well-chosen class of equations. Indeed, such equations possess a confining potential, allowing the resolvent of the differential operator to be compact on a well-chosen space of functions. Note that the method derived in \citep{breden2024constructiveproofssemilinearpdes} also applies to PDEs possessing a linear part of the form $\Delta u + \frac{x}{2} \cdot \nabla u$. 
Moving to PDEs, but in the weak formulation, the authors of \citep{plum_numerical_verif} presented a method for proving weak solutions to second and fourth-order PDEs. Their approach relies on the rigorous control of the spectrum of the linearization around an approximate solution. Then, they use a homotopy argument and the Temple-Lehmann-Goerisch method (see Section 10.2 in \citep{plum_numerical_verif}). This approach was then applied by Wunderlich in \citep{plum_thesis_navierstokes}. Wunderlich proved the existence of a weak solution to the Navier-Stokes equations defined on an infinite strip with an obstacle. As far as strong solutions are concerned, \citep{olivier_radial} provides a methodology for proving the existence of radially symmetric solutions. Using the radially symmetric ansatz,  the PDE is transformed into an ordinary differential equation. The approach then relies on  a rigorous enclosure of the center stable manifold by using a Lyapunov-Perron operator. This allows one to solve a boundary value problem on $(0,\infty)$. Removing the radially symmetric assumption, the authors of \citep{unbounded_domain_cadiot} provide a general method for proving the existence and local uniqueness of localized solutions to semilinear autonomous PDEs on $\mathbb{R}^m$. The approach is based on Fourier series and describes the necessary tools to construct an approximate solution, $u_0$, and an approximate inverse $\mathbb{A}$ of the linearization $D\mathbb{F}(u_0)$ about $u_0$. An application to the 1D Kawahara equation was provided in the same paper. In \citep{sh_cadiot}, the method of \citep{unbounded_domain_cadiot} was applied to the planar Swift Hohenberg PDE. The method was then extended further by the authors of \citep{gs_cadiot_blanco}. In \citep{unbounded_domain_cadiot}, the method was presented for scalar PDEs. In \citep{gs_cadiot_blanco}, the authors generalized the approach to systems of PDEs. This allows for rigorous proofs of localized solutions to systems of PDEs. The approach was demonstrated on the 2D Gray-Scott system of equations. Then, in \citep{symmetry_blanco_cadiot}, the authors extended further this methodology to treat existence proofs of symmetric solutions, where the symmetry might be given by a general group of symmetries. The 2D planar Swift Hohenberg equation was again used as an illustration, allowing for the authors to obtain existence  proofs of solutions with various dihedral symmetries.

\par Now that we have discussed the existing literature for stationary localized patterns and state of the art techniques in rigorous numerics, we outline how we will mesh these ideas together for our analysis.

\subsection{Ingredients of our Approach}
\par Our goal is to provide rigorous validations to some of the observations made by the authors of \citep{AlSaadi_unifying_framework}. As mentioned previously, we will need to develop a methodology for constructively proving the existence of saddle-node bifurcations on unbounded domains. Our approach will rely on meshing previously developed ingredients. The first of which is a rigorous study on the existence of localized patterns. We base our analysis on the method developed in \citep{gs_cadiot_blanco}.  Going back to the study of  \eqref{eq:original_system}, we assume in the rest of the paper that $j=d$ and that $c>h$. Then, we use the following change of variables and re-normalisations
 \begin{align*}
     u_1(x,t) \bydef u\left( x,t\right) - \lambda_1 \text{ and } u_2(x,t) \bydef  v(x,t) - \lambda_2, \text{ where }\\
      \lambda_1 \bydef \frac{a+b}{c-h} \text{ and } \lambda_2 \bydef \frac{c(c-h)(a+b) - a(c-h)^2}{(a+b)^2 + d(c-h)^2}.
 \end{align*}
Note that $(\lambda_1,\lambda_2)$ is a steady state of \eqref{eq:original_system}. Consequently, using the above definitions, we can look for $(u_1,u_2)(x,t) \to 0$ as $|x| \to \infty$. In fact, we obtain that $(u_1,u_2)$ equivalently solve
\begin{equation}\label{eq : gen_model}
\begin{split}
   \partial_t u_1 &= \rho \Delta u_1 + \nu_1u_1 + \nu_2 u_2 + u_1^2 u_2 + \nu_4 u_1^2 + \nu_5 u_1 u_2,  \\
      \partial_t u_2 &=\Delta u_2 - \nu_2 u_2 + \nu_3 u_1 - u_1^2 u_2 - \nu_4 u_1^2 - \nu_5 u_1 u_2
      \end{split}
\end{equation}
where
\begin{align}
    &\rho \bydef \delta^2,~\nu_1 \bydef  2\lambda_1 \lambda_2 - c,~\nu_2 \bydef d + \lambda_1^2,~\nu_3 \bydef h - 2\lambda_1 \lambda_2,~\nu_4 \bydef \lambda_2,~\nu_5 \bydef 2\lambda_1.
\end{align}
Using the above, $(u,v)$ solves \eqref{eq:original_system} if and only if $(u_1,u_2)$ solves \eqref{eq : gen_model}. Let us denote $\mbf{u}=(u_1, u_2)$ and define $\mathbb{f}$ as follows
\begin{align}
    \mbb{f}(\mbf{u}) \bydef \mathbb{l} \bu + \mathbb{g}(\bu),
\end{align}
where
\begin{equation}\label{def : def l and g intro}
    \mathbb{l} \bydef \begin{bmatrix}
        \rho \Delta + \nu_1 I & \nu_2 I \\
        \nu_3 I & \Delta - \nu_2
    \end{bmatrix}  = \begin{bmatrix}
        \mathbb{l}_{11} & \mathbb{l}_{12} \\ \mathbb{l}_{21} & \mathbb{l}_{22}
    \end{bmatrix}, ~~  \mathbb{g}(\mathbf{u}) \bydef \begin{bmatrix}
        u_1^2 u_2 + \nu_4 u_1^2 + \nu_5 u_1 u_2 \\
        - u_1^2 u_2 - \nu_4 u_1^2 - \nu_5 u_1 u_2
    \end{bmatrix}.
\end{equation}
In fact, \eqref{eq : gen_model} has the condensed parabolic form
\begin{align}\label{eq : parabolic form of the problem}
    \partial_t \bu  = \mathbb{f}(\bu).
\end{align}

To begin our study, we will focus our attention on stationary localized solutions, that is $\bu : \R \to \R^2$ such that $\mathbb{f}(\bu) = 0$ and  satisfying $\bu(x) \to 0$ as $|x| \to \infty.$  Given a well-chosen Hilbert space $\mathcal{H}_e$ (see \eqref{def : Hilbert space Hl}), containing even localized functions, we look for zeros of $\mathbb{f}$ in $\mathcal{H}_e$. In fact, we demonstrate that $\mathbb{f} : \mathcal{H}_e \to L^2_e$ is a smooth operator, where $L^2_e$ is the restriction of even functions of $L^2(\R) \times L^2(\R)$. Then, following the analysis derived in \citep{unbounded_domain_cadiot, gs_cadiot_blanco}, we prove the existence of zeros of $\mathbb{f}$ using a Newton-Kantorovich approach (cf. Section \ref{sec : patterns}). In fact, our analysis relies on the construction of an approximate solution $\bar{\bu} \in \mathcal{H}_e$ and of an approximate inverse $\mathbb{A} : {L}^2_e \to \mathcal{H}_e$ for the Fréchet derivative $D\mathbb{f}(\bar{\bu})$. Indeed, we can then define a Newton-like fixed point operator $\mathbb{T}$ given as 
\begin{align*}
    \mathbb{T}(\bu) \bydef \bu - \mathbb{A}\mathbb{f}(\bu)
\end{align*}
and we prove that there exists $r>0$ such that $\mathbb{T}$ is contracting from $\overline{B_r(\bar{\bu})}$ to itself, where $\overline{B_r(\bar{\bu})}$ is the closed ball in $\mathcal{H}_e$ of radius $r$ centered at $\bar{\bu}$. Using the Banach-fixed point theorem, this implies the existence of a unique zero $\tilde{\bu}$ of $\mathbb{f}$ in $\overline{B_r(\bar{\bu})}$. The verification of the contractivity of $\mathbb{T}$ on $\overline{B_r(\bar{\bu})}$ involves the rigorous computation of different quantities, which we expose in Section \ref{sec : Bounds of patterns}. This allows us to provide novel constructive existence proofs of localized solutions in \eqref{eq : gen_model}. 
\par Building upon existence proofs of solutions, we investigate the existence of saddle-node bifurcations. {In fact, we aim at applying similar ideas as in \citep{gs_cadiot_blanco} to a well-chosen augmented system in order to prove the existence of saddle node bifurcations. This provides a novel computer-assisted approach  for proving the existence of saddle-node bifurcations for localized solutions.}  We now write $\mathbb{f}(\bu) = \mathbb{f}(\rho,\bu)$ to explicit the dependency of $\mathbb{f}$ on the parameter $\rho$, as given in \eqref{eq : gen_model}.
By definition, a saddle-node bifurcation at $(\tilde{\rho}, \tilde{\bu})$ satisfies the following
\begin{align*}
    \mathbb{f}(\tilde{\rho},\tilde{\bu}) = 0, ~ D_u\mathbb{f}(\tilde{\rho},\tilde{\bu})\mathbf{w} =0 
\end{align*}
and some extra non-degeneracy conditions (given in \citep{jp_saddle_node} for instance). In particular, \citep{jp_saddle_node} provides an equivalent definition of a saddle-node bifurcation, thanks to isolated zeros of a well-chosen map. For the second ingredient of our approach, we introduce the  Hilbert spaces $H_1 = \mathbb{R}\times \mathcal{H}_e \times \mathcal{H}_e$ and $H_2 = \mathbb{R}\times L_e^2 \times L^2_2$ (see \eqref{def : Hilbert space Hl}) and  we define the so-called  \emph{Saddle-Node Map} $\mathbb{F} : H_1 \to H_2$ as follows
\begin{equation}\label{eq:saddle_node_map}
  \mathbb{F}(\mathbf{x}) \bydef \begin{bmatrix}
\phi(\mathbf{w})  -\alpha \\\mathbb{f}(\rho,\mbf{u}) \\
      D_{\mbf{u}}\mathbb{f} (\rho,\mbf{u})\mbf{w} 
  \end{bmatrix}, \text{ with } \mathbf{x} = (\rho, \mathbf{u},\mathbf{w}) \in H_1,
\end{equation}
and where $\alpha \in \mathbb{R}$, $\phi : \mathcal{H}_e \to \R$ is a non-trivial linear functional. The additional function, $\mathbf{w}$ denotes the eigenvector corresponding to the zero eigenvalue of $D_{\mathbf{u}}\mathbb{f}(\rho,\mathbf{u}).$ Then, using \citep{jp_saddle_node}, we have the following sufficient conditions for a saddle-node bifurcation.
\begin{lemma}\label{lem : saddle node def}
    Suppose that $\tilde{\mathbf{x}} = (\tilde{\rho}, \tilde{\mathbf{u}},\tilde{\mathbf{w}}) \in H_1$ is a non-degenerate zero of $\mathbb{F}$, that is $\mathbb{F}(\tilde{\mathbf{x}}) = 0$ and $D\mathbb{F}(\tilde{\mathbf{x}})$ is invertible. Moreover, assume that $D_u\mathbb{f}(\tilde{\rho},\tilde{\mathbf{u}})$ is a Fredholm operator of index zero. Finally, assume that the spectrum of $D_u\mathbb{f}(\tilde{\rho},\tilde{\mathbf{u}})$ does not intersect the imaginary axis, except at zero.  Then, the problem $\mathbb{f}(\rho,\mathbf{u}) = 0$ undergoes a saddle-node bifurcation at  $(\tilde{\rho}, \tilde{\mathbf{u}})$.
\end{lemma}

\begin{remark}
    In the previous lemma, the assumption that the spectrum of $D_u\mathbb{f}(\tilde{\rho},\tilde{\mathbf{u}})$ does not intersect the imaginary axis allows to eliminate possible degenerate saddle-node bifurcations at which a Hopf bifurcation simultaneously occurs. This assumption was not necessary in the applications of \citep{jp_saddle_node} because of self-adjointness. 
\end{remark}

In practice, non-degenerate zeros of $\mathbb{F}$ in $H_1$ are obtained using a Newton-Kantorovich approach, similarly as for the existence proofs of localized solutions. Our approach is computer-assisted and depends on the rigorous computation of various quantities, explicited in Section \ref{sec : Bounds for Saddle Nodes}. Moreover, the Fredholm property of $D_u\mathbb{f}(\tilde{\rho},\tilde{\mathbf{u}})$ was established in \citep{unbounded_domain_cadiot}. In order to control the spectrum, we require a third ingredient developed in \citep{cadiot2025stabilityanalysislocalizedsolutions}. More specifically, the control of the spectrum of $D_u\mathbb{f}(\tilde{\rho},\tilde{\mathbf{u}})$ is done a posteriori (that is after the existence proof of a zero of  $\mathbb{F}$), and is presented in Section \ref{sec : control of the spectrum}.  In fact, this approach also allows us to determine the stability (with respect to even perturbations) of localized solutions. We provide such a stability analysis in Section \ref{sec : control of the spectrum} as well. By combining the ingredients for proving localized patterns, zeros of the augmented system \eqref{eq:saddle_node_map}, and the control of the spectrum, we are able to obtain the existence of saddle-node bifurcations on unbounded domains. Furthermore, we aim to provide such estimates in such a way that the results are widely applicable. That is, all of our estimates are quantitative and explicitly provided. Similarly, we provide the requisite codes for the related rigorous computations on the GitHub repository \citep{julia_blanco_fassler}.

This manuscript is organized as follows. Section~\ref{sec : setup} introduces the notation, assumptions, and restrictions used throughout the paper. In Section~\ref{sec : patterns}, we introduce a Newton-Kantorovich approach associated to the construction of an approximate solution $\bar{\mathbf{u}}$ and of an approximate inverse $\mathbb{A}$ for $D\mathbb{f}(\bar{\mathbf{u}})$. The specific construction of our approximate objects allow us to derive explicit formulas in Section~\ref{sec : Bounds of patterns} for the application of the Newton-Kantorovich approach. In the same section, we present applications of our analysis and derive multiple constructive existence proofs of localized solutions. In Section~\ref{sec : saddle node}, we conduct a similar analysis for the constructive proofs of solutions to the augmented problem \eqref{eq:saddle_node_map}. Finally, we derive in Section \ref{sec : control of the spectrum} a computer-assisted approach for controlling the spectrum of the Jacobian at a given localized solution. This allows us to establish the existence of a saddle-node in the Glycolysis model, as well as stability of some localized solutions.

\section{Setup of the Problem}\label{sec : setup}

Recall that we  wish to prove the existence of localized solutions to \eqref{eq : gen_model} for the spatial domain $\R$. For this matter, we introduce usual notations of functional analysis on $\R$. We first define the Lebesgue notation on a product space as $L^2 = L^2(\mathbb{R}) \times L^2(\mathbb{R})$ and $L^2(\om)$ on a bounded domain $\om$ in $\mathbb{R}$. More generally, $L^p$ denotes the usual $p$ Lebesgue product space with two components on $\mathbb{R}$ associated to its norm $\|\cdot\|_{p}$ defined as
$\|\mathbf{u}\|_{p} \bydef (\|u_1\|_{p}^p + \|u_2\|_{p}^p)^{\frac{1}{p}}$
where $\mathbf{u} \bydef (u_1,u_2).$ Moreover, given $s \in \mathbb{R}$, denote by $H^s \bydef H^s(\mathbb{R}) \times H^s(\mathbb{R})$ the usual Sobolev space on $\mathbb{R}$.  For a bounded linear operator $\mathbb{K} : L^2 \to L^2$, denote by $\mathbb{K}^*$ the adjoint of $\mathbb{K}$ in $L^2$. Moreover, if $\mathbf{u} \in L^2$ where $\mathbf{u} = (u_1,u_2)$, denote by $\hat{\mathbf{u}} \bydef \mathcal{F}(\mathbf{u}) \bydef (\mathcal{F}(u_1),\mathcal{F}(u_2))$ the Fourier transform of $\mathbf{u}$. More specifically,  $\displaystyle \hat{u}_{j}(\xi)  \bydef  \int_{\mathbb{R}}u_j(x)e^{-i2\pi x \cdot \xi}dx$ for all $\xi \in \mathbb{R}$ and $j \in \{1,2\}$. 

Adopting the notation of \citep{unbounded_domain_cadiot}, we look for $\mbf{u}=(u_1, u_2)$ such that
\begin{align}
    \mbb{f}(\mbf{u})  = 0
\end{align}
where
$\mathbb{f}$ is given in \eqref{def : def l and g intro}. Solving this equation requires us to choose a natural space of functions for $\mathbf{u}$ to live. In fact this function space is of main importance as it determines the required computations in our analysis. For that matter, we follow the construction introduced in \citep{unbounded_domain_cadiot}, constructing the space of solutions thanks to the linear operator $\mathbb{l}$. Note that $\mathbb{l}$ possesses a symbol (Fourier transform) $\mathscr{l}: \mathbb{R} \to \mathbb{R} \times \mathbb{R}$ given as 
\begin{equation}\label{def : definition of symbol l}
    \mathscr{l}(\xi) \bydef \begin{bmatrix}
        \mathscr{l}_{11}(\xi) & \mathscr{l}_{12}(\xi) \\
        \mathscr{l}_{21}(\xi) & \mathscr{l}_{22}(\xi)
    \end{bmatrix}
    = \begin{bmatrix}
         -\rho|2\pi\xi|^2 + \nu_1 & \nu_2 \\
        \nu_3 & -|2\pi\xi|^2 - \nu_2
    \end{bmatrix}
\end{equation}
for all $\xi \in \mathbb{R}$. We now recall a necessary assumption from \citep{gs_cadiot_blanco} that our framework requires. We will later verify it for a class of  Activator-Inhibitor models \eqref{eq : gen_model}.

\begin{assumption}\label{assumption : fourier}
  Given $\mathscr{l}$ as in \eqref{def : definition of symbol l},  assume there exists $\sigma_0 > 0$ such that 
    \begin{equation}\label{eq : fourier transform bounded away from 0}
        |\det(\mathscr{l}(\xi))| \geq \sigma_0 \text{ for all } \xi \in \mathbb{R}.
    \end{equation}
    That, is, $\det(\mathscr{l}(\xi))$ is bounded away uniformly from $0$.
\end{assumption}
We now provide the values of the parameters $\nu_j$ ($j\in \{1, \dots, 5\})$ such that $\mathscr{l}$ satisfies Assumption \ref{assumption : fourier}. 
\begin{lemma}\label{lem : l_invertible}$\mathscr{l}$ is invertible if
    \begin{equation}
        (\rho\nu_2-\nu_1)^2 + 4\rho\nu_2(\nu_1 + \nu_3) < 0,
    \end{equation}\label{eq : invertible condition 1}
    or if
    \begin{equation}
        \begin{cases}
            (\rho\nu_2 - \nu_1)^2 + 4\rho\nu_2(\nu_1 + \nu_3) \geq 0 ~~ \text{ and } \\
            \rho\nu_2 - \nu_1 > \sqrt{(\rho \nu_2 - \nu_1)^2 + 4\rho \nu_2 (\nu_1 + \nu_3)}
        \end{cases}
    \end{equation}\label{eq : invertible condition 2}
\end{lemma}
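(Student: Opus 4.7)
The plan is to reduce invertibility of the $2\times 2$ matrix $\mathscr{l}(\xi)$ to a sign analysis of a single real quadratic polynomial. The key observation is that $\mathscr{l}(\xi)$ depends on $\xi$ only through $s \bydef |2\pi\xi|^2$, which ranges over $[0,\infty)$ as $\xi$ ranges over $\mathbb{R}$. Expanding the $2\times 2$ determinant in the definition of $\mathscr{l}$, I obtain
\begin{equation*}
\det(\mathscr{l}(\xi)) = (-\nu s + \nu_1)(-s - \nu_2) - \nu_2\nu_3 = \nu s^2 + (\nu\nu_2 - \nu_1)s - \nu_2(\nu_1 + \nu_3) \bydef P(s).
\end{equation*}
Since $\xi\mapsto|2\pi\xi|^2$ surjects onto $[0,\infty)$, the matrix $\mathscr{l}(\xi)$ is invertible for every $\xi\in\mathbb{R}$ if and only if $P(s)\neq 0$ for every $s\geq 0$. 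This reduces the lemma to elementary positivity of a quadratic on a half-line.

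Next I classify according to the sign of the discriminant $\Delta \bydef (\nu\nu_2-\nu_1)^2 + 4\nu\nu_2(\nu_1+\nu_3)$ of $P$. If $\Delta < 0$, which is the first sufficient condition, then $P$ has no real roots, and because the leading coefficient $\nu > 0$ is positive I get $P(s) > 0$ for every $s\in\mathbb{R}$, and in particular on $[0,\infty)$.

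If instead $\Delta \geq 0$, then $P$ has real roots $s_\pm = \frac{-(\nu\nu_2-\nu_1)\pm\sqrt{\Delta}}{2\nu}$, and since $\nu > 0$ I have $P(s) > 0$ precisely on $(-\infty, s_-)\cup(s_+,\infty)$. Requiring $P$ to be nonzero on $[0,\infty)$ is therefore equivalent to $s_+ < 0$, i.e.\ $\sqrt{\Delta} < \nu\nu_2 - \nu_1$, which is exactly the second sufficient condition in the statement; it automatically forces $\nu\nu_2 - \nu_1 > 0$, which in turn forces $s_- < 0$ as well. Combining the two cases completes the proof.

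No step poses a genuine obstacle: once the identification $\det(\mathscr{l}(\xi)) = P(|2\pi\xi|^2)$ is made, the argument is a short discriminant analysis. I would also note in passing that, because $P$ is continuous on $[0,\infty)$, strictly positive there under either sufficient condition, and satisfies $P(s)\to+\infty$ as $s\to+\infty$, the pointwise invertibility automatically upgrades to the uniform lower bound $|\det(\mathscr{l}(\xi))| \geq \sigma_0 > 0$ demanded by Assumption \ref{assumption : fourier}, so the lemma already delivers the form required by the subsequent framework.
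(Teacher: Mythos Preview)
Your proof is correct and follows essentially the same approach as the paper: reduce invertibility of $\mathscr{l}(\xi)$ to nonvanishing of $\det(\mathscr{l}(\xi))$, identify this determinant with the quadratic $P(s)=\nu s^2+(\nu\nu_2-\nu_1)s-\nu_2(\nu_1+\nu_3)$ in $s=|2\pi\xi|^2$, and conclude via a discriminant analysis. Your version spells out the root-location argument and the uniform lower bound more explicitly than the paper, which simply invokes ``basic properties of second order polynomials.''
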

\begin{proof}
    $\mathscr{l}$ is invertible if and only if $\mathscr{l}_{11}(\xi)\mathscr{l}_{22}(\xi) - \mathscr{l}_{12}(\xi)\mathscr{l}_{21}(\xi) \neq 0, \forall \xi \in \mathbb{R}$. 
    This is equivalent to studying the roots of  the second order polynomial $x \mapsto \rho x^2 + (\rho\nu_2 - \nu_1)x - \nu_2(\nu_1 + \nu_3) $. The proof is then obtained using basic properties of second order polynomials.
\end{proof}
From now on, we assume that the parameters $\rho$ and $\nu_j$ ($j\in \{1, \dots, 5\})$   satisfy the conditions of Lemma \ref{lem : l_invertible}, yielding that $\mathscr{l}$ is invertible. This allows to define the  following norm and inner product \begin{align}\label{def : definition of the norm and inner product Hl}
    \|\mathbf{u}\|_{\mathcal{H}} \bydef \|\mathbb{l}\mathbf{u}\|_{2} ~~ \text{ and } ~~(\mathbf{u},\mathbf{v})_{\mathcal{H}} \bydef (\mathbb{l}\mathbf{u},\mathbb{l}\mathbf{v})_2 
\end{align}
for all $\mathbf{u}, \mathbf{v} \in \mathcal{H}$, where   $\mathcal{H}$ is the Hilbert space
\begin{align}\label{def : Hilbert space Hl}
    \mathcal{H} \bydef \{ \mathbf{u} \in L^2, \|\mathbf{u}\|_{\mathcal{H}} < \infty \}.
\end{align} 
Using Plancherel's identity, we have 
\begin{align*}
(\mathbf{u}, \mathbf{v})_{\mathcal{H}} = (\mathscr{l}\hat{\mathbf{u}},\mathscr{l}\hat{\mathbf{v}})_2 
\end{align*}
for all $\mathbf{u}, \mathbf{v} \in \mathcal{H}.$ In particular, note that $\mathbb{l} : \mathcal{H} \to L^2$ is a well-defined bounded linear operator, which is actually an isometric isomorphism. We now want to prove that the operator $\mathbb{g}: \mathcal{H} \to L^2$ is a smooth operator. For this matter, we recall two spaces from \citep{gs_cadiot_blanco} :  $\mathcal{M}_1$ and $\mathcal{M}_2$ given as
\begin{align}
 &\mathcal{M}_1 \bydef \left\{ M = \left(M_{i,j}\right)_{i,j \in \{ 1,2 \} }, \text{ where } M_{i,j} \in L^{\infty}(\mathbb{R}) \text{ and } ~ \|M\|_{\mathcal{M}_1} < \infty \right\}  \\
&\mathcal{M}_2 \bydef \left\{ M = \left(M_{i,j}\right)_{i,j \in \{ 1,2 \} }, \text{ where } M_{i,j} \in L^{2}(\mathbb{R}) \text{ and } ~ \|M\|_{\mathcal{M}_2} < \infty \right\}
\end{align}
with their associated norms
\\ \noindent\begin{minipage}{.4\linewidth}
\begin{equation}\label{M_2_norm_def}
 \|M\|_{\mathcal{M}_1} \bydef \sup_{\xi \in \mathbb{R}} \sup\limits_{\substack{x \in \mathbb{R}^2\\ |x|_2=1}}  |M(\xi)x|_2\end{equation}\end{minipage}%
\begin{minipage}{.6\linewidth}
\begin{equation}\label{M_21_norm_def}
\vspace{0.15cm}\|M\|_{\mathcal{M}_2} \bydef \max_{i \in \{1,2\}} \left\{\left(\sum_{j = 1}^2 \|M_{i,j}\|_{L^2(\mathbb{R})}^2\right)^{\frac{1}{2}} \right\}\end{equation}
\end{minipage}
In fact, using $\mathcal{M}_1$ and $\mathcal{M}_2$, \citep{gs_cadiot_blanco} provides sufficient conditions for which products on $\mathcal{H}\times \mathcal{H} \to L^2$ are well-defined. Using this result, we obtain the following lemma.
\begin{lemma}\label{corr : banach algebra}
Let $\mathbf{u} = (u_1,u_2), \mathbf{v} = (v_1,v_2),$ and $\mathbf{w} = (w_1,w_2)$. Suppose Assumption~\ref{assumption : fourier} is verified and let $\kappa \bydef \|\mathscr{l}^{-1}\|_{\mathcal{M}_1} \|\mathscr{l}^{-1}\|_{\mathcal{M}_2}$. Then 
\begin{align}
\|u_iv_j\|_2 \leq \kappa \|\mathbf{u}\|_\mathcal{H} \|\mathbf{v}\|_\mathcal{H} ~~ \text{ and } ~~ \|u_iv_jw_k\|_2 \leq \kappa \|\mathscr{l}^{-1}\|_{\mathcal{M}_2} \|\mathbf{u}\|_\mathcal{H} \|\mathbf{v}\|_\mathcal{H} \|\mathbf{w}\|_\mathcal{H}
\end{align}
for any $i,j,k \in \{1,\dots,2\}$. 
\end{lemma}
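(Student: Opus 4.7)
The plan is to apply the product estimates from~\cite{gs_cadiot_blanco} (the sufficient conditions for $\mathcal{H}\times\mathcal{H}\to L^2$ bilinear products) by reducing both inequalities to two elementary Sobolev-type embeddings: an $L^2$ embedding $\mathcal{H}\hookrightarrow L^2$ with constant $\|\mathscr{l}^{-1}\|_{\mathcal{M}_1}$, and an $L^\infty$ embedding $\mathcal{H}\hookrightarrow L^\infty$ with constant $\|\mathscr{l}^{-1}\|_{\mathcal{M}_2}$. The bilinear bound will then follow from Hölder $\|u_iv_j\|_2\le\|u_i\|_\infty\|v_j\|_2$, and the trilinear bound by iterating the bilinear bound once.

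First I would set $\mathbf{g}\bydef \mathscr{l}\hat{\mathbf{u}}$, so that $\|\mathbf{g}\|_2=\|\mathbf{u}\|_{\mathcal{H}}$ by definition~\eqref{def : definition of the norm and inner product Hl} and Assumption~\ref{assumption : fourier} lets me write $\hat{u}_i(\xi)=\sum_{j}[\mathscr{l}^{-1}(\xi)]_{ij}g_j(\xi)$. For the $L^2$ embedding, bounding the pointwise operator norm of $\mathscr{l}^{-1}$ by $\|\mathscr{l}^{-1}\|_{\mathcal{M}_1}$ and using Plancherel gives
\begin{equation}
\|v_j\|_2 \;\le\; \|\hat{\mathbf{v}}\|_2 \;=\; \bigl\|\mathscr{l}^{-1}\mathscr{l}\hat{\mathbf{v}}\bigr\|_2 \;\le\; \|\mathscr{l}^{-1}\|_{\mathcal{M}_1}\,\|\mathbf{v}\|_{\mathcal{H}}.
\end{equation}
For the $L^\infty$ embedding, by Fourier inversion $\|u_i\|_\infty\le\|\hat{u}_i\|_1$, and applying Cauchy--Schwarz first in the matrix index $j$ and then in $\xi$ yields
\begin{equation}
\|\hat{u}_i\|_1 \;\le\; \int_{\mathbb{R}}\Bigl(\sum_{j}|[\mathscr{l}^{-1}(\xi)]_{ij}|^2\Bigr)^{\!1/2}|\mathbf{g}(\xi)|_2\,d\xi \;\le\; \Bigl(\sum_{j}\|[\mathscr{l}^{-1}]_{ij}\|_{L^2}^2\Bigr)^{\!1/2}\|\mathbf{g}\|_2 \;\le\; \|\mathscr{l}^{-1}\|_{\mathcal{M}_2}\|\mathbf{u}\|_{\mathcal{H}}.
\end{equation}

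With these two embeddings in hand, the bilinear estimate is immediate: $\|u_iv_j\|_2\le\|u_i\|_\infty\|v_j\|_2\le\|\mathscr{l}^{-1}\|_{\mathcal{M}_2}\|\mathscr{l}^{-1}\|_{\mathcal{M}_1}\|\mathbf{u}\|_{\mathcal{H}}\|\mathbf{v}\|_{\mathcal{H}}=\kappa\|\mathbf{u}\|_{\mathcal{H}}\|\mathbf{v}\|_{\mathcal{H}}$. For the trilinear bound, I would write $\|u_iv_jw_k\|_2\le\|u_i\|_\infty\|v_jw_k\|_2$, apply the $L^\infty$ embedding to $u_i$, then apply the already-proven bilinear estimate to $\|v_jw_k\|_2$, yielding $\|u_iv_jw_k\|_2\le\|\mathscr{l}^{-1}\|_{\mathcal{M}_2}\|\mathbf{u}\|_{\mathcal{H}}\cdot\kappa\|\mathbf{v}\|_{\mathcal{H}}\|\mathbf{w}\|_{\mathcal{H}}=\kappa\|\mathscr{l}^{-1}\|_{\mathcal{M}_2}\|\mathbf{u}\|_{\mathcal{H}}\|\mathbf{v}\|_{\mathcal{H}}\|\mathbf{w}\|_{\mathcal{H}}$, which is the required inequality.

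There is no serious obstacle here, since the result is essentially a corollary of the general bilinear algebra framework in~\cite{gs_cadiot_blanco}; the only subtle point is to be careful that the two Cauchy--Schwarz applications in the $L^\infty$ bound are carried out in the correct order so that one exactly recovers the row-wise $\ell^2$-of-$L^2$ quantity that defines $\|\cdot\|_{\mathcal{M}_2}$ in~\eqref{M_21_norm_def}, rather than a larger norm. If preferred, one can replace the explicit argument by a direct citation of the corresponding lemma of~\cite{gs_cadiot_blanco}, specialized to the choice of symbol $\mathscr{l}$ from~\eqref{def : definition of symbol l}.
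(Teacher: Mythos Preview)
Your proof is correct and follows essentially the same approach as the paper: Hölder's inequality $\|u_iv_j\|_2\le\|u_i\|_\infty\|v_j\|_2$, followed by the $L^2$ embedding via $\|\mathscr{l}^{-1}\|_{\mathcal{M}_1}$ and the $L^\infty$ embedding via $\|\hat{u}_i\|_1\le\|\mathscr{l}^{-1}\|_{\mathcal{M}_2}\|\mathbf{u}\|_{\mathcal{H}}$. The only cosmetic difference is in the trilinear step, where the paper splits as $\|u_i\|_\infty\|w_k\|_\infty\|v_j\|_2$ directly rather than iterating the bilinear bound; both routes give the same constant.
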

\begin{proof}
    To begin, observe that
    \begin{align}
        \nonumber \|u_i v_j\|_{2} &\leq \|u_i\|_{\infty} \|v_j\|_{2} \leq \|\mathbf{u}\|_{\infty} \|\mathbf{v}\|_{2}\leq \|\hat{\mathbf{u}}\|_{1} \|\hat{\mathbf{v}}\|_{2}
    \end{align}
where we used Plancherel's theorem.
But now, using the proof of Lemma 2.1 in \citep{gs_cadiot_blanco}, we get
\begin{align}\label{eq : inequality l norm}
    \|\hat{\mathbf{v}}\|_{2} &= \|\mathscr{l}^{-1} \mathscr{l} \hat{\mathbf{v}}\|_{2} \leq \|\mathscr{l}^{-1}\|_{\mathcal{M}_1} \|\mathscr{l}\hat{\mathbf{v}}\|_{2} = \|\mathscr{l}^{-1}\|_{\mathcal{M}_1} \|\mathbf{v}\|_{\mathcal{H}}.
\end{align}
Finally, let us analyze $\|\hat{\mathbf{u}}\|_{1}$. Using the proof of Lemma 2.1 in \citep{gs_cadiot_blanco} again, we get
\begin{align}
    \nonumber \|\hat{\mathbf{u}}\|_{1} &= \|\mathscr{l}^{-1} \mathscr{l} \hat{\mathbf{u}}\|_{1}\leq \|\mathscr{l}^{-1}\|_{\mathcal{M}_2} \|\mathscr{l}\hat{\mathbf{u}}\|_{2} \leq \|\mathscr{l}^{-1}\|_{\mathcal{M}_2} \|\mathbf{u}\|_{\mathcal{H}}
\end{align}
where we again used Plancheral's theorem on the final step. All together, we have
\begin{align}
    \nonumber \|u_i v_j\|_2 &\leq \|\mathscr{l}^{-1}\|_{\mathcal{M}_1}\|\mathscr{l}^{-1}\|_{\mathcal{M}_2} \|\mathbf{u}\|_{\mathcal{H}} \|\mathbf{v}\|_{\mathcal{H}}.
\end{align}
The case $\|u_i v_j w_k\|_{2}$ is treated similarly observing that
\begin{align}
    \nonumber \|u_i v_j w_k\|_{2}  \leq  \|u_i \|_{\infty} \|w_k\|_{\infty} \|v_j\|_{2}. ~~~~~~~~~~~ \qedhere
\end{align}
\end{proof}

Using Lemma \ref{corr : banach algebra}, we obtain that $\mathbb{g} : \mathcal{H} \to L^2$ is a smooth operator. In particular, this yields that $\mathbb{f} : \mathcal{H} \to L^2$ is smooth as well. In fact, Proposition 2.1 in \citep{gs_cadiot_blanco} yields that solutions to \eqref{eq : gen_model} in $\mathcal{H}$ are actually infinitely differentiable, meaning that our analysis on $\mathcal{H}$ will yield strong solutions to \eqref{eq : gen_model}.

Supposing that $\mathbf{u} = (u_1,u_2)$ is a solution of \eqref{eq : gen_model}, then any translation in space provides a new solution. Therefore, in order to isolate a localized solution in the set of solutions, we choose to look for even solutions, that is we enforce that $\mathbf{u}(x) = \mathbf{u}(-x)$ for all $x \in \R$.   With this in mind, we introduce the following even restriction $\mathcal{H}_{e} \subset \mathcal{H}$
\begin{equation}\label{H_l_e_definition}
         \mathcal{H}_{e} \bydef \{ \mathbf{u} \in \mathcal{H} \ | \ \mathbf{u}(x) = \mathbf{u}(-x)\}.
    \end{equation}
Similarly, denote $L_{e}^2$ the Hilbert subspace of $L^2$ satisfying the even symmetry.
In particular we notice that if $\mathbf{u} \in \mathcal{H}_{e}$, then $\mathbb{l}\mathbf{u} \in L^2_{e}$ and $\mathbb{g}(\mathbf{u}) \in L^2_{e}$ ; hence,  $\mathbb{l}$ and $\mathbb{g}$ are well-defined as operators from $\mathcal{H}_{e}$ to $L^2_{e}$. 
 Finally, we look for solutions of the following problem
\begin{equation}\label{eq : f(u)=0 on H^l_e}
    \mathbb{f}(\mathbf{u}) = 0 ~~ \text{ and } ~~ \mathbf{u} \in \mathcal{H}_{e}.
\end{equation}
\subsection{Periodic Spaces}\label{sec : periodic spaces}
In this section, we recall some notations introduced in Section 2.4 of \citep{unbounded_domain_cadiot}. In particular, the objects defined in the above sections have a corresponding representation in Fourier series when restricted to a bounded domain. Specifically, we define our bounded domain of interest $\Omega_0 \bydef (-d,d)$  where $1 \leq d<\infty$.
Moreover, denote $\tilde{n} = \frac{n}{2d} \in \mathbb{R}$ for all $n \in \mathbb{Z}$. As for the continuous case, we want to restrict to Fourier series representing even functions. Let $\mathbf{u}_{n} = ((u_1)_{n},(u_2)_{n})$ be the $n$ Fourier coefficient of $\mathbf{u}$. In terms of Fourier coefficients, the even  restriction reads
\begin{align*}
    \mathbf{u}_{n} = \mathbf{u}_{-n} \text{ for all } n \in \mathbb{Z}.
\end{align*}
In particular, when enforcing the even symmetry, we can restrict the indices of Fourier coefficients from $\mathbb{Z}$ to the reduced set
$
 \mathbb{N}_0 \bydef \left\{n \in \mathbb{Z} \ | \ 0 \leq n \right\}.$
Now, let $(\alpha_n)_{n \in \mathbb{N}_0}$ be defined by 
\begin{equation}\label{def : alpha_n}
    \alpha_n \bydef \begin{cases}
        1 &\text{ if } n=0\\
       2 &\text{ if } n > 0. 
    \end{cases}
\end{equation}
In particular, $\alpha_n$ is the size of $\mathrm{orb}_{\mathbb{Z}_2}(n)$.  Next, let $\ell^p(\mathbb{N}_0)$ denote the following Banach space
\begin{align}
    \ell^p(\mathbb{N}_0) \bydef \left\{U = (u_n)_{n \in \mathbb{N}_0}: ~ \|U\|_p \bydef \left( \sum_{n \in \mathbb{N}_0} \alpha_n|u_n|^p\right)^\frac{1}{p} < \infty \right\}.
\end{align}
In the above, note that $U$ is a sequence of scalars, and not a sequence of vectors.
In particular, $\ell^2(\mathbb{N}_0)$ is an Hilbert space associated to its natural inner product $(\cdot, \cdot)_2$ given by
\[
(U,V)_2 \bydef \sum_{n \in \mathbb{N}_0} \alpha_n u_n v_n^*
\]
for all $U = (u_n)_{n \in \mathbb{N}_0}, V = (v_n)_{n \in \mathbb{N}_0} \in \ell^2(\mathbb{N}_0)$ and $*$ denotes complex conjugation.
Then, we define the Banach space $\ell^p_{e}$  as 
\begin{align}
   \ell^p_{e} \bydef \ell^p(\mathbb{N}_0) \times \ell^p(\mathbb{N}_0), ~ \text{ with norm } ~ \|\mathbf{U}\|_{p} = (\|U_1\|_{p}^p + \|U_2\|_{p}^p)^{\frac{1}{p}}.
\end{align}
 For a bounded operator $K : \ell^2_{e} \to \ell^2_{e}$ (resp. $K : \ell^2(\mathbb{N}_0) \to \ell^2(\mathbb{N}_0)$), $K^*$ denotes the adjoint of $K$ in $\ell^2_{e}$ (resp. $\ell^2(\mathbb{N}_0)$). Now, similarly as what is achieved \citep{gs_cadiot_blanco}, we  define $\gamma~:~L^2_{e}(\mathbb{R}) \to \ell^2(\mathbb{N}_0)$ as
\begin{align}
    \left(\gamma(u)\right)_n \bydef  \frac{1}{|\om|}\int_\om u(x) e^{-2\pi i \tilde{n}\cdot x}dx\label{single_gamma}
\end{align}
for all $n \in \mathbb{N}_0$ and all $u \in L^2_{e}(\R)$. Similarly, we define $\gamma^\dagger : \ell^2(\mathbb{N}_0) \to L^2_{e}(\mathbb{R})$ as 
\begin{align}
    \gamma^\dagger\left(U\right)(x) \bydef \cha(x) \sum_{n \in \mathbb{N}_0} \alpha_n u_n \cos(2\pi \tilde{m} x)\label{single_gamma_dagger}
\end{align}
for all $x \in \mathbb{R}$ and all $U =\left(u_n\right)_{n \in \mathbb{N}_0} \in \ell^2(\mathbb{N}_0)$, where $\cha$ is the characteristic function on $\om$. We now introduce a similar notation for functions in the product space $L^2_{e}$. In particular, define $\bgam : L^2_{e} \to \ell^2_{e}$ as
\begin{align}
\bgam(\mathbf{u}) = (\gamma(u_1),\gamma(u_2))\label{def : gamma}
\end{align}
and 
$\bgam^\dagger : \ell^2_{e} \to L^2_{e}$ as
\begin{align}
\bgam^\dagger(\mathbf{U}) = (\gamma^\dagger(U_1),\gamma^\dagger(U_2)).\label{def : gamma dagger}
\end{align}
More specifically, given $\mathbf{u} \in L^2_{e}$, $\bgam(\mathbf{u})$ represents the Fourier coefficients indexed on $\mathbb{N}_0$ of the restriction of $\mathbf{u}$ on $\om$. Conversely, given a sequence $\mathbf{U}\in \ell^2_{e}$, $\bgam^\dagger\left(\mathbf{U}\right)$  is the function representation of $\mathbf{U}$ in $L^2_{e}.$ In particular, notice that $\bgam^\dagger\left(\mathbf{U}\right)(x) =0$ for all $x \notin \om.$  

Then, recall similar notations from \citep{unbounded_domain_cadiot}
\begin{align}
    L^2_{e,\om} \bydef \left\{\mathbf{u} \in L^2_{e} : \text{supp}(\mathbf{u}) \subset \overline{\om} \right\}~~ \text{ and } ~~
   \mathcal{H}_{e,\om} \bydef \left\{\mathbf{u} \in \mathcal{H}_{e} : \text{supp}(\mathbf{u}) \subset \overline{\om} \right\}.
\end{align}
Moreover, define $\mathcal{B}(L^2_{e})$ (respectively $\mathcal{B}(\ell^2_{e})$) as the space of bounded linear operators on $L^2_{e}$ (respectively $\ell^2_{e}$) and denote by $\mathcal{B}_\om(L^2_{e})$ the following subspace of $\mathcal{B}(L^2_{e})$
\begin{equation}\label{def : Bomega}
    \mathcal{B}_\om(L^2_{e}) \bydef \{\mathbb{K} \in \mathcal{B}(L^2_{e}) :  \mathbb{K} = \cha \mathbb{K} \cha\}.
\end{equation}
Finally, define $\bGam : \mathcal{B}(L^2_{e}) \to \mathcal{B}(\ell^2_{e})$ and $\bGam^\dagger : \mathcal{B}(\ell^2_{e}) \to \mathcal{B}(L^2_{e})$ as follows
\begin{equation}\label{def : Gamma and Gamma dagger}
    \bGam(\mathbb{K}) \bydef \bgam \mathbb{K} \bgam^\dagger ~~ \text{ and } ~~  \bGam^\dagger(K) \bydef \bgam^\dagger {K} \bgam 
\end{equation}
for all $\mathbb{K} \in \mathcal{B}(L^2_{e})$ and all $K \in \mathcal{B}(\ell^2_{e}).$

The maps defined above in \eqref{def : gamma}, \eqref{def : gamma dagger} and \eqref{def : Gamma and Gamma dagger} are fundamental in our analysis as they allow to pass from the problem on $\mathbb{R}$ to the one in $\ell^2_{e}$ and vice-versa. Furthermore, the following lemma, provides that this passage is an isometric isomorphism when restricted to the relevant spaces.

\begin{lemma}\label{lem : gamma and Gamma properties}
    The map $\sqrt{|\om|} \bgam : L^2_{e,\om} \to \ell^2_{e}$ (respectively $\bGam : \mathcal{B}_\om(L^2_{e}) \to \mathcal{B}(\ell^2_{e})$) is an isometric isomorphism whose inverse is given by $\frac{1}{\sqrt{|\om|}} \bgam^\dagger : \ell^2_{e} \to L^2_{e,\om}$ (respectively $\bGam^\dagger :   \mathcal{B}(\ell^2_{e}) \to \mathcal{B}_\om(L^2_{e})$). In particular,
    \begin{align}\label{eq : parseval's identity}
        \|\mathbf{u}\|_2 = \sqrt{\om}\|\mathbf{U}\|_2 \text{ and } \|\mathbb{K}\|_2 = \|K\|_2
    \end{align}
    for all $\mathbf{u} \in L^2_{e,\om}$ and $\mathbb{K} \in \mathcal{B}_\om(L^2_{e})$ where $\mathbf{U} \bydef \bgam(\mathbf{u})$ and $K \bydef \bGam(\mathbb{K})$.
\end{lemma}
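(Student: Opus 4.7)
The plan is to reduce the lemma to the classical Fourier series theory on the bounded interval $\om = (-d,d)$ and then upgrade the resulting scalar identities to the product and operator settings by a direct conjugation argument.

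First I would treat the single-component case. Any $u \in L^2_{e,\om}$ extends by $|\om|$-periodicity to a locally $L^2$ function on $\R$ whose Fourier series on $\om$ reads $u(x) = \sum_{n \in \mathbb{Z}} c_n e^{2\pi i \tilde{n} x}$ with $c_n = (\gamma(u))_n$. The evenness of $u$ forces $c_n = c_{-n}$, so pairing the $\pm n$ terms yields exactly the cosine expansion
\[u(x) = \sum_{n\in \mathbb{N}_0} \alpha_n c_n \cos(2\pi \tilde{n} x) \text{ for } x \in \om,\]
which matches the definition of $\gamma^\dagger(\gamma(u))$ on $\om$. Since $\gamma^\dagger(\gamma(u))$ vanishes outside $\om$ by the $\cha$ factor and $u$ vanishes there as well (as $u \in L^2_{e,\om}$), we conclude $\gamma^\dagger \gamma = \text{Id}$ on $L^2_{e,\om}$. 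Conversely, computing $\gamma(\gamma^\dagger(U))_m$ directly by the orthogonality of $\{\cos(2\pi \tilde{n} x)\}_{n\in\mathbb{N}_0}$ on $\om$, with the normalizations built into $\alpha_n$, produces $\gamma \gamma^\dagger = \text{Id}$ on $\ell^2(\mathbb{N}_0)$. Plancherel's identity on the periodic interval then gives
\[\|u\|_2^2 = \int_\om |u(x)|^2 dx = |\om| \sum_{n\in\mathbb{Z}} |c_n|^2 = |\om|\sum_{n\in\mathbb{N}_0} \alpha_n |c_n|^2 = |\om|\,\|\gamma(u)\|_2^2,\]
so $\sqrt{|\om|}\gamma : L^2_{e,\om} \to \ell^2(\mathbb{N}_0)$ is an isometric isomorphism with inverse $\tfrac{1}{\sqrt{|\om|}}\gamma^\dagger$. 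Taking the direct sum over the two components immediately promotes this to the vector statement for $\sqrt{|\om|}\bgam : L^2_{e,\om} \to \ell^2_e$.

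For the operator part I would exploit the defining condition $\mathbb{K} = \cha \mathbb{K} \cha$ of $\mathcal{B}_\om(L^2_e)$, which means $\mathbb{K}$ is completely determined by its restriction to $L^2_{e,\om}$ and kills the orthogonal complement. Since $\bgam^\dagger$ maps into $L^2_{e,\om}$ and $\bgam$ coincides with $\bgam\, \cha$, the composition $\bGam(\mathbb{K}) = \bgam \mathbb{K} \bgam^\dagger$ is nothing but the conjugation of $\mathbb{K}|_{L^2_{e,\om}}$ by the isometric isomorphism $\sqrt{|\om|}\bgam$ (the two factors of $\sqrt{|\om|}$ cancel between $\bgam$ and $(\sqrt{|\om|}\bgam)^{-1} = \tfrac{1}{\sqrt{|\om|}}\bgam^\dagger$). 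Since conjugation by an isometric isomorphism preserves operator norms, this yields $\|\mathbb{K}\|_2 = \|\bGam(\mathbb{K})\|_2$. Bijectivity of $\bGam$ with inverse $\bGam^\dagger$ then follows from the scalar identities: $\bGam^\dagger \bGam(\mathbb{K}) = \bgam^\dagger \bgam \mathbb{K} \bgam^\dagger \bgam = \cha \mathbb{K} \cha = \mathbb{K}$ for $\mathbb{K} \in \mathcal{B}_\om(L^2_e)$, and $\bGam \bGam^\dagger(K) = \bgam\bgam^\dagger K \bgam\bgam^\dagger = K$ on $\ell^2_e$. A short sanity check that $\bGam^\dagger(K) = \bgam^\dagger K \bgam$ automatically lies in $\mathcal{B}_\om(L^2_e)$ is immediate from $\operatorname{supp}(\bgam^\dagger U) \subset \overline{\om}$.

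The only genuine care is in the bookkeeping: the normalization $\tfrac{1}{|\om|}$ in the definition of $\gamma$ forces the $\sqrt{|\om|}$ scaling in the isometry, and the weight $\alpha_n$ encodes the identification of $n$ and $-n$ under evenness. Neither is a deep obstacle; the main point requiring attention is verifying that the domain/codomain constraints (the condition $\mathbb{K} = \cha\mathbb{K}\cha$ together with $\operatorname{supp}(\bgam^\dagger U) \subset \overline{\om}$) keep every composition in the conjugation argument well-defined, so that the operator-norm identity is truly an equality and not just an inequality.
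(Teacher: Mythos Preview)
Your argument is correct and is precisely the standard route: Parseval's identity on $\om$ for the vector isometry (with the $\alpha_n$ weights absorbing the $n\leftrightarrow -n$ pairing), and conjugation by that isometric isomorphism for the operator statement. The paper does not spell out a proof here but defers to Lemma~3.2 of \cite{unbounded_domain_cadiot}, whose content is exactly what you have written.
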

\begin{proof}
The proof is obtained following similar steps as the ones of Lemma 3.2 in \citep{unbounded_domain_cadiot}.
\end{proof}
The above lemma not only provides a one-to-one correspondence between the elements in $L^2_{e,\om}$ (respectively $\mathcal{B}_\om(L^2_{e})$) and the ones in $\ell^2_{e}$ (respectively $\mathcal{B}(\ell^2_{e})$) but it also provides an identity on norms. This property is essential in our construction of an approximate inverse  in Section \ref{sec : A}.

Now, we define the Hilbert space $\mathscr{h}$ as 
\begin{align*}
    \mathscr{h} \bydef \left\{ \mathbf{U}  \in \ell^2_{e} \text{ such that } \|\mathbf{U}\|_{\mathscr{h}} < \infty \right\}
\end{align*} associated to its inner product $(\cdot,\cdot)_{\mathscr{h}}$ and norm $\|\cdot\|_{\mathscr{h}}$ defined as 
\begin{align*}
    (\mathbf{U},\mathbf{V})_{\mathscr{h}} \bydef \sum_{n \in \mathbb{N}_0}  \alpha_n\left(l(\tilde{n})\mathbf{u}_n\right)\cdot \left(l(\tilde{n})\overline{\mathbf{v}_n}\right),~
    \|\mathbf{U}\|_{\mathscr{h}} \bydef \sqrt{(\mathbf{U},\mathbf{U})_{\mathscr{h}}}
\end{align*}
for all $\mathbf{U}=(\mathbf{u}_n)_{n \in \mathbb{N}_0}, \mathbf{V}=(\mathbf{v}_n)_{n \in \mathbb{N}_0} \in \mathscr{h}$ and $\alpha_n$ is defined as in \eqref{def : alpha_n}.
Denote by $l : \mathscr{h} \to \ell^2_{e}$ and $g : \mathscr{h} \to \ell^2_{e}$ the Fourier coefficients representation of $\mathbb{l}$ and $\mathbb{g}$ respectively. More specifically,
\begin{align*}
    &l \bydef \begin{bmatrix}
        l_{11} & l_{12}\\
        l_{21} & l_{22}
    \end{bmatrix},~g(\bU) \bydef \begin{bmatrix}
       U_1 * U_1 * U_2 + \nu_4 U_1 * U_1 + \nu_5 U_1 * U_2\\
        -U_1 * U_1 * U_2 - \nu_4 U_1 * U_1 - \nu_5 U_1 * U_2
    \end{bmatrix}
\end{align*}
for all $\bU \in \mathscr{h}.$ For all $(i,j) \in \{(1,1),(1,2),(2,1),(2,2)\}$, 
$l_{ij}$ is an infinite diagonal matrix with coefficients $\left(\mathscr{l}_{ij}(\tilde{n})\right)_{n\in \mathbb{N}_0}$ on its diagonal. Moreover $U*V \bydef \gamma(\gamma^\dagger(U)\gamma^\dagger(V))$ is defined as the discrete convolution (under even symmetry). In particular, notice that Young's inequality for convolution is applicable 
\begin{align}
    \|U*V\|_{2} \leq \|U\|_{2} \|V\|_{1}\label{young_inequality}
\end{align}
for all $U \in \ell^2(\mathbb{N}_0), V \in \ell^1(\mathbb{N}_0)$.
Furthermore, using the definition of $l$, notice that \[\|\mathbf{U}\|_{\mathscr{h}} = \|l\mathbf{U}\|_{2}.\] Finally, we define $f(\mathbf{U}) \bydef l\mathbf{U} + g(\mathbf{U})$ and introduce  
\begin{equation}\label{eq : F(U)=0 in X^l_e}
    f(\mathbf{U}) =0 ~~ \text{ and } ~~ \mathbf{U} \in \mathscr{h}
\end{equation}
as the periodic equivalent on $\om$ of \eqref{eq : f(u)=0 on H^l_e}.
\section{Setting up the Computer-Assisted Approach for Localized Patterns}\label{sec : patterns}\
The goal of this section is to set-up a Newton-Kantorovich approach for proving the existence of solutions to \eqref{eq : f(u)=0 on H^l_e} thanks to a fixed-point argument. We  state the necessary theorem  to perform such an analysis, and construct the related objects to apply it.
\subsection{Radii-Polynomial Theorem}\label{sec : radii poly theorem}
 Given $\overline{\mathbf{u}} \in \mathcal{H}_{e}$, an approximate solution to \eqref{eq : f(u)=0 on H^l_e}, and $\mathbb{A} : L^2_{e} \to \mathcal{H}_{e}$, an approximate inverse to $D\mathbb{f}(\overline{\mathbf{u}})$, we want to prove that there exists $r>0$ such that $\mathbb{T} : \overline{B_r(\overline{\mathbf{u}})} \to \overline{B_r(\overline{\mathbf{u}})}$ given by
\[
\mathbb{T}(\mathbf{u}) \bydef \mathbf{u} - \mathbb{A}\mathbb{f}(\mathbf{u})
\]
is well-defined and a contraction. In order to determine a possible value for $r>0$ that would provide the contraction, we wish to use a Radii-Polynomial theorem.  In particular, we  build $\mathbb{A} : L_{e}^2 \to \mathcal{H}_{e}$, $\mathcal{Y}_0, \mathcal{Z}_1 >0$ and  $\mathcal{Z}_2 : (0, \infty) \to [0,\infty)$ in such a way that the hypotheses of the following theorem are satisfied.
\begin{theorem}\label{th: radii polynomial}
Let $\mathbb{A} : L_{e}^2 \to \mathcal{H}_{e}$ be a bounded linear operator. Moreover, let $\mathcal{Y}_0, \mathcal{Z}_1$ be non-negative constants and let  $\mathcal{Z}_2 : (0, \infty) \to [0,\infty)$ be a non-negative function  such that for all $r>0$
  \begin{align}\label{eq: definition Y0 Z1 Z2}
    \|\mathbb{A}\mathbb{f}(\overline{\mathbf{u}})\|_{\mathcal{H}} \leq &\mathcal{Y}_0\\
    \|I - \mathbb{A}D\mathbb{f}(\overline{\mathbf{u}})\|_{\mathcal{H}} \leq &\mathcal{Z}_1\\
    \|\mathbb{A}\left({D}\mathbb{f}(\overline{\mathbf{u}} + \mathbf{h}) - D\mathbb{f}(\overline{\mathbf{u}})\right)\|_{\mathcal{H}} \leq &\mathcal{Z}_2(r)r, ~~ \text{for all } \mathbf{h} \in B_r(0)
\end{align}  
If there exists $r_0>0$ such that
\begin{equation}\label{condition radii polynomial}
    \frac{1}{2}\mathcal{Z}_2(r_0)r_0^2 - (1-\mathcal{Z}_1)r_0 + \mathcal{Y}_0 <0, \ and \ \mathcal{Z}_1 + \mathcal{Z}_2(r_0)r_0 < 1 
 \end{equation}
then there exists a unique $\tilde{\mathbf{u}} \in \overline{B_{r_0}(\overline{\mathbf{u}})} \subset \mathcal{H}_{e}$ such that $\mathbb{f}(\tilde{\mathbf{u}})=0$, where $B_{r_0}(\overline{\mathbf{u}})$ is the open ball of radius $r_0$ in $\mathcal{H}_{e}$ and centered at $\overline{\mathbf{u}}$. 
\end{theorem}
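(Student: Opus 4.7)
The plan is to apply the Banach fixed point theorem to the Newton-like operator $\mathbb{T}(\mathbf{u}) = \mathbf{u} - \mathbb{A}\mathbb{f}(\mathbf{u})$ on the closed ball $\overline{B_{r_0}(\bar{\mathbf{u}})} \subset \mathcal{H}_e$. The two conditions in \eqref{condition radii polynomial} are tailored so that the standard Taylor/mean-value decomposition of $\mathbb{T}(\bar{\mathbf{u}}+\mathbf{h})-\bar{\mathbf{u}}$ yields precisely (i) a self-map bound and (ii) a contraction rate strictly less than one. The main work is the careful bookkeeping of the integral remainder so that the $\frac{1}{2}$ factor appearing in the quadratic inequality is recovered.

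First I would verify that $\mathbb{T}$ maps $\overline{B_{r_0}(\bar{\mathbf{u}})}$ into itself. For $\mathbf{h}\in\overline{B_{r_0}(0)}$, use the fundamental theorem of calculus to write
\begin{equation*}
\mathbb{T}(\bar{\mathbf{u}}+\mathbf{h}) - \bar{\mathbf{u}} = -\mathbb{A}\mathbb{f}(\bar{\mathbf{u}}) + \bigl(I-\mathbb{A}D\mathbb{f}(\bar{\mathbf{u}})\bigr)\mathbf{h} - \int_0^1 \mathbb{A}\bigl(D\mathbb{f}(\bar{\mathbf{u}}+s\mathbf{h}) - D\mathbb{f}(\bar{\mathbf{u}})\bigr)\mathbf{h}\,ds.
\end{equation*}
The first two terms are bounded by $\mathcal{Y}_0$ and $\mathcal{Z}_1 r_0$ respectively. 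For the remainder, observe that $s\mathbf{h} \in \overline{B_{sr_0}(0)}$, so the third hypothesis applied with $r = sr_0$ yields the operator-norm bound $\mathcal{Z}_2(sr_0)\,sr_0$; using monotonicity of $\mathcal{Z}_2$ (which is standard for Lipschitz-type bounds, and can be assumed without loss since one may replace $\mathcal{Z}_2$ by its non-decreasing envelope) this is dominated by $s\mathcal{Z}_2(r_0)r_0$, and integration produces the crucial $\int_0^1 s\,ds = \frac{1}{2}$ factor, giving $\frac{1}{2}\mathcal{Z}_2(r_0)r_0\|\mathbf{h}\| \leq \frac{1}{2}\mathcal{Z}_2(r_0)r_0^2$. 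Summing yields $\|\mathbb{T}(\bar{\mathbf{u}}+\mathbf{h})-\bar{\mathbf{u}}\|_{\mathcal{H}} \leq \mathcal{Y}_0 + \mathcal{Z}_1 r_0 + \tfrac{1}{2}\mathcal{Z}_2(r_0)r_0^2 < r_0$ by the first part of \eqref{condition radii polynomial}.

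Next I would establish contractivity. For $\mathbf{h}_1,\mathbf{h}_2\in\overline{B_{r_0}(0)}$, the analogous expansion gives
\begin{equation*}
\mathbb{T}(\bar{\mathbf{u}}+\mathbf{h}_1) - \mathbb{T}(\bar{\mathbf{u}}+\mathbf{h}_2) = \bigl(I - \mathbb{A}D\mathbb{f}(\bar{\mathbf{u}})\bigr)(\mathbf{h}_1-\mathbf{h}_2) - \int_0^1 \mathbb{A}\bigl(D\mathbb{f}(\bar{\mathbf{u}}+\mathbf{w}_s) - D\mathbb{f}(\bar{\mathbf{u}})\bigr)(\mathbf{h}_1-\mathbf{h}_2)\,ds,
\end{equation*}
where $\mathbf{w}_s = \mathbf{h}_2 + s(\mathbf{h}_1-\mathbf{h}_2)$ satisfies $\|\mathbf{w}_s\|_{\mathcal{H}} \leq r_0$ by convexity. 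Applying the third hypothesis directly with $r=r_0$ and integrating (no $\frac{1}{2}$ is needed here since the integrand is $s$-independent in its operator-norm bound) gives the Lipschitz estimate $\|\mathbb{T}(\bar{\mathbf{u}}+\mathbf{h}_1) - \mathbb{T}(\bar{\mathbf{u}}+\mathbf{h}_2)\|_{\mathcal{H}} \leq (\mathcal{Z}_1 + \mathcal{Z}_2(r_0)r_0)\|\mathbf{h}_1-\mathbf{h}_2\|_{\mathcal{H}}$, whose constant is $<1$ by the second part of \eqref{condition radii polynomial}.

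The Banach fixed point theorem then produces a unique $\tilde{\mathbf{u}}\in\overline{B_{r_0}(\bar{\mathbf{u}})}$ with $\mathbb{A}\mathbb{f}(\tilde{\mathbf{u}})=0$. The remaining step---which I expect to be the subtlest point---is upgrading this to $\mathbb{f}(\tilde{\mathbf{u}})=0$. This requires injectivity of $\mathbb{A}$; the condition $\mathcal{Z}_1<1$ (implied by the second part of \eqref{condition radii polynomial} since $\mathcal{Z}_2(r_0)r_0\geq 0$) makes $\mathbb{A}D\mathbb{f}(\bar{\mathbf{u}})$ invertible on $\mathcal{H}_e$ by Neumann series, and together with the explicit construction of $\mathbb{A}$ to be presented in Section~\ref{sec : A}, this will yield injectivity of $\mathbb{A}: L^2_e \to \mathcal{H}_e$, completing the proof.
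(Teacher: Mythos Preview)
Your argument is correct and is precisely the standard Newton--Kantorovich contraction-mapping proof that the cited reference \cite{gs_cadiot_blanco} contains. For the injectivity of $\mathbb{A}$ you need not appeal to its explicit construction: $\mathcal{Z}_1<1$ makes $D\mathbb{f}(\bar{\mathbf{u}})$ injective, and since $D\mathbb{f}(\bar{\mathbf{u}})=\mathbb{l}+D\mathbb{g}(\bar{\mathbf{u}})$ is a compact perturbation of the isomorphism $\mathbb{l}$ (hence Fredholm of index zero), $D\mathbb{f}(\bar{\mathbf{u}}):\mathcal{H}_e\to L^2_e$ is in fact bijective, which forces $\mathbb{A}$ to be bijective as well.
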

\begin{proof}
The proof can be found in \citep{gs_cadiot_blanco}.
\end{proof}
In order to apply Theorem \ref{th: radii polynomial}, we need to construct explicitly $\overline{\mathbf{u}} \in \mathcal{H}_e$ and $\mathbb{A} : \mathcal{H}_e \to L^2_e$. These are the topics of the next sections.
\subsection{Construction of \texorpdfstring{$\overline{\mathbf{u}}$}{mbfu0}}\label{sec : u0}
In this section, we discuss the  construction of $\overline{\mathbf{u}}$, which is an approximate solution to \eqref{eq : f(u)=0 on H^l_e}. This is generally a challenging problem. In the specific case of \eqref{eq:original_system}, there is a rich variety of work on computing numerical solutions to reaction diffusion systems (e.g \citep{alsaadi_spikes, AlSaadi_unifying_framework, Champneys_Bistability}). Our approach relies on $\overline{\mathbf{u}}$ being constructed numerically on $\om = (-d,d)$ thanks to its Fourier coefficients representation. Fix $N \in \mathbb{N}$ to be the size of our numerical approximation for linear operators (i.e. matrices) and $N_0 \in \mathbb{N}$ to be the one of our Fourier coefficients approximations (i.e. vectors). Now, given $\mathcal{N} \in \mathbb{N}$, let us introduce the following projection operators 
 \begin{align}
 \nonumber
    (\Pi^{\leq\mathcal{N}}(V))_n  =  \begin{cases}
          v_n,  & n \in I^{\mathcal{N}} \\
              0, &n \notin I^{\mathcal{N}}
    \end{cases} ~~ \text{ and } ~~
     (\Pi^{>\mathcal{N}}(V))_n  =  \begin{cases}
          0,  & n \in I^{\mathcal{N}} \\
              v_n, &n \notin I^{\mathcal{N}}
    \end{cases}\label{def : piN and pisubN}
 \end{align}
where $I^{\mathcal{N}} \bydef \{n \in \mathbb{N}_0, ~ n \leq \mathcal{N}\}$ for all $V = (v_n)_{n \in  \mathbb{N}_0} \in \ell^2_e.$ Then, we define
\begin{align}
    (\bpi^{\leq\mathcal{N}}\mathbf{U})_n \bydef ((\Pi^{\leq\mathcal{N}}U_1)_n, (\Pi^{\leq\mathcal{N}}U_2)_n) ~~\text{and}~~(\bpi^{>\mathcal{N}}\mathbf{U})_n \bydef ((\Pi^{>\mathcal{N}}U_1)_n, (\Pi^{>\mathcal{N}}U_2)_n)
\end{align}
for all $\mathbf{U} = \mathbf{u}_{n \in \mathbb{N}_0} \in \ell^2_{e}$.
 To obtain a numerical approximation of a pattern, we looked at the numerical solutions found in \citep{AlSaadi_unifying_framework, Champneys_Bistability} and tried to recover them as a Fourier series representation using guesses of the form:
 \begin{equation*}
    \begin{cases}
        u_{1,0}(x) = \gamma_1 \frac{\beta_1}{\cosh(\alpha_1x)} + \lambda_1 \\
        u_{2,0}(x) = \gamma_2 \frac{\beta_2}{\cosh(\alpha_2x)} + \lambda_2
    \end{cases}
\end{equation*}

Then we tuned each parameter to replicate the localized solutions. $\alpha_i, i = 1,2$ controls the stiffness of each spike, and $\beta_i$ their amplitude. Once we found a promising candidate, we applied Newton's method to obtain a better approximate solution of \eqref{eq : F(U)=0 in X^l_e}. Then we applied the change of variable introduced in Section~\ref{sec : introduction} to get a candidate solution of \eqref{eq : gen_model}. The solution in Figure \ref{fig : bruss pattern} was obtained through numerical continuation of a candidate in the Brusselator equation, using the AUTO continuation software \citep{auto_article}.
From here, we compute a cosine Fourier sequence approximation of $\mathbf{u}_0 = (\bar{u}_{1,0}, \bar{u}_{2,0})$, $\mbf{U}_0 = \gamma(\bar{\mbf{u}}_0) \in \ell^2_e$. We note that $\mbf{U}_0$ satisfies the identity $\mbf{U}_0 = \bpi^{\leq N_0}\mbf{U}_0$ as it is a Fourier sequence with finitely many coefficients. \\
At this point, we now have a vector representation on $\Omega_0$, $\mbf{u}_0 = \mathbf{\gamma}^\dagger(\mbf{U}_0)$ that we extend by $0$ to have $\mbf{u}_0 \in L^2_e$; however, such a function is not necessarily in $\mathcal{H}_e$. To ensure $\mbf{u}_0 \in \mathcal{H}_e$, we need to impose that $\mbf{u}_0$ and some of its derivatives to vanish at $\pm d$. 

In fact, note that $\mathcal{H}_e$ contains the even functions of $H^2(\R) \times H^2(\R)$. Consequently, in order to have $\mbf{u}_0 \in \mathcal{H}_e$, we simply need 
\begin{align*}
    \mbf{u}_0(\pm d) = \partial_x \mbf{u}_0(\pm d) = 0.
\end{align*}
Note that $\partial_x \mbf{u}_0(\pm d) = 0$ is automatically satisfied since $\mbf{u}_0(\pm d)$ admits a cosine series representation on $\om$. Then, using our Fourier representation of $\mbf{u}_0$, we have that for $i = 1,2$
\begin{equation*}
    u_{i,0}(d) = u_{i,0}(-d) = \sum_{n=0}^{N_0} (U_{i, 0})_n \cos\left(\frac{n\pi x}{d}\right) = \sum_{n=0}^{N_0} (U_{i, 0})_n (-1)^n
\end{equation*}
We can ensure that $u_{i,0}(d) = 0$ by setting $(U_{i,0})_0 = \sum_{n=1}^{N_0} (U_{i,0})_n(-1)^{n+1}$. Hence we can define the operator $\mathcal{P}: \ell^2_e \rightarrow \ell_e^2$,
\begin{equation*}
    (\mathcal{P}V)_n = \begin{cases}
        \sum_{n=1}^{N_0} (V)_n(-1)^{n+1} & \text{if } n = 0 \\
        (V)_n & \text{if } n \neq 0
    \end{cases}\label{def : operator P}
\end{equation*}
Moreover, we let $\mathbfcal{P}\mathbf{V} = (\mathcal{P}V_1, \mathcal{P}V_2)$. This allows us to define 
$$\overline{\mathbf{U}} = \mathbfcal{P}\mathbf{U}_0 \text{ and } \overline{\mathbf{u}} = \gamma^\dagger(\overline{\mathbf{U}}).$$ By construction of $\mathcal{P}$, one can verify that $\overline{\mathbf{u}}(\pm d)=0$, and consequently $\overline{\mathbf{u}} \in \mathcal{H}_e$. Moreover, we have that $\overline{\mathbf{U}} = \Pi^{\leq N_0}\overline{\mathbf{U}}$ by construction. Overall, we have constructed our approximate solution $\overline{\mathbf{u}}$ as 
\begin{equation}\label{eq : approximate solution}
    \overline{\mathbf{u}} = \gamma^\dagger(\overline{\mathbf{U}}) \in \mathcal{H}_e, ~  \text{ and } \overline{\mathbf{U}} = \Pi^{\leq N_0}\overline{\mathbf{U}}.
\end{equation}
\subsection{The Operator \texorpdfstring{$\mathbb{A}$}{A}}\label{sec : A}
In this section, we focus our attention to the operator $\mbb{A}: L^2_e \rightarrow \mathcal{H}_e$ required by Theorem \ref{th: radii polynomial}. $\mbb{A}$ is an approximate inverse of $D\mbb{f}(\overline{\mathbf{u}})$. We now present its construction following a similar argument as in \citep{gs_cadiot_blanco}. First we observe that $\mbb{L}: \mathcal{H}_e \rightarrow L^2_e$ is an isometric isomorphism. Therefore building $\mbb{A}: L^2_e \rightarrow \mathcal{H}_e$ is equivalent to building $\mbb{B}: L^2_e \rightarrow L^2_e$ approximating the inverse of $D\mbb{F}(\overline{\mathbf{u}})\mbb{L}^{-1}$, and setting $\mbb{A} \bydef \mbb{L}^{-1}\mbb{B}$. We will then construct $\mbb{B}$ using Lemma \ref{lem : gamma and Gamma properties} and a bounded linear operator on Fourier coefficients. 
First we construct $B^N$ a numerical approximate inverse to $\bpi^{\leq N}Df(\overline{\mathbf{U}})\mathscr{l}^{-1}\bpi^{\leq N}$. We choose $B^N$ satisfying $B^N = \bpi^{\leq N} B^N \bpi^{\leq N}$, hence it admits a matrix representation. We now describe further its construction. Due to the form of $g$, we obtain
{\small\begin{align}
    &Df(\overline{\mathbf{U}})l^{-1} = 
     \begin{bmatrix}
        I_d + Dg_{11}(\overline{\mathbf{U}})l_{\text{den}}^{-1}l_{22} -Dg_{12}(\overline{\mathbf{U}})l_{\text{den}}^{-1}l_{21} & -Dg_{11}(\overline{\mathbf{U}})l_{\text{den}}^{-1}l_{12} + Dg_{12}(\overline{\mathbf{U}})l_{\text{den}}^{-1}l_{11} \\ - Dg_{11}(\overline{\mbf{U}}) l_{\text{den}}^{-1} l_{22} + Dg_{12}(\overline{\mbf{U}}) l_{\text{den}}^{-1}l_{21} & I_d +Dg_{11}(\overline{\mathbf{U}})l_{\text{den}}^{-1}l_{12} - Dg_{12}(\overline{\mathbf{U}})l_{\text{den}}^{-1}l_{11}
        \end{bmatrix}\label{eq: B^N matrix}
\end{align}}
where 
\begin{align} 
l_{\text{den}} \bydef  l_{11}l_{22} - l_{12}l_{21}.\label{def : l_den}
\end{align}  Notice that since \eqref{eq: B^N matrix} is a full matrix, we will choose $B^N$ to be a full matrix as well.  More specifically, we construct $B^N$ and $B$ as follows
\begin{align} 
B^N = \begin{bmatrix}
    B_{11}^N & B_{12}^N \\  B_{21}^N & B_{22}^N
\end{bmatrix},~
B = \bpi^{>N} + B^N \bydef \begin{bmatrix}
    B_{11} & B_{12} \\ B_{21} & B_{22}
\end{bmatrix}\label{def : B_finite_infinite}
\end{align}
with $B_{11}^N, B_{12}^N, B_{21}^N,$ and $B_{22}^N$ constructed numerically and satisfying $B_{1j}^N = \Pi^{\leq N} B_{1j}^N\Pi^{\leq N}$. Moreover, we define $B_{11} = B_{11}^N + \Pi^{>N}$, $B_{12} = B_{12}^N, B_{21} = B_{21}^N, B_{22} = B_{22}^N + \Pi^{>N}$ for convenience.
Finally, we get $\mbb{B}: L_e^2 \rightarrow L_e^2$ and $\mbb{A}: L_e^2 \rightarrow \mathcal{H}_e$ as 
\begin{equation}\label{eq:def_A}
    \mbb{B}\bydef \begin{bmatrix}
        \mbb{1}_{\mathbb{R}\backslash \Omega_0} & 0 \\ 0 & \mbb{1}_{\mathbb{R} \backslash \Omega_0}
    \end{bmatrix} + \mbf{\Gamma}^{\dagger}(B)   = \begin{bmatrix}
        \mbb{B}_{11} & \mbb{B}_{12} \\ \mbb{B}_{21} & \mbb{B}_{22}
    \end{bmatrix},~ \mbb{A}\bydef \mbb{l}^{-1}\mbb{B}.
\end{equation}
where $\mbb{B}_{ij} = \Gamma^{\dagger}(B_{ij})$ for $i,j \in \{1,2\}$. We will justify later on such a choice for $\mathbb{A}$ by computing explicitly the defect $\|I - \mathbb{A}D\mathbb{f}(\bar{\mathbf{u}})\|_{\mathcal{H}}$. In fact, we already obtain the following result, allowing to compute the operator norm of $\mathbb{A}$.
\begin{lemma}\label{lem:bound_B}
    Let $\mbb{A}: L^2_e \rightarrow \mathcal{H}_e$ be as in \eqref{eq:def_A}. Then,
    \begin{equation}\label{eq:B_bound}
        \|\mbb{A}\|_{2, \mathcal{H}} = \|\mbb{B}\|_2 = \max\{1, \|B^N\|_2\}.
    \end{equation}
\end{lemma}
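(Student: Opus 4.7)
The first equality $\|\mathbb{A}\|_{2,\mathcal{H}} = \|\mathbb{B}\|_2$ is immediate from the construction $\mathbb{A} = \mathbb{l}^{-1}\mathbb{B}$: by the definition \eqref{def : definition of the norm and inner product Hl} of the norm on $\mathcal{H}$, the map $\mathbb{l}^{-1} : L^2_e \to \mathcal{H}_e$ is an isometric isomorphism, so $\|\mathbb{l}^{-1}\mathbb{B}\mathbf{u}\|_\mathcal{H} = \|\mathbb{B}\mathbf{u}\|_2$ for every $\mathbf{u} \in L^2_e$. It therefore remains to compute $\|\mathbb{B}\|_2$.

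The key observation is that $\mathbb{B}\mathbf{u}$ decomposes into two pieces with disjoint supports. Given $\mathbf{u} \in L^2_e$, the first block $\bigl(\begin{smallmatrix} \mathbb{1}_{\mathbb{R}\setminus\Omega_0} & 0 \\ 0 & \mathbb{1}_{\mathbb{R}\setminus\Omega_0}\end{smallmatrix}\bigr)\mathbf{u}$ is supported on $\mathbb{R}\setminus\Omega_0$, while $\bGam^\dagger(B)\mathbf{u} = \bgam^\dagger B \bgam\mathbf{u}$ is supported on $\overline{\Omega_0}$. Therefore, by Pythagoras,
\begin{equation}
\|\mathbb{B}\mathbf{u}\|_2^2 = \|\mathbb{1}_{\mathbb{R}\setminus\Omega_0}\mathbf{u}\|_2^2 + \|\bgam^\dagger B \bgam\mathbf{u}\|_2^2.
\end{equation}
Using Lemma \ref{lem : gamma and Gamma properties}, the map $\bgam^\dagger$ satisfies $\|\bgam^\dagger \mathbf{V}\|_2 = \sqrt{|\Omega_0|}\,\|\mathbf{V}\|_2$ for $\mathbf{V} \in \ell^2_e$, and $\bgam$ restricted to $L^2_{e,\Omega_0}$ satisfies $\|\bgam\mathbf{v}\|_2 = \frac{1}{\sqrt{|\Omega_0|}}\|\mathbf{v}\|_2$; since the definition \eqref{single_gamma} of $\gamma$ only integrates over $\Omega_0$, we have $\bgam\mathbf{u} = \bgam(\mathbb{1}_{\Omega_0}\mathbf{u})$, hence $\|\bgam\mathbf{u}\|_2 = \frac{1}{\sqrt{|\Omega_0|}}\|\mathbb{1}_{\Omega_0}\mathbf{u}\|_2$. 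Combining these, I obtain
\begin{equation}
\|\bgam^\dagger B \bgam \mathbf{u}\|_2 \leq \|B\|_2\,\|\mathbb{1}_{\Omega_0}\mathbf{u}\|_2,
\end{equation}
which yields $\|\mathbb{B}\mathbf{u}\|_2^2 \leq \max\{1, \|B\|_2^2\}\,\|\mathbf{u}\|_2^2$ and hence $\|\mathbb{B}\|_2 \leq \max\{1,\|B\|_2\}$.

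Next, the block-diagonal structure $B = \bpi^{>N} + B^N$ with $B^N = \bpi^{\leq N}B^N\bpi^{\leq N}$ gives $\|B\|_2 = \max\{\|\bpi^{>N}\|_2, \|B^N\|_2\} = \max\{1, \|B^N\|_2\}$ because $\bpi^{>N}$ is an orthogonal projection. For the matching lower bound, I use two families of test functions. Any nonzero $\mathbf{u} \in L^2_e$ supported in $\mathbb{R}\setminus\Omega_0$ satisfies $\mathbb{B}\mathbf{u} = \mathbf{u}$ because $\bgam\mathbf{u} = 0$, giving $\|\mathbb{B}\|_2 \geq 1$. To recover the factor $\|B^N\|_2$, I pick a near-maximizer $\mathbf{U} \in \ell^2_e$ with $\mathbf{U} = \bpi^{\leq N}\mathbf{U}$ of $\|B^N\mathbf{U}\|_2/\|\mathbf{U}\|_2$, and set $\mathbf{u} = \bgam^\dagger(\mathbf{U}) \in L^2_{e,\Omega_0}$; using $\bgam\bgam^\dagger = I$ on $\ell^2_e$, the fact that $B\mathbf{U} = B^N\mathbf{U}$ since $\mathbf{U} = \bpi^{\leq N}\mathbf{U}$, and the isometry identities above, a direct calculation gives $\|\mathbb{B}\mathbf{u}\|_2/\|\mathbf{u}\|_2 = \|B^N\mathbf{U}\|_2/\|\mathbf{U}\|_2$, and taking a supremum over such $\mathbf{U}$ yields $\|\mathbb{B}\|_2 \geq \|B^N\|_2$. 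Combining the two inequalities completes the proof.

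There is no serious obstacle in this argument; it is essentially an isometry bookkeeping exercise. The only subtle point to keep track of is the scaling $\sqrt{|\Omega_0|}$ appearing in Lemma \ref{lem : gamma and Gamma properties}, which must cancel correctly between $\bgam^\dagger$ and $\bgam$ so that the final bound is expressed purely through $\|B^N\|_2$ without any domain-dependent constant.
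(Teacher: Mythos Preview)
Your proof is correct and follows the natural route: use that $\mathbb{l}^{-1}$ is an isometry for the first equality, then exploit the disjoint-support decomposition of $\mathbb{B}$ together with the $\sqrt{|\Omega_0|}$-isometry of $\bgam,\bgam^\dagger$ (Lemma~\ref{lem : gamma and Gamma properties}) and the block-orthogonal structure $B=\bpi^{>N}+B^N$ for the second. The paper itself does not write out the argument but defers to \cite{unbounded_domain_cadiot}, where precisely this reasoning is carried out; your exposition is a faithful reconstruction of it.
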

\begin{proof}
The proof can be found in \citep{unbounded_domain_cadiot}.
\end{proof}
With $\mathbb{A}$ now built, we can begin to compute the bounds necessary to apply Theorem \ref{th: radii polynomial}. This will be the goal of the next section.
\section{Computing the Bounds for Localized Patterns}\label{sec : Bounds of patterns}
In this section, we provide explicit formulas for the bounds $\mathcal{Y}_0,\mathcal{Z}_1,$ and $\mathcal{Z}_2$ in Theorem \ref{th: radii polynomial}.
Before we begin the computation of the bounds, we recall some preliminary notations and results from \citep{gs_cadiot_blanco}. First, given $u \in L^\infty(\mathbb{R}^2)$,  denote by 
\begin{align}\label{def : multiplication operator}
    \mathbb{u}  \colon L^2(\mathbb{R}^2) &\to L^2(\mathbb{R}^2)\\
    v &\mapsto uv
\end{align}
 the multiplication operator associated to $u$.
Similarly, given $U= (u_n)_{n \in \mathbb{N}_0} \in \ell^1(\mathbb{N}_0)$,  denote by
\begin{align}\label{def : discrete conv operator}
    \mathbb{U} : \ell^2(\mathbb{N}_0) &\to \ell^2(\mathbb{N}_0) \\
    V &\mapsto  U * V
\end{align}
 the discrete convolution operator associated to $U$. We now define the following map $\varphi: \mathbb{R}^4 \to \mathbb{R}$ as 
\begin{equation}\label{definition_of_phi}
    \varphi(x_1, x_2, x_3, x_4) \bydef \min\left\{\max \{x_1, x_4\} + \max \{x_2, x_3\},~ \sqrt{x_1^2 + x_2^2 + x_3^2 + x_4^2}\right\}
\end{equation}
for a given $(x_1,x_2,x_3,x_4) \in \mathbb{R}^4$.
For the purpose of our analysis, we recall Lemma 4.2 from \citep{gs_cadiot_blanco} which allows us to estimate the norm of an operator by block in terms of the norms of its individual blocks.
\begin{lemma}\label{lem : full_matrix_estimate}
Let $K_1,K_2,K_3,K_4 \in \mathcal{B}(X)$ for $X \in \left\{L_e^2(\mathbb{R}^2),\ell_e^2\right\}$ where $\mathcal{B}(X)$ is the set of bounded linear operators on $X$. Then,
    \begin{align}  
    \nonumber &\left\| \begin{bmatrix}
        K_1 & K_2 \\ 
        K_3 & K_4 \\
    \end{bmatrix} \right\|_2 \leq \varphi(\left\|K_1\right\|_{2},\left\|K_2\right\|_{2},\left\|K_3\right\|_{2},\left\|K_4\right\|_{2}),
    \end{align}
    where $\varphi: \mathbb{R}^4 \to \mathbb{R}$ is defined as in \eqref{definition_of_phi}. 
\end{lemma}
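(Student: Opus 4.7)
The plan is to establish the two bounds that make up the minimum in $\varphi$ separately; the stated inequality then follows by taking whichever is smaller. Throughout, write $\mathbb{K} = \begin{bmatrix} K_1 & K_2 \\ K_3 & K_4 \end{bmatrix}$, viewed as a bounded operator on $X \times X$ with the natural product Hilbert norm $\|(u,v)\|_2^2 = \|u\|_2^2 + \|v\|_2^2$. No structure beyond the Hilbert-space axioms of $X$ will be needed, so the same argument applies uniformly to the two cases $X = L_e^2(\mathbb{R}^2)$ and $X = \ell_e^2$.

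For the bound $\|\mathbb{K}\|_2 \leq \sqrt{\|K_1\|_2^2 + \|K_2\|_2^2 + \|K_3\|_2^2 + \|K_4\|_2^2}$, I would proceed row-by-row. The triangle inequality gives $\|K_1 u + K_2 v\|_2 \leq \|K_1\|_2\|u\|_2 + \|K_2\|_2\|v\|_2$, and then the finite-dimensional Cauchy--Schwarz inequality, applied to the vectors $(\|K_1\|_2, \|K_2\|_2)$ and $(\|u\|_2, \|v\|_2)$ in $\mathbb{R}^2$, yields $\|K_1 u + K_2 v\|_2^2 \leq (\|K_1\|_2^2 + \|K_2\|_2^2)\,\|(u,v)\|_2^2$. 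The analogous estimate for the second row adds to $\|\mathbb{K}(u,v)\|_2^2 \leq (\|K_1\|_2^2 + \|K_2\|_2^2 + \|K_3\|_2^2 + \|K_4\|_2^2)\,\|(u,v)\|_2^2$, which is the desired bound after taking square roots and dividing by $\|(u,v)\|_2$.

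For the bound $\|\mathbb{K}\|_2 \leq \max\{\|K_1\|_2,\|K_4\|_2\} + \max\{\|K_2\|_2,\|K_3\|_2\}$, I would decompose $\mathbb{K} = \mathbb{D} + \mathbb{O}$, where $\mathbb{D}$ is the block-diagonal part carrying $K_1, K_4$ and $\mathbb{O}$ is the anti-diagonal part carrying $K_2, K_3$. A direct computation gives $\|\mathbb{D}(u,v)\|_2^2 = \|K_1 u\|_2^2 + \|K_4 v\|_2^2 \leq \max\{\|K_1\|_2^2,\|K_4\|_2^2\}\,\|(u,v)\|_2^2$, so $\|\mathbb{D}\|_2 \leq \max\{\|K_1\|_2,\|K_4\|_2\}$; the same reasoning applied to $\mathbb{O}$ yields $\|\mathbb{O}\|_2 \leq \max\{\|K_2\|_2,\|K_3\|_2\}$, and the triangle inequality $\|\mathbb{K}\|_2 \leq \|\mathbb{D}\|_2 + \|\mathbb{O}\|_2$ closes this case. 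Combining the two estimates produces the upper bound by $\varphi$. No real obstacle arises: the lemma is a piece of standard Hilbert-space bookkeeping, and the only (very mild) care needed is to match the product-norm convention on $X \times X$ with the definition of $\|\mathbb{K}\|_2$.
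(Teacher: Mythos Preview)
Your proof is correct. The paper itself does not give a proof of this lemma but simply cites \cite{gs_cadiot_blanco}; your argument is the standard one---Cauchy--Schwarz row-by-row for the Frobenius-type bound, and the diagonal/anti-diagonal split plus triangle inequality for the other---and is almost certainly what appears in that reference.
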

\begin{proof}
The proof can be found in \citep{gs_cadiot_blanco}.
\end{proof}

Now, we provide explicit formulas for the bounds of Theorem \ref{th: radii polynomial} in the following list of lemmas. In particular, each formula relies on finite dimensional computations (that is vector or matrix norms), which can be rigorously evaluated thanks to computer-assisted techniques (cf. \citep{julia_blanco_fassler}). This justifies how specific choices for the approximate objects $\bar{\mathbf{u}}$ and $\mathbb{A}$.
We begin with the $\mathcal{Y}_0$ bound. In the following lemma, we describe the necessary steps to estimate this bound required for Theorem \ref{th: radii polynomial}. Its proof will involve splitting the estimate using the projection operators $\bpi^{\leq N}$ and $\bpi^{> N}$. We then rewrite the bounds in such a way that they all involve finite quantities which can be rigorously evaluated on the computer.
\begin{lemma}\label{lem : bound Y_0}
Let $\mathcal{Y}_0 >0$ be defined as  
\begin{equation}
    \mathcal{Y}_0 \bydef |\om|^{\frac{1}{2}}\left(\|B^Nf(\overline{\mathbf{U}})\|_2^2 + \|(\bpi^{\leq N_0}-\bpi^{\leq N})l\overline{\mathbf{U}} + (\bpi^{\leq 3N_0}-\bpi^{\leq N}) g(\overline{\mathbf{U}})\|_2^2 \right)^{\frac{1}{2}}.
\end{equation}
Then $\|\mathbb{A}\mathbb{f}(\overline{\mathbf{u}})\|_{\mathcal{H}} \leq \mathcal{Y}_0.$
\end{lemma}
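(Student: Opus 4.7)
The plan is to exploit the fact that both $\mathbb{L}$ is an isometric isomorphism between $\mathcal{H}_e$ and $L^2_e$, and that $\bar{\mathbf{u}}$ has support in $\overline{\Omega_0}$, reducing the computation to a finite-dimensional sequence calculation. The main technical point is tracking the splitting of $Bf(\overline{\mathbf{U}})$ into a finite part (computed rigorously via interval arithmetic) and a tail part (which remains finite because $\overline{\mathbf{U}}$ has finite bandwidth).

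First, since $\mathbb{A} = \mathbb{L}^{-1}\mathbb{B}$ and $\mathbb{L}:\mathcal{H}_e \to L^2_e$ is an isometric isomorphism (see the definition of the $\mathcal{H}$-norm in \eqref{def : definition of the norm and inner product Hl}), I would begin by noting
\[
\|\mathbb{A}\mathbb{f}(\bar{\mathbf{u}})\|_{\mathcal{H}} = \|\mathbb{L}\mathbb{A}\mathbb{f}(\bar{\mathbf{u}})\|_2 = \|\mathbb{B}\mathbb{f}(\bar{\mathbf{u}})\|_2.
\]
Next, because $\bar{\mathbf{u}} = \bgam^\dagger(\overline{\mathbf{U}})$ is supported in $\overline{\Omega_0}$, so are $\mathbb{l}\bar{\mathbf{u}}$ and $\mathbb{g}(\bar{\mathbf{u}})$; hence $\mathbb{f}(\bar{\mathbf{u}}) \in L^2_{e,\Omega_0}$. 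The first summand in the definition \eqref{eq:def_A} of $\mathbb{B}$ annihilates functions supported in $\overline{\Omega_0}$, so $\mathbb{B}\mathbb{f}(\bar{\mathbf{u}}) = \bGam^\dagger(B)\,\mathbb{f}(\bar{\mathbf{u}}) = \bgam^\dagger B \bgam(\mathbb{f}(\bar{\mathbf{u}}))$.

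Then I would invoke Lemma \ref{lem : gamma and Gamma properties} to pass to the sequence space: since $\bgam^\dagger B \bgam (\mathbb{f}(\bar{\mathbf{u}})) \in L^2_{e,\Omega_0}$, Parseval's identity \eqref{eq : parseval's identity} gives
\[
\|\mathbb{B}\mathbb{f}(\bar{\mathbf{u}})\|_2 = \sqrt{|\Omega_0|}\,\|B\,\bgam(\mathbb{f}(\bar{\mathbf{u}}))\|_2 = \sqrt{|\Omega_0|}\,\|B f(\overline{\mathbf{U}})\|_2,
\]
using that $\bgam \mathbb{l} \bgam^\dagger = l$ and $\bgam \mathbb{g}(\bgam^\dagger \cdot) = g$ by construction of $l$ and $g$, so that $\bgam(\mathbb{f}(\bar{\mathbf{u}})) = f(\overline{\mathbf{U}})$.

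Finally, using the block decomposition $B = \bpi^{>N} + B^N$ with $B^N = \bpi^{\leq N} B^N \bpi^{\leq N}$ from \eqref{def : B_finite_infinite}, the two pieces $B^N f(\overline{\mathbf{U}})$ and $\bpi^{>N} f(\overline{\mathbf{U}})$ lie in the orthogonal subspaces $\mathrm{ran}(\bpi^{\leq N})$ and $\mathrm{ran}(\bpi^{>N})$ of $\ell^2_e$, giving
\[
\|Bf(\overline{\mathbf{U}})\|_2^2 = \|B^N f(\overline{\mathbf{U}})\|_2^2 + \|\bpi^{>N} f(\overline{\mathbf{U}})\|_2^2.
\]
The tail term is where the finite-bandwidth of $\overline{\mathbf{U}} = \bpi^{\leq N_0}\overline{\mathbf{U}}$ pays off: since $l$ is diagonal, $l\overline{\mathbf{U}} = \bpi^{\leq N_0} l\overline{\mathbf{U}}$, and since $g$ consists of quadratic and cubic discrete convolutions of $\overline{\mathbf{U}}$, one has $g(\overline{\mathbf{U}}) = \bpi^{\leq 3N_0} g(\overline{\mathbf{U}})$. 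Therefore
\[
\bpi^{>N}f(\overline{\mathbf{U}}) = (\bpi^{\leq N_0} - \bpi^{\leq N}) l\overline{\mathbf{U}} + (\bpi^{\leq 3N_0} - \bpi^{\leq N}) g(\overline{\mathbf{U}}),
\]
which is a finite sequence that can be evaluated rigorously. Combining everything gives the stated bound. No step is particularly hard; the only subtle point is verifying that the first block of $\mathbb{B}$ in \eqref{eq:def_A} truly annihilates $\mathbb{f}(\bar{\mathbf{u}})$, which requires the support property of the boundary-corrected $\bar{\mathbf{u}}$ from Section \ref{sec : u0}.
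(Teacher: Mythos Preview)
Your proof is correct and follows essentially the same approach as the paper, which reduces $\|\mathbb{A}\mathbb{f}(\bar{\mathbf{u}})\|_{\mathcal{H}}$ to $|\Omega_0|^{1/2}\|Bf(\overline{\mathbf{U}})\|_2$ via the isometry of $\mathbb{l}$ and Lemma~\ref{lem : gamma and Gamma properties}, then splits into the finite and tail parts using the orthogonal decomposition $B = B^N + \bpi^{>N}$ and the finite bandwidth of $\overline{\mathbf{U}}$. You have spelled out in detail what the paper delegates to Lemma~4.3 of \cite{gs_cadiot_blanco} and Lemma~4.11 of \cite{unbounded_domain_cadiot}; the only minor slip is writing $\mathbb{L}$ for the operator the paper calls $\mathbb{l}$.
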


\begin{proof}
We follow the same steps as those of Lemma 4.3 in \citep{gs_cadiot_blanco}. In particular, we combine \eqref{def : definition of the norm and inner product Hl},the operator $\mathbb{A}$, and the properties of $\bgam$ to get
\begin{align}
    \|\mathbb{A}\mathbb{f}(\overline{\mathbf{u}})\|_{\mathcal{H}} &= \|\mathbb{B}\mathbb{f}(\overline{\mathbf{u}})\|_{2} = |\om|^{\frac{1}{2}}\|\left(Bf(\overline{\mathbf{U}})\right)\|_2.
\end{align}
We then obtain the result using the steps of Lemma 4.11 in \citep{unbounded_domain_cadiot}.
\end{proof}
%
%
%
We now estimate the $\mathcal{Z}_2(r)$ bound. We exploit the specific structure of $\mathbb{B}$ to obtain a similar result to the one obtained in \citep{gs_cadiot_blanco}. We use properties of multilinear operators to write the estimate in terms of the Hessian, and then rely on Parseval's identity to obtain quantities which can be estimated on the computer.
\begin{lemma}\label{lem : Z2 patterns}
    Let $\kappa > 0$ be defined as in Lemma \ref{corr : banach algebra}, Moreover, define $q_1, q_2 \in L^\infty(\mbb{R})\cap L^2(\mbb{R})$  as
    \begin{align*}
        &q_1 \bydef 2\overline{u}_2 + 2\nu_4 \mathbb{1}_{\om},~
        q_2 \bydef 2\overline{u}_1 + \nu_5 \mathbb{1}_{\om}
    \end{align*}
    and their associated Fourier Coefficients $Q_1 = \gamma(q_1), Q_2 = \gamma(q_2)$.
    Let $\mathcal{Z}_2 : [0, \infty) \rightarrow [0, \infty)$ be defined as
    $$\mathcal{Z}_2(r) \bydef \sqrt{5}\kappa\sqrt{\varphi\left(\mathcal{Z}_{2,1},\mathcal{Z}_{2,2},\mathcal{Z}_{2,2},\mathcal{Z}_{2,3}\right)} + 3\kappa \|\mathscr{l}^{-1}\|_{\mathcal{M}_2}\max\left\{1, \left\| \begin{bmatrix}
        B_{11}^N - B_{12}^N \\
        B_{21}^N - B_{22}^N
    \end{bmatrix}\right\|_{2}\right\}r$$
    for all $r\geq 0$, where
    \begin{align*}
        &\mathcal{Z}_{2,1} \bydef  \left\|\ \begin{bmatrix}
            B_{11}^N - B_{12}^N \\
            B_{21}^N - B_{22}^N
        \end{bmatrix}\left(\mbb{Q}_1^2 + \mbb{Q}_2^2\right) \begin{bmatrix}
            (B_{11}^N - B_{12}^N)^* & (B_{21}^N - B_{22}^N)^* 
        \end{bmatrix} \right\|_2,\\
        &\mathcal{Z}_{2,2} \bydef \sqrt{\left\|  \begin{bmatrix}
            B_{11}^N - B_{12}^N \\
            B_{21}^N - B_{22}^N
        \end{bmatrix}\left(\mbb{Q}_1^2 + \mbb{Q}_2^2\right) \Pi^{>N} \left(\mbb{Q}_1^2 + \mbb{Q}_2^2\right)\begin{bmatrix}
            (B_{11}^N - B_{12}^N)^* & (B_{21}^N - B_{22}^N)^* 
        \end{bmatrix} \right|_{2}}, \\
    &\mathcal{Z}_{2,3} \bydef \| Q_1^2 + Q_2^2 \|_1
    \end{align*}
    and $\varphi$ is defined in Lemma \ref{lem : full_matrix_estimate}.
    Then $\|\mathbb{A}(D\mathbb{f}(\mathbf{h} + \bar{\mathbf{u}}) - D\mathbb{f}(\bar{\mathbf{u}})) \|_{\mathcal{H}} \leq \mathcal{Z}_2(r)r$ for all $r>0$ and $\mathbf{h} \in \overline{B_r(0)} \subset H_e^l$
\end{lemma}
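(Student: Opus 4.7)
The plan is to reduce $\|\mathbb{A}(D\mathbb{f}(\overline{\mathbf{u}}+\mathbf{h}) - D\mathbb{f}(\overline{\mathbf{u}}))\|_{\mathcal{H}}$ to manageable pieces by exploiting three structural features: (i) $\mathbb{l}:\mathcal{H}_e\to L^2_e$ is an isometric isomorphism; (ii) the difference $D\mathbb{g}(\overline{\mathbf{u}}+\mathbf{h}) - D\mathbb{g}(\overline{\mathbf{u}})$ has a rank-one column structure, its second row being the negative of its first; and (iii) $\mathbb{B}$ decomposes as $\mathbb{1}_{\mathbb{R}\setminus\Omega_0}I + \boldsymbol{\Gamma}^\dagger(B)$, separating the action on $\mathbb{R}\setminus\Omega_0$ from the Fourier-matrix action on $\Omega_0$. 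Using $\mathbb{A}=\mathbb{l}^{-1}\mathbb{B}$, the target norm equals $\|\mathbb{B}(D\mathbb{g}(\overline{\mathbf{u}}+\mathbf{h}) - D\mathbb{g}(\overline{\mathbf{u}}))\mathbb{l}^{-1}\|_2$. A direct expansion of $D\mathbb{g}$ yields, for any $\mathbf{v}\in L^2_e$,
\begin{equation*}
(D\mathbb{g}(\overline{\mathbf{u}}+\mathbf{h}) - D\mathbb{g}(\overline{\mathbf{u}}))\mathbf{v} = (a v_1 + b v_2)\begin{pmatrix}1\\-1\end{pmatrix},
\end{equation*}
with $a = q_1 h_1 + q_2 h_2 + 2h_1 h_2 + \mathbb{1}_{\mathbb{R}\setminus\Omega_0}(2\nu_4 h_1 + \nu_5 h_2)$ and $b = q_2 h_1 + h_1^2 + \nu_5\mathbb{1}_{\mathbb{R}\setminus\Omega_0}h_1$, after using $\mathrm{supp}(\overline{\mathbf{u}})\subset\overline{\Omega_0}$ to repackage the coefficients via $q_1, q_2$. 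Composing with $\mathbb{B}$ then reduces the problem to bounding the action of the column operator $\begin{bmatrix}\mathbb{B}_{11}-\mathbb{B}_{12}\\ \mathbb{B}_{21}-\mathbb{B}_{22}\end{bmatrix}$ on $aw_1 + bw_2$, where $\mathbf{w}:=\mathbb{l}^{-1}\mathbf{v}$.

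I would split $aw_1 + bw_2$ into three contributions. The quadratic-in-$\mathbf{h}$ part $2h_1h_2 w_1 + h_1^2 w_2$ satisfies $\|2h_1h_2 w_1 + h_1^2 w_2\|_2 \leq 3\kappa\|\mathscr{l}^{-1}\|_{\mathcal{M}_2}r\|\mathbf{w}\|_{\mathcal{H}}$ by the trilinear estimate of Lemma \ref{corr : banach algebra}; composing with the column operator, whose operator norm is bounded by $\max\{1,\|T\|_2\}$ with $T:=\begin{bmatrix}B_{11}^N-B_{12}^N\\ B_{21}^N-B_{22}^N\end{bmatrix}$ via Lemma \ref{lem:bound_B} and Lemma \ref{lem : gamma and Gamma properties}, produces the second summand of $\mathcal{Z}_2(r)$. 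The $\mathbb{R}\setminus\Omega_0$-supported linear piece is annihilated by $\boldsymbol{\Gamma}^\dagger(B)$ since $\gamma$ restricts to $\Omega_0$; only the identity portion of $\mathbb{B}$ survives and its contribution is absorbed into the $\sqrt{5}\kappa$ prefactor produced below.

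The main obstacle is the $\Omega_0$-supported linear piece $(q_1 h_1 + q_2 h_2)w_1 + q_2 h_1 w_2$. Taking Fourier coefficients via $\gamma$ turns pointwise products of $\Omega_0$-restricted functions into discrete convolutions, and by Lemma \ref{lem : gamma and Gamma properties} the $L^2(\mathbb{R})$ norm becomes an $\ell^2_e$-norm of
\begin{equation*}
T(\mathbb{Q}_1 X + \mathbb{Q}_2 Y), \quad X := H_1*W_1, \quad Y := H_2*W_1 + H_1*W_2,
\end{equation*}
where $H_i:=\gamma(h_i)$ and $W_i:=\gamma(w_i)$. Since $q_j$ is real and even, $\mathbb{Q}_j$ is self-adjoint on $\ell^2_e$, so the block identity $\|[A\ B]\|_2^2 = \|AA^*+BB^*\|_2$ gives $\|[T\mathbb{Q}_1\ T\mathbb{Q}_2]\|_2 = \sqrt{\mathcal{Z}_{2,1}}$, and hence $\|T(\mathbb{Q}_1 X + \mathbb{Q}_2 Y)\|_2 \leq \sqrt{\mathcal{Z}_{2,1}}\sqrt{\|X\|_2^2 + \|Y\|_2^2}$. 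Bounding $\|Y\|_2^2 \leq 2(\|H_2*W_1\|_2^2 + \|H_1*W_2\|_2^2)$ generates the factor $\sqrt{5}$; converting each convolution back to $L^2(\mathbb{R})$ via Lemma \ref{lem : gamma and Gamma properties} and applying Lemma \ref{corr : banach algebra} as $\|h_iw_j\|_2\leq\kappa\|\mathbf{h}\|_{\mathcal{H}}\|\mathbf{w}\|_{\mathcal{H}}$ yields the overall $\sqrt{5}\kappa$.

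The final subtlety is that $W_1,W_2$ have unbounded Fourier support while $T$ is a finite matrix, so the sequences must be split via $\Pi^{\leq N}$ and $\Pi^{>N}$. The pure low-mode block of the resulting $2\times 2$ operator yields $\mathcal{Z}_{2,1}$; the two mixed low-high off-diagonal blocks yield $\mathcal{Z}_{2,2}$ (the square root reflecting the sandwich structure $T(\mathbb{Q}_1^2+\mathbb{Q}_2^2)\Pi^{>N}(\mathbb{Q}_1^2+\mathbb{Q}_2^2)T^*$); and on the pure high-mode tail, $T$ acts as zero but the surviving $\pm\Pi^{>N}$ components of $B_{11}-B_{12}$ and $B_{21}-B_{22}$ are estimated directly via Young's convolution inequality \eqref{young_inequality}, producing $\mathcal{Z}_{2,3} = \|\mathbb{Q}_1^2+\mathbb{Q}_2^2\|_1$. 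Assembling the four blocks through Lemma \ref{lem : full_matrix_estimate} yields the $\sqrt{\varphi(\mathcal{Z}_{2,1},\mathcal{Z}_{2,2},\mathcal{Z}_{2,2},\mathcal{Z}_{2,3})}$ factor and the first summand of $\mathcal{Z}_2(r)$, completing the estimate.
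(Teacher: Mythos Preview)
Your approach is essentially the paper's: the same isometric reduction $\|\mathbb{A}(\cdot)\|_{\mathcal{H}}=\|\mathbb{B}(\cdot)\|_2$, the same use of the $(1,-1)^T$ column structure to collapse onto $\begin{bmatrix}\mathbb{B}_{11}-\mathbb{B}_{12}\\\mathbb{B}_{21}-\mathbb{B}_{22}\end{bmatrix}$, the same Hessian/cubic split (the paper phrases it via a trilinear $g_3$), the same trilinear estimate for the $3\kappa\|\mathscr{l}^{-1}\|_{\mathcal{M}_2}$ term, and the same square-root identity $\|T[\mathbb{q}_1\ \mathbb{q}_2]\|_2^2=\|T(\mathbb{q}_1^2+\mathbb{q}_2^2)T^*\|_2$ followed by the $\Pi^{\leq N}/\Pi^{>N}$ block split and Lemma~\ref{lem : full_matrix_estimate}. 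The only stylistic difference is that you pass explicitly to Fourier coefficients and discrete convolutions, while the paper stays in $L^2$ with the multiplication operators $\mathbb{q}_j,\mathbb{h}_j$ and invokes Lemma~\ref{lem : gamma and Gamma properties} at the end; these are equivalent.

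One step is not justified: your claim that the $\mathbb{1}_{\mathbb{R}\setminus\Omega_0}$-supported linear piece is ``absorbed into the $\sqrt{5}\kappa$ prefactor'' is not correct as stated. After only the identity part of $\mathbb{B}$ survives, that piece contributes
\[
\sqrt{2}\,\bigl\|(2\nu_4 h_1+\nu_5 h_2)w_1+\nu_5 h_1 w_2\bigr\|_{L^2(\mathbb{R}\setminus\Omega_0)},
\]
which is of size $\sqrt{2(4\nu_4^2+\nu_5^2)}\,\kappa r$; the $\sqrt{5}\kappa$ factor bounding $\sqrt{\|X\|_2^2+\|Y\|_2^2}$ has no room for these constants. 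The paper's proof is equally terse here: it writes $D^2\mathbb{g}(\bar{\mathbf{u}})(\mathbf{h},\mathbf{z})$ directly in terms of the $\Omega_0$-supported multipliers $\mathbb{q}_j$, which is only an equality on $\Omega_0$, and refers to \cite{gs_cadiot_blanco} for the remaining details. So your sketch matches the paper, but this step would need an explicit argument---for instance, noting that the $\mathbb{R}\setminus\Omega_0$ and $\Omega_0$ outputs of the column operator are orthogonal and that $\mathcal{Z}_{2,3}\geq\|q_1^2+q_2^2\|_{L^\infty(\Omega_0)}\geq 4\nu_4^2+\nu_5^2$ (since $\bar{\mathbf{u}}(\pm d)=0$), so the exterior contribution is dominated by the $\varphi$ block already present.
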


\begin{proof}
Let $\mbf{h} \in \overline{B_r(0)}, \mbf{z} \in \overline{B_1(0)} \subset \mathcal{H}_e$. 
Let $g_3 : \left(\mathcal{H}_e\right)^3 \rightarrow \mathcal{H}_e$ be defined as the following multilinear operator
    \begin{align}
        g_3(\mbf{u}, \mbf{v}, \mbf{w}) \bydef \begin{bmatrix}
        u_1 v_1 w_2 \\ -u_1 v_1 w_2
    \end{bmatrix} \text{ for all } \mbf{u} = (u_1, u_2), \mbf{v} = (v_1, v_2),\mbf{w} = (w_1, w_2) \in \mathcal{H}_e.\label{def : g3}
    \end{align}
    In fact, notice that $g_3(\mbf{u},\mbf{u},\mbf{u}) = \mbb{g}_3(\mbf{u})$.
    Moreover,  we have the symmetry
       $
        g_3(\mbf{u}, \mbf{v}, \mbf{w}) = g_3(\mbf{v}, \mbf{u}, \mbf{w})$
    for all $\mbf{u}, \mbf{v}, \mbf{w} \in \mathcal{H}_e$.
Following similar steps to those done in Lemma 4.5 of \citep{gs_cadiot_blanco}, we obtain
\begin{align}
    &\|\mathbb{A}(D\mathbb{f}(\mathbf{h} + \bar{\mathbf{u}}) - D\mathbb{f}(\bar{\mathbf{u}})) \|_{\mathcal{H}} \leq \|\mathbb{B} D^2\mathbb{g}(\overline{\mathbf{u}})(\mbf{h},\mbf{z})\|_{2} + \|\mathbb{B}(2g_3(\mbf{z},\mbf{h},\mbf{h}) + g_3(\mbf{h},\mbf{h},\mbf{z}))\|_{2} \\
    &\leq \left\|\mathbb{B}\begin{bmatrix}(\mathbb{q}_1 \mathbb{h}_1+ \mathbb{q}_2 \mathbb{h}_2) z_1 + \mathbb{q}_2 \mathbb{h}_1 z_2 \\ -(\mathbb{q}_1 \mathbb{h}_1+ \mathbb{q}_2 \mathbb{h}_2) z_1 - \mathbb{q}_2 \mathbb{h}_1 z_2 \end{bmatrix}\right\|_{2} +  \left\|\begin{bmatrix}
        \mathbb{B}_{11} - \mathbb{B}_{12} \\
        \mathbb{B}_{21} - \mathbb{B}_{22}
    \end{bmatrix} \begin{bmatrix}
        2 z_1 h_1 h_2 + h_1^2 z_2
    \end{bmatrix}\right\|_{2} 
    \\
    & \leq  \left\|\mathbb{B}\begin{bmatrix}
        \mathbb{q}_1 & \mathbb{q}_2 \\
        -\mathbb{q}_1  & -\mathbb{q}_2
    \end{bmatrix}  \begin{bmatrix}
        \mathbb{h}_1 & \mathbb{h}_2 \\
        \mathbb{h}_2 & \mathbb{h}_1
    \end{bmatrix}\begin{bmatrix}
        z_1 \\ z_2
    \end{bmatrix}\right\|_{2} + \left\| \begin{bmatrix}
        \mathbb{B}_{11} - \mathbb{B}_{12} \\
        \mathbb{B}_{21} - \mathbb{B}_{22}
    \end{bmatrix}\right\|_{2}\| 2z_1 h_1 h_2 + h_1^2 z_2\|_{2}.
    \end{align}
Now, observe that
\begin{align}
    \|2z_1 h_1 h_2 + h_1^2 z_2\|_{2} \leq 2\|z_1 h_2 h_2\|_{2} + \|h_1^2 z_2\|_{2} \leq 3\kappa \|\mathscr{l}^{-1}\|_{\mathcal{M}_2} r^2.
\end{align}
Hence, we obtain
{\footnotesize\begin{align}
    &\left\|\mathbb{B}\begin{bmatrix}
        \mathbb{q}_1 & \mathbb{q}_2 \\
        -\mathbb{q}_1  & -\mathbb{q}_2
    \end{bmatrix}  \begin{bmatrix}
        \mathbb{h}_1 & \mathbb{h}_2 \\
        \mathbb{h}_2 & \mathbb{h}_1
    \end{bmatrix}\begin{bmatrix}
        z_1 \\ z_2
    \end{bmatrix}\right\|_{2} + \left\| \begin{bmatrix}
        \mathbb{B}_{11} - \mathbb{B}_{12} \\
        \mathbb{B}_{21} - \mathbb{B}_{22}
    \end{bmatrix}\right\|_{2}\| 2z_1 h_1 h_2 + h_1^2 z_2\|_{2}
    \\
    &\leq \left\|\begin{bmatrix}
        \mathbb{B}_{11} - \mathbb{B}_{12} \\ \mathbb{B}_{21} - \mathbb{B}_{22}
    \end{bmatrix}\begin{bmatrix}
        \mathbb{q}_1 & \mathbb{q}_2
    \end{bmatrix}\right\|_{2} \left\| \begin{bmatrix}
        \mathbb{h}_1 & \mathbb{h}_2 \\
        \mathbb{h}_2 & \mathbb{h}_1
    \end{bmatrix}\begin{bmatrix}
        z_1 \\ z_2
    \end{bmatrix}\right\|_{2}  + 3\kappa \|\mathscr{l}^{-1}\|_{\mathcal{M}_2}\max\left\{1, \left\| \begin{bmatrix}
        B_{11}^N - B_{12}^N \\
        B_{21}^N - B_{22}^N
    \end{bmatrix}\right\|_{2}\right\} r^2 \\
    &\\
    &\leq \sqrt{5}\kappa \sqrt{\left\|\begin{bmatrix}
        \mathbb{B}_{11} - \mathbb{B}_{12} \\ \mathbb{B}_{21} - \mathbb{B}_{22}
    \end{bmatrix}(\mathbb{q}_1^2 + \mathbb{q}_2^2) \begin{bmatrix}
        (\mathbb{B}_{11}-\mathbb{B}_{12})^* & (\mathbb{B}_{21} - \mathbb{B}_{22})^*
    \end{bmatrix}\right\|_{2}}r  + 3\kappa \|\mathscr{l}^{-1}\|_{\mathcal{M}_2}\max\left\{1, \left\| \begin{bmatrix}
        B_{11}^N - B_{12}^N \\
        B_{21}^N - B_{22}^N
    \end{bmatrix}\right\|_{2}\right\} r^2
\end{align}}
where we also used Lemma \ref{corr : banach algebra} in the last inequality.
Then, using the properties of $\bGam$, we similar steps to those used in Lemma 4.5 of \citep{gs_cadiot_blanco} to estimate
\begin{align}
    \left\|\begin{bmatrix}
        \mathbb{B}_{11} - \mathbb{B}_{12} \\ \mathbb{B}_{21} - \mathbb{B}_{22}
    \end{bmatrix}(\mathbb{q}_1^2 + \mathbb{q}_2^2) \begin{bmatrix}
        (\mathbb{B}_{11}-\mathbb{B}_{12})^* & (\mathbb{B}_{21} - \mathbb{B}_{22})^*
    \end{bmatrix}\right\|_{2} &\leq \varphi(\mathcal{Z}_{2,1},\mathcal{Z}_{2,2},\mathcal{Z}_{2,2},\mathcal{Z}_{2,3})
\end{align}
and conclude the result.
\end{proof}
 \subsection{The Bound \texorpdfstring{$\mathcal{Z}_1$}{Z1}}
For now, we provided explicit formulas for the bounds $\mathcal{Y}_{0}$ and $\mathcal{Z}_2$, and it remains to  estimate $\|I - \mathbb{A}D\mathbb{f}(\bar{\mathbf{u}})\|_{\mathcal{H}} \leq \mathcal{Z}_1$. This bound requires a bit more analysis, and we present its treatment in this section. First let us define $v_1, v_2 \in L^\infty(\mbb{R}) \cap L^2(\mbb{R})$
\begin{align}\label{def : v1v2}
    v_1 \bydef 2\overline{u}_1 \overline{u}_2 + 2\nu_4 \overline{u}_1 + \nu_5 \overline{u}_2 \text{ and }v_2 \bydef \overline{u}_1^2 + \nu_5 \overline{u}_1.
\end{align}
In particular, we can write 
$
    D\mbb{g}(\bbu) = \begin{bmatrix}
    \mbb{v_1} & \mbb{v_2} \\ -\mbb{v}_1 & -\mbb{v}_2    
    \end{bmatrix},$
where $\mbb{v}_1, \mbb{v}_2$ are defined as in \eqref{def : multiplication operator}. Now we define
$$V_1\bydef \gamma(v_1), V_2\bydef \gamma(v_2), V_1^N\bydef \Pi^{\leq N}V_1, V_2^N\bydef \Pi^{\leq N}V_2 \text{ and }Dg^N(\overline{\mathbf{U}})\bydef \begin{bmatrix}
    \mathbb{V}_1^N & \mathbb{V}_2^N \\
    -\mathbb{V}_1^N & -\mathbb{V}_2^N
\end{bmatrix}$$
where $\mbb{V}_1^N, \mbb{V}_2^N$ are the discrete convolution operators associated to $V_1^N, V_2^N$. Finally we define
$$D\mbb{g}^N(\overline{\mathbf{U}})\bydef \Gamma^\dag(Dg^N(\overline{\mathbf{U}})),~~ v_1^N\bydef \gamma^\dag(V_1^N) \text{ and } v_2^N\bydef \Gamma^\dag(V_2^N).$$

This notation allows us to decompose both $D\mbb{g}(\overline{\mathbf{U}})$ and $Dg(\overline{\mathbf{U}})$ using a truncation of size $N$. We now state a lemma for the $\mathcal{Z}_1$ bound This lemma will allow us to split the analysis of the $\mathcal{Z}_1$ into two parts, which we denote $Z_1$ and $\mathcal{Z}_u$. The former is the ``usual" $Z_1$ bound one computes when proving periodic solutions using Theorem \ref{th: radii polynomial} (cf. \citep{van2021spontaneous, period_kuramoto}). The latter is due to the unboundedness of the domain as outlined in \citep{unbounded_domain_cadiot}, and will explicitly depend on the truncated domain size $d$.
\begin{lemma}\label{lem : Z_full_1}
Let $\mathcal{Z}_{u}, Z_1,\mathcal{Z}_1 > 0$ be bounds satisfying
\begin{align} 
&\mathcal{Z}_{u} \geq \|\mathbb{B}D\mathbb{g}^N(\overline{\mathbf{u}})(\mathbb{l}^{-1} - \bGam^\dagger(l^{-1})\|_{2},~
    Z_1 \geq \|I - B(I + Dg^N(\overline{\mathbf{U}})l^{-1})\|_{2} \\
    &\mathcal{Z}_1 \bydef Z_1 +  \mathcal{Z}_{u} +  \max\left\{1, \left\|\begin{bmatrix}
        B_{11}^N - B_{12}^N \\
        B_{21}^N - B_{22}^N
    \end{bmatrix}\right\|_{2}\right\}\|\mathbb{l}^{-1}\|_{2} \sqrt{\|V_1^N - V_1\|_{1}^2 + \|V_2^N - V_2\|_{1}^2}.
\end{align}
Then, it follows that $ \|I -{\mathbb{A}}D\mathbb{f}(\overline{\mathbf{u}})\|_{\mathcal{H}} \leq \mathcal{Z}_1.$
\end{lemma}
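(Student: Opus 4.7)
The starting point is to exploit that $\mathbb{l}:\mathcal{H}_e\to L^2_e$ is an isometric isomorphism, so that for any bounded operator $\mathbb{K}:\mathcal{H}_e\to \mathcal{H}_e$ one has $\|\mathbb{K}\|_{\mathcal{H}}=\|\mathbb{l}\mathbb{K}\mathbb{l}^{-1}\|_{2}$. Applied here with $\mathbb{A}=\mathbb{l}^{-1}\mathbb{B}$ and $D\mathbb{f}(\bar{\mathbf{u}})=\mathbb{l}+D\mathbb{g}(\bar{\mathbf{u}})$, this yields
\begin{equation*}
\|I-\mathbb{A}D\mathbb{f}(\bar{\mathbf{u}})\|_{\mathcal{H}}
= \|I-\mathbb{B}(I+D\mathbb{g}(\bar{\mathbf{u}})\mathbb{l}^{-1})\|_{2},
\end{equation*}
reducing the task to an operator norm on $L^2_e$. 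I then decompose $D\mathbb{g}(\bar{\mathbf{u}})\mathbb{l}^{-1}$ into three pieces by inserting and subtracting $D\mathbb{g}^N(\bar{\mathbf{u}})\bGam^{\dagger}(l^{-1})$:
\begin{equation*}
D\mathbb{g}(\bar{\mathbf{u}})\mathbb{l}^{-1}
= D\mathbb{g}^N(\bar{\mathbf{u}})\bGam^{\dagger}(l^{-1})
+ D\mathbb{g}^N(\bar{\mathbf{u}})(\mathbb{l}^{-1}-\bGam^{\dagger}(l^{-1}))
+ (D\mathbb{g}(\bar{\mathbf{u}})-D\mathbb{g}^N(\bar{\mathbf{u}}))\mathbb{l}^{-1}.
\end{equation*}

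The first piece captures the finite-dimensional numerical part. Here I would use that $\mathbb{B}=\mathbb{I}_{\mathbb{R}\setminus\om}+\bGam^{\dagger}(B)$, that $\bGam^{\dagger}$ is a $*$-homomorphism when restricted appropriately (from $\bgam\bgam^{\dagger}=I$ on $\ell^2_e$), and that $\bGam^{\dagger}(I)$ coincides with the indicator operator on $\om$. These identities allow me to collapse $\mathbb{B}(I+D\mathbb{g}^N(\bar{\mathbf{u}})\bGam^{\dagger}(l^{-1}))$ into $\mathbb{I}_{\mathbb{R}\setminus\om}+\bGam^{\dagger}(B(I+Dg^N(\bar{\mathbf{U}})l^{-1}))$, so that $I$ minus this operator equals $\bGam^{\dagger}(I-B(I+Dg^N(\bar{\mathbf{U}})l^{-1}))$, whose norm by Lemma~\ref{lem : gamma and Gamma properties} is exactly $\|I-B(I+Dg^N(\bar{\mathbf{U}})l^{-1})\|_{2}\le Z_1$. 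The second piece is absorbed directly into $\mathcal{Z}_u$ by hypothesis.

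The third and most delicate piece is where the explicit constant $\sqrt{\|V_1^N-V_1\|_1^2+\|V_2^N-V_2\|_1^2}$ appears. I would exploit the explicit structure
\begin{equation*}
D\mathbb{g}(\bar{\mathbf{u}})-D\mathbb{g}^N(\bar{\mathbf{u}})
= \begin{bmatrix} 1 \\ -1 \end{bmatrix}\begin{bmatrix} \mathbb{v}_1-\mathbb{v}_1^N & \mathbb{v}_2-\mathbb{v}_2^N\end{bmatrix},
\end{equation*}
so that the composition $\mathbb{B}(D\mathbb{g}(\bar{\mathbf{u}})-D\mathbb{g}^N(\bar{\mathbf{u}}))\mathbb{l}^{-1}$ factors into the column $\bigl(\mathbb{B}_{11}-\mathbb{B}_{12},\mathbb{B}_{21}-\mathbb{B}_{22}\bigr)^{T}$ composed with a scalar-valued row of multiplication operators composed with $\mathbb{l}^{-1}$. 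The crucial observation is that $v_i-v_i^N$ is supported in $\overline{\om}$ (since $\bar{\mathbf{u}}$ and $\gamma^{\dagger}(V_i^N)$ are), so the intermediate output lies in $L^2_{e,\om}$; on that subspace the $\mathbb{I}_{\mathbb{R}\setminus\om}$ part of $\mathbb{B}$ annihilates, and the action of the column reduces to $\bGam^{\dagger}$ of the corresponding block of $B^N$. Using Lemma~\ref{lem : gamma and Gamma properties} once more gives the factor $\|(B^N_{11}-B^N_{12};\,B^N_{21}-B^N_{22})^T\|_{2}$, and the $\max\{1,\cdot\}$ simply provides a uniform upper bound. For the multiplication piece I use that $\|\mathbb{v}_i-\mathbb{v}_i^N\|_{2\to 2}\le \|v_i-v_i^N\|_\infty\le \|V_i-V_i^N\|_1$ (since the truncation error is a cosine series on $\om$), and combine the two components by Cauchy--Schwarz. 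The factor $\|\mathbb{l}^{-1}\|_2$ comes from propagating through $\mathbb{l}^{-1}$.

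The main obstacle is the bookkeeping in the third piece: tracking the interplay between the two-block structure of $\mathbb{B}$ (with its $\mathbb{I}_{\mathbb{R}\setminus\om}$ diagonal), the column factorization of the off-diagonal quadratic coupling in $D\mathbb{g}(\bar{\mathbf{u}})$, and the support-in-$\om$ reduction that allows us to replace $\mathbb{B}$ by $\bGam^{\dagger}(B)$ at no cost. Once this is in place, summing the three bounds via the triangle inequality produces $\mathcal{Z}_1$ as defined, completing the proof.
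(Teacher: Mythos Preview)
Your proposal is correct and follows essentially the same route as the paper. The paper arrives at the same three-term decomposition (grouped slightly differently: it first splits off the $D\mathbb{g}^N-D\mathbb{g}$ tail, then invokes Theorem~3.5 of \cite{unbounded_domain_cadiot} to obtain $Z_1+\mathcal{Z}_u$ from the remaining piece), and handles the third term by the identical rank-one factorization through $\begin{bmatrix}\mathbb{B}_{11}-\mathbb{B}_{12}\\\mathbb{B}_{21}-\mathbb{B}_{22}\end{bmatrix}$, submultiplicativity, and Parseval together with Lemma~4.6 of \cite{gs_cadiot_blanco}. Your write-up spells out more explicitly the $\bGam^{\dagger}$-homomorphism collapse that the paper delegates to the cited reference, which is fine; the one small point is that the $\max\{1,\cdot\}$ is not merely a convenience upper bound but is forced by the $\Pi^{>N}$ tail in $B_{11}-B_{12}$ and $B_{21}-B_{22}$ (or equivalently the $\mathbb{I}_{\mathbb{R}\setminus\om}$ diagonal if you do not use the support reduction), exactly as in Lemma~\ref{lem:bound_B}.
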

\begin{proof}
First, using the definition of the norm $\|\cdot\|_{\mathcal{H}}$ and the triangle inequality, we get
{\footnotesize\begin{align}
     \|I - \mathbb{A}D\mathbb{f}(\overline{\mathbf{u}})\|_{\mathcal{H}} 
     = \|I - \mathbb{B} ( I + D\mathbb{g}(\overline{\mathbf{u}})\mathbb{l}^{-1})\|_2,   
   &\leq \|I - \mathbb{B}(I +D\mathbb{g}^N(\overline{\mathbf{u}})\mathbb{l}^{-1})\|_{2} + \|\mathbb{B}(D\mathbb{g}^N(\overline{\mathbf{u}}) - D\mathbb{g}(\overline{\mathbf{u}}))\mathbb{l}^{-1}\|_{2} \\
   &\leq Z_1 + \mathcal{Z}_u + \left\|\begin{bmatrix}
        \mathbb{B}_{11} - \mathbb{B}_{12} \\
        \mathbb{B}_{21} - \mathbb{B}_{22} 
    \end{bmatrix}\begin{bmatrix}
        \mathbb{V}_1^N - \mathbb{V}_1 & \mathbb{V}_2^N - \mathbb{V}_2
    \end{bmatrix}\mathbb{l}^{-1}\right\|_{2}\label{ineq : first step in Zu}
\end{align}}
where the final step followed from Theorem 3.5 of \citep{unbounded_domain_cadiot} and the definition of $\mathbb{B}$. For the final term in \eqref{ineq : first step in Zu}, we use submultiplicativity to obtain
\begin{align}
     \left\|\begin{bmatrix}
        \mathbb{B}_{11} - \mathbb{B}_{12} \\
        \mathbb{B}_{21} - \mathbb{B}_{22} 
    \end{bmatrix}\begin{bmatrix}
        \mathbb{V}_1^N - \mathbb{V}_1 & \mathbb{V}_2^N - \mathbb{V}_2
    \end{bmatrix}\mathbb{l}^{-1}\right\|_{2} 
    &\leq \left\|\begin{bmatrix}
        \mathbb{B}_{11} - \mathbb{B}_{12} \\
        \mathbb{B}_{21} - \mathbb{B}_{22}
    \end{bmatrix}\right\|_{2}\|\mathbb{l}^{-1}\|_{2} \|\begin{bmatrix}
        \mathbb{V}_1^N - \mathbb{V}_1 & \mathbb{V}_2^N - \mathbb{V}_2
    \end{bmatrix}\|_{2} \\
    &\hspace{-2.3cm}\leq \max\left\{1, \left\|\begin{bmatrix}
        B_{11}^N - B_{12}^N \\
        B_{21}^N - B_{22}^N
    \end{bmatrix}\right\|_{2}\right\}\|\mathbb{l}^{-1}\|_{2} \sqrt{\|V_1^N - V_1\|_{1}^2 + \|V_2^N - V_2\|_{1}^2}
\end{align}
where the last step followed by Parseval's Identity and Lemma 4.6 of \citep{gs_cadiot_blanco}. 
\end{proof}
Now, we must compute the bounds $Z_1$ and $\mathcal{Z}_u$. We will begin with $Z_1$, which as mentioned previously, is the "usual" $Z_1$ bound that one normally computes when proving Periodic solutions. As a result, its estimate only depends on Fourier coefficient computations.
\begin{lemma}\label{lem : Z1 periodic patterns}
    Let
    \begin{align}
        Z_1 \bydef \sqrt{Z_{0}^2 + 2Z_{1,1}^2 + 2Z_{1,2}^2}
    \end{align}
    where
    \begin{align*}
        &Z_{0} \bydef \|\bpi^{\leq2N} - \bpi^{\leq 3N}B(I_d + Dg^N(\overline{\mbf{U}}) l^{-1})\bpi^{\leq2N}\|_{2}\\
        &Z_{1,1} \bydef \left(\frac{l_{22}}{l_{\mathrm{den}}}\right)_{2N} \| V_1^N\|_{1} + \left(\frac{l_{21}}{l_{\mathrm{den}}}\right)_{2N} \|V_2^N\|_{1},~~ 
        Z_{1,2} &\bydef \left(\frac{l_{12}}{l_{\mathrm{den}}}\right)_{2N} \|V_1^N\|_{1} + \left(\frac{l_{11}}{l_{\mathrm{den}}}\right)_{2N} \|V_2^N\|_{1} 
    \end{align*}
    and $\left(\frac{\mathscr{l}_{ij}}{l_{\mathrm{den}}}\right)_{2N} = \max_{n\in \mathbb{Z}\setminus I^{2N}}\frac{\mathscr{l}_{ij} (\tilde n)}{\mathscr{l}_{den}(\tilde{n})}$. Then, it follows that $\|I_d - B(I_d + Dg^N(\overline{U}) l^{-1})\|_{2} \leq Z_1$.
\end{lemma}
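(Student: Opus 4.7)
The plan is to decompose $M \bydef I_d - B(I_d + Dg^N(\overline{\mathbf{U}}) l^{-1})$ via the splitting $M = M\bpi^{\leq 2N} + M\bpi^{>2N}$, bound each of the two pieces separately, and combine them by a Cauchy--Schwarz argument in $\ell^2_e$. Concretely, for any $\mathbf{v} \in \ell^2_e$,
\begin{align*}
\|M\mathbf{v}\|_2 &\leq \|M\bpi^{\leq 2N}\|_2 \|\bpi^{\leq 2N}\mathbf{v}\|_2 + \|M\bpi^{>2N}\|_2 \|\bpi^{>2N}\mathbf{v}\|_2 \\
&\leq \sqrt{\|M\bpi^{\leq 2N}\|_2^2 + \|M\bpi^{>2N}\|_2^2}\,\|\mathbf{v}\|_2,
\end{align*}
where the second step uses Cauchy--Schwarz on $\mathbb{R}^2$ together with the orthogonal decomposition $\|\bpi^{\leq 2N}\mathbf{v}\|_2^2 + \|\bpi^{>2N}\mathbf{v}\|_2^2 = \|\mathbf{v}\|_2^2$. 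Thus it suffices to show $\|M\bpi^{\leq 2N}\|_2 \leq Z_0$ and $\|M\bpi^{>2N}\|_2 \leq \sqrt{2Z_{1,1}^2 + 2Z_{1,2}^2}$.

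For the low-frequency block, I would track Fourier supports. Since $V_1^N$ and $V_2^N$ are supported on modes $\{0,\dots,N\}$, the cosine-convolution operator $Dg^N(\overline{\mathbf{U}})$ maps $\bpi^{\leq M}\ell^2_e$ into $\bpi^{\leq M+N}\ell^2_e$. Combined with the fact that $l^{-1}$ is diagonal and that $B = \bpi^{>N} + B^N$ with $B^N = \bpi^{\leq N}B^N\bpi^{\leq N}$, one checks that $B(I_d + Dg^N(\overline{\mathbf{U}})l^{-1})\bpi^{\leq 2N}$ has image in $\bpi^{\leq 3N}\ell^2_e$. Hence $M\bpi^{\leq 2N} = \bpi^{\leq 3N}M\bpi^{\leq 2N} = \bpi^{\leq 2N} - \bpi^{\leq 3N}B(I_d + Dg^N(\overline{\mathbf{U}})l^{-1})\bpi^{\leq 2N}$, whose norm is $Z_0$ by definition.

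For the high-frequency block, the same support analysis shows that $Dg^N(\overline{\mathbf{U}})l^{-1}\bpi^{>2N}$ has image in $\bpi^{>N}\ell^2_e$, because cosine-convolving a sequence supported on $\{0,\dots,N\}$ with one supported on $\{n>2N\}$ produces only modes $n\pm k$ with $n-k > N$. Since $B^N$ vanishes on $\bpi^{>N}$ and $\bpi^{>N}$ acts as the identity there, and since $(I_d-B)\bpi^{>2N}=0$, the operator collapses to $M\bpi^{>2N} = -Dg^N(\overline{\mathbf{U}})l^{-1}\bpi^{>2N}$. Writing $l^{-1}$ in its explicit $2\times 2$ block form using $l_{\text{den}}$, one obtains $Dg^N(\overline{\mathbf{U}})l^{-1} = \begin{pmatrix} D_1 & D_2 \\ -D_1 & -D_2\end{pmatrix}$ with $D_1 \bydef \mathbb{V}_1^N \tfrac{l_{22}}{l_{\text{den}}} - \mathbb{V}_2^N \tfrac{l_{21}}{l_{\text{den}}}$ and $D_2 \bydef -\mathbb{V}_1^N \tfrac{l_{12}}{l_{\text{den}}} + \mathbb{V}_2^N \tfrac{l_{11}}{l_{\text{den}}}$. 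Applying Young's inequality \eqref{young_inequality} to the convolutions and the tail bound $|\mathscr{l}_{ij}(\tilde n)/\mathscr{l}_{\text{den}}(\tilde n)| \leq (l_{ij}/l_{\text{den}})_{2N}$ for $n>2N$ to the diagonal multipliers yields $\|D_1 u\|_2 \leq Z_{1,1}\|u\|_2$ and $\|D_2 u\|_2 \leq Z_{1,2}\|u\|_2$ whenever $u$ is supported on $\{n>2N\}$. For $\mathbf{u}=(u_1,u_2)$ the output has two opposite entries, so its squared norm is $2\|D_1 u_1 + D_2 u_2\|_2^2$; a final Cauchy--Schwarz gives $\|M\bpi^{>2N}\|_2^2 \leq 2(Z_{1,1}^2 + Z_{1,2}^2)$.

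The main obstacle is the careful bookkeeping of cosine-convolution Fourier supports (addition \emph{and} subtraction of frequencies), in particular verifying both that $\bpi^{\leq 3N}$ is exactly large enough to contain the image of the low-frequency block (so that $Z_0$ is a finite matrix norm) and that the composition $B^N \circ Dg^N(\overline{\mathbf{U}}) l^{-1} \circ \bpi^{>2N}$ vanishes so that only $\bpi^{>N}$ contributes in the tail. The remaining estimates are then routine applications of Young's inequality and Cauchy--Schwarz.
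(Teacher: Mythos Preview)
Your proposal is correct and follows essentially the same route as the paper: the paper also splits via $\bpi^{\leq 2N}$ and $\bpi^{>2N}$, uses the Cauchy--Schwarz/orthogonality argument to combine the two pieces as a sum of squares, tracks Fourier supports to identify the low-frequency block with the finite matrix norm $Z_0$, and reduces the high-frequency block to $\bpi^{>N}Dg^N(\overline{\mathbf{U}})l^{-1}\bpi^{>2N}$ before applying Young's inequality and the tail bound on $\mathscr{l}_{ij}/\mathscr{l}_{\text{den}}$. Your exposition of the support bookkeeping is in fact slightly more explicit than the paper's, but the argument is the same.
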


\begin{proof}
To begin, we introduce a truncation.
{\footnotesize\begin{align}
    \|I_d - B(I_d + Dg^N(\overline{\mbf{U}}) l^{-1})\|_{2}^2 &\leq \|\bpi^{>2N} - B(I_d + Dg^N(\overline{\mbf{U}}) l^{-1})\bpi^{>2N}\|_{2}^2 + \|\bpi^{\leq2N} - B(I_d + Dg^N(\overline{\mbf{U}}) l^{-1})\bpi^{\leq2N}\|_{2}^2 \\
    &= \|\bpi^{>2N} - B \bpi^{>2N} - BDg^N(\overline{\mbf{U}}) \bpi^{<2N} l^{-1}\|_{2}^2 + Z_{0}^2.
\end{align}}
Now, by definition of $B$, we have $B \bpi^{>2N} = \bpi^{>2N}$ and $B\bpi^{>N} = \bpi^{>N}$. Additionally, since $v_1^N$ and $v_2^N$ are of size $N$, it follows that $Dg^N(\overline{U})\bpi^{>2N} = \bpi^{>N}Dg^N(\overline{U})\bpi^{>2N}$. Hence, we have
\begin{align}
    \|\bpi^{>2N} - B \bpi^{>2N} - BDg^N(\overline{U}) \bpi^{<2N} l^{-1}\|_{2} &= \|\bpi^{>2N} -  \bpi^{>2N} - B\bpi^{>N} Dg^N(\overline{U}) \bpi^{<2N} l^{-1}\|_{2} \\
    &= \|\bpi^{>N} Dg^N(\overline{U}) l^{-1}\bpi^{<2N}\|_{2} \\
    &\hspace{-1cm}= \left\|\begin{bmatrix}
        \mathbb{V}_1^N l_{22} l_{\mathrm{den}}^{-1} - \mathbb{V}_2^N l_{21} l_{\mathrm{den}}^{-1} & -\mathbb{V}_1^N  l_{12} l_{\mathrm{den}}^{-1} + \mathbb{V}_2^N l_{11} l_{\mathrm{den}}^{-1} \\ -\mathbb{V}_1^N l_{22} l_{\mathrm{den}}^{-1} + \mathbb{v}_2^N l_{21} l_{\mathrm{den}}^{-1} & \mathbb{V}_1^N  l_{12} l_{\mathrm{den}}^{-1} - \mathbb{V}_2^N l_{11} l_{\mathrm{den}}^{-1}
    \end{bmatrix}\right\|_{2} \\
    &\hspace{-1.5cm}\leq \sqrt{2}\sqrt{\|\mathbb{V}_1^N l_{22} l_{\mathrm{den}}^{-1} - \mathbb{V}_2^N l_{21} l_{\mathrm{den}}^{-1}\|_{2}^2 + \|-\mathbb{V}_1^N  l_{12} l_{\mathrm{den}}^{-1} + \mathbb{V}_2^N l_{11} l_{\mathrm{den}}^{-1}\|_{2}^2}
\end{align}
where we used the Cauchy-Schwarz inequality. We now examine each term
\begin{align}
    &\|\mathbb{V}_1^N l_{22} l_{\mathrm{den}}^{-1} - \mathbb{v}_2^N l_{21} l_{\mathrm{den}}^{-1}\|_{2} \leq \left(\frac{l_{22}}{l_{\mathrm{den}}}\right)_{2N} \| V_1^N\|_{1} + \left(\frac{l_{21}}{l_{\mathrm{den}}}\right)_{2N} \|V_2^N\|_{1} \bydef Z_{1,1} \\
    &\|-\mathbb{V}_1^N  l_{12} l_{\mathrm{den}}^{-1} + \mathbb{V}_2^N l_{11} l_{\mathrm{den}}^{-1}\|_{2} \leq \left(\frac{l_{12}}{l_{\mathrm{den}}}\right)_{2N} \|V_1^N\|_{1} + \left(\frac{l_{11}}{l_{\mathrm{den}}}\right)_{2N} \|V_2^N\|_{1} \bydef Z_{1,2}.
\end{align}
Therefore, we have
\begin{align}
    \|I_d - B(I_d + Dg^N(\overline{U})l^{-1})\|_{2} \leq \sqrt{Z_{0}^2 + 2Z_{1,1}^2 + 2Z_{1,2}^2} \bydef Z_1
\end{align}
as desired.
\end{proof}
We now work on the bound $\mathcal{Z}_u$. Before doing so, we will derive two preliminary results. The first of which provides the exponential decay of $\mathcal{F}^{-1}(\frac{\mathscr{l}_{ij}}{\mathscr{l}_{den}})(x)$ for $i,j \in \{1,2\}$. This estimate will have a direct influence on the computation of $\mathcal{Z}_u$.
\begin{lemma}\label{lem : ift_L_inverse}
    Assume that Assumption \ref{assumption : fourier} holds, then we have the following disjunction : 
    \begin{enumerate}
        \item If $(\rho\nu_2 - \nu_1)^2 + 4\rho\nu_2(\nu_1 + \nu_3) < 0$, then  define $z_1, z_2$ as 
        \small{
        \begin{align}\label{eq: def z1 z2 and y}
        z_1 \bydef 2\pi i y \text{ and } z_2 \bydef -2\pi i \bar{y}, \text{ where }
            y \bydef \frac{1}{2\pi} \left(\frac{\nu_1 - \nu_2\rho + i \sqrt{-(\rho \nu_2 - \nu_1)^2 - 4 \rho \nu_2(\nu_1 + \nu_3)}}{2\rho}\right)^{\frac{1}{2}}.
        \end{align}
        }
        \normalsize
    \item If $(\rho\nu_2 - \nu_1)^2 + 4\rho\nu_2(\nu_1 + \nu_3) \ge 0$ and $\rho\nu_2 - \nu_1 > \sqrt{(\rho\nu_2 - \nu_1)^2 + 4\rho\nu_2(\nu_1 + \nu_3)}$, then define $z_1, z_2$ as 
  \begin{align}\label{eq : def z1 z2 second case}
    z_1 &\bydef  \left(\frac{ \rho\nu_2 - \nu_1 + \sqrt{(\rho \nu_2 - \nu_1)^2 + 4 \rho \nu_2(\nu_1 + \nu_3)}}{2\rho}\right)^{\frac{1}{2}}\\
    z_2 &\bydef  \left(\frac{ \rho\nu_2 - \nu_1 -  \sqrt{(\rho \nu_2 - \nu_1)^2 + 4 \rho \nu_2(\nu_1 + \nu_3)}}{2\rho}\right)^{\frac{1}{2}}.
\end{align}
\end{enumerate}
In both cases, for $d_1,d_2 \in \mathbb{R}$, we obtain that
    \begin{equation*}
\left|\mathcal{F}^{-1}\left(\frac{d_1|2\pi\xi|^2 + d_2}{\mathscr{l}_{den}(\xi)}\right)(x)\right|\leq C_0e^{-a|x|},
    \end{equation*}
    with 
    \begin{align}
    a = \min\{\mathrm{Re}(z_1), \mathrm{Re}(z_2)\},\label{def : a}
    \end{align}
    \begin{align} C_0(d_1,d_2) = \frac{1}{|2\rho(z_1^2 - z_2^2)|} \left( |d_1 z_2|  +  \frac{|d_2|}{|z_2|} \right).\label{def : C0}
\end{align}
\end{lemma}

\begin{proof}
Case 1: $(\rho\nu_2 - \nu_1)^2 + 4\rho\nu_2(\nu_1 + \nu_3) < 0$. In this case,  $\ell_\text{den}$ possesses 4 complex roots $\pm y$ and $\pm \overline{y}$, where $y$ is given in \eqref{eq: def z1 z2 and y}.

Case 2: $(\rho\nu_2 - \nu_1)^2 + 4\rho\nu_2(\nu_1 + \nu_3) \ge 0$ and $\rho\nu_2 - \nu_1 > \sqrt{(\rho\nu_2 - \nu_1)^2 + 4\rho\nu_2(\nu_1 + \nu_3)}$. In this case, $\ell_\text{den}$ possesses 4 imaginary roots $\pm y_1$ and $\pm y_2$ where $y_1 = \frac{z_1}{2\pi i}$ and $y_2 = \frac{z_2}{2\pi i}$.

In any case, we define 
\begin{align*}
    f(x) = \mathcal{F}^{-1}\left(\frac{1}{\ell_\text{den}(\xi)}\right)(x) = \mathcal{F}^{-1}\left(\frac{1}{((2\pi \xi)^2 + z_1^2)((2\pi \xi)^2 + z_2^2)}\right)(x)
\end{align*}
and using that 
\begin{align}
    \mathcal{F}^{-1}\left(\frac{1}{|2\pi\xi^2| + {z}^2}\right)(t) = \frac{e^{-z|t|}}{2z},
\end{align}
 we  obtain
\begin{align}
    f(t) = \frac{1}{2\rho ({z}_2^2-z_1^2)} \left(\frac{e^{-z_1|t|}}{z_1} - \frac{e^{-{z}_2|t|}}{{z}_2}\right).
\end{align}
In the case 1., we have that $z_2 = \bar{z_1}$, this implies that 
\begin{align*}
    |f(t)| \leq \frac{ e^{-a|t|}}{|2 \rho  z_2 ({z}_1^2-z_2^2)|}
\end{align*}
where $a = \min\{\mathrm{Re}(z_1), \mathrm{Re}(z_2)\}$. Now, since $\frac{(2\pi\xi)^2}{\mathscr{l}_{den}(\xi)} \in L^1(\R)$, we have that $f$ is twice continuously differentiable. In particular,  we have
\begin{align*}
    f''(t) = \mathcal{F}^{-1}\left(\frac{(2\pi\xi)^2}{\rho (|2\pi\xi^2| + {z}_1^2)(|2\pi\xi^2| + {z}_2^2)}\right)(t). 
\end{align*}
In fact, using the above, we readily have that 
\begin{align*}
    |f''(t)| \leq \frac{|z_2|e^{-a|t|}}{|\sqrt{2}\rho ({z}_1^2-z_2^2)|}.
\end{align*}
For the case 2., notice that $z_1 > z_2 >0$. This implies that 
\begin{align}
   0 < f(t) = \frac{1}{2\rho ({z}_1^2-z_2^2)} \left(\frac{e^{-z_2|t|}}{z_2} - \frac{e^{-{z}_1|t|}}{{z}_1}\right) \leq  \frac{e^{-z_2|t|}}{2\rho z_2 ({z}_1^2-z_2^2)}.
\end{align}
Similarly, we get
\begin{align}
    |f''(t)| \leq     \frac{z_2e^{-z_2|t|}}{2\rho  ({z}_1^2-z_2^2)}.
\end{align}
We obtain the desired result by observing that:
\begin{align*}
\left|\mathcal{F}^{-1}\left(\frac{d_1|2\pi\xi|^2 + d_2}{\mathscr{l}_{den}(\xi)}\right)(t)\right| \leq |d_1f''(t)| + |d_2 f(t)| 
    \leq C_0 e^{-a|t|}.
\end{align*}
\end{proof}
We now move to the second preliminary result which allows to separate the evaluation of $\mathcal{Z}_u$ into simpler elementary computations which can be evaluated using the results of \citep{unbounded_domain_cadiot}.
In fact, we follow the approach outlined in \citep{gs_cadiot_blanco}, and obtain the following lemma.
\begin{lemma}\label{lem : old Zu}
Let $\left(\mathcal{Z}_{u,k,j}\right)_{k \in \{1,2\}, j \in \{1,2,3,4\}}$ be bounds satisfying
\begin{align}
&\mathcal{Z}_{u,1,1} \geq \|\mathbb{1}_{\mathbb{R}\setminus\om} \mathbb{l}_{22}\mathbb{l}_{den}^{-1} \mathbb{v}_1^N\|_{2},~\mathcal{Z}_{u,2,1} \geq \|\mathbb{1}_{\om} (\mathbb{l}_{22}\mathbb{l}_{den}^{-1} - \Gamma^\dagger(l_{22}l_{\mathrm{den}}^{-1}))\mathbb{v}_1^N\|_{2}\\
&\mathcal{Z}_{u,1,2} \geq \|\mathbb{1}_{\mathbb{R}\setminus\om}\mathbb{l}_{21}\mathbb{l}_{den}^{-1}\mathbb{v}_2^N\|_{2},~\mathcal{Z}_{u,2,2} \geq \|\mathbb{1}_{\om} (\mathbb{l}_{21}\mathbb{l}_{den}^{-1} - \Gamma^\dagger(l_{21}l_{\mathrm{den}}^{-1}))\mathbb{v}_2^N\|_{2}
    \\
    &\mathcal{Z}_{u,1,3} \geq \|\mathbb{1}_{\mathbb{R}\setminus\om}\mathbb{l}_{12}\mathbb{l}_{den}^{-1}\mathbb{v}_1^N\|_{2},~\mathcal{Z}_{u,2,3} \geq \|\mathbb{1}_{\om}(\mathbb{l}_{12}\mathbb{l}_{den}^{-1} - \Gamma^\dagger(l_{12}l_{\mathrm{den}}^{-1}))\mathbb{v}_1^N\|_{2}\\
    &\mathcal{Z}_{u,1,4} \geq \|\mathbb{1}_{\mathbb{R}\setminus\om}\mathbb{l}_{22}\mathbb{l}_{den}^{-1}\mathbb{v}_2^N\|_{2},~\mathcal{Z}_{u,2,4} \geq \|\mathbb{1}_{\om}(\mathbb{l}_{11}\mathbb{l}_{den}^{-1} - \Gamma^\dagger(l_{11}l_{\mathrm{den}}^{-1}))\mathbb{v}_2^N\|_{2}
\end{align} 
Moreover, given $k \in \{1,2\}$, define  $\mathcal{Z}_{u,k} \bydef \sqrt{2}\sqrt{(\mathcal{Z}_{u,k,1} + \mathcal{Z}_{u,k,2})^2 + (\mathcal{Z}_{u,k,3}+\mathcal{Z}_{u,k,4})^2} $.
Then, it follows that $\mathcal{Z}_{u,1}$ and $\mathcal{Z}_{u,2}$ satisfy
\begin{align} 
\mathcal{Z}_{u,1} \geq \|\mathbb{1}_{\mathbb{R} \setminus \om} D\mathbb{g}(\overline{u})^N \mathbb{l}^{-1}\|_{2},~\mathcal{Z}_{u,2} \geq \|\mathbb{1}_{\om}D\mathbb{g}(\overline{u})^N(\bGam^\dagger(l^{-1}) - \mathbb{l}^{-1})\|_{2}.
\end{align}
Finally, defining \begin{align}
    \mathcal{Z}_u \bydef \sqrt{\mathcal{Z}_{u,1}^2 + \mathcal{Z}_{u,2}^2},
\end{align}
it follows that $\|\mathbb{B}D\mathbb{g}(\overline{u})^N(\bGam^\dagger(l^{-1}) - \mathbb{l}^{-1})\|_{2} \leq \max\left\{1,\left\|\begin{bmatrix}
    B_{11}^N - B_{12}^N \\
    B_{21}^N - B_{22}^N
\end{bmatrix}\right\|_{2}\right\}\mathcal{Z}_u$.
\end{lemma}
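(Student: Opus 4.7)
The plan is to bound $\|\mathbb{B}D\mathbb{g}^N(\overline{\mathbf{u}})(\bGam^\dagger(l^{-1})-\mathbb{l}^{-1})\|_2$ in two stages: first, peel off $\mathbb{B}$ using its decomposition $\mathbb{B} = \mathbb{1}_{\mathbb{R}\setminus\Omega_0}I + \bGam^\dagger(B)$ together with the rank-two structure of $D\mathbb{g}^N(\overline{\mathbf{u}})$; second, split the remaining analytic operator by image support via $I=\mathbb{1}_{\Omega_0}+\mathbb{1}_{\mathbb{R}\setminus\Omega_0}$, producing the two contributions bounded by $\mathcal{Z}_{u,1}$ and $\mathcal{Z}_{u,2}$, and reduce each piece to the building blocks $\mathcal{Z}_{u,k,j}$ via Lemma \ref{lem : full_matrix_estimate}.

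For the first stage, I would use the factorization $D\mathbb{g}^N(\overline{\mathbf{u}}) = \begin{bmatrix}1\\-1\end{bmatrix}[\mathbb{v}_1^N,\mathbb{v}_2^N]$, which yields $B\cdot Dg^N(\overline{\mathbf{U}}) = \begin{bmatrix}B_{11}-B_{12}\\B_{21}-B_{22}\end{bmatrix}[\mathbb{V}_1^N,\mathbb{V}_2^N]$ on the discrete side. Because $v_1^N,v_2^N$ are supported in $\overline{\Omega_0}$, the range of $D\mathbb{g}^N(\overline{\mathbf{u}})$ lies in $\overline{\Omega_0}$, so $\mathbb{1}_{\mathbb{R}\setminus\Omega_0}D\mathbb{g}^N(\overline{\mathbf{u}}) = 0$ and $\mathbb{B}D\mathbb{g}^N(\overline{\mathbf{u}}) = \bGam^\dagger(B\cdot Dg^N(\overline{\mathbf{U}}))$ (using the multiplicativity $\bGam^\dagger(AK) = \bGam^\dagger(A)\bGam^\dagger(K)$ coming from $\bgam\bgam^\dagger = I$). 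Applying the isometric identity of Lemma \ref{lem : gamma and Gamma properties}, together with the orthogonality of $\Pi^{\leq N}$ and $\Pi^{>N}$ on $\ell^2_e$ (needed because $B_{ii} = B_{ii}^N + \Pi^{>N}$ while the off-diagonal entries are purely $B_{ij}^N$), would produce the prefactor $\max\{1,\|[B_{11}^N-B_{12}^N;B_{21}^N-B_{22}^N]\|_2\}$ and reduce the task to estimating $\|D\mathbb{g}^N(\overline{\mathbf{u}})(\bGam^\dagger(l^{-1})-\mathbb{l}^{-1})\|_2$.

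For the second stage, I would insert $I = \mathbb{1}_{\Omega_0}+\mathbb{1}_{\mathbb{R}\setminus\Omega_0}$ on the left of $(\bGam^\dagger(l^{-1})-\mathbb{l}^{-1})$. Since $\bGam^\dagger(l^{-1})$ has range in $\overline{\Omega_0}$, one has $\mathbb{1}_{\mathbb{R}\setminus\Omega_0}(\bGam^\dagger(l^{-1}) - \mathbb{l}^{-1}) = -\mathbb{1}_{\mathbb{R}\setminus\Omega_0}\mathbb{l}^{-1}$, giving rise to the contribution $\mathcal{Z}_{u,1}$, while $\mathbb{1}_{\Omega_0}(\bGam^\dagger(l^{-1}) - \mathbb{l}^{-1}) = \bGam^\dagger(l^{-1}) - \mathbb{1}_{\Omega_0}\mathbb{l}^{-1}$ gives rise to $\mathcal{Z}_{u,2}$. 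Orthogonality of these two pieces by their supports delivers the combined bound $\sqrt{\mathcal{Z}_{u,1}^2+\mathcal{Z}_{u,2}^2} = \mathcal{Z}_u$. For each piece, I would then expand $\mathbb{l}^{-1}$ through Cramer's rule into entries built from $\mathbb{l}_{ij}\mathbb{l}_{\mathrm{den}}^{-1}$ combined with $\mathbb{v}_k^N$, and apply Lemma \ref{lem : full_matrix_estimate} to the resulting $2\times 2$ operator matrix. The $\pm$ row-symmetry of $D\mathbb{g}^N(\overline{\mathbf{u}})$ produces the $\sqrt{2}$ prefactor in $\mathcal{Z}_{u,k}$, while triangle inequality applied to the two summands in each row yields the sums $\mathcal{Z}_{u,k,1}+\mathcal{Z}_{u,k,2}$ and $\mathcal{Z}_{u,k,3}+\mathcal{Z}_{u,k,4}$.

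The main obstacle will be the careful book-keeping of operator orderings and adjoints. The bounds $\mathcal{Z}_{u,k,j}$ are written with $\mathbb{v}_k^N$ placed to the right of the Fourier multipliers $\mathbb{l}_{ij}\mathbb{l}_{\mathrm{den}}^{-1}$, whereas in the product $D\mathbb{g}^N(\overline{\mathbf{u}})\mathbb{l}^{-1}$ the multiplication operator $\mathbb{v}_k^N$ appears on the left. The transition between the two orderings is achieved by taking adjoints, which preserves norms and is innocuous here because each of $\mathbb{l}_{ij}\mathbb{l}_{\mathrm{den}}^{-1}$ (real Fourier symbol), $\mathbb{v}_k^N$ (multiplication by a real function), and the indicators $\mathbb{1}_{\Omega_0}$, $\mathbb{1}_{\mathbb{R}\setminus\Omega_0}$ is self-adjoint on $L^2_e$.
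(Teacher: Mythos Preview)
Your overall plan is right, but the placement of the indicator split in stage two is a genuine gap. Since each $v_k^N$ is supported in $\overline{\Omega_0}$, one has $D\mathbb{g}^N(\overline{\mathbf{u}})\,\mathbb{1}_{\mathbb{R}\setminus\Omega_0}=0$. Hence inserting $I=\mathbb{1}_{\Omega_0}+\mathbb{1}_{\mathbb{R}\setminus\Omega_0}$ \emph{between} $D\mathbb{g}^N(\overline{\mathbf{u}})$ and $(\bGam^\dagger(l^{-1})-\mathbb{l}^{-1})$, as you describe, collapses the decomposition: the ``$\mathbb{1}_{\mathbb{R}\setminus\Omega_0}$'' piece is identically zero and the ``$\mathbb{1}_{\Omega_0}$'' piece is the whole operator. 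Your claimed orthogonality by supports also fails at that position, since both pieces, if nonzero, would be pushed by $D\mathbb{g}^N$ into functions supported in $\overline{\Omega_0}$.

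The paper fixes this by taking the adjoint of the composite \emph{before} splitting: one works with
\[
\bigl(\bGam^\dagger(l^{-1})-\mathbb{l}^{-1}\bigr)^*(D\mathbb{g}^N(\overline{\mathbf{u}}))^*
=\bigl(\bGam^\dagger(l^{-*})-\mathbb{l}^{-*}\bigr)(D\mathbb{g}^N(\overline{\mathbf{u}}))^*,
\]
and inserts $\mathbb{1}_{\Omega_0}+\mathbb{1}_{\mathbb{R}\setminus\Omega_0}$ on the \emph{output} of this adjoint. Now the Fourier-multiplier part is leftmost, so $\mathbb{1}_{\mathbb{R}\setminus\Omega_0}\bGam^\dagger(l^{-*})=0$ yields the $\mathcal{Z}_{u,1}$ piece $\|\mathbb{1}_{\mathbb{R}\setminus\Omega_0}\mathbb{l}^{-*}(D\mathbb{g}^N)^*\|_2$, the $\mathbb{1}_{\Omega_0}$ piece gives $\mathcal{Z}_{u,2}$, and orthogonality of their ranges produces $\sqrt{\mathcal{Z}_{u,1}^2+\mathcal{Z}_{u,2}^2}$. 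The $\sqrt{2}$ then comes directly from the rank structure $(D\mathbb{g}^N)^*\mathbf{u}=(\mathbb{v}_1^N(u_1-u_2),\,\mathbb{v}_2^N(u_1-u_2))$ together with $\|u_1-u_2\|_2\le\sqrt{2}\|\mathbf{u}\|_2$, rather than from Lemma~\ref{lem : full_matrix_estimate}. You do invoke adjoints, but only to reorder $\mathbb{v}_k^N$ with $\mathbb{l}_{ij}\mathbb{l}_{\mathrm{den}}^{-1}$ at the very end; the key point is that the adjoint must be taken \emph{before} the support split so that the Fourier multiplier, not the compactly supported multiplication, sits adjacent to the indicator.
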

\begin{proof}
To begin, we use the definitions of \eqref{def : v1v2} and $\mathbb{B}$ to see that
\begin{align}
   \left\| \mathbb{B}D\mathbb{g}(\overline{u})^N(\mathbf{u}_0)\left(\bGam^\dagger\left(l^{-1}\right) - \mathbb{l}^{-1}\right)\right\|_2 &\leq \max\left\{1,\left\|\begin{bmatrix}
    B_{11}^N - B_{12}^N \\
    B_{21}^N - B_{22}^N
\end{bmatrix}\right\|_{2}\right\} \mathcal{Z}_u \\
&= \max\left\{1,\left\|\begin{bmatrix}
    B_{11}^N - B_{12}^N \\
    B_{21}^N - B_{22}^N
\end{bmatrix}\right\|_{2}\right\}\sqrt{\mathcal{Z}_{u,1}^2 + \mathcal{Z}_{u,2}^2}.
\end{align}
Now, we examine $\mathcal{Z}_{u,1}\geq \|\mathbb{1}_{\mathbb{R}\setminus \om}\mathbb{m}^N(\overline{\mbf{x}}) \mathbb{l}^{-1}\|_{2}.$ To do so,
let $\mathbf{u} = (u_1,u_2) \in L^2$ such that $\|\mathbf{u}\|_{2} = 1$. Then, observe that
{\small\begin{align}
    \nonumber \|\mathbb{1}_{\mathbb{R}\setminus \om}\mathbb{l}^{-*}(D\mathbb{g}(\overline{u})^N)^{*}\mathbf{u}\|_{2}
    &=\left\|\mathbb{1}_{\mathbb{R}\setminus \om} \begin{bmatrix}
        \mathbb{l}_{22}\mathbb{l}_{den}^{-1}& - \mathbb{l}_{21} \mathbb{l}_{den}^{-1} \\
        -\mathbb{l}_{12}\mathbb{l}_{den}^{-1} & \mathbb{l}_{11}\mathbb{l}_{den}^{-1}
    \end{bmatrix}\begin{bmatrix}
      \mathbb{v}_1^N & -\mbb{v}_1^N \\ \mathbb{v}_2^N & -\mbb{v}_2^N
    \end{bmatrix}\begin{bmatrix}
        u_1 \\ u_2
    \end{bmatrix}\right\|_{2} \\ \nonumber
    &= \left\|\mathbb{1}_{\mathbb{R}\setminus \om}\begin{bmatrix}
        \mathbb{l}_{22}\mathbb{l}_{den}^{-1}& - \mathbb{l}_{21} \mathbb{l}_{den}^{-1} \\
        -\mathbb{l}_{12}\mathbb{l}_{den}^{-1} & \mathbb{l}_{11}\mathbb{l}_{den}^{-1}
    \end{bmatrix}\begin{bmatrix}
        \mathbb{v}_1^N(u_1-u_2) \\ \mathbb{v}_2^N(u_1-u_2)
    \end{bmatrix}\right\|_{2} \\ \nonumber
    &= \left\|\mathbb{1}_{\mathbb{R}\setminus \om} \begin{bmatrix}
        \mathbb{l}_{22}\mathbb{l}_{den}^{-1}\mathbb{v}_1^N (u_1-u_2) -\mathbb{l}_{21}\mathbb{l}_{den}^{-1} \mathbb{v}_2^N (u_1-u_2) \\
        -\mathbb{l}_{12}\mathbb{l}_{den}^{-1} \mathbb{v}_1^N (u_1-u_2) + \mathbb{l}_{11}\mathbb{l}_{den}^{-1} \mathbb{v}_2^N (u_1-u_2)
    \end{bmatrix}\right\|_{2} 
    \\
    &\leq \biggl(\left( \|\mathbb{1}_{\mathbb{R}\setminus \om}\mathbb{l}_{22}\mathbb{l}_{den}^{-1} \mathbb{v}_1^N (u_1-u_2) \|_{2} + \|\mathbb{1}_{\mathbb{R}\setminus \om}\mathbb{l}_{21}\mathbb{l}_{den}^{-1}\mathbb{v}_2^N (u_1-u_2)\|_{2}\right)^2 \\
    &\hspace{+2cm}+ (\|\mathbb{1}_{\mathbb{R}\setminus \om}\mathbb{l}_{12}\mathbb{l}_{den}^{-1} \mathbb{v}_1^N (u_1-u_2)\|_{2} + \|\mathbb{1}_{\mathbb{R}\setminus \om}\mathbb{l}_{11}\mathbb{l}_{den}^{-1} \mathbb{v}_2^N (u_1-u_2)\|_{2})^2\biggr)^{\frac{1}{2}}.
\end{align}}
Now, observe that
\begin{align}
    \|\mathbb{1}_{\mathbb{R} \setminus \om} \mathbb{l}_{ij} \mathbb{l}_{den}^{-1} \mathbb{v}_k^N (u_1 - u_2)\|_{2} \leq \|\mathbb{1}_{\mathbb{R} \setminus \om} \mathbb{l}_{ij} \mathbb{l}_{den}^{-1} \mathbb{v}_k^N\|_{2} \|u_1 - u_2\|_{2}
    &\leq \|\mathbb{1}_{\mathbb{R} \setminus \om} \mathbb{l}_{ij} \mathbb{l}_{den}^{-1} \mathbb{v}_k^N\|_{2} \left\| \begin{bmatrix}
    1 & -1 \\
    0 & 0 \end{bmatrix} \begin{bmatrix}
        u_1 \\ u_2
    \end{bmatrix}\right\|_{2}\\
    &\leq \|\mathbb{1}_{\mathbb{R} \setminus \om} \mathbb{l}_{ij} \mathbb{l}_{den}^{-1} \mathbb{v}_k^N\|_{2} \left\| \begin{bmatrix}
    1 & -1 \\
    0 & 0 \end{bmatrix}\right\|_{2}\|\mbf{u}\|_{2} \\
    &= \sqrt{2}\|\mathbb{1}_{\mathbb{R} \setminus \om} \mathbb{l}_{ij} \mathbb{l}_{den}^{-1} \mathbb{v}_k^N\|_{2}.
\end{align}
Therefore, return to the previous step and obtain
\begin{align}
    \|\mathbb{1}_{\mathbb{R} \setminus \om} \mathbb{l}^{-*} (D\mathbb{g}(\overline{\mbf{u}})^N)^* \mbf{u}\|_{2} &\leq \biggl(\left( \sqrt{2}\|\mathbb{1}_{\mathbb{R}\setminus \om}\mathbb{l}_{22}\mathbb{l}_{den}^{-1} \mathbb{v}_1^N \|_{2} + \sqrt{2}\|\mathbb{1}_{\mathbb{R}\setminus \om}\mathbb{l}_{21}\mathbb{l}_{den}^{-1}\mathbb{v}_2^N \|_{2}\right)^2 \\
    &\hspace{+3cm}+ (\sqrt{2}\|\mathbb{1}_{\mathbb{R}\setminus \om}\mathbb{l}_{12}\mathbb{l}_{den}^{-1} \mathbb{v}_1^N \|_{2} + \sqrt{2}\|\mathbb{1}_{\mathbb{R}\setminus \om}\mathbb{l}_{11}\mathbb{l}_{den}^{-1} \mathbb{v}_2^N \|_{2})^2\biggr)^{\frac{1}{2}} \\&\leq \biggl(2\left( \|\mathbb{1}_{\mathbb{R}\setminus \om}\mathbb{l}_{22}\mathbb{l}_{den}^{-1} \mathbb{v}_1^N \|_{2} + \|\mathbb{1}_{\mathbb{R}\setminus \om}\mathbb{l}_{21}\mathbb{l}_{den}^{-1}\mathbb{v}_2^N \|_{2}\right)^2 \\
    &\hspace{+3cm}+ 2(\|\mathbb{1}_{\mathbb{R}\setminus \om}\mathbb{l}_{12}\mathbb{l}_{den}^{-1} \mathbb{v}_1^N \|_{2} + \|\mathbb{1}_{\mathbb{R}\setminus \om}\mathbb{l}_{11}\mathbb{l}_{den}^{-1} \mathbb{v}_2^N \|_{2})^2\biggr)^{\frac{1}{2}} \\
&\leq \sqrt{2}\sqrt{(\mathcal{Z}_{u,1,1} + \mathcal{Z}_{u,1,2})^2+(\mathcal{Z}_{u,1,3}+\mathcal{Z}_{u,1,4})^2}.
\end{align}

By performing the same steps for $\mathcal{Z}_{u,2}$, we similarly obtain
\begin{align}
    \|\mathbb{1}_{\om}D\mathbb{g}(\overline{u})^N(\bGam^\dagger(l^{-1}) - \mathbb{l}^{-1})\|_{2} \leq 2\sqrt{(\mathcal{Z}_{u,2,1} + \mathcal{Z}_{u,2,2})^2 + (\mathcal{Z}_{u,2,3}+\mathcal{Z}_{u,2,4})^2}.
\end{align}
\end{proof}
 Combining Lemmas \ref{lem : ift_L_inverse} and \ref{lem : old Zu}, we obtain an explicit formula for the bound $\mathcal{Z}_u$. The formulas, which come from computations performed for scalar PDEs in \citep{unbounded_domain_cadiot}, is composed of finite-dimensional evaluations. As a result, these can be computed using rigorous numerics.
\begin{lemma}\label{lem : Zu old exact}
Let $a$ be defined as in \eqref{def : a}. Moreover, let $E \in \ell^2$ and $C(d),C_1,C_2,C_3,C_4 > 0$ be defined as
\begin{align}
    &E \bydef \gamma(\mathbb{1}_{\om} \cosh(2ax)),~ C(d) \bydef 4d + \frac{4e^{-ad}}{a(1-e^{-\frac{3ad}{2}})} + \frac{2}{a(1-e^{-2ad})},\\
    &C_1 \bydef C_0(-1,-\nu_2),~ C_2 \bydef C_0(0,\nu_2),~C_3 \bydef C_0(0,\nu_3),~\text{and}~ C_4 \bydef C_0(-\rho,\nu_1)\label{def : E Cd Cj}
\end{align}
where $C_0(d_1,d_2)$ is defined as in \eqref{def : C0}. Then, let $(\mathcal{Z}_{u,k,j})_{k \in \{1,2\},j \in \{1,2,3,4\}} > 0$ be defined as
\begin{align}
&\mathcal{Z}_{u,1,1}^2 \bydef |\om| \frac{C_{1}^2e^{-2ad}}{a} (V_1^N, V_1^N * E)_2,~\mathcal{Z}_{u,2,1}^2 \bydef \mathcal{Z}_{u,1,1}^2 + e^{-4ad} C(d) C_1^2 |\om|(V_1^N,V_1^N * E)_2 \\
    &\mathcal{Z}_{u,1,2}^2 \bydef |\om| \frac{C_2^2 e^{-2ad}}{a} (V_2^N, V_2^N * E)_2,~\mathcal{Z}_{u,2,2}^2 \bydef \mathcal{Z}_{u,1,2}^2 + e^{-4ad} C(d) C_2^2 |\om|(V_2^N,V_2^N * E)_2 \\
    &\mathcal{Z}_{u,1,3}^2 \bydef |\om| \frac{C_3^2 e^{-2ad}}{a} (V_1^N, V_1^N * E)_2,~\mathcal{Z}_{u,2,3}^2 \bydef \mathcal{Z}_{u,1,3}^2 + e^{-4ad} C(d) C_3^2 |\om|(V_1^N,V_1^N * E)_2 \\
    &\mathcal{Z}_{u,1,4}^2 \bydef |\om| \frac{C_4^2e^{-2ad}}{a} (V_2^N,V_2^N*E)_2,~\mathcal{Z}_{u,2,4}^2 \bydef \mathcal{Z}_{u,1,4}^2 + e^{-4ad} C(d) C_4^2 |\om|(V_2^N,V_2^N * E)_2.
\end{align}
If $\mathcal{Z}_{u,1},\mathcal{Z}_{u,2},$ and $\mathcal{Z}_u$ are defined as in Lemma \ref{lem : old Zu}, then it follows that $\|\mathbb{B}D\mathbb{g}(\overline{u})^N(\bGam^\dagger(l^{-1}) - \mathbb{l}^{-1})\|_{2} \leq \max\left\{1,\left\|\begin{bmatrix}
    B_{11}^N - B_{12}^N \\
    B_{21}^N - B_{22}^N
\end{bmatrix}\right\|_{2}\right\} \mathcal{Z}_u$.
\end{lemma}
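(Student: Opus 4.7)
The plan is to establish explicit operator-norm bounds in $L^2(\mathbb{R})$ for the eight quantities $(\mathcal{Z}_{u,1,j})_j$ and $(\mathcal{Z}_{u,2,j})_j$ required by Lemma \ref{lem : old Zu}, by exploiting the explicit exponentially-decaying convolution kernels supplied by Lemma \ref{lem : ift_L_inverse}. Each entry $\mathscr{l}_{ij}$ has the form $d_1|2\pi\xi|^2 + d_2$, so $\mathbb{l}_{ij}\mathbb{l}_{den}^{-1}$ acts as convolution against a kernel $k_{ij}(x) \bydef \mathcal{F}^{-1}(\mathscr{l}_{ij}/\mathscr{l}_{den})(x)$ satisfying the pointwise bound $|k_{ij}(x)| \leq C_{ij}e^{-a|x|}$, with the four constants $C_1,C_2,C_3,C_4$ of \eqref{def : E Cd Cj} corresponding to the four pairs $(d_1,d_2)$.

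For the exterior bounds $\mathcal{Z}_{u,1,j}$, I would exploit that $v_k^N$ is supported in $\overline{\om}$ to write, for $f \in L^2(\mathbb{R})$,
\begin{align*}
(\mathbb{1}_{\mathbb{R}\setminus\om}\mathbb{l}_{ij}\mathbb{l}_{den}^{-1}\mathbb{v}_k^N f)(x) = \mathbb{1}_{\mathbb{R}\setminus\om}(x)\int_\om k_{ij}(x-y)\,v_k^N(y)\,f(y)\,dy.
\end{align*}
Applying Cauchy--Schwarz in $y$ with the weight $e^{-2a|x-y|}$ split symmetrically, and then integrating in $x$ over $|x|\geq d$, yields a geometric factor of $e^{-2ad}/a$ from the exponential tail, leaving a quadratic form in $v_k^N$ carrying a weight of the form $\cosh(2a\,\cdot\,)$ on $\om$. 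Passing to Fourier coefficients via the isometry of Lemma \ref{lem : gamma and Gamma properties}, and identifying $\mathbb{1}_\om\cosh(2ax)$ as the function whose Fourier coefficients are $E$, converts the bound into $|\om|\,\frac{C_{ij}^2 e^{-2ad}}{a}(V_k^N, V_k^N * E)_2$, which matches each $\mathcal{Z}_{u,1,j}^2$.

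For the interior bounds $\mathcal{Z}_{u,2,j}$, the key observation is that on $L^2_{e,\om}$ the operator $\bGam^\dagger(l_{ij}l_{den}^{-1})$ acts as convolution against the $2d$-periodization $\sum_{n\in\mathbb{Z}}k_{ij}(\,\cdot\, - 2dn)$ of $k_{ij}$; this is the Poisson-type identity encoded in the definitions of $\bgam, \bgam^\dagger$ together with the Fourier multiplier structure of $l_{ij}l_{den}^{-1}$. Hence, restricted to act from $L^2_{e,\om}$ into $L^2(\om)$, the difference $\mathbb{l}_{ij}\mathbb{l}_{den}^{-1} - \bGam^\dagger(l_{ij}l_{den}^{-1})$ is convolution against $-\sum_{n\neq 0}k_{ij}(\,\cdot\, - 2dn)$. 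For $x,y\in\om$, the triangle inequality gives $|x-y-2dn| \geq 2d|n| - 2d$, so $|k_{ij}(x-y-2dn)| \leq C_{ij}e^{-a(2d|n|-2d)}e^{-a|x-y|}$; summing the geometric series in $n$ produces exactly the factor $e^{-4ad}C(d)$. Combining this with the same Fourier-coefficient identification used in the exterior bound adds the correction $e^{-4ad}C(d)C_{ij}^2|\om|(V_k^N, V_k^N * E)_2$ to $\mathcal{Z}_{u,1,j}^2$, yielding $\mathcal{Z}_{u,2,j}^2$.

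The main technical obstacle is the conversion of the continuous double integral $\int_\om\int_\om v_k^N(y)v_k^N(y')\cosh(2a(y\pm y'))\,dy'\,dy$ into the discrete quadratic form $|\om|(V_k^N, V_k^N*E)_2$: one must use a cosh addition formula together with the parity of $v_k^N$ to recognize the weight as a convolution kernel against $E$, and carry the $|\om|$ factor correctly from Lemma \ref{lem : gamma and Gamma properties}. The second delicate point is the Poisson-type identification of $\bGam^\dagger(l_{ij}l_{den}^{-1})$ with periodized convolution; one must justify this on $L^2_{e,\om}$ rather than only for trigonometric polynomials. Once these two ingredients are in place, the final formula $\|\mathbb{B}D\mathbb{g}(\overline{u})^N(\bGam^\dagger(l^{-1})-\mathbb{l}^{-1})\|_2 \leq \max\{1,\|\cdot\|_2\}\mathcal{Z}_u$ follows immediately by invoking Lemma \ref{lem : old Zu}.
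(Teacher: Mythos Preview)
Your proposal is correct and follows exactly the approach of Lemma~6.5 in \cite{unbounded_domain_cadiot}, which is all the paper itself invokes (the paper's proof is a one-line citation). You have correctly reconstructed the key ingredients that the cited lemma contains: the exponential kernel bounds from Lemma~\ref{lem : ift_L_inverse}, Cauchy--Schwarz with exponentially weighted splits for the exterior piece, the Poisson-type periodization identity for the interior piece, and the conversion of the weighted double integral into the discrete quadratic form $(V_k^N, V_k^N * E)_2$ via Lemma~\ref{lem : gamma and Gamma properties}.

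One small imprecision: your stated inequality $|k_{ij}(x-y-2dn)| \leq C_{ij}e^{-a(2d|n|-2d)}e^{-a|x-y|}$ does not follow directly from the triangle-inequality bound $|x-y-2dn|\geq 2d|n|-2d$ alone; the actual argument in the cited lemma splits the shifted kernel more carefully (treating small $|n|$ separately from large $|n|$), which is what produces the three-term structure of $C(d)$. Likewise, the appearance of $\mathcal{Z}_{u,1,j}^2$ inside $\mathcal{Z}_{u,2,j}^2$ is simply a compact way of writing the common factor $C_j^2|\Omega_0|(V_k^N,V_k^N*E)_2$, not a structural re-use of the exterior estimate. These are sketch-level issues you have already flagged as technical obstacles, and they do not affect the correctness of your overall plan.
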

\begin{proof}
The proof follows from Lemma 6.5 of \citep{unbounded_domain_cadiot}. In particular, each $\mathcal{Z}_{u,k,j}$ can be computed using the results of the aforementioned lemma.
\end{proof}
\subsection{Constructive existence proofs of localized solutions}\label{sec : Proofs of patterns}
\begin{figure}[H]
    \centering
    \begin{subfigure}[t]{0.3\textwidth}
        \centering
        \epsfig{figure=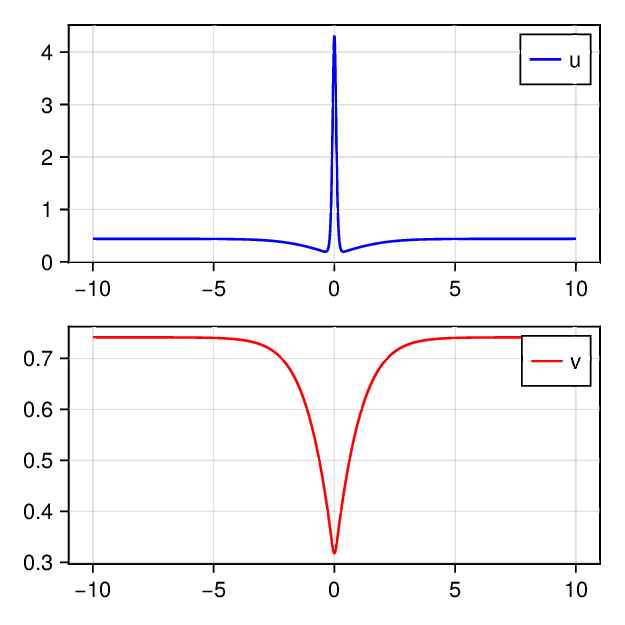, width=\textwidth}
        \caption{Approximation of a localized solution $\mbf{u}_1$ in the Glycolysis equation}\label{fig : gly pattern 1}
    \end{subfigure}
    \hfill
    \begin{subfigure}[t]{0.3\textwidth}
        \centering
        \epsfig{figure=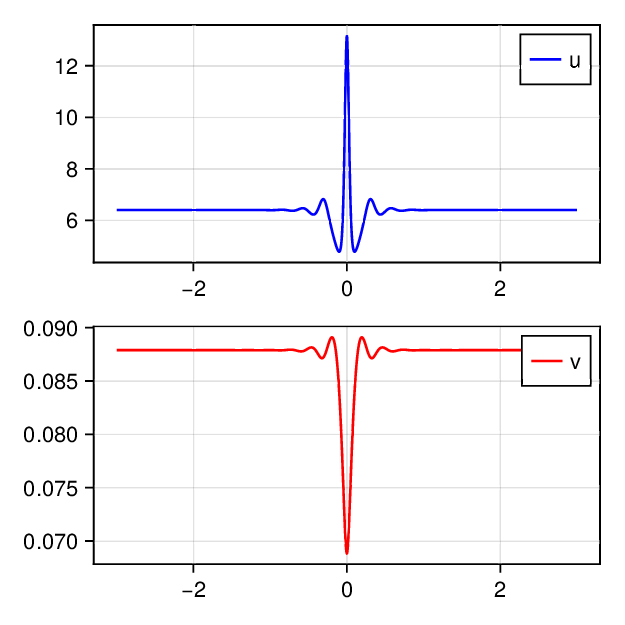, width=\textwidth}
        \caption{Approximation of a localized solution $\mbf{u}_2$ in the Schnakenberg equation}\label{fig : gly pattern 2}
    \end{subfigure}
    \hfill
    \begin{subfigure}[t]{0.3\textwidth}
    \centering
        \epsfig{figure=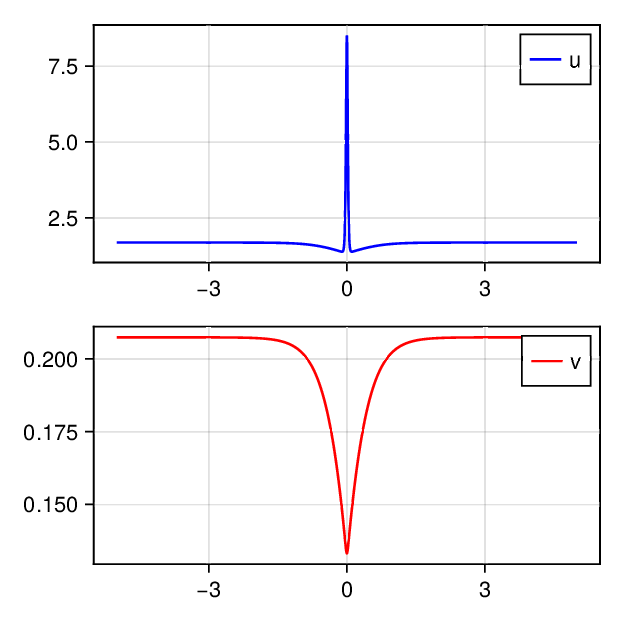, width=\textwidth}
        \caption{Approximation of a localized solution $\mbf{u}_3$ in the Selkov-Schnakenberg equation}\label{fig : sel pattern}
    \end{subfigure}
    \\
    \begin{subfigure}[t]{0.3\textwidth}
        \centering
        \epsfig{figure=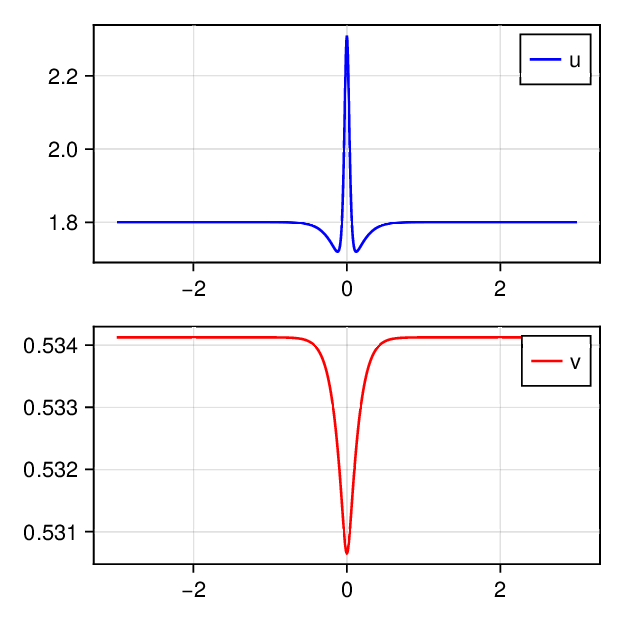, width=\textwidth}
        \caption{Approximation of a localized solution $\mbf{u}_4$ in the root-hair \\equation}\label{fig : sch pattern}
    \end{subfigure}
    \hfill
    \begin{subfigure}[t]{0.3\textwidth}
        \centering
        \epsfig{figure=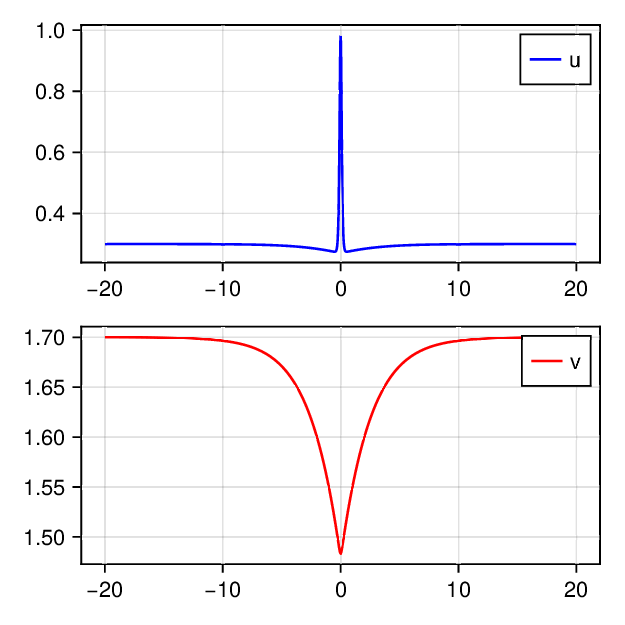, width=\textwidth}
        \caption{Approximation of a localized solution $\mbf{u}_5$ in the Brusselator equation}\label{fig : rh pattern 2}
    \end{subfigure}
    \hfill
    \begin{subfigure}[t]{0.3\textwidth}
    \centering
        \epsfig{figure=bruss_multi_spike.eps, width=\textwidth}
        \caption{Approximation of a localized solution $\mbf{u}_6$ in the Brusselator equation}\label{fig : bruss pattern}
    \end{subfigure}
    \caption{Approximations of localized solutions for general activator-inhibitor models }\label{fig:patterns}
\end{figure}

%
%
%
In this section, we apply the Newton-Kantorovich approach presented in Section \ref{sec : patterns} thanks to the explicit formulas derived in Section \ref{sec : Bounds of patterns}. In fact, the bounds are evaluated rigorously thanks to the \textit{Julia} packages \textit{IntervalArithmetic.jl} \citep{julia_interval} and \textit{RadiiPolynomial.jl} \citep{julia_olivier}. More specifically, the computer-assisted proofs details can be found on \citep{julia_blanco_fassler}. Now, we present our obtained constructive proofs of existence for localized patterns in \eqref{eq:original_system}.

\begin{table}[H]
\centering
\renewcommand{\arraystretch}{1.5}
\resizebox{\textwidth}{!}{
\begin{tabular}{|c|c|c|c|c|c|c|c|}
\hline
 & $a$ & $b$ & $c$ & $h$ & $j$ & $\delta$ & $r_0$\\
\hline

$u_1$  & $0$  & $0.44$  & $0$  & $0$  & $0.5$  & $0.45$ & $2\times 10^{-7}$  \\
\hline
$u_2$   & $2.8$  & $3.6$  & $1$  & $0$  & $0$  & $0.011$ & $3\times 10^{-10}$\\
\hline
$u_3$    & $1.095$  & $0.589818$  & $1$  & $0$  & $0.005$  & $0.01$ & $4\times 10^{-6}$ \\
\hline
$u_4$   & $0$  & $0.44$  & $0$  & $0$  & $0.5$  & $0.45$ & $6\times 10^{-7}$\\
\hline
$u_5$   & $0.3$  & $0$  & $1.51$  & $0.51$  & $0$  & $0.05$ &  $8\times 10^{-5}$\\
\hline
$u_6$    & $16.421100616$  & $0$  & $2.3299999237$  & $1.3299999237$  & $0$  & $0.0095742405424$ & $9\times 10^{-11}$\\
\hline
 \end{tabular}
}
\caption{Parameter values for the solutions of Theorem \ref{th : existence proofs}}
\label{tab:rigorous_values}
\end{table}

\begin{theorem}[\bf Proofs of Localized solutions]\label{th : existence proofs}
    Let $\mbf{u}_i, i \in\{1,2,3,4,5,6\}$ be the approximate solutions, constructed as in \eqref{eq : approximate solution}, represented in Figure~\ref{fig:patterns}, and whose Fourier coefficients can be downloaded at \citep{julia_blanco_fassler}. Then, for each $i \in \{1, \dots, 6\}$, there exists a unique solution $\tilde{\mbf{u}}_i$ in $\overline{B_{r_0}(\mbf{u}_i)}\subset \mathcal{H}_e$ to~\eqref{eq : gen_model}, where the corresponding parameters are given in  Table \ref{tab:rigorous_values}.
\end{theorem}
\begin{proof}
    The proof of this theorem is obtained by putting together Propositions~\ref{prop : spike in gyl}, \ref{prop : Pulse in schnakenberg}, \ref{prop : spike in brusselator}, \ref{prop : multispike in brusselator}, \ref{prop : spike in selkov schnakenberg} and \ref{prop : spike in rh} in Appendix~\ref{apen : proof of patterns}. The model that each solution solves is provided in Figure \ref{fig:patterns}.
\end{proof}

\begin{remark}
We acknowledge that the solution denoted $\mathbf{u}_6$ proven in Brusselator has parameters featuring many decimal places in Table~\ref{tab:rigorous_values}. This is justified by the will to produce a solution with multiple oscillations, as depicted  in Figure \ref{fig : bruss pattern}. We computed this solution using the continuation library AUTO \citep{auto_article}.
\end{remark}

\section{Saddle-Node Bifurcations : zeros of \texorpdfstring{$\mathbb{F}$}{F}}\label{sec : saddle node}
In this manuscript, we present a novel methodology for proving the existence of saddle-node bifurcations on unbounded domains. In fact, our approach  relies on the setup described in Section 5 of \citep{unbounded_domain_cadiot}, which deals with PDEs augmented with extra equations and extra parameters. In fact, in this section we focus on a computer-assisted methodology for proving constructively the existence of non-degenerate zeros of $\mathbb{F}$ given in \eqref{eq:saddle_node_map}.

Let $\bar{\rho}$ be fixed and be  such that $\mathbb{l}_{\overline{\rho}} \bydef \mathbb{l}(\overline{\rho})$ is invertible and  define the $\mathcal{H}-$norm as 
\begin{equation}\label{eq:h-norm-saddle}
    \|\mathbf{u}\|_{\mathcal{H}}^2 \bydef \int_{\mathbb{R}^2}|\hat{\mathbf{u}}(\xi)||\mathscr{l}_{\overline{\rho}}(\xi)|d\xi
\end{equation}
where $\mathscr{l}_{\overline{\rho}}(\xi)$ is the Fourier transform of $\mathbb{l}_{\overline{\rho}}$. We now define the Hilbert spaces $H_1 = \mathbb{R}\times (\mathcal{H}_e)^2$ and $H_2 = \mathbb{R}\times (L_e^2)^2$ endowed with their norms
\begin{equation}\label{eq:H12_norm}
    \|(\rho, \mathbf{u},\mathbf{w})\|_{H_1} = \left(|\rho|^2 + \|\mathbf{u}\|_{\mathcal{H}}^2 + \|\mathbf{w}\|_{\mathcal{H}}^2\right)^{\frac{1}{2}}~~\text{and}~~\|(\rho, \mathbf{u})\|_{H_2} = \left(|\rho|^2 + \|\mathbf{u}\|_2^2 + \|\mathbf{w}\|_{2}^2\right)^{\frac{1}{2}}.
\end{equation}
We now define
\begin{align}
    \mathbb{L}_{\rho} = \mathbb{L}(\rho) \bydef  \begin{bmatrix}
        1 & 0 & 0 \\
        0 & \mathbb{l}(\rho) & 0 \\
        0 & 0 & \mathbb{l}(\rho)
    \end{bmatrix}~~\mathbb{G}(\mbf{u},\mbf{w}) \bydef \begin{bmatrix}
        0 \\
        \mathbb{g}(\mbf{u}) \\
        D_{\mbf{u}}\mathbb{g}(\mbf{u})\mbf{w}
    \end{bmatrix}
\end{align}
and specify the definition of the \emph{Saddle-Node Map} $\mathbb{F} : H_1 \to H_2$  \eqref{eq:saddle_node_map} as follows
\begin{equation}
  \mathbb{F}(\mathbf{x}) \bydef \begin{bmatrix}
      (\mathbb{l}_{\overline{\rho}}\boldsymbol{\xi}, \mbf{w})_2  -\alpha \\\mathbb{f}(\rho,\mbf{u}) \\
      D_{\mbf{u}}\mathbb{f} (\rho,\mbf{u})\mbf{w} 
  \end{bmatrix}
\end{equation}
where $\boldsymbol{\xi} \bydef (\xi_1,\xi_2) \in \mathcal{H}_e$ is a non-trivial fixed function and $\alpha \in \mathbb{R}$ is a fixed constant. 

As provided in Definition \ref{lem : saddle node def}, non-degenerate zeros of $\mathbb{F}$ provide candidates for a saddle-node bifurcation. Moreover, \citep{unbounded_domain_cadiot} provides that, given $\tilde{\mathbf{u}}\in \mathcal{H}_e$, $D\mathbb{f}(\tilde{\rho}, \tilde{\mathbf{u}}) : \mathbb{R}\times \mathcal{H}_e \to \mathbb{R}\times L^2_e$ is a Fredholm operator of index zero. The spectral condition of Definition \ref{lem : saddle node def} will then be verified in the next section. For now, we  perform rigorous existence proofs of zeros of  \eqref{eq:saddle_node_map} in $H_1$ using the following theorem.
\begin{theorem}\label{th: radii polynomial saddle}
Let $\mathbb{A} : H_2 \to H_1$ be a bounded linear operator. Moreover, let $\mathcal{Y}_0, \mathcal{Z}_1$ be non-negative constants and let  $\mathcal{Z}_2 : (0, \infty) \to [0,\infty)$ be a non-negative function  such that for all $r>0$
  \begin{align}\label{eq: definition Y0 Z1 Z2 saddle}
    \|\mathbb{A}\mathbb{F}(\overline{\mathbf{x}})\|_{H_1} \leq &\mathcal{Y}_0\\
    \|I - \mathbb{A}D\mathbb{F}(\overline{\mathbf{x}})\|_{H_1} \leq &\mathcal{Z}_1\\
    \|\mathbb{A}\left({D}\mathbb{F}(\overline{\mathbf{x}} + \mathbf{h}) - D\mathbb{F}(\overline{\mathbf{x}})\right)\|_{H_1} \leq &\mathcal{Z}_2(r)r, ~~ \text{for all } \mathbf{h} \in B_r(0)
\end{align}  
If there exists $r>0$ such that \eqref{condition radii polynomial} is satisfied, then there exists a unique $\tilde{\mathbf{x}} \in \overline{B_{r}(\overline{\mathbf{x}})} \subset H_1$ such that $\mathbb{F}(\tilde{\mathbf{x}})=0$, where $B_{r}(\overline{\mathbf{x}})$ is the open ball of radius $r$ in $H_1$ and centered at $\overline{\mathbf{x}}$. 
\end{theorem}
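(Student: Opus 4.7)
The statement of Theorem~\ref{th: radii polynomial saddle} is structurally identical to Theorem~\ref{th: radii polynomial}: the underlying spaces have been replaced by the augmented Hilbert spaces $H_1,H_2$, the map $\mathbb{f}$ by the saddle-node map $\mathbb{F}$, and the approximate solution $\overline{\mathbf{u}}\in \mathcal{H}_e$ by $\overline{\mathbf{x}}=(\overline{\nu},\overline{\mathbf{u}},\overline{\mathbf{w}})\in H_1$. Consequently, the plan is to run verbatim the Newton--Kantorovich fixed-point argument of Theorem~\ref{th: radii polynomial} in the new setting. Define
\begin{equation*}
\mathbb{T}(\mathbf{x}) \bydef \mathbf{x} - \mathbb{A}\mathbb{F}(\mathbf{x}), \qquad \mathbf{x} \in H_1,
\end{equation*}
and aim to show that, under the hypothesis \eqref{condition radii polynomial}, $\mathbb{T}$ is a contraction from $\overline{B_r(\overline{\mathbf{x}})}\subset H_1$ to itself.

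First I would establish the self-mapping property. For $\mathbf{h}\in \overline{B_r(0)}\subset H_1$, write
\begin{equation*}
\mathbb{T}(\overline{\mathbf{x}}+\mathbf{h})-\overline{\mathbf{x}} = -\mathbb{A}\mathbb{F}(\overline{\mathbf{x}}) + (I-\mathbb{A}D\mathbb{F}(\overline{\mathbf{x}}))\mathbf{h} - \mathbb{A}\!\int_{0}^{1}\bigl(D\mathbb{F}(\overline{\mathbf{x}}+t\mathbf{h})-D\mathbb{F}(\overline{\mathbf{x}})\bigr)\mathbf{h}\, dt,
\end{equation*}
using the fundamental theorem of calculus applied to $\mathbb{F}$ (which is smooth on $H_1$ since $\mathbb{f}$ and $D_{\mathbf{u}}\mathbb{f}$ are smooth on $\mathcal{H}_e$ by Lemma~\ref{corr : banach algebra}, and the scalar component $(\mathbb{l}_{\overline{\nu}}\boldsymbol{\xi},\mathbf{w})_2-\alpha$ is affine). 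Taking norms in $H_1$ and invoking the three hypotheses in \eqref{eq: definition Y0 Z1 Z2 saddle} yields
\begin{equation*}
\|\mathbb{T}(\overline{\mathbf{x}}+\mathbf{h})-\overline{\mathbf{x}}\|_{H_1} \le \mathcal{Y}_0 + \mathcal{Z}_1 r + \tfrac{1}{2}\mathcal{Z}_2(r)r^2,
\end{equation*}
which is $\le r$ precisely under the first part of \eqref{condition radii polynomial}.

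Next I would prove the contraction estimate. For $\mathbf{h}_1,\mathbf{h}_2\in \overline{B_r(0)}$, a second application of the FTC gives
\begin{equation*}
\mathbb{T}(\overline{\mathbf{x}}+\mathbf{h}_1)-\mathbb{T}(\overline{\mathbf{x}}+\mathbf{h}_2) = \int_0^1 \bigl[(I-\mathbb{A}D\mathbb{F}(\overline{\mathbf{x}})) + \mathbb{A}\bigl(D\mathbb{F}(\overline{\mathbf{x}})-D\mathbb{F}(\overline{\mathbf{x}}+\mathbf{h}_2+t(\mathbf{h}_1-\mathbf{h}_2))\bigr)\bigr](\mathbf{h}_1-\mathbf{h}_2)\, dt,
\end{equation*}
and the $\mathcal{Z}_1,\mathcal{Z}_2$ bounds give a Lipschitz constant $\mathcal{Z}_1+\mathcal{Z}_2(r)r<1$ by the second part of \eqref{condition radii polynomial}. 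The Banach fixed-point theorem then furnishes a unique $\tilde{\mathbf{x}}\in \overline{B_r(\overline{\mathbf{x}})}$ with $\mathbb{T}(\tilde{\mathbf{x}})=\tilde{\mathbf{x}}$, i.e.\ $\mathbb{A}\mathbb{F}(\tilde{\mathbf{x}})=0$.

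The one delicate point, and the only place where the adaptation from Theorem~\ref{th: radii polynomial} is not entirely cosmetic, is to upgrade $\mathbb{A}\mathbb{F}(\tilde{\mathbf{x}})=0$ to $\mathbb{F}(\tilde{\mathbf{x}})=0$. I would handle this exactly as in \cite{unbounded_domain_cadiot,gs_cadiot_blanco}: from $\|I-\mathbb{A}D\mathbb{F}(\overline{\mathbf{x}})\|_{H_1}\le \mathcal{Z}_1<1$, a Neumann series shows $\mathbb{A}D\mathbb{F}(\overline{\mathbf{x}}):H_1\to H_1$ is invertible. Writing $\mathbb{A}\mathbb{F}(\tilde{\mathbf{x}})-\mathbb{A}\mathbb{F}(\overline{\mathbf{x}}) = \mathbb{A}\!\int_0^1 D\mathbb{F}(\overline{\mathbf{x}}+t(\tilde{\mathbf{x}}-\overline{\mathbf{x}}))(\tilde{\mathbf{x}}-\overline{\mathbf{x}})\,dt$ and using that this averaged operator differs from $\mathbb{A}D\mathbb{F}(\overline{\mathbf{x}})$ by a term of operator norm at most $\mathcal{Z}_2(r)r$ (so is still invertible since $\mathcal{Z}_1+\mathcal{Z}_2(r)r<1$), one deduces that $\mathbb{A}$ is injective on the range of the relevant averaged derivative, and therefore $\mathbb{F}(\tilde{\mathbf{x}})=0$. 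Uniqueness in $\overline{B_r(\overline{\mathbf{x}})}$ is inherited from the uniqueness of the fixed point of $\mathbb{T}$. Since this argument is well documented in the cited references and essentially identical to the proof of Theorem~\ref{th: radii polynomial}, the paper would most naturally just refer to it rather than repeat the calculation.
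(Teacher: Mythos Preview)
Your proposal is correct and takes exactly the same approach as the paper: the paper's proof is the single sentence ``The same proof of Theorem~\ref{th: radii polynomial} applies with the changing of the map and spaces,'' which is precisely what you have spelled out in detail. Your closing remark that the paper would likely just refer back to Theorem~\ref{th: radii polynomial} rather than repeat the computation is spot on.
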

\begin{proof}
The same proof of Theorem \ref{th: radii polynomial} applies with the changing of the map and spaces.
\end{proof}

\par The augmented zero finding problem \eqref{eq:saddle_node_map} has a periodic boundary value problem correspondence. In fact, as $\boldsymbol{\xi} \in L^2$, we can define $\boldsymbol{\Xi} \bydef \bgam(\boldsymbol{\xi}) \in \ell^2_e$. Then we define $X_1 \bydef \mathbb{R}\times (X^l_e)^2$ and $X_2 \bydef \mathbb{R} \times (\ell^2_e)^2$ the corresponding Hilbert spaces associated to their natural norms
{\small\begin{align}
    \|(\rho,\mbf{U},\mbf{W})\|_{X_1} \bydef \left( |\rho|^2 + {|\om|}\|\mbf{U}\|_l^2 + |\om|\|\mbf{W}\|_{\mathcal{H}}^2\right)^{\frac{1}{2}}, ~\|(\rho,\mbf{U},\mbf{W})\|_{X_2} \bydef \left( |\rho|^2 + {|\om|}\|\mbf{U}\|_2^2 + |\om| \|\mbf{W}\|_2^2\right)^{\frac{1}{2}}.
\end{align}}
 Note that the normalization by ${|\om|}$ is useful to obtain an isometry (cf. Lemma \ref{lem : gamma and Gamma properties}). Next, we introduce $\mbf{X} \bydef (\rho, \bgam(\mbf{u}),\bgam(\mbf{w})) = (\rho,\mbf{U},\mbf{W}) \in X_1$.
 Finally, we  define a corresponding operator $F : X_1 \to X_2$ along with $L$ and $G$ as 
 \begin{align}
    F(\mbf{X}) \bydef \begin{bmatrix} |\om|((l_{\overline{\rho}}\boldsymbol{\Xi}, \mbf{W})_2  -\alpha\\
    f(\rho,\mbf{U}) \\
    D_{\mbf{U}} f(\rho,\mbf{U})\mbf{W}\end{bmatrix},~
    L = L(\rho) \bydef \begin{bmatrix}
        1 & 0 & 0 \\
        0 & l & 0 \\
        0 & 0 & l
    \end{bmatrix},~~ G(\mbf{U},\mbf{W}) \bydef \begin{bmatrix}
        0 \\
        g(\mbf{U}) \\
        D_{\mbf{U}}g(\mbf{U})\mbf{W}
    \end{bmatrix}
\end{align}
where $f(\rho,\mbf{U}) = f(\mbf{U})$ is defined as in \eqref{eq : F(U)=0 in X^l_e}. 
Note that the construction of $\overline{\mbf{u}}$ can be done the same way as in Section \ref{sec : u0}. Furthermore, one can then construct $\overline{\mbf{w}}$ by computing the approximate eigenvector corresponding to the $0$ eigenvalue of $DF(\overline{U})$. Let us now discuss the approximate inverse, $\mathbb{A}$.

\subsection{Approximate Inverse}
We now briefly recall the setup of Section 5.2 of \citep{unbounded_domain_cadiot} to build the approximate inverse. Denote $H_{2,\om} \bydef \{(\rho,\mbf{u},\mbf{w}) \in H_2, ~~ \text{supp}(\mbf{u}) \subset \overline{\om}, \text{supp}(\mbf{w}) \subset \overline{\om}\}$ and $H_{1,\om}\bydef \{(\rho,\mbf{u},\mbf{w}) \in H_1, ~~ \text{supp}(\mbf{u}) \subset \overline{\om}, \text{supp}(\mbf{w}) \subset \overline{\om}\}$. By construction, we have $\overline{\mathbf{x}} = (\overline{\rho},\overline{\mathbf{u}},\overline{\mathbf{w}}) \in H_{1,\om}$. 
Recall that $\mathcal{B}(Y)$ is the set of bounded linear operators on some space $Y$. Denote by $\mathcal{B}_\om(H_2)$ the following  subset of $\mathcal{B}(H_2)$ 
\[
\mathcal{B}_\om(H_2) \bydef \left\{\mathbb{K} \in \mathcal{B}(H_2), ~~ \mathbb{K} = \begin{bmatrix}
        1&0&0\\
        0&\cha&0\\
        0&0&\cha
    \end{bmatrix} \mathbb{K} \begin{bmatrix}
        1&0&0\\
        0&\cha&0\\
        0&0&\cha
    \end{bmatrix}\right\}.
\]
Now,  let $\og : H_2 \to X_2$, $\ogd : X_2 \to H_2$,$\oGG : \mathcal{B}(H_2) \to \mathcal{B}(X_2)$, and $\oGGD : \mathcal{B}(X_2) \to \mathcal{B}(H_2)$ be defined as follows
\begin{align}
    &\og \bydef \begin{bmatrix}
        1 & 0 & 0\\
        0 & \bgam & 0 \\
        0 & 0 & \bgam
    \end{bmatrix}
    ,~ \ogd \bydef \begin{bmatrix}
        1 & 0 & 0\\
        0 & \bgam^\dagger & 0 \\
        0 & 0 & \bgam^\dagger
    \end{bmatrix},~
    \oGG\left(\mathbb{K}\right) \bydef \og \mathbb{K} \ogd
    ,~ \text{ and } ~ \oGGD\left(K\right) \bydef \ogd K \og
\end{align}
for all $\mathbb{K} \in \mathcal{B}(H_2)$ and all $K \in \mathcal{B}(X_2).$ 
We now recall the equivalent to Lemma \ref{lem : gamma and Gamma properties} for this case.

\begin{lemma}\label{lem : properties constraigned gamma}
    The map $\og : H_{2,\om} \to X_2$ (respectively $\oGG : \mathcal{B}_\om(H_2) \to \mathcal{B}(X_2)$) is an isometric 
    isomorphism whose inverse is $\ogd : X_2 \to H_{2,\om}$ 
    (respectively $\oGGD : \mathcal{B}(X_2) \to \mathcal{B}_\om(H_2)$). In particular,
    \begin{align*}
        \|(\rho,\mbf{u},\mbf{w})\|_{H_2} = \|(\rho,\mbf{U},\mbf{W})\|_{X_2} ~~ \text{ and } ~~ \|\mathbb{K}\|_{H_2} = \|K\|_{X_2} 
    \end{align*}
    for all $(\rho,\mbf{u},\mbf{w}) \in H_{2,\om}$ and all $\mathbb{K} \in \mathcal{B}_\om(H_2)$, where $(\rho,\mbf{U},\mbf{W}) \bydef \og\left(\rho,\mbf{u},\mbf{w}\right)$ and $K \bydef \oGG\left(\mathbb{K}\right)$.
\end{lemma}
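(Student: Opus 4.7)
The plan is to reduce this lemma to the already-established Lemma~\ref{lem : gamma and Gamma properties} by exploiting the block-diagonal structure of $\og$ and $\ogd$. Concretely, both $\og$ and $\ogd$ act as the identity on the scalar $\mathbb{R}$-component, and act by $\bgam$ (respectively $\bgam^\dagger$) componentwise on the two $L^2_e$ (respectively $\ell^2_e$) factors. Since $\mathcal{B}_\om(H_2)$ is precisely the set of operators that preserve the block structure determined by the characteristic function $\cha$ in the second and third slots, the algebraic manipulations mirror those of the scalar case one block at a time.

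First, I would establish the isometry for $\og : H_{2,\om} \to X_2$. Given $(\nu,\mathbf{u},\mathbf{w}) \in H_{2,\om}$, both $\mathbf{u}$ and $\mathbf{w}$ lie in $L^2_{e,\om}$, so Lemma~\ref{lem : gamma and Gamma properties} yields $\|\mathbf{u}\|_2^2 = |\om|\|\bgam(\mathbf{u})\|_2^2$ and the analogous identity for $\mathbf{w}$. Adding $|\nu|^2$ on both sides and comparing with the definitions \eqref{eq:H12_norm} of the $X_2$ and $H_2$ norms delivers $\|\og(\nu,\mathbf{u},\mathbf{w})\|_{X_2} = \|(\nu,\mathbf{u},\mathbf{w})\|_{H_2}$ directly. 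The identity $\ogd \og = I$ on $H_{2,\om}$ and $\og \ogd = I$ on $X_2$ follow immediately by applying the corresponding componentwise identities for $\bgam^\dagger \bgam$ and $\bgam \bgam^\dagger$ from Lemma~\ref{lem : gamma and Gamma properties}, using that the scalar block is simply the identity.

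Second, for the operator version $\oGG : \mathcal{B}_\om(H_2) \to \mathcal{B}(X_2)$, I would combine the isometry of $\og$ and $\ogd$ just established. For $\mathbb{K} \in \mathcal{B}_\om(H_2)$, the definition of $\mathcal{B}_\om(H_2)$ ensures that $\mathbb{K}$ factors through $H_{2,\om}$, so the operator norm of $\mathbb{K}$ is realized on inputs supported in $H_{2,\om}$. Writing $\oGG(\mathbb{K}) = \og \mathbb{K} \ogd$ and taking operator norms, the isometries collapse the outer factors and yield $\|\oGG(\mathbb{K})\|_{X_2} = \|\mathbb{K}\|_{H_2}$. The inverse relation $\oGGD \oGG = I$ on $\mathcal{B}_\om(H_2)$ and $\oGG \oGGD = I$ on $\mathcal{B}(X_2)$ again reduces to applying Lemma~\ref{lem : gamma and Gamma properties} blockwise, now in the operator formulation.

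The argument is essentially bookkeeping and I do not expect a significant obstacle. The only point requiring care is verifying that the extra scalar $\mathbb{R}$-factor, which appears with coefficient $1$ rather than $|\om|$ in the $X_1$ and $X_2$ norms, is consistently treated: the factor of $|\om|$ in the definitions of $\|\cdot\|_{X_1}$ and $\|\cdot\|_{X_2}$ on the $\mathbf{U}$ and $\mathbf{W}$ slots is precisely what makes $\og$ (rather than some weighted variant) an isometry. Once this normalization is traced through, the proof is an immediate consequence of Lemma~\ref{lem : gamma and Gamma properties} applied independently to the two Fourier blocks.
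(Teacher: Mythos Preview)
Your proposal is correct and matches the paper's approach: the paper does not give an explicit proof but merely states the lemma as ``the equivalent to Lemma~\ref{lem : gamma and Gamma properties} for this case,'' so reducing to Lemma~\ref{lem : gamma and Gamma properties} via the block-diagonal structure is exactly the intended argument. Your observation about the $|\om|$ normalization in the $X_2$ norm is the only point worth noting, and you have traced it correctly.
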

Finally, let $N \in \mathbb{N}$ ($N$ is the size of the numerical approximation) and define $\overline{\bpi}^N$ and $\overline{\bpi}_N$ as 
\begin{align}
    \overline{\bpi}^{\leq N} \bydef \begin{bmatrix}
        1 & 0 & 0\\
        0 & \bpi^{\leq N} & 0 \\
        0 & 0 & \bpi^{\leq N}
    \end{bmatrix} 
  ~~  \text{ and } \obpi^{>N} \bydef \begin{bmatrix}
        0 & 0 & 0\\
        0 & \bpi^{>N} & 0 \\
        0 & 0 & \bpi^{>N}
    \end{bmatrix}, 
\end{align}
where $\bpi^{\leq N}$ and $\bpi^{>N}$ are given in \eqref{def : piN and pisubN}.

As before, we want to approximate the inverse of $D\mathbb{F}(\overline{\mathbf{x}})$ by some operator $\mathbb{A} : H_2 \to H_1$ through the construction of an operator $\mathbb{B} : H_2 \to H_2$, where $\mathbb{A} = \mathbb{L}_{\overline{\rho}}^{-1} \mathbb{B}.$ 
As described in Section 5 of \citep{unbounded_domain_cadiot}, we compute a numerical approximate inverse $B^N : X_2 \to X_2$ for $\overline{\bpi}^{\leq N}DF(\overline{\mathbf{X}})L_{\overline{\rho}}^{-1}\overline{\bpi}^{\leq N}$ such that $B^N  = \overline{\bpi}^{\leq N} B^N \overline{\bpi}^{\leq N}$. Then, we define $\mathbb{A} : H_2 \to H_1$ and $A : X_2 \to X_1$ as 
\begin{align}\label{def : operator A constraigned}
    \mathbb{A} \bydef \mathbb{L}_{\overline{\rho}}^{-1}\left(\begin{bmatrix}
        0&0&0\\
        0&\out&0\\
        0&0&\out
    \end{bmatrix} + \oGGD\left(\overline{\bpi}^{>N} + B^N\right) \right) ~~\text{ and } ~~ A \bydef L_{\overline{\rho}}^{-1}\left(\overline{\bpi}^{>N} + B^N\right).
\end{align}
 With $\mathbb{A}$ now constructed, we are able to apply Theorem \ref{th: radii polynomial} to \eqref{eq:saddle_node_map}. What remains is to compute the needed bounds, $ \mathcal{Y}_0,  \mathcal{Z}_1,$ and $ \mathcal{Z}_2 $ in this case. This will be the focus of the next section.
\subsection{Bounds for Saddle Nodes}\label{sec : Bounds for Saddle Nodes}
In this section, we compute the bounds of Theorem \ref{th: radii polynomial saddle} in order to prove existence and uniqueness of zeros to \eqref{eq:saddle_node_map}. As a result, we obtain a rigorous enclosure of a saddle node bifurcation to \eqref{eq : gen_model}. We will begin with the $\mathcal{Y}_0$ bound.
This can be done by combining the approach in Section 6 of \citep{unbounded_domain_cadiot} to treat a scalar PDE with an extra equation and that of Section 4.1 of \citep{gs_cadiot_blanco} where a system of PDEs was considered. 
The estimate for $\mathcal{Y}_0$ is similar to that of Lemma \ref{lem : bound Y_0}. Indeed, we introduce the projection operator, and take care of the extra equation to perform the estimate.
\begin{lemma}\label{lem : bound Y_0 saddle}
Let $\mathcal{Y}_0 >0$ be such that 
{\small\begin{align}
    &\mathcal{Y}_0 \bydef |\om|^{\frac{1}{2}} \sqrt{\left\|B^N\begin{bmatrix} ((l_{\overline{\rho}}\boldsymbol{\Xi}, \mbf{W}_{0})_2  - \alpha\\
    f(\overline{\rho},\overline{\mathbf{U}}) \\
    D_{\mbf{U}} f(\overline{\rho},\overline{\mathbf{U}})\overline{\mathbf{W}}\end{bmatrix}\right\|_{2}^2 + \left\| \begin{bmatrix}
        0 \\
       (\bpi^{\leq N_0} - \bpi^{\leq N}) l_{\overline{\rho}} \overline{\mathbf{U}} + (\bpi^{3N_0} - \bpi^{\leq N})g(\overline{\mathbf{U}})  \\
        (\bpi^{\leq N_0} - \bpi^{\leq N})l_{\overline{\rho}} \overline{\mathbf{W}} + (\bpi^{3N_0} - \bpi^{\leq N})D_{\mbf{U}}g(\overline{\mathbf{U}})\overline{\mathbf{W}}
    \end{bmatrix}\right\|_{2}^2}.
\end{align}}
Then $\|\mathbb{A}\mathbb{F}(\overline{\mathbf{x}})\|_{H_1} \leq \mathcal{Y}_0.$
\end{lemma}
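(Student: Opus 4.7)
The plan is to adapt the proof of Lemma \ref{lem : bound Y_0} to the augmented Saddle-Node system, in four connected steps. First, since $\mathbb{A} = \mathbb{L}_{\overline{\nu}}^{-1}\mathbb{B}$ and $\mathbb{L}_{\overline{\nu}} : H_1 \to H_2$ is an isometric isomorphism (by the choice of $\mathcal{H}$-norm in \eqref{eq:h-norm-saddle}, together with the scalar entry being preserved), I obtain $\|\mathbb{A}\mathbb{F}(\overline{\mathbf{x}})\|_{H_1} = \|\mathbb{B}\mathbb{F}(\overline{\mathbf{x}})\|_{H_2}$. Next, since $\overline{\mathbf{u}}, \overline{\mathbf{w}} \in \mathcal{H}_{e,\om}$ by construction, and both $\mathbb{l}_{\overline{\nu}}$ (a local differential operator) and $\mathbb{g}$ (pointwise polynomial) preserve support, the space components of $\mathbb{F}(\overline{\mathbf{x}})$ are supported in $\overline{\om}$, so $\mathbb{F}(\overline{\mathbf{x}}) \in H_{2,\om}$. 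Consequently the $\out$-block of $\mathbb{B}$ annihilates $\mathbb{F}(\overline{\mathbf{x}})$, leaving $\mathbb{B}\mathbb{F}(\overline{\mathbf{x}}) = \oGGD(\overline{\bpi}^{>N} + B^N)\mathbb{F}(\overline{\mathbf{x}}) \in H_{2,\om}$.

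Invoking Lemma \ref{lem : properties constraigned gamma} then transfers the computation to Fourier coefficients:
\begin{align*}
\|\mathbb{B}\mathbb{F}(\overline{\mathbf{x}})\|_{H_2} = \|(\overline{\bpi}^{>N} + B^N) F(\overline{\mathbf{X}})\|_{X_2},
\end{align*}
where the identity $\og \mathbb{F}(\overline{\mathbf{x}}) = F(\overline{\mathbf{X}})$ is verified entry-by-entry; in particular, Parseval applied to functions with support in $\om$ gives $(\mathbb{l}_{\overline{\nu}}\boldsymbol{\xi}, \overline{\mathbf{w}})_2 = |\om|(l_{\overline{\nu}}\boldsymbol{\Xi}, \overline{\mathbf{W}})_2$, which matches the first component of $F$, while the sequence components follow from $\bgam$ intertwining $\mathbb{l}_{\overline{\nu}}$ with $l_{\overline{\nu}}$ and $\mathbb{g}$ with its convolutional counterpart $g$.

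Because $B^N = \overline{\bpi}^{\leq N}B^N\overline{\bpi}^{\leq N}$ and $\overline{\bpi}^{>N}$ project onto disjoint Fourier bands, the two contributions are orthogonal in $X_2$, yielding
\begin{align*}
\|(\overline{\bpi}^{>N} + B^N) F(\overline{\mathbf{X}})\|_{X_2}^2 = \|B^N F(\overline{\mathbf{X}})\|_{X_2}^2 + \|\overline{\bpi}^{>N} F(\overline{\mathbf{X}})\|_{X_2}^2.
\end{align*}
The tail term is made explicit by using $\bpi^{\leq N_0}\overline{\mathbf{U}} = \overline{\mathbf{U}}$ and $\bpi^{\leq N_0}\overline{\mathbf{W}} = \overline{\mathbf{W}}$: the linear pieces $l_{\overline{\nu}}\overline{\mathbf{U}}, l_{\overline{\nu}}\overline{\mathbf{W}}$ have Fourier support $\leq N_0$, while the quadratic and cubic convolutions arising in $g(\overline{\mathbf{U}})$ and $D_{\mbf{U}}g(\overline{\mathbf{U}})\overline{\mathbf{W}}$ have support $\leq 3N_0$. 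Hence $\bpi^{>N}$ reduces to $\bpi^{\leq N_0}-\bpi^{\leq N}$ on the linear terms and $\bpi^{\leq 3N_0}-\bpi^{\leq N}$ on the nonlinear ones, reproducing exactly the second vector inside the square root in $\mathcal{Y}_0$, and the scalar component of $\overline{\bpi}^{>N}F(\overline{\mathbf{X}})$ vanishes by the block structure of $\overline{\bpi}^{>N}$.

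Finally, to match the stated formula, I would upper-bound $\|\cdot\|_{X_2}^2$ by $|\om|$ times the unweighted sum of squared entries on $(|\nu|, \mathbf{U}, \mathbf{W})$; this is valid since $|\om| = 2d \geq 2$ and it introduces only a harmless overcount of the scalar entry of $B^NF(\overline{\mathbf{X}})$. Combining with the previous step produces $\mathcal{Y}_0$ as stated. The main obstacle is really just careful bookkeeping around the scalar/sequence weight mismatch in the $X_2$-norm and the verification that $\og$ intertwines $\mathbb{F}$ with $F$ (including the $|\om|$ factor absorbed into the first component of $F$); everything else is the same aliasing-and-projection computation used in the localized-pattern case.
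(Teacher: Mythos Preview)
Your proof is correct and follows essentially the same approach as the paper's own proof, which is quite terse and simply refers back to Lemma~\ref{lem : bound Y_0}. You actually provide more detail than the paper does, in particular the careful bookkeeping around the scalar/sequence weight mismatch in the $X_2$-norm (handled via $|\om|\geq 2$), which the paper leaves implicit.
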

\begin{proof}
Following similar to steps to those from Lemma \ref{lem : bound Y_0}, we have
{\small\begin{align}
    \|\mathbb{A}\mathbb{F}(\overline{\mathbf{x}})\|_{H_1} = \|\mathbb{B}\mathbb{F}(\overline{\mathbf{x}})\|_{H_2} = \|B F(\overline{\mathbf{X}})\|_{X_2} &\leq \left(\left\|B^NF(\overline{\mathbf{X}})\right\|_{X_2}^2 + \left\| \begin{bmatrix} 0\\
    \bpi^{>N} f(\overline{\rho},\overline{\mathbf{U}}) \\
    \bpi^{>N} D_{\mbf{U}} f(\overline{\rho},\overline{\mathbf{U}})\overline{\mathbf{W}}\end{bmatrix}\right\|_{X_2}^2\right)^{\frac{1}{2}}.
\end{align}}
Following the same steps as Lemma \ref{lem : bound Y_0} but on the larger system with more components will yield the result. 
\end{proof}
%
%
%
Next, we compute the $\mathcal{Z}_2$ bound. Due to the presence of the extra equation, the computation of $\mathcal{Z}_2$ changes significantly. We summarize the results in the following lemma.
\begin{lemma}\label{lem : bound Z_2 saddle}
Let 
\begin{align}\label{def : Z2 saddle definitions}
    &q_1 \bydef  2\overline{u}_2 + 2\nu_4 \mathbb{1}_{\om},~
        q_2 \bydef 2\overline{u}_1 +\nu_5 \mbb{1}_{\Omega_0},~ q_3 \bydef 2\overline{w}_2,~ q_4 \bydef 2\overline{w}_1 \\
    &\mathbb{B}_{1\to5,24} \bydef  \begin{bmatrix}
        0 & \mathbb{B}_{12} & 0 & \mathbb{B}_{14} & 0\\
        0 & \mathbb{B}_{22} & 0 &\mathbb{B}_{24} & 0\\
        0 & \mathbb{B}_{32} & 0 & \mathbb{B}_{34} & 0 \\
        0 & \mathbb{B}_{42} & 0 &\mathbb{B}_{44} & 0\\
        0 & \mathbb{B}_{52} & 0 & \mathbb{B}_{54} & 0
    \end{bmatrix},~B_{1\to5,24}^N \bydef 
    \obpi^{\leq N}\oGGD( \mathbb{B}_{1\to5,24}) \end{align}
    {\small\begin{align}
    &M_1 \bydef \begin{bmatrix}
         B_{12}^N-B_{13}^N &  B_{14}^N-B_{15}^N \\
         B_{22}^N-B_{23}^N & B_{24}^N-B_{25}^N \\
          B_{32}^N- B_{33}^N  & B_{34}^N - B_{35}^N  \\
          B_{42}^N-B_{43}^N  &  B_{44}^N-B_{45} \\
         B_{52}^N-B_{53}^N  & B_{54}^N- B_{55}^N
    \end{bmatrix},~M_2 \bydef \begin{bmatrix}
        \mathbb{Q}_1^2 + \mathbb{Q}_2^2 & \mathbb{Q}_1 \mathbb{Q}_3 + \mathbb{Q}_2 \mathbb{Q}_4 \\
        \mathbb{Q}_3 \mathbb{Q}_1 + \mathbb{Q}_4 \mathbb{Q}_2 & \mathbb{Q}_3^2 + \mathbb{Q}_4^2 + \mathbb{Q}_1^2 + \mathbb{Q}_2^2
    \end{bmatrix}.
\end{align}}
Additionally, we introduce the non-negative function $\mathcal{Z}_{2,2}: [0,\infty) \to (0,\infty)$ for  and the non-negative constants $\mathcal{Z}_{2,j} > 0$ for $j = 1,3,4,5$ as 
\begin{align}
    &\mathcal{Z}_{2,1} \bydef \max(1,\|B^N_{1\to5,24}\|_{X_2}) \left\|\begin{bmatrix}
        -|2\pi \xi|^2  & 0 \\
        0  & 0
    \end{bmatrix}\mathscr{l}^{-1}(\xi)\right\|_{\mathcal{M}_1},~\mathcal{Z}_{2,2}(r) \bydef \|M_1\|_{2,X_2}2\sqrt{10} \kappa \|\mathscr{l}^{-1}\|_{\mathcal{M}_2}r  \\
    &\mathcal{Z}_{2,3} \bydef \|\obpi^{\leq N} M_1 M_2 M_1^* \obpi^{\leq N}\|_{X_2},~\mathcal{Z}_{2,4} \bydef  \sqrt{\|\obpi^{\leq N} M_1 M_2 \bpi^{>N} M_2^* M_1^* \obpi^{\leq N}\|_{X_2}},\label{def : Z2js saddle} \\
    &\mathcal{Z}_{2,5} \bydef \varphi(\|Q_1^2 + Q_2^2\|_{1}, \|Q_1 Q_3 + Q_2 Q_4\|_{1}, \|Q_3 Q_1 + Q_4 Q_2\|_{1}, \|Q_3^2 + Q_4^2 + Q_1^2 + Q_2^2\|_{1}).
\end{align}
Finally, we let $\mathcal{Z}_2(r): [0,\infty) \to (0,\infty)$ be a non-negative function defined as 
\begin{align}
    \mathcal{Z}_2(r) \bydef (1+\sqrt{3})\mathcal{Z}_{2,1} + \mathcal{Z}_{2,2}(r)  + 5\kappa  \sqrt{\varphi(\mathcal{Z}_{2,3},\mathcal{Z}_{2,4},\mathcal{Z}_{2,4},\mathcal{Z}_{2,5})}.
\end{align}
Then, it follows that $\|\mathbb{A}(D\mathbb{F}(\mbf{h} + \overline{\mathbf{x}}) - D\mathbb{F}(\overline{\mathbf{x}}))\|_{H_1} \leq \mathcal{Z}_2(r)r$
\end{lemma}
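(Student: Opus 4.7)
The plan is to extend the argument of Lemma \ref{lem : Z2 patterns} to the augmented saddle-node system. Since $\mathbb{f}(\nu, \mathbf{u}) = \mathbb{l}(\nu)\mathbf{u} + \mathbb{g}(\mathbf{u})$ with $\mathbb{l}$ depending linearly on $\nu$ via the term $\nu\Delta u_1$, the parameter derivative $\partial_\nu\mathbb{l}$ equals the constant operator $E_\nu \bydef \mathrm{diag}(\Delta, 0)$. Writing $D\mathbb{F}(\mathbf{x})$ explicitly on a tangent vector $(\eta,\mathbf{v},\mathbf{z}) \in H_1$ gives
\begin{equation*}
D\mathbb{F}(\mathbf{x})(\eta,\mathbf{v},\mathbf{z}) = \begin{bmatrix}
(\mathbb{l}_{\bar\nu}\boldsymbol{\xi}, \mathbf{z})_2 \\
\eta E_\nu \mathbf{u} + \mathbb{l}(\nu)\mathbf{v} + D\mathbb{g}(\mathbf{u})\mathbf{v} \\
\eta E_\nu \mathbf{w} + \mathbb{l}(\nu)\mathbf{z} + D\mathbb{g}(\mathbf{u})\mathbf{z} + D^2\mathbb{g}(\mathbf{u})(\mathbf{v}, \mathbf{w})
\end{bmatrix}.
\end{equation*}
Taking the difference at $\overline{\mathbf{x}}+\mathbf{h}$ and $\overline{\mathbf{x}}$ and using that $\mathbb{g}$ is cubic (so $D^3\mathbb{g}$ is constant and $D^2\mathbb{g}$ is affine in $\mathbf{u}$), the first block vanishes identically while the remaining blocks split into three groups: (i) linear $E_\nu$-contributions $\eta E_\nu \mathbf{h}_u$, $h_\nu E_\nu \mathbf{v}$, $\eta E_\nu \mathbf{h}_w$, $h_\nu E_\nu \mathbf{z}$; (ii) bilinear $D^2\mathbb{g}$-contributions of the form $D^2\mathbb{g}(\overline{\mathbf{u}})(\mathbf{h}_u, \cdot)$, $D^2\mathbb{g}(\overline{\mathbf{u}})(\cdot, \mathbf{h}_w)$, and $D^3\mathbb{g}(\mathbf{h}_u, \cdot, \overline{\mathbf{w}})$; and (iii) purely cubic remainders such as $D^3\mathbb{g}(\mathbf{h}_u, \mathbf{h}_u, \cdot)$ and $D^3\mathbb{g}(\mathbf{h}_u, \mathbf{v}, \mathbf{h}_w)$.

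For group (i), observe that $E_\nu$ only acts on the first scalar coordinate of each of the second and third blocks of $H_2$, so the action of $\mathbb{B}$ on these inputs only involves columns $2$ and $4$ of $\mathbb{B}$; this precisely produces the operator $\mathbb{B}_{1\to 5, 24}$ from \eqref{def : Z2 saddle definitions}. Controlling the $L^2$-norm of $E_\nu \mathbf{v}$ by $\|\mathbf{v}\|_{\mathcal{H}}$ via $\|E_\nu\mathbb{l}^{-1}\|_{2} = \|\mathrm{diag}(-|2\pi\xi|^2,0)\mathscr{l}^{-1}\|_{\mathcal{M}_1}$ yields the symbol-norm factor appearing in $\mathcal{Z}_{2,1}$. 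Summing the four $E_\nu$-terms and separating the scalar factors $h_\nu, \eta$ from the function factors $\mathbf{v},\mathbf{z},\mathbf{h}_u,\mathbf{h}_w$ via Cauchy--Schwarz produces the $(1+\sqrt{3})\mathcal{Z}_{2,1}$ prefactor.

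For group (iii), each cubic term inherits the $(1,-1)$ antisymmetric structure of $\mathbb{g}$, so applying $\mathbb{B}$ collapses each block into a column of $M_1$ (columns $2-3$ for remainders living in block 2, columns $4-5$ for those in block 3). Lemma \ref{corr : banach algebra} bounds each triple product by $\kappa \|\mathscr{l}^{-1}\|_{\mathcal{M}_2}$ times a product of three $\mathcal{H}$-norms, and counting contributions (one remainder in block 2 and two in block 3, each carrying two $\mathbf{h}$-factors) yields the $2\sqrt{10}\,\kappa\|\mathscr{l}^{-1}\|_{\mathcal{M}_2}r$ factor in $\mathcal{Z}_{2,2}(r)$. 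For group (ii), the plan is to follow the operator-norm strategy of Lemma \ref{lem : Z2 patterns}: each bilinear term factors as multiplication by one of the symbols $q_1, q_2$ (second-order coefficients of $\mathbb{g}$ evaluated at $\overline{\mathbf{u}}$) or $q_3, q_4$ (coming from $\overline{\mathbf{w}}$ through the extra term $D^3\mathbb{g}(\mathbf{h}_u,\mathbf{v},\overline{\mathbf{w}})$), composed with a unit-norm function factor. Arranging these across the five output rows yields the matrix $M_1$ and the convolution block $M_2$, and the resulting norm satisfies
\begin{equation*}
\|M_1 M_2 M_1^*\|_{X_2} \le \varphi(\mathcal{Z}_{2,3}, \mathcal{Z}_{2,4}, \mathcal{Z}_{2,4}, \mathcal{Z}_{2,5})
\end{equation*}
after inserting $I = \obpi^{\le N} + \obpi^{>N}$ in the middle: the truncated piece produces the finite-dimensional norms $\mathcal{Z}_{2,3}$ and $\mathcal{Z}_{2,4}$, the tail is estimated via Young's inequality \eqref{young_inequality} giving $\mathcal{Z}_{2,5}$, and the three pieces are combined via Lemma \ref{lem : full_matrix_estimate}.

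The principal difficulty lies in the bookkeeping of the Taylor expansion: correctly identifying every linear, bilinear, and cubic contribution, matching each to the appropriate columns of $\mathbb{B}$, and tracking the $(1,-1)$ antisymmetry of $\mathbb{g}$ through the linearization of the $\mathbf{w}$-component of $\mathbb{F}$. Once the decomposition is settled, the individual estimates reduce to direct adaptations of Lemmas \ref{corr : banach algebra}, \ref{lem : full_matrix_estimate}, and \ref{lem : Z2 patterns}, with the new feature that the block $M_2$ now mixes symbols coming from both $\overline{\mathbf{u}}$ and $\overline{\mathbf{w}}$, explaining the asymmetric structure of the $(2,2)$-entry of $M_2$ and, correspondingly, of $\mathcal{Z}_{2,3}, \mathcal{Z}_{2,4}, \mathcal{Z}_{2,5}$.
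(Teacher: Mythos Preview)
Your outline follows essentially the same approach as the paper's proof (given in Appendix~\ref{apen : saddle node computations}): both split $D\mathbb{F}(\overline{\mathbf{x}}+\mathbf{h})-D\mathbb{F}(\overline{\mathbf{x}})$ into (i) the $E_\nu=\partial_\nu\mathbb{l}$ contributions, (ii) the bilinear $D^2\mathbb{G}(\overline{\mathbf{u}},\overline{\mathbf{w}})$ piece factored through $M_1M_2M_1^*$ and estimated via the truncation $\obpi^{\le N}+\obpi^{>N}$, and (iii) the cubic $G_3$ remainders bounded by Lemma~\ref{corr : banach algebra}. The one organizational difference is that the paper does not enumerate your four $E_\nu$ terms directly; instead it separates the Jacobian difference into its first column (the $\eta$-direction, containing $E_\nu\mathbf{h}_u$ and $E_\nu\mathbf{h}_w$) and the remaining columns (where $h_\nu E_\nu$ sits on the diagonal blocks), and these two pieces are what produce the $1$ and the $\sqrt{3}$ in $(1+\sqrt{3})\mathcal{Z}_{2,1}$ respectively. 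Your hand-wave ``Cauchy--Schwarz on four terms'' does not by itself explain that specific split, so when you write the details you should adopt the column/diagonal decomposition to recover the stated constant. Similarly, the $5\kappa$ and $2\sqrt{10}$ come from explicitly writing $D^2\mathbb{G}(\overline{\mathbf{u}},\overline{\mathbf{w}})(\mathbf{h},\mathbf{z})$ and $G_3$ in $5\times 5$ scalar form and bounding the resulting $4\times 4$ multiplication block; your outline identifies the right mechanism but does not yet pin down these constants.
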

\begin{proof}
The proof can be found in Appendix \ref{apen : saddle node computations}.
\end{proof}
%
%
%
Next, we examine the $\mathcal{Z}_1$ bound for \eqref{eq:saddle_node_map}. To begin, we introduce some new notations. First, recall the definitions of $v_1, v_2$ from \eqref{def : v1v2}. Then, we introduce
\begin{align}
    &v_3 \bydef q_1 w_1 + q_2 w_2,~\text{and}~v_4 \bydef q_2 w_1
\end{align}
and define 
\begin{align}
    \mathbb{m}(\overline{\mbf{x}}) \bydef \begin{bmatrix}
        D_{\mbf{u}}\mathbb{g}(\overline{\mathbf{u}}) & 0 \\
         D^2_{\mbf{u}}\mathbb{g}(\overline{\mathbf{u}})\overline{\mathbf{w}} & D_{\mbf{u}}\mathbb{g}(\overline{\mathbf{u}})
    \end{bmatrix} \bydef \begin{bmatrix}
        \mathbb{v}_1 & \mathbb{v}_2 & 0 & 0 \\
        -\mathbb{v}_1 & -\mathbb{v}_2 & 0 & 0 \\
        \mathbb{v}_3 & \mathbb{v}_4 & \mathbb{v}_1 & \mathbb{v}_2 \\
        -\mathbb{v}_3 & -\mathbb{v}_4 & -\mathbb{v}_1 & -\mathbb{v}_2
    \end{bmatrix},&\mathbb{m}^N(\overline{\mbf{x}}) = \begin{bmatrix}
   \mathbb{v}_1^N & \mathbb{v}_2^N & 0 & 0 \\
    -\mbb{v}_1^N & -\mbb{v}_2^N & 0 & 0 \\
     \mathbb{v}_3^N & \mathbb{v}_4^N & \mathbb{v}_1^N & \mathbb{v}_2^N \\
    -\mathbb{v}_3^N & -\mathbb{v}_4^N & -\mathbb{v}_1^N & -\mathbb{v}_2^N
    \end{bmatrix} 
\end{align}
where $\mathbb{v}_j$ is the multiplication operator associated to $v_j$ for $j = 1,2,3,4$ and
\begin{align}
    &V_j \bydef \gamma(v_j), V_j^N = \Pi^{\leq N} V_j, v_j^N \bydef \gamma^\dagger(V_j^N).
    \end{align}
For $k = 1,2$, we also define
\begin{align}
    &\Xi_k^N = \Pi^{\leq N} \Xi_k, \xi_k^N = \gamma^\dagger(\Xi_k^N) ~~U_1^N = \Pi^{\leq N} U_1, u_1^N = \gamma^\dagger(U_1^N)~~W_1^N = \Pi^{\leq N} W_1, w_1^N = \gamma^\dagger(W_1^N).
\end{align}
Now, given $u \in L^2(\mathbb{R}^2)$ and $U \in \ell^2(\mathbb{Z}^2)$, define $u^*$ and $U^*$ as the duals in $L^2(\mathbb{R}^2)$ and $\ell^2(\mathbb{Z}^2)$ respectively of $u$ and $U$. More specifically, we have 
\begin{align}
    u^*(v) \bydef (u,v)_2, ~~ U^*(V) \bydef (U,V)_2 ~~~~ \text{ for all } v \in L^2(\mathbb{R}^2) \text{ and } V \in \ell^2(\mathbb{Z}^2).
    \label{dual definition}
\end{align}
 Moreover, we slightly abuse notation and, given $u \in L^2$ and $U \in \ell^2$, we denote $u^* \mathbb{l} : \mathcal{H} \to \mathbb{R}$ and $U^* l : \mathscr{h} \to \mathbb{R}$ the following linear operators
\begin{align}\label{def : dual compose L}
    u^* \mathbb{l}v \bydef (u, \mathbb{l}v)_2, ~~ U^* lV \bydef (U, lV)_2  ~~~~ \text{ for all } v \in L^2(\mathbb{R}^2) \text{ and } V \in \ell^2(\mathbb{Z}^2).
\end{align}
Using the above notation, we introduce
\begin{align}
    \mathbb{M}(\overline{\mathbf{x}})
    &\bydef
    \begin{bmatrix}
        -1 & 0 & \boldsymbol{\xi}^* \mathbb{l}_{\overline{\rho}}^* \\
        D_{\rho} \mathbb{f}(\overline{\rho},\overline{\mathbf{u}})  & D_{\mbf{u}}\mathbb{g}(\overline{\mathbf{u}}) & 0 \\
        D_{\rho,\mbf{u}}\mathbb{f}(\overline{\rho},\overline{\mathbf{u}})\overline{\mathbf{w}} & D^2_{\mbf{u}}\mathbb{g}(\overline{\mathbf{u}})\overline{\mathbf{w}} & D_{\mbf{u}}\mathbb{g}(\overline{\mathbf{u}})
    \end{bmatrix}, \\
    \mathbb{M}^N(\overline{\mathbf{x}}) &\bydef
    \begin{bmatrix}
        -1 & 0 & (\boldsymbol{\xi}^N)^* \mathbb{l}_{\overline{\rho}}^* \\
        D_{\rho} \mathbb{f}^N(\overline{\rho},\overline{\mathbf{u}})  & D_{\mbf{u}}\mathbb{g}^N(\overline{\mathbf{u}}) & 0 \\
        D_{\rho,\mbf{u}}\mathbb{f}^N(\overline{\rho},\overline{\mathbf{u}})\overline{\mathbf{w}} & D^2_{\mbf{u}}\mathbb{g}^N(\overline{\mathbf{u}})\overline{\mathbf{w}} & D_{\mbf{u}}\mathbb{g}^N(\overline{\mathbf{u}})
    \end{bmatrix},
    M^N(\overline{\mathbf{X}}) \bydef \oGGD(\mathbb{M}^N(\overline{\mathbf{x}}))
\end{align}
We further introduce notation for clarify
\begin{align}
    \Delta U \bydef (\Delta U)_{n \in \mathbb{Z}} = (-\tilde{n}^2 u_n)_{n \in \mathbb{Z}} .
\end{align}
Now, we state the following lemma for the $\mathcal{Z}_1$ bound. The motivation is similar to that done in Lemma \ref{lem : Z_full_1}. In fact, the $\mathcal{Z}_u$ bound is mostly unchanged as the extra equation does not play a major role. The primary difference will be in the $Z_1$ bound, where the estimate will require us to treat the augmented system directly.
\begin{lemma}\label{lem : Z_full_1 saddle}
Let $\mathcal{Z}_{\infty,1},\mathcal{Z}_{\infty,2},\mathcal{Z}_{\infty,3},\mathcal{Z}_{\infty},\mathcal{Z}_u,Z_1,$ and $\mathcal{Z}_1 > 0$ be constants satisfying
{\small\begin{align} 
&\mathcal{Z}_{\infty,1} \bydef |\om| \sqrt{\|\Xi_1 - \Xi_1^N\|_{2}^2 + \|\Xi_2 - \Xi_2^N\|_{2}^2},~
\mathcal{Z}_{\infty,2} \bydef  |\om|  \sqrt{\|\Delta(\overline{U}_1 - \overline{U}_1^N)\|_{2}^2 + \|\Delta(\overline{W}_1 - \overline{W}_1^N)\|_{2}^2}, \\
&\mathcal{Z}_{\infty,3} \bydef \|\mathbb{l}_{\overline{\rho}}^{-1}\|_{2}\varphi\left(\left(\sum_{j = 1}^2 \|V_j - V_j^N\|_{1}^2\right)^{\frac{1}{2}},0,\left(\sum_{j = 3}^4 \|V_j - V_j^N\|_{1}^2\right)^{\frac{1}{2}},\left(\sum_{j = 1}^2 \|V_j - V_j^N\|_{1}^2\right)^{\frac{1}{2}} \right),
\end{align}}
\begin{align}
&\mathcal{Z}_{\infty} \bydef \varphi(0,\mathcal{Z}_{\infty,1},\mathcal{Z}_{\infty,2},\mathcal{Z}_{\infty,3}),~\mathcal{Z}_{u} \geq \left\|\mathbb{B}\begin{bmatrix}
    0 & 0 \\
    0 & \mathbb{m}^N(\overline{\mbf{x}})
\end{bmatrix}\left(\mathbb{L}_{\overline{\rho}}^{-1} - \oGG^\dagger(L_{\overline{\rho}}^{-1})\right)\right\|_{H_2}, \\
&Z_1 \geq \left\|I - B(I + M^N(\overline{\mathbf{X}})L_{\overline{\rho}}^{-1})\right\|_{X_2},~
\mathcal{Z}_1 \bydef Z_1 +  \mathcal{Z}_{u} +   \|M_1\|_{2,X_2}\mathcal{Z}_{\infty}.
\end{align}
Then, it follows that $ \|I -{\mathbb{A}}D\mathbb{F}(\overline{\mathbf{x}})\|_{H_1} \leq \mathcal{Z}_1.$
\end{lemma}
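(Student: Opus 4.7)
The strategy closely parallels Lemma~\ref{lem : Z_full_1}, adapted to the augmented system. First, I would use that $\mathbb{L}_{\overline{\nu}} : H_1 \to H_2$ is an isometric isomorphism (by the definition of the $\mathcal{H}$--norm~\eqref{eq:h-norm-saddle}) and that $\mathbb{A} = \mathbb{L}_{\overline{\nu}}^{-1}\mathbb{B}$, which lets me transfer the computation to $H_2$:
\begin{align*}
\|I - \mathbb{A}\,D\mathbb{F}(\overline{\mathbf{x}})\|_{H_1} \;=\; \|I - \mathbb{B}\,D\mathbb{F}(\overline{\mathbf{x}})\mathbb{L}_{\overline{\nu}}^{-1}\|_{H_2}.
\end{align*}
A direct differentiation of $\mathbb{F}$ in~\eqref{eq:saddle_node_map}, using that $\mathbb{g}$ is independent of $\nu$ and that $D_{\mathbf{w}}(\mathbb{l}_{\overline{\nu}}\boldsymbol{\xi},\mathbf{w})_2 = \boldsymbol{\xi}^*\mathbb{l}_{\overline{\nu}}^*$, shows $D\mathbb{F}(\overline{\mathbf{x}}) = \mathbb{L}_{\overline{\nu}} + \mathbb{M}(\overline{\mathbf{x}})$, hence $D\mathbb{F}(\overline{\mathbf{x}})\mathbb{L}_{\overline{\nu}}^{-1} = I + \mathbb{M}(\overline{\mathbf{x}})\mathbb{L}_{\overline{\nu}}^{-1}$.

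Next, I would insert $\mathbb{M}^N(\overline{\mathbf{x}})$ and $\oGG^\dagger(L_{\overline{\nu}}^{-1})$ as intermediate approximations and apply the triangle inequality to split $\|I - \mathbb{B}(I + \mathbb{M}(\overline{\mathbf{x}})\mathbb{L}_{\overline{\nu}}^{-1})\|_{H_2}$ into three pieces:
\begin{align*}
& \|I - \mathbb{B}(I + \mathbb{M}^N(\overline{\mathbf{x}})\oGG^\dagger(L_{\overline{\nu}}^{-1}))\|_{H_2} \\
& \quad + \|\mathbb{B}\,\mathbb{M}^N(\overline{\mathbf{x}})(\oGG^\dagger(L_{\overline{\nu}}^{-1}) - \mathbb{L}_{\overline{\nu}}^{-1})\|_{H_2} + \|\mathbb{B}(\mathbb{M}(\overline{\mathbf{x}}) - \mathbb{M}^N(\overline{\mathbf{x}}))\mathbb{L}_{\overline{\nu}}^{-1}\|_{H_2}.
\end{align*}
The first summand, thanks to Lemma~\ref{lem : properties constraigned gamma}, transfers isometrically to the discrete Fourier setting and equals $\|I - B(I + M^N(\overline{\mathbf{X}})L_{\overline{\nu}}^{-1})\|_{X_2} \leq Z_1$. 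The second summand is bounded by $\mathcal{Z}_u$ after noting that the first row and the first column of $\mathbb{M}^N(\overline{\mathbf{x}})$ contribute nothing to the difference $\oGG^\dagger(L_{\overline{\nu}}^{-1}) - \mathbb{L}_{\overline{\nu}}^{-1}$: the scalar block of both operators is the identity, and the $D_\nu\mathbb{l}(\overline{\nu})\overline{\mathbf{u}}$, $D_\nu\mathbb{l}(\overline{\nu})\overline{\mathbf{w}}$ entries are composed only with that scalar block. This is exactly the reason $\mathcal{Z}_u$ is defined on the block $\mathrm{diag}(0,\mathbb{m}^N(\overline{\mathbf{x}}))$.

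For the remaining term, I would factor $\mathbb{M}(\overline{\mathbf{x}}) - \mathbb{M}^N(\overline{\mathbf{x}})$ using the multiplication-operator structure: its non-zero entries involve $\boldsymbol{\xi} - \boldsymbol{\xi}^N$ (top-right), $D_\nu\mathbb{l}(\overline{\nu})(\overline{\mathbf{u}} - \overline{\mathbf{u}}^N)$ and $D_\nu\mathbb{l}(\overline{\nu})(\overline{\mathbf{w}} - \overline{\mathbf{w}}^N)$ (bottom-left), and the multiplication operators associated with $v_j - v_j^N$ for $j = 1,\ldots,4$ (bottom-right). Pulling out the operator-norm factor $\|M_1\|_{2,X_2}$, which absorbs the corresponding column of $\mathbb{B}$-blocks by Lemma~\ref{lem : properties constraigned gamma}, leaves a $2\times 2$ block operator whose norm is bounded via Lemma~\ref{lem : full_matrix_estimate} by $\varphi$ applied to the four block norms. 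The top-left block is identically zero (giving the leading $0$ in $\mathcal{Z}_\infty$); Parseval and the fact that $D_\nu\mathbb{l}(\overline{\nu}) = \mathrm{diag}(\Delta,0)$ produce $\mathcal{Z}_{\infty,1}$ and $\mathcal{Z}_{\infty,2}$; and Lemma~\ref{corr : banach algebra} combined with $\|\mathbb{l}_{\overline{\nu}}^{-1}\|_2$ and a further application of Lemma~\ref{lem : full_matrix_estimate} on the $4\times 4$ multiplication block produces $\mathcal{Z}_{\infty,3}$, yielding the contribution $\|M_1\|_{2,X_2}\mathcal{Z}_\infty$. Summing the three pieces gives $\mathcal{Z}_1$. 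The main obstacle is bookkeeping: matching the $5\times 5$ block structure of $\mathbb{M}(\overline{\mathbf{x}})$ with the $5\times 2$ layout of $M_1$ and with the $2\times 2$ grouping required by $\varphi$, while carefully extracting the $\Delta$ factor from $D_\nu\mathbb{l}(\overline{\nu})$ so that the bound on the bottom-left block becomes $\mathcal{Z}_{\infty,2}$ rather than being lost inside an $\mathbb{l}_{\overline{\nu}}^{-1}$ factor.
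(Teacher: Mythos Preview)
Your proposal is correct and follows essentially the same route as the paper's proof: transfer to $H_2$ via the isometry $\mathbb{L}_{\overline{\nu}}$, write $D\mathbb{F}(\overline{\mathbf{x}})\mathbb{L}_{\overline{\nu}}^{-1} = I + \mathbb{M}(\overline{\mathbf{x}})\mathbb{L}_{\overline{\nu}}^{-1}$, split into the three pieces $Z_1$, $\mathcal{Z}_u$, and the truncation residual $\|\mathbb{B}(\mathbb{M}-\mathbb{M}^N)\mathbb{L}_{\overline{\nu}}^{-1}\|$, and then bound the last one block-wise via Lemma~\ref{lem : full_matrix_estimate} to obtain $\varphi(0,\mathcal{Z}_{\infty,1},\mathcal{Z}_{\infty,2},\mathcal{Z}_{\infty,3})$. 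The paper's own proof is terser but structurally identical, and your identification of the four block contributions (zero top-left, $\boldsymbol{\xi}-\boldsymbol{\xi}^N$ top-right, the $\Delta$-terms bottom-left, and $\mathbb{m}-\mathbb{m}^N$ bottom-right) matches exactly.
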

\begin{proof}
Following similar steps to those performed in \eqref{lem : Z_full_1}, we obtain
\begin{align}
    \|I_{d} - \mathbb{A}D\mathbb{F}(\overline{\mathbf{x}})\|_{H_1} &\leq Z_1 + \mathcal{Z}_u + \|M_1\|_{2,X_2}\|(\mathbb{M}(\overline{\mathbf{x}}) - \mathbb{M}^N(\overline{\mathbf{x}}))\mathbb{L}_{\overline{\rho}}^{-1}\|_{H_2}.
\end{align}
For the final term, we can use $\varphi$ defined in Lemma \ref{lem : full_matrix_estimate} to get
{\footnotesize\begin{align}
    \|(\mathbb{M}(\overline{\mathbf{x}}) - \mathbb{M}^N(\overline{\mathbf{x}}))\mathbb{L}_{\overline{\rho}}^{-1}\|_{H_2} 
    \leq \varphi\left(0, \left\|\boldsymbol{\xi} - \boldsymbol{\xi}^N\right\|_{2},\left\|\begin{bmatrix}
        D_{\rho} \mathbb{f}(\overline{\rho},\overline{\mathbf{u}})  - D_{\rho} \mathbb{f}^N(\overline{\rho},\overline{\mathbf{u}}) \\
        D_{\rho,\mbf{u}}\mathbb{f}(\overline{\rho},\overline{\mathbf{u}})\overline{\mathbf{w}} - D_{\rho,\mbf{u}}\mathbb{f}^N(\overline{\rho},\overline{\mathbf{u}})\overline{\mathbf{w}}^N
    \end{bmatrix}\right\|_{2}, \|\mathbb{l}_{\overline{\rho}}^{-1}\|_{2}\left\|\mathbb{m}(\overline{\mbf{x}}) - \mathbb{m}^N(\overline{\mbf{x}})\right\|_{2}\right).
\end{align}}
Now, we will examine each term required to evaluate this expression. To begin, observe that
\begin{align}
    &\|\boldsymbol{\xi} - \boldsymbol{\xi}^N\|_{2} = |\om| \|\boldsymbol{\Xi} - \boldsymbol{\Xi}^N\|_{2} = |\om| \sqrt{\|\Xi_1 - \Xi_1^N\|_{2}^2 + \|\Xi_2 - \Xi_2^N\|_{2}^2} \bydef \mathcal{Z}_{\infty,1}. \end{align}
This is the second term required. For the third term, notice we can write
   \begin{align} &\left\|\begin{bmatrix}
        D_{\rho} \mathbb{f}(\overline{\rho},\overline{\mathbf{u}})  - D_{\rho} \mathbb{f}^N(\overline{\rho},\overline{\mathbf{u}}) \\
        D_{\rho,\mbf{u}}\mathbb{f}(\overline{\rho},\overline{\mathbf{u}})\overline{\mathbf{w}} - D_{\rho,\mbf{u}}\mathbb{f}^N(\overline{\rho},\overline{\mathbf{u}})\overline{\mathbf{w}}^N
    \end{bmatrix}\right\|_{2} 
    = |\om| \left\|\begin{bmatrix}
        D_{\rho} f(\overline{\rho},\overline{\mathbf{U}})  - D_{\rho} f^N(\overline{\rho},\overline{\mathbf{U}}) \\
        D_{\rho,\mbf{U}}f(\overline{\rho},\overline{\mathbf{U}})\overline{\mathbf{W}} - D_{\rho,\mbf{U}}f^N(\overline{\rho},\overline{\mathbf{U}})\overline{\mathbf{W}}^N
    \end{bmatrix}\right\|_{2} \\
    &\hspace{+1.5cm}= |\om| \sqrt{\|D_{\rho} f(\overline{\rho},\overline{\mathbf{U}})  - D_{\rho} f^N(\overline{\rho},\overline{\mathbf{U}}) \|_{2}^2 + \|D_{\rho,\mbf{U}}f(\overline{\rho},\overline{\mathbf{U}})\overline{\mathbf{W}} - D_{\rho,\mbf{U}}f^N(\overline{\rho},\overline{\mathbf{U}})\overline{\mathbf{W}}^N\|_{2}^2} \\
    &\hspace{+1.5cm}= |\om|  \sqrt{\|\Delta (U_1 - U_1^N)\|_{2}^2 + \|\Delta (W_1 - W_1^N)\|_{2}^2} \bydef \mathcal{Z}_{\infty,2}.
    \end{align}
Finally, we estimate the fourth term.
    {\scriptsize\begin{align}
&\|\mathbb{l}_{\overline{\rho}}^{-1}\|_{2}\left\|\mathbb{m}(\overline{\mbf{x}}) - \mathbb{m}^N(\overline{\mbf{x}})\right\|_{2} \leq \|\mathbb{l}_{\overline{\rho}}^{-1}\|_{2}\varphi\left(\left(\sum_{j = 1}^2 \| V_j-V_j^N\|_{1}^2\right)^{\frac{1}{2}},0,\left(\sum_{j = 3}^4 \|V_j-V_j^N\|_{1}^2\right)^{\frac{1}{2}},\left(\sum_{j = 1}^2 \|V_j-V_j^N\|_{1}^2\right)^{\frac{1}{2}} \right) \bydef \mathcal{Z}_{\infty,3}.
\end{align}}
This concludes the proof.
\end{proof}
Next, we must compute the $Z_1$ and $\mathcal{Z}_u$ bounds. We begin with the $Z_1$ bound which, as mentioned previously, is now different as we must treat the extra equation.
\begin{lemma}\label{lem : Z1 periodic saddle}
    Let
    \begin{align}
        Z_1 \bydef \sqrt{Z_{0}^2 + 2\left(\sqrt{Z_{1,1}^2 + Z_{1,2}^2} + \sqrt{Z_{1,3}^2 + Z_{1,4}^2}\right)^2}
    \end{align}
    where $Z_{1,1}$ and $Z_{1,2}, \left(\frac{\mathscr{l}_{ij}}{l_{\mathrm{den}}}\right)_{2N}$ are defined as in Lemma \ref{lem : Z1 periodic patterns} and
    \begin{align*}
        &Z_{0} \bydef \|\obpi^{\leq2N} - \obpi^{\leq 3N}B(I_d + M^N(\overline{X}) L^{-1})\obpi^{\leq2N}\|_{X_2}\\
        &Z_{1,3} \bydef \left(\frac{l_{22}}{l_{\mathrm{den}}}\right)_{2N} \| V_3^N\|_{1} + \left(\frac{l_{21}}{l_{\mathrm{den}}}\right)_{2N} \|V_4^N\|_{1},~~ 
        Z_{1,4} &\bydef \left(\frac{l_{12}}{l_{\mathrm{den}}}\right)_{2N} \|V_3^N\|_{1} + \left(\frac{l_{11}}{l_{\mathrm{den}}}\right)_{2N} \|V_4^N\|_{1}. 
    \end{align*}
Then, it follows that $\|I_d - B(I_d + Mg^N(\overline{X}) L_{\overline{\rho}}^{-1})\|_{X_2} \leq Z_1$.
\end{lemma}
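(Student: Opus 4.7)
The plan is to adapt the proof of Lemma \ref{lem : Z1 periodic patterns} to the augmented operator $M^N(\overline{X})L_{\overline{\nu}}^{-1}$, treating the extra second-derivative block $D^2_{\mbf{U}}g^N\overline{\mbf{W}}$ as a second, independent convolution block carrying the coefficients $V_3^N,V_4^N$ (instead of $V_1^N,V_2^N$).

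First, I would apply the standard operator-norm splitting $\|A\|^2 \leq \|A\obpi^{\leq 2N}\|^2 + \|A\obpi^{>2N}\|^2$ (which follows by Pythagoras applied to $A^*$ for the orthogonal projection $\obpi^{\leq 2N}$) to $A \bydef I - B(I + M^N(\overline{X})L_{\overline{\nu}}^{-1})$. The first piece equals $Z_0^2$ once one observes that $(I+M^NL_{\overline{\nu}}^{-1})\obpi^{\leq 2N}$ has output supported in $\obpi^{\leq 3N}$, because the convolutions in $M^N$ widen support by at most $N$; hence the $\obpi^{\leq 3N}$ sandwich in the definition of $Z_0$ is automatic. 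For the tail piece, I would use $B\obpi^{>2N}=\obpi^{>2N}$ to reduce to $\|B M^N L_{\overline{\nu}}^{-1}\obpi^{>2N}\|_{X_2}$, and then analyze the block structure of $M^NL_{\overline{\nu}}^{-1}\obpi^{>2N}$: the scalar first-row entry $(\boldsymbol{\Xi}^N)^*l^*_{\overline{\nu}}l^{-1}_{\overline{\nu}}$ pairs trivially with $\bpi^{>2N}\mbf{W}$ because $\boldsymbol{\Xi}^N$ is supported on $[0,N]$ and $l^*_{\overline{\nu}}l^{-1}_{\overline{\nu}}$ is Fourier-diagonal (hence commutes with $\bpi^{>2N}$), while the scalar first column drops out because $\obpi^{>2N}$ zeros the $\nu$-component. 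The only surviving contribution is therefore the lower $2\times 2$ block
\begin{align*}
T \bydef \begin{bmatrix} D_{\mbf{U}}g^N l^{-1}_{\overline{\nu}} & 0 \\ D^2_{\mbf{U}}g^N \overline{\mbf{W}}\, l^{-1}_{\overline{\nu}} & D_{\mbf{U}}g^N l^{-1}_{\overline{\nu}}\end{bmatrix}\obpi^{>2N},
\end{align*}
whose entries each map $\obpi^{>2N}$ into $\obpi^{>N}$, where $B$ acts as the identity (using $B\obpi^{>N}=\obpi^{>N}$ and $B^N\obpi^{>N}=0$), so the $B$ prefactor drops entirely.

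To estimate $\|T\|_{X_2}$, I would decompose $T$ into its diagonal part and its strictly-lower off-diagonal part and apply the triangle inequality to obtain
\begin{align*}
\|T\|_{X_2} \;\leq\; \|\bpi^{>N}D_{\mbf{U}}g^N l^{-1}_{\overline{\nu}}\|_{2} \;+\; \|\bpi^{>N}D^2_{\mbf{U}}g^N \overline{\mbf{W}}\, l^{-1}_{\overline{\nu}}\|_{2}.
\end{align*}
The first operator norm is exactly the one already estimated in the proof of Lemma \ref{lem : Z1 periodic patterns}, yielding the bound $\sqrt{2(Z_{1,1}^2+Z_{1,2}^2)}$. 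The second has the same matrix structure but with $V_1^N,V_2^N$ replaced by $V_3^N,V_4^N$, so the identical argument (using $(l_{ij}/l_{\mathrm{den}})_{2N}$ and Young's inequality) gives $\sqrt{2(Z_{1,3}^2+Z_{1,4}^2)}$. Squaring the resulting sum and recombining with $Z_0^2$ reproduces the stated form of $Z_1$.

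The main technical obstacle is the structural reduction in the second paragraph: one has to carefully verify that the scalar $\nu$-coupling row and column of $M^N(\overline{X})L_{\overline{\nu}}^{-1}$ contribute nothing to the tail norm on $\obpi^{>2N}$. This rests on the finite-dimensional support of $\boldsymbol{\Xi}^N$, $D_{\nu}f^N$, and $D_{\nu,\mbf{U}}f^N\overline{\mbf{W}}$, together with the Fourier diagonality of $l^{-1}_{\overline{\nu}}$. Once this bookkeeping is done, the lemma collapses to two independent copies of the pattern-case tail estimate glued together by a $2\times 2$ block-operator triangle inequality.
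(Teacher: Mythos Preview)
Your proposal is correct and follows essentially the same route as the paper: the Pythagoras splitting on $\obpi^{\leq 2N}$ versus $\obpi^{>2N}$, the reduction of the tail to the lower $2\times 2$ block (where the paper simply writes this down, while you supply the support/diagonality justification for why the scalar row and column vanish), and the triangle-inequality decomposition of that block into its diagonal and strictly-lower parts, each handled exactly as in Lemma \ref{lem : Z1 periodic patterns}. Your treatment is in fact slightly more explicit than the paper's on the bookkeeping step for the $\nu$-coupling.
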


\begin{proof}
To begin, we introduce a truncation.
\begin{align}
    &\|I_d - B(I_d + M^N(\overline{X}) L_{\overline{\rho}}^{-1})\|_{X_2}^2 \\
    &\leq \|\obpi^{>2N} - B(I_d + M^N(\overline{X}) L_{\overline{\rho}}^{-1})\obpi^{>2N}\|_{X_2}^2 + \|\obpi^{\leq2N} - B(I_d + M^N(\overline{X}) L_{\overline{\rho}}^{-1})\bpi^{\leq2N}\|_{X_2}^2 \\
    &= \|\obpi^{>2N} - B \obpi^{>2N} - BM^N(\overline{X}) \obpi^{<2N} L_{\overline{\rho}}^{-1}\|_{X_2}^2 + Z_{0}^2 \\
    &= \|\obpi^{> N} M^N(\overline{X}) L_{\overline{\rho}}^{-1} \obpi^{<2N}\|_{X_2}^2 + Z_0^2 \\
    &= \left\|\begin{bmatrix} \bpi^{> N} & 0 \\
    0 & \bpi^{>N}\end{bmatrix}\begin{bmatrix}
        Dg^N(\overline{\rho},\overline{\mbf{U}}) & 0 \\
        D^2g^N(\overline{\rho},\overline{\mbf{U}}) \overline{\mbf{W}} & Dg^N(\overline{\rho},\overline{\mbf{U}})
    \end{bmatrix} \begin{bmatrix}
        l_{\overline{\rho}}^{-1} & 0 \\
        0 & l_{\overline{\rho}}^{-1}
    \end{bmatrix}\begin{bmatrix} \bpi^{>2 N} & 0 \\
    0 & \bpi^{>2N}\end{bmatrix}\right\|_{2}^2 + Z_0^2.
\end{align}
Now, we use the structure of $\begin{bmatrix}
        Dg^N(\overline{\rho},\overline{\mbf{U}}) & 0 \\
        D^2g^N(\overline{\rho},\overline{\mbf{U}}) \overline{\mbf{W}} & Dg^N(\overline{\rho},\overline{\mbf{U}})
    \end{bmatrix}$  to get
{\small\begin{align}
    &\left\|\begin{bmatrix} \bpi^{> N} & 0 \\
    0 & \bpi^{>N}\end{bmatrix} \begin{bmatrix}
        Dg^N(\overline{\rho},\overline{\mbf{U}}) & 0 \\
        D^2g^N(\overline{\rho},\overline{\mbf{U}}) \overline{\mbf{W}} & Dg^N(\overline{\rho},\overline{\mbf{U}})
    \end{bmatrix} \begin{bmatrix}
        l_{\overline{\rho}}^{-1} & 0 \\
        0 & l_{\overline{\rho}}^{-1}
    \end{bmatrix}\begin{bmatrix} \bpi^{> 2N} & 0 \\
    0 & \bpi^{>2N}\end{bmatrix}\right\|_{2} \\
    &= \left\| \begin{bmatrix}
        \bpi^{> N}Dg^N(\overline{\rho},\overline{\mbf{U}}) l_{\overline{\rho}}^{-1}\bpi^{>2N} & 0 \\
        \bpi^{>N} D^2g^N(\overline{\rho},\overline{\mbf{U}})\overline{\mbf{W}} l_{\overline{\rho}}^{-1} \bpi^{>2N} & \bpi^{> N}Dg^N(\overline{\rho},\overline{\mbf{U}}) l_{\overline{\rho}}^{-1}\bpi^{>2N}
    \end{bmatrix}\right\|_{2} \\
    &\leq \left\| \begin{bmatrix}
        \bpi^{> N}Dg^N(\overline{\rho},\overline{\mbf{U}}) l_{\overline{\rho}}^{-1}\bpi^{>2N} & 0 \\
        0 & \bpi^{> N}Dg^N(\overline{\rho},\overline{\mbf{U}}) l_{\overline{\rho}}^{-1}\bpi^{>2N}
    \end{bmatrix}\right\|_{2} + \left\| \begin{bmatrix}
        0 & 0 \\
        \bpi^{>N} D^2g^N(\overline{\rho},\overline{\mbf{U}})\overline{\mbf{W}} l_{\overline{\rho}}^{-1} \bpi^{>2N} & 0
    \end{bmatrix}\right\|_{2} \\
    &=\left\| 
        \bpi^{> N}Dg^N(\overline{\rho},\overline{\mbf{U}}) l_{\overline{\rho}}^{-1}\bpi^{>2N}\right\|_{2} + \|\bpi^{>N} D^2g^N(\overline{\rho},\overline{\mbf{U}})\overline{\mbf{W}} l_{\overline{\rho}}^{-1} \bpi^{>2N}\|_{2} \\
    &\leq \sqrt{2Z_{1,1}^2 + 2Z_{1,2}^2} + \|\bpi^{>N} D^2g^N(\overline{\rho},\overline{\mbf{U}})\overline{\mbf{W}} l_{\overline{\rho}}^{-1} \bpi^{>2N}\|_{2}.
\end{align}}
The final term above can be estimated in a very similar way to $Z_{1,1}$ and $Z_{1,2}$. We obtain
\begin{align}
    \|\bpi^{>N} D^2g^N(\overline{\rho},\overline{\mbf{U}})\overline{\mbf{W}} l_{\overline{\rho}}^{-1} \bpi^{>2N}\|_{2} \leq \sqrt{2Z_{1,3}^2 + 2Z_{1,4}^2}.
\end{align}
Therefore, we have
\begin{align}
    \|I_d - B(I_d + M^N(\overline{X})L_{\overline{\rho}}^{-1})\|_{2} \leq \sqrt{Z_0^2 + 2\left(\sqrt{Z_{1,1}^2 + Z_{1,2}^2} + \sqrt{Z_{1,3}^2 + Z_{1,4}^2}\right)^2} \bydef Z_1
\end{align}
as desired.
\end{proof}
Finally, we compute the bound $\mathcal{Z}_u$ for the saddle node map \eqref{eq:saddle_node_map}. 
\begin{lemma}\label{lem : old Zu saddle}
Let $\left(\mathcal{Z}_{u,k,j}\right)_{k \in \{1,2\}, j \in \{1,2,3,4\}}$ be defined as Lemma \ref{lem : old Zu}. Let $\left(\mathcal{Z}_{u,k,j}\right)_{k \in \{3,4\}, j \in \{1,2,3,4\}}$ bounds satisfying
\begin{align}
&\mathcal{Z}_{u,3,1} \geq \|\mathbb{1}_{\mathbb{R}\setminus\om} \mathbb{l}_{22}\mathbb{l}_{den}^{-1} \mathbb{v}_3^N\|_{2},~\mathcal{Z}_{u,4,1} \geq \|\mathbb{1}_{\om} (\mathbb{l}_{22}\mathbb{l}_{den}^{-1} - \Gamma^\dagger(l_{22}l_{\mathrm{den}}^{-1}))\mathbb{v}_3^N\|_{2}\\
&\mathcal{Z}_{u,3,2} \geq \|\mathbb{1}_{\mathbb{R}\setminus\om}\mathbb{l}_{21}\mathbb{l}_{den}^{-1}\mathbb{v}_4^N\|_{2},~\mathcal{Z}_{u,4,2} \geq \|\mathbb{1}_{\om} (\mathbb{l}_{21}\mathbb{l}_{den}^{-1} - \Gamma^\dagger(l_{21}l_{\mathrm{den}}^{-1}))\mathbb{v}_4^N\|_{2}
    \\
    &\mathcal{Z}_{u,3,3} \geq \|\mathbb{1}_{\mathbb{R}\setminus\om}\mathbb{l}_{12}\mathbb{l}_{den}^{-1}\mathbb{v}_3^N\|_{2},~\mathcal{Z}_{u,4,3} \geq \|\mathbb{1}_{\om}(\mathbb{l}_{12}\mathbb{l}_{den}^{-1} - \Gamma^\dagger(l_{12}l_{\mathrm{den}}^{-1}))\mathbb{v}_3^N\|_{2}\\
    &\mathcal{Z}_{u,3,4} \geq \|\mathbb{1}_{\mathbb{R}\setminus\om}\mathbb{l}_{22}\mathbb{l}_{den}^{-1}\mathbb{v}_4^N\|_{2},~\mathcal{Z}_{u,4,4} \geq \|\mathbb{1}_{\om}(\mathbb{l}_{11}\mathbb{l}_{den}^{-1} - \Gamma^\dagger(l_{11}l_{\mathrm{den}}^{-1}))\mathbb{v}_4^N\|_{2}
\end{align} 
Moreover, given $k \in \{1,2,3,4\}$, define  $\mathcal{Z}_{u,k} \bydef \sqrt{2}\sqrt{(\mathcal{Z}_{u,k,1} + \mathcal{Z}_{u,k,2})^2 + (\mathcal{Z}_{u,k,3}+\mathcal{Z}_{u,k,4})^2} $.
Then, it follows that $\mathcal{Z}_{u,1}, \mathcal{Z}_{u,2}, \mathcal{Z}_{u,3},$ and $\mathcal{Z}_{u,4}$ satisfy
\begin{align} 
&\mathcal{Z}_{u,1} \geq \|\mathbb{1}_{\mathbb{R} \setminus \om} D_{\mbf{u}}\mathbb{g}^N(\overline{\mbf{u}}) \mathbb{l}_{\overline{\rho}}^{-1}\|_{2},~\mathcal{Z}_{u,2} \geq \|\mathbb{1}_{\om}D_{\mbf{u}}\mathbb{g}^N(\overline{\mbf{u}})(\bGam^\dagger(l_{\overline{\rho}}^{-1}) - \mathbb{l}_{\overline{\rho}}^{-1})\|_{2} \\
&\mathcal{Z}_{u,3} \geq \|\mathbb{1}_{\mathbb{R} \setminus \om} D^2_{\mbf{u}}\mathbb{g}^N(\overline{\mbf{u}})\overline{\mbf{w}} \mathbb{l}_{\overline{\rho}}^{-1}\|_{2},~\mathcal{Z}_{u,4} \geq \|\mathbb{1}_{\om}D^2_{\mbf{u}}\mathbb{g}^N(\overline{\mbf{u}})\overline{\mbf{w}}(\bGam^\dagger(l_{\overline{\rho}}^{-1}) - \mathbb{l}_{\overline{\rho}}^{-1})\|_{2}.
\end{align}
Finally, defining \begin{align}
    \mathcal{Z}_u \bydef \sqrt{\mathcal{Z}_{u,1}^2 + \mathcal{Z}_{u,2}^2} + \sqrt{\mathcal{Z}_{u,3}^2 + \mathcal{Z}_{u,4}^2},
\end{align}
it follows that $\left\|\mathbb{B}\begin{bmatrix} 
0 & 0 \\
0 & \mathbb{m}^N(\overline{\mbf{x}})\end{bmatrix}\left(\oGGD(L_{\overline{\rho}}^{-1}) - \mathbb{L}_{\overline{\rho}}^{-1}\right)\right\|_{X_2} \leq \|M_1\|_{2,X_2}\mathcal{Z}_u$.
\end{lemma}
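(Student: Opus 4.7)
The plan is to extend the argument of Lemma \ref{lem : old Zu} to the larger block operator $\begin{bmatrix}0&0\\0&\mathbb{m}^N(\overline{\mbf{x}})\end{bmatrix}$ composed on the right with $\mathbb{L}_{\overline{\nu}}^{-1} - \oGGD(L_{\overline{\nu}}^{-1})$. The key observation is that $\mathbb{m}^N(\overline{\mbf{x}})$ decomposes as a $2\times 2$ block operator with $D_{\mbf{u}}\mathbb{g}^N(\overline{\mathbf{u}})$ in the upper-left and lower-right positions and $D^2_{\mbf{u}}\mathbb{g}^N(\overline{\mathbf{u}})\overline{\mathbf{w}}$ in the lower-left. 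Each of these blocks inherits from \eqref{def : def l and g intro} the $(+,-)$ row pattern of $(\mathbb{v}_j^N,\mathbb{v}_k^N;-\mathbb{v}_j^N,-\mathbb{v}_k^N)$ already exploited in Lemma \ref{lem : old Zu}, and it is this antisymmetric row structure that makes the column-difference cancellation leading to $M_1$ possible.

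First, I would pull $\mathbb{B}$ past the $(+,-)$ rows exactly as in Lemma \ref{lem : old Zu}. Since the first row and column of the outer $2\times 2$ block are zero, only columns $2$--$5$ of $\mathbb{B}$ contribute, and the $(+,-)$ cancellation in each row of $\mathbb{m}^N(\overline{\mbf{x}})$ collapses the product so that the five rows of $\mathbb{B}$ acting on the two active columns of the inner operator produce exactly the matrix $\mathbb{B}_{1\to 5,24}$ of \eqref{def : Z2 saddle definitions}. Passing to the periodic Fourier representation via Lemma \ref{lem : properties constraigned gamma} then provides the prefactor $\|M_1\|_{2,X_2}$, so it remains to estimate the $L^2$ operator norm of the block matrix whose entries are of the form $\mathbb{v}_j^N$ composed with blocks of $\mathbb{L}_{\overline{\nu}}^{-1} - \oGGD(L_{\overline{\nu}}^{-1})$ for $j\in\{1,2,3,4\}$.

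Second, exactly as in Lemma \ref{lem : old Zu}, I would split the image via $I = \mathbb{1}_\om + \mathbb{1}_{\mathbb{R}\setminus\om}$. Outside $\om$ the operator $\oGGD(L_{\overline{\nu}}^{-1})$ vanishes, leaving only $\mathbb{L}_{\overline{\nu}}^{-1}$; inside $\om$ we are left with the difference $\mathbb{l}^{-1} - \bGam^\dagger(l^{-1})$. For the $D_{\mbf{u}}\mathbb{g}^N$ blocks the resulting estimates are precisely $\mathcal{Z}_{u,1}$ and $\mathcal{Z}_{u,2}$, inherited directly from Lemma \ref{lem : old Zu}. For the new $D^2_{\mbf{u}}\mathbb{g}^N(\overline{\mathbf{u}})\overline{\mathbf{w}}$ block, the same manipulations with $\mathbb{v}_3^N, \mathbb{v}_4^N$ in place of $\mathbb{v}_1^N, \mathbb{v}_2^N$ yield the analogous bounds $\mathcal{Z}_{u,3}$ and $\mathcal{Z}_{u,4}$. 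Combining the contributions of the top and bottom half-blocks via the triangle inequality produces the final sum $\sqrt{\mathcal{Z}_{u,1}^2+\mathcal{Z}_{u,2}^2} + \sqrt{\mathcal{Z}_{u,3}^2+\mathcal{Z}_{u,4}^2} = \mathcal{Z}_u$.

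The main difficulty is not analytical but notational: one must align the rows of $M_1$, indexed by the five components of the augmented system, with the two-column block decomposition of $\mathbb{m}^N(\overline{\mbf{x}})$, and ensure that the $(+,-)$ column cancellation generating the differences $B_{ij}^N - B_{i,j+1}^N$ is applied to the correct pairs of columns simultaneously for the $D_{\mbf{u}}\mathbb{g}^N$ block (acting on the $\mbf{u}$ slot) and the $D^2_{\mbf{u}}\mathbb{g}^N(\overline{\mathbf{u}})\overline{\mathbf{w}}$ block (acting on the $\mbf{w}$ slot).
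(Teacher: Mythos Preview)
Your proposal is correct and follows essentially the same route as the paper: factor out $\|M_1\|_{2,X_2}$ via the $(+,-)$ row structure of $\mathbb{m}^N(\overline{\mbf{x}})$, then handle the remaining block-triangular operator by separating the $D_{\mbf{u}}\mathbb{g}^N$ and $D^2_{\mbf{u}}\mathbb{g}^N(\overline{\mathbf{u}})\overline{\mathbf{w}}$ contributions and invoking the inside/outside splitting of Lemma~\ref{lem : old Zu}. The only cosmetic difference is that the paper applies the block-triangular triangle inequality (diagonal plus strictly lower-triangular) \emph{before} the $\mathbb{1}_{\om}/\mathbb{1}_{\mathbb{R}\setminus\om}$ split, whereas you describe them in the opposite order; since the two decompositions act on orthogonal pieces of the space, either order yields the same bound $\sqrt{\mathcal{Z}_{u,1}^2+\mathcal{Z}_{u,2}^2}+\sqrt{\mathcal{Z}_{u,3}^2+\mathcal{Z}_{u,4}^2}$.
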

\begin{proof}
To begin, observe that
\begin{align}
    &\left\|\mathbb{B}\begin{bmatrix} 
0 & 0 \\
0 & \mathbb{m}^N(\overline{\mbf{x}})\end{bmatrix}\left(\oGGD(L_{\overline{\rho}}^{-1}) - \mathbb{L}_{\overline{\rho}}^{-1}\right)\right\|_{2} \\
&\leq \|M_1\|_{2,X_2} \left\| \begin{bmatrix}
    D_{\mbf{u}} \mathbb{g}^N(\overline{\mbf{u}}) & 0 \\
    D^2_{\mbf{u}} \mathbb{g}^N(\overline{\mbf{u}})\overline{\mbf{w}} & D_{\mbf{u}} \mathbb{g}^N(\overline{\mbf{u}})
\end{bmatrix}\begin{bmatrix}
    \bGam^\dagger(l_{\overline{\rho}}^{-1}) - \mathbb{l}_{\overline{\rho}}^{-1} & 0 \\
    0 & \bGam^\dagger(l_{\overline{\rho}}^{-1}) - \mathbb{l}_{\overline{\rho}}^{-1}
\end{bmatrix}\right\|_{2} \\
&= \|M_1\|_{2,X_2} \left\| \begin{bmatrix}
    D_{\mbf{u}} \mathbb{g}^N(\overline{\mbf{u}})(\bGam^\dagger(l_{\overline{\rho}}^{-1}) - \mathbb{l}_{\overline{\rho}}^{-1}) & 0 \\
    D^2_{\mbf{u}} \mathbb{g}^N(\overline{\mbf{u}})\overline{\mbf{w}}(\bGam^\dagger(l_{\overline{\rho}}^{-1}) - \mathbb{l}_{\overline{\rho}}^{-1}) & D_{\mbf{u}} \mathbb{g}^N(\overline{\mbf{u}})(\bGam^\dagger(l_{\overline{\rho}}^{-1}) - \mathbb{l}_{\overline{\rho}}^{-1})
\end{bmatrix}\right\|_{2} \\
&\leq \|M_1\|_{2,X_2} \biggl(\left\| \begin{bmatrix}
    D_{\mbf{u}} \mathbb{g}^N(\overline{\mbf{u}})(\bGam^\dagger(l_{\overline{\rho}}^{-1}) - \mathbb{l}_{\overline{\rho}}^{-1}) & 0 \\
    0 & D_{\mbf{u}} \mathbb{g}^N(\overline{\mbf{u}})(\bGam^\dagger(l_{\overline{\rho}}^{-1}) - \mathbb{l}_{\overline{\rho}}^{-1})
\end{bmatrix}\right\|_{2} \\
&\hspace{+2cm}+ \left\| \begin{bmatrix}
    0 & 0 \\
    D^2_{\mbf{u}} \mathbb{g}^N(\overline{\mbf{u}})\overline{\mbf{w}}(\bGam^\dagger(l_{\overline{\rho}}^{-1}) - \mathbb{l}_{\overline{\rho}}^{-1}) & 0\end{bmatrix}\right\|_{2}\biggr) \\
    &= \|M_1\|_{2,X_2} \left( \|D_{\mbf{u}} \mathbb{g}^N(\overline{\mbf{u}})(\bGam^\dagger(l_{\overline{\rho}}^{-1}) - \mathbb{l}_{\overline{\rho}}^{-1})\|_{2} + \|D^2_{\mbf{u}} \mathbb{g}^N(\overline{\mbf{u}})\overline{\mbf{w}}(\bGam^\dagger(l_{\overline{\rho}}^{-1}) - \mathbb{l}_{\overline{\rho}}^{-1})\|_{2}\right) \\
    &\leq \|M_1\|_{2,X_2} \mathcal{Z}_u
\end{align}
as desired.
\end{proof}
We now estimate $\mathcal{Z}_{u,3}$ and $\mathcal{Z}_{u,4}$.
\begin{lemma}
Let $a, E, C(d),$ and $C_j$ for $j = 1,2,3,4$ be defined as in Lemma \ref{lem : Zu old exact}.
Then, let $(\mathcal{Z}_{u,k,j})_{k \in \{3,4\},j \in \{1,2,3,4\}} > 0$ be defined as
\begin{align}
    &\mathcal{Z}_{u,3,1}^2 \bydef |\om| \frac{C_{1}^2e^{-2ad}}{a} (V_3^N, V_3^N * E)_2,~\mathcal{Z}_{u,4,1}^2 \bydef \mathcal{Z}_{u,3,1}^2 + e^{-4ad} C(d) C_1^2 |\om|(V_3^N,V_3^N * E)_2 \\
    &\mathcal{Z}_{u,3,2}^2 \bydef |\om| \frac{C_2^2 e^{-2ad}}{a} (V_4^N, V_4^N * E)_2,~\mathcal{Z}_{u,4,2}^2 \bydef \mathcal{Z}_{u,3,2}^2 + e^{-4ad} C(d) C_2^2 |\om|(V_4^N,V_4^N * E)_2 \\
    &\mathcal{Z}_{u,3,3}^2 \bydef |\om| \frac{C_3^2 e^{-2ad}}{a} (V_3^N, V_3^N * E)_2,~\mathcal{Z}_{u,4,3}^2 \bydef \mathcal{Z}_{u,3,3}^2 + e^{-4ad} C(d) C_3^2 |\om|(V_3^N,V_3^N * E)_2 \\
    &\mathcal{Z}_{u,3,4}^2 \bydef |\om| \frac{C_4^2e^{-2ad}}{a} (V_4^N,V_4^N*E)_2,~\mathcal{Z}_{u,4,4}^2 \bydef \mathcal{Z}_{u,3,4}^2 + e^{-4ad} C(d) C_4^2 |\om|(V_4^N,V_4^N * E)_2.
\end{align}
If $\mathcal{Z}_{u,1},\mathcal{Z}_{u,2},\mathcal{Z}_{u,3},\mathcal{Z}_{u,4},$ and $\mathcal{Z}_u$ are defined as in Lemma \ref{lem : old Zu saddle}, then it follows that \\ $\left\|\mathbb{B}\begin{bmatrix} 
0 & 0 \\
0 & \mathbb{m}^N(\overline{\mbf{x}})\end{bmatrix}\left(\oGGD(L_{\overline{\rho}}^{-1}) - \mathbb{L}_{\overline{\rho}}^{-1}\right)\right\|_{X_2} \leq \|M_1\|_{2,X_2}\mathcal{Z}_u$.
\end{lemma}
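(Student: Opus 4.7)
The plan is to exploit the fact that the bounds $\mathcal{Z}_{u,3,j}$ and $\mathcal{Z}_{u,4,j}$ for $j \in \{1,2,3,4\}$ have exactly the same algebraic structure as the bounds $\mathcal{Z}_{u,1,j}$ and $\mathcal{Z}_{u,2,j}$ treated in Lemma \ref{lem : Zu old exact}, with $V_1^N, V_2^N$ systematically replaced by $V_3^N, V_4^N$. Consequently, essentially no new analytic ingredient is required: I would reduce the problem to a direct application of the pointwise decay estimate in Lemma \ref{lem : ift_L_inverse} followed by the convolution bookkeeping of Lemma 6.5 of \cite{unbounded_domain_cadiot}.

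More precisely, I would first observe that each quantity $\|\mathbb{1}_{\mathbb{R}\setminus\om} \mathbb{l}_{ij}\mathbb{l}_{\mathrm{den}}^{-1} \mathbb{v}_k^N\|_{2}$ (with $k \in \{3,4\}$) and each corresponding periodic-discrepancy quantity $\|\mathbb{1}_{\om} (\mathbb{l}_{ij}\mathbb{l}_{\mathrm{den}}^{-1} - \Gamma^\dagger(l_{ij}l_{\mathrm{den}}^{-1}))\mathbb{v}_k^N\|_{2}$ is an operator norm involving the same convolution kernel $\mathcal{F}^{-1}(d_1|2\pi\xi|^2 + d_2)/\mathscr{l}_{\mathrm{den}}(\xi)$ that already appeared in the proof of Lemma \ref{lem : Zu old exact}. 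By Lemma \ref{lem : ift_L_inverse}, this kernel is pointwise controlled by $C_j e^{-a|x|}$ with the same constants $C_1, \dots, C_4$ defined in \eqref{def : E Cd Cj}. Then, following the argument of Lemma 6.5 of \cite{unbounded_domain_cadiot} verbatim, one obtains
\begin{align*}
\|\mathbb{1}_{\mathbb{R}\setminus\om} \mathbb{l}_{ij}\mathbb{l}_{\mathrm{den}}^{-1} \mathbb{v}_k^N\|_{2}^2 &\leq |\om|\,\frac{C_j^2 e^{-2ad}}{a}(V_k^N, V_k^N * E)_2,\\
\|\mathbb{1}_{\om}(\mathbb{l}_{ij}\mathbb{l}_{\mathrm{den}}^{-1} - \Gamma^\dagger(l_{ij}l_{\mathrm{den}}^{-1}))\mathbb{v}_k^N\|_{2}^2 &\leq |\om|\,\frac{C_j^2 e^{-2ad}}{a}(V_k^N, V_k^N * E)_2 + e^{-4ad} C(d) C_j^2 |\om|(V_k^N,V_k^N * E)_2,
\end{align*}
which are precisely the stated definitions of $\mathcal{Z}_{u,3,j}^2$ and $\mathcal{Z}_{u,4,j}^2$ after matching indices.

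Having verified each $\mathcal{Z}_{u,k,j}$ for $k \in \{3,4\}$, the composite bounds $\mathcal{Z}_{u,3}$ and $\mathcal{Z}_{u,4}$ defined as in Lemma \ref{lem : old Zu saddle} then control $\|\mathbb{1}_{\mathbb{R}\setminus\om} D^2_{\mbf{u}}\mathbb{g}^N(\overline{\mbf{u}})\overline{\mbf{w}} \mathbb{l}^{-1}\|_{2}$ and $\|\mathbb{1}_{\om} D^2_{\mbf{u}}\mathbb{g}^N(\overline{\mbf{u}})\overline{\mbf{w}}(\bGam^\dagger(l^{-1}) - \mathbb{l}^{-1})\|_{2}$ respectively. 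Combining these with the corresponding estimates for $\mathcal{Z}_{u,1}$ and $\mathcal{Z}_{u,2}$ obtained in Lemma \ref{lem : Zu old exact}, and invoking the conclusion of Lemma \ref{lem : old Zu saddle}, yields the desired bound $\left\|\mathbb{B}\begin{bmatrix} 0 & 0 \\ 0 & \mathbb{m}^N(\overline{\mbf{x}})\end{bmatrix}\left(\oGGD(L^{-1}) - \mathbb{L}^{-1}\right)\right\|_{X_2} \leq \|M_1\|_{2,X_2}\mathcal{Z}_u$.

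The only potentially delicate step is the bookkeeping of the different blocks in $\mathbb{m}^N(\overline{\mbf{x}})$, since the lower-left block $D^2_{\mbf{u}}\mathbb{g}^N(\overline{\mbf{u}})\overline{\mbf{w}}$ has the same matrix structure as $D_{\mbf{u}}\mathbb{g}^N(\overline{\mbf{u}})$ but with $(v_3^N, v_4^N)$ in place of $(v_1^N, v_2^N)$. Once this structural parallel is made explicit, the proof reduces entirely to the already established Lemma 6.5 of \cite{unbounded_domain_cadiot}, and no new computation is required.
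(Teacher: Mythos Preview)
Your proposal is correct and takes essentially the same approach as the paper's own proof, which simply states that each $\mathcal{Z}_{u,k,j}$ follows from Lemma 6.5 of \cite{unbounded_domain_cadiot}. You have merely spelled out in more detail the structural parallel between the $k\in\{3,4\}$ case and the $k\in\{1,2\}$ case already handled in Lemma \ref{lem : Zu old exact}, which is exactly what the paper leaves implicit.
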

\begin{proof}
The proof follows from Lemma 6.5 of \citep{unbounded_domain_cadiot}. In particular, each $\mathcal{Z}_{u,k,j}$ can be computed using the results of the aforementioned lemma.
\end{proof}
\subsection{Proofs of Saddle Nodes}\label{sec : proofs of saddle nodes}
In this section, we present the computer assisted proofs of Saddle Node bifurcations in \eqref{eq : gen_model}.
\begin{figure}[H]
    \centering
    \epsfig{figure=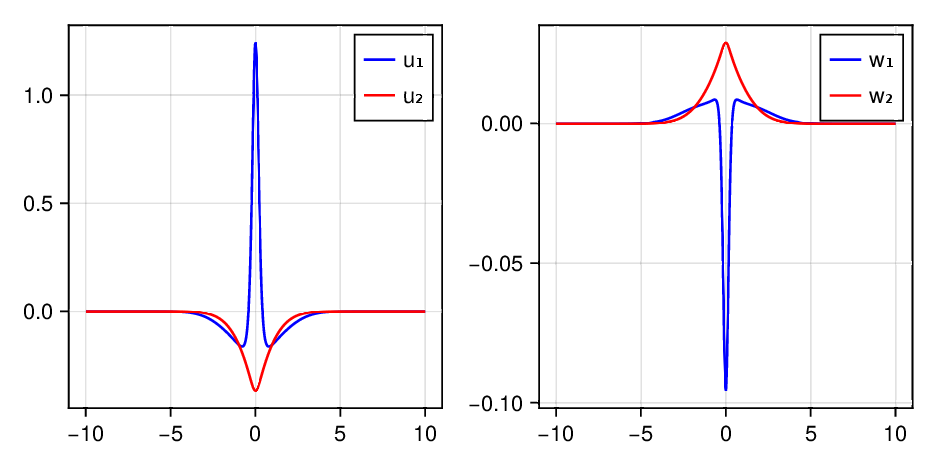, width = 0.5\textwidth}
    \caption{Approximation of a saddle-node bifurcation for the Glycolysis equation.}\label{fig:saddle gly}
\end{figure}
\begin{theorem}\label{th : saddle node in gyl}
Choose $\overline{\rho} = 0.010384836980401362$ and let $c = 1, b = 0.4154639603133325, j = 0.21, a = 0, h = 0$. Moreover, let $r_0 \bydef 4 \times 10^{-8}$. Then there exists a unique solution $\tilde{\mbf{x}} = (\tilde{\rho},\tilde{\mathbf{u}},\tilde{\mathbf{w}})$ to \eqref{eq : gen_model} in $\overline{B_{r_0}(\mbf{x}_{gyls})} \subset H_1$ and we have that $\|\tilde{\mbf{x}}-\mbf{x}_{gyls}\|_{H_1} \leq r_0$. 
\end{theorem}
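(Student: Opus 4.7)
The plan is to apply the Newton--Kantorovich theorem for the saddle-node map, Theorem~\ref{th: radii polynomial saddle}, exactly as in the pattern existence proofs of Section~\ref{sec : Proofs of patterns}, but now for the augmented zero-finding problem $\mathbb{F}(\mathbf{x})=0$. Specifically, I would fix the parameters $c=1,\,b=0.4154639603133325,\,j=0.21,\,a=0,\,h=0$ and the reference diffusivity $\overline{\nu}=0.010384836980401362$, and choose truncation and domain sizes $N_0,N\in\mathbb{N}$ and $d\geq 1$ (similar in order of magnitude to those of Theorem~\ref{th : spike in gyl}, since the underlying pulse is a Glycolysis pulse). Then I would numerically compute $\overline{\mathbf{U}}\in \ell^2_e$ via Newton's method on $f(\overline{\nu},\mathbf{U})=0$, enforce the boundary correction through the operator $\mathbfcal{P}$ as in Section~\ref{sec : u0} to obtain $\overline{\mathbf{u}}=\bgam^{\dagger}(\overline{\mathbf{U}})\in \mathcal{H}_e$, and compute $\overline{\mathbf{W}}$ as the numerically approximated kernel element of $D_{\mathbf{U}}f(\overline{\nu},\overline{\mathbf{U}})$, again applying $\mathbfcal{P}$ to ensure $\overline{\mathbf{w}}=\bgam^{\dagger}(\overline{\mathbf{W}})\in \mathcal{H}_e$. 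The functional $\phi$ would be realized by fixing $\boldsymbol{\xi}\in\mathcal{H}_e$ (e.g.\ proportional to $\overline{\mathbf{w}}$) and the constant $\alpha$ as $\alpha\bydef (\mathbb{l}_{\overline{\nu}}\boldsymbol{\xi},\overline{\mathbf{w}})_2$, which makes the first component of $\mathbb{F}(\overline{\mathbf{x}})$ nearly zero.

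Next, I would construct the numerical approximate inverse $B^N$ of the truncated block operator $\overline{\bpi}^{\leq N}DF(\overline{\mathbf{X}})L_{\overline{\nu}}^{-1}\overline{\bpi}^{\leq N}$ on $X_2$, and assemble $\mathbb{A}=\mathbb{L}_{\overline{\nu}}^{-1}\mathbb{B}$ as in~\eqref{def : operator A constraigned}. With $\mathbb{A}$ in hand, the bounds $\mathcal{Y}_0$, $\mathcal{Z}_1$, and $\mathcal{Z}_2(r)$ required by Theorem~\ref{th: radii polynomial saddle} are furnished explicitly by Lemmas~\ref{lem : bound Y_0 saddle}, \ref{lem : bound Z_2 saddle}, \ref{lem : Z_full_1 saddle}, \ref{lem : Z1 periodic saddle}, and~\ref{lem : old Zu saddle} together with the exponential-decay estimates of Lemma~\ref{lem : ift_L_inverse}. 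Each of these bounds reduces to a finite-dimensional matrix or sequence norm computation, which I would evaluate rigorously using \emph{IntervalArithmetic.jl} and \emph{RadiiPolynomial.jl}, analogously to the other computer-assisted proofs in this paper (code available in \cite{julia_blanco_fassler}).

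Then I would pick the candidate radius $r_0\bydef 4\times 10^{-7}$
%  (the statement has 4 \times 10^{-8})
% Let me re-check: the statement says r_0 := 4 × 10^{-8}. Let me use that.
and verify numerically that
\begin{equation*}
\tfrac{1}{2}\mathcal{Z}_2(r_0)r_0^2-(1-\mathcal{Z}_1)r_0+\mathcal{Y}_0<0,\qquad \mathcal{Z}_1+\mathcal{Z}_2(r_0)r_0<1.
\end{equation*}
Theorem~\ref{th: radii polynomial saddle} then yields a unique $\tilde{\mathbf{x}}=(\tilde{\nu},\tilde{\mathbf{u}},\tilde{\mathbf{w}})\in\overline{B_{r_0}(\mathbf{x}_{gyls})}\subset H_1$ with $\mathbb{F}(\tilde{\mathbf{x}})=0$, i.e., the desired rigorous enclosure of the saddle-node candidate.

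The main obstacle is that the saddle-node system is considerably more sensitive than the pure existence problem: the linearization $D_{\mathbf{u}}\mathbb{f}(\tilde{\nu},\tilde{\mathbf{u}})$ is by design near-singular (it has $\tilde{\mathbf{w}}$ as near-kernel), so the numerical conditioning of $B^N$ and hence the size of $Z_1$ and $\mathcal{Z}_u$ may be large, forcing finer truncations $N$ and $N_0$ than in Theorem~\ref{th : spike in gyl} to keep $\mathcal{Z}_1<1$. Care is also needed in choosing $\boldsymbol{\xi}$ so that the phase-fixing row in $DF$ effectively removes the zero eigenvalue of $D_{\mathbf{U}}f$, ensuring that $DF(\overline{\mathbf{X}})$ is well-conditioned. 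Once $\mathcal{Z}_1<1$ and the radii-polynomial inequalities are satisfied, the proof is complete; the verification of the non-degeneracy and spectral assumptions of Definition~\ref{lem : saddle node def} that upgrade this zero into an actual saddle-node bifurcation is performed a posteriori in Section~\ref{sec : control of the spectrum}.
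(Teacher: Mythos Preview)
Your proposal is correct and follows essentially the same approach as the paper: construct $\overline{\mathbf{x}}=(\overline{\nu},\overline{\mathbf{u}},\overline{\mathbf{w}})$ and $B^N$ as described, evaluate the bounds of Lemmas~\ref{lem : bound Y_0 saddle}--\ref{lem : old Zu saddle} rigorously via interval arithmetic, and verify the hypotheses of Theorem~\ref{th: radii polynomial saddle}. The paper in fact uses $N_0=800$, $N=620$, $d=40$ and reports $\mathcal{Y}_0\approx 3.31\times 10^{-9}$, $\mathcal{Z}_2(r_0)\approx 2.51\times 10^{4}$, $Z_1\approx 0.112$, $\mathcal{Z}_u\approx 1.56\times 10^{-4}$; note also that your displayed radius $4\times 10^{-7}$ is a typo for the $r_0=4\times 10^{-8}$ of the statement.
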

\begin{proof}
Choose $N_0 = 800, N = 620 , d = 40$. Then, we perform the full construction to build $\overline{\mathbf{x}} = \ogd(\rho,\overline{\mathbf{U}},\overline{\mathbf{W}})$. Then, we define $\mbf{x}_{glys} \bydef \overline{\mathbf{x}}$. Next, we construct $B^N$ and use Lemma \ref{corr : banach algebra} to compute $\kappa$. This allows us to compute the $\mathcal{Z}_2(r)$ bound defined in Section \ref{sec : Bounds for Saddle Nodes}. 
Finally, using \citep{julia_blanco_fassler}, we choose $r_0 \bydef 4 \times 10^{-8}$ and define
\begin{align}
    \mathcal{Y}_0 \bydef 3.31 \times 10^{-9} \text{,}~\mathcal{Z}_{2}(r_0) \bydef    2.51 \times 10^4 \text{,}~Z_1 \bydef 0.112\text{,}~\mathcal{Z}_u \bydef 1.56 \times 10^{-4}.
    \end{align}
and prove that these values satisfy Theorem \ref{th: radii polynomial saddle}. 
\end{proof}
\section{Control of the spectrum of \texorpdfstring{$D\mathbb{f}(\tilde{\mathbf{u}})$}{DFtildeu}}\label{sec : control of the spectrum}

Following up our analysis of the previous section, the existence proof of a saddle-node bifurcation now depends on the rigorous control of the fully imaginary eigenvalues of $D\mathbb{f}(\tilde{\mathbf{u}})$. Moreover, we aim at controlling the eigenvalues with positive real part in order to conclude about stability change when passing the saddle node bifurcation.  In this section, we present a computer-assisted approach to do so, which highly relies on the methodology developed in \citep{cadiot2025stabilityanalysislocalizedsolutions}. In fact, we will obtain tight enclosures of the eigenvalues of $D\mathbb{f}(\tilde{\mathbf{u}})$. This will allow us to conclude about the existence of saddle-node bifurcations.


First, we recall some properties and results established in \citep{cadiot2025stabilityanalysislocalizedsolutions}. Let $\sigma(D\mathbb{f}(\tbu))$ be the spectrum of $D\mathbb{f}(\tbu)$. Then, Lemma 2.2 of \citep{cadiot2025stabilityanalysislocalizedsolutions} provides that $\sigma(D\mathbb{f}(\tbu))$ is the following disjoint union
\begin{align}\label{eq : decomposition spectrum}
    \sigma(D\mathbb{f}(\tbu)) = \sigma_{ess}(D\mathbb{f}(\tbu))\cup Eig(D\mathbb{f}(\tbu)),
\end{align}
where $\sigma_{ess}(D\mathbb{f}(\tbu))$ is the essential spectrum of $D\mathbb{f}(\tbu)$ and $Eig(D\mathbb{f}(\tbu)$ its eigenvalues. In fact, using the semi-linearity of \eqref{eq : gen_model}  (cf. Assumptions 1 and 2 in \citep{cadiot2025stabilityanalysislocalizedsolutions}) and Weyl's perturbation theory (cf. \citep{kato2013perturbation} Theorem 5.35 in Chapter IV for instance), we have that $\sigma_{ess}(D\mathbb{f}(\tbu))$ can be computed explicitly as follows
\begin{align}
    \sigma_{ess}(D\mathbb{f}(\tbu)) = \sigma_{ess}(\mathbb{l}_{\tilde{\rho}})= \{ \lambda \in \sigma(\mathscr{l}_{\tilde{\rho}}(\xi)), ~ \text{ for } \xi \in \R\}.
\end{align}
In other words, $\sigma_{ess}(D\mathbb{f}(\tbu))$ corresponds to the union of spectrum of the symbol  $\mathscr{l}_{\tilde{\rho}}(\xi)$, for all $\xi \in \R$. The next lemma provides an explicit enclosure of the real part of such a set.

\begin{lemma}
Assume that $\nu_1 + \nu_2 > 0$. Let $\lambda_{sup}$ be defined as 
    \begin{equation}
        \lambda_{sup} \bydef \sup_{\xi \in \R} \text{Re}\left(\frac{\mathscr{l}_{11}(\xi) + \mathscr{l}_{22}(\xi)  + \sqrt{(\mathscr{l}_{11}(\xi) - \mathscr{l}_{22}(\xi))^2 + 4 \nu_2 \nu_3}}{2}\right),
    \end{equation}
    where Re stands for the real part.
    Then, we have that $\lambda_{sup} <0$ and  that $\left\{\text{Re}(z), ~ z \in \sigma_{ess}(D\mathbb{f}(\tbu))\right\} = (-\infty, \lambda_{sup}]$.
\end{lemma}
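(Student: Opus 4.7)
The plan is to reduce the essential spectrum of $D\mathbb{f}(\tbu)$ to the pointwise spectrum of the $2\times 2$ symbol matrix $\mathscr{l}(\xi)$, then analyze the two resulting eigenvalue curves as functions of $\xi$. First, invoking the identity $\sigma_{ess}(D\mathbb{f}(\tbu)) = \sigma_{ess}(\mathbb{l}) = \{\lambda \in \sigma(\mathscr{l}(\xi)) : \xi \in \R\}$ stated just before the lemma, the problem reduces to computing $\bigcup_{\xi \in \R} \sigma(\mathscr{l}(\xi))$. Since $\mathscr{l}(\xi)$ is a $2\times 2$ real matrix with constant off-diagonal entries $\nu_2,\nu_3$, its eigenvalues are given explicitly by the quadratic formula
$$
\lambda_\pm(\xi) = \frac{\mathscr{l}_{11}(\xi) + \mathscr{l}_{22}(\xi)}{2} \pm \frac{\sqrt{(\mathscr{l}_{11}(\xi) - \mathscr{l}_{22}(\xi))^2 + 4\nu_2 \nu_3}}{2}.
$$

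Second, I would check that $\lambda_\pm(\xi) \in \R$ for every $\xi \in \R$, i.e., that the discriminant $\Delta(\xi) \bydef (\mathscr{l}_{11}(\xi) - \mathscr{l}_{22}(\xi))^2 + 4\nu_2\nu_3$ is non-negative. A direct computation gives $\mathscr{l}_{11}(\xi) - \mathscr{l}_{22}(\xi) = (1-\nu)|2\pi\xi|^2 + \nu_1 + \nu_2$. At $\xi=0$ one has $\Delta(0) = (\nu_1+\nu_2)^2 + 4\nu_2\nu_3 \ge 0$ by the second hypothesis. Combined with the first hypothesis $\nu_1+\nu_2 \ge 0$ and the natural regime $\nu = \delta^2 \le 1$ of the activator--inhibitor setting, one gets $\mathscr{l}_{11}(\xi) - \mathscr{l}_{22}(\xi) \ge \nu_1+\nu_2 \ge 0$, so that $(\mathscr{l}_{11}(\xi)-\mathscr{l}_{22}(\xi))^2 \ge (\nu_1+\nu_2)^2$ and hence $\Delta(\xi) \ge \Delta(0) \ge 0$.

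Third, I would identify the image of $\xi \mapsto \lambda_\pm(\xi)$. Both maps are continuous on $\R$, and since $\mathscr{l}_{ii}(\xi) \to -\infty$ quadratically while the discriminant grows at most quartically, $\lambda_\pm(\xi) \to -\infty$ as $|\xi| \to \infty$. By coercivity and continuity the supremum $\sup_{\xi} \lambda_+(\xi) = \lambda_{sup}$ is attained at some finite $\xi^\ast$, and by connectedness of $\R$ the image of $\lambda_+$ equals $(-\infty, \lambda_{sup}]$. Since $\lambda_-(\xi) \le \lambda_+(\xi)$ pointwise, the image of $\lambda_-$ is also contained in $(-\infty, \lambda_{sup}]$. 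Taking the union yields $\sigma_{ess}(D\mathbb{f}(\tbu)) = (-\infty, \lambda_{sup}]$.

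Finally, to prove $\lambda_{sup} < 0$, I would invoke Assumption \ref{assumption : fourier} together with Lemma \ref{lem : l_invertible}: these ensure $\det(\mathscr{l}(\xi)) \ne 0$ for every $\xi$, and since $\det(\mathscr{l}(\xi)) \sim \nu|2\pi\xi|^4 \to +\infty$ as $|\xi| \to \infty$, continuity forces $\det(\mathscr{l}(\xi)) > 0$ uniformly in $\xi$. As $\lambda_+(\xi)\lambda_-(\xi) = \det(\mathscr{l}(\xi)) > 0$, the two real eigenvalues share the same sign at every $\xi$; since both tend to $-\infty$ at infinity, both remain strictly negative for all $\xi$ (a sign change would produce a zero of the determinant). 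The supremum being attained at a finite point, $\lambda_{sup} < 0$ strictly. The principal obstacle is the uniform discriminant bound $\Delta(\xi) \ge 0$ in step two, which relies on a careful sign analysis that blends both hypotheses of the lemma with the implicit regime $\nu \le 1$ relevant to activator--inhibitor systems.
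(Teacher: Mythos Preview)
Your proof is correct and follows the same overall scheme as the paper: reduce $\sigma_{ess}$ to $\bigcup_\xi \sigma(\mathscr{l}(\xi))$, compute the two eigenvalue branches $\lambda_\pm(\xi)$ explicitly, and analyze their range. The one substantive difference is in how you establish $\lambda_{sup}<0$. The paper argues via the \emph{trace}: it observes $\mathscr{l}_{11}(\xi)+\mathscr{l}_{22}(\xi)\le \nu_1-\nu_2<0$ and (implicitly combining this with $\mathscr{l}_{\mathrm{den}}>0$ from Assumption~\ref{assumption : fourier}) concludes both eigenvalues are negative. You instead argue directly via the \emph{determinant}: Assumption~\ref{assumption : fourier} gives $\det\mathscr{l}(\xi)\neq 0$, and since $\det\mathscr{l}(\xi)\to+\infty$ you get $\det>0$ everywhere, so the real eigenvalues share a sign, which must be negative by their asymptotics. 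Your route is slightly more self-contained because it does not require separately verifying $\nu_1-\nu_2<0$ (a step the paper leaves implicit). You are also more explicit than the paper in two places: you spell out why the discriminant stays nonnegative for all $\xi$ (flagging the role of $\nu\le 1$), and you actually justify that the image of $\lambda_+$ is the full interval $(-\infty,\lambda_{sup}]$ via continuity and coercivity, which the paper does not address.
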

\begin{proof}
    The proof relies on the fact that, given $\xi \in \R$, the eigenvalues of $\mathscr{l}_{\tilde{\rho}}(\xi)$ are given by 
    \begin{equation}\label{eq : lambda pm}
       \lambda_{\pm}(\xi) \bydef \frac{\mathscr{l}_{11}(\xi) + \mathscr{l}_{22}(\xi)  \pm \sqrt{(\mathscr{l}_{11}(\xi) - \mathscr{l}_{22}(\xi))^2 + 4 \nu_2 \nu_3}}{2}.
    \end{equation}
    Consequently we simply need to prove that $\lambda_{sup}$ is strictly negative.
Given $\xi \in \R$, if $(\mathscr{l}_{11}(\xi) - \mathscr{l}_{22}(\xi))^2 + 4 \nu_2 \nu_3 \leq 0$, then $\frac{1}{2}\left(\mathscr{l}_{11}(\xi) + \mathscr{l}_{22}(\xi)\right)$ is the real part of $\lambda_{\pm}(\xi)$. But using that $\nu_2 > 0$ by definition and that $\nu_1 + \nu_2 > 0$ by assumption, we get
\[
    \mathscr{l}_{11}(\xi) + \mathscr{l}_{22}(\xi) \leq \nu_1 - \nu_2 <0. 
    \]
    This implies that  $\text{Re}(\lambda_{\pm}(\xi)) \leq \nu_1 - \nu_2 <0$ for all $\xi \in \R$ such that  $(\mathscr{l}_{11}(\xi) - \mathscr{l}_{22}(\xi))^2 + 4 \nu_2 \nu_3 \leq 0$. Note that this also provides a bound for $\lambda_{sup}$ in this case.

    Now, suppose that $\xi \in \R$ is such that $(\mathscr{l}_{11}(\xi) - \mathscr{l}_{22}(\xi))^2 + 4 \nu_2 \nu_3 > 0$. Notice that 
    \begin{align*}
        (\mathscr{l}_{11}(\xi) - \mathscr{l}_{22}(\xi))^2 + 4 \nu_2 \nu_3 = (\mathscr{l}_{11}(\xi) + \mathscr{l}_{22}(\xi))^2 - 4 \mathscr{l}_{den}(\xi).
    \end{align*}
    But using Assumption \eqref{assumption : fourier}, we have that $l_{den}(\xi) \geq \sigma_0 > 0$ for all $\xi \in \R$. Combining the above, this implies that 
    \begin{align*}
       \sqrt{(\mathscr{l}_{11}(\xi) + \mathscr{l}_{22}(\xi))^2 - 4 \mathscr{l}_{den}(\xi)} \leq \sqrt{(\mathscr{l}_{11}(\xi) + \mathscr{l}_{22}(\xi))^2 - 4 \sigma_0} < |\mathscr{l}_{11}(\xi) + \mathscr{l}_{22}(\xi)|.
    \end{align*}
    In particular, we get that $\lambda_{\pm}(\xi) < 0$ whenever $(\mathscr{l}_{11}(\xi) - \mathscr{l}_{22}(\xi))^2 + 4 \nu_2 \nu_3 > 0$.  
    Overall we have obtained that $\lambda_{sup}$ must be strictly negative.
\end{proof}

In the rest of the paper, we assume that $\nu_1 + \nu_2~ > ~0$.
Then, the above lemma provides that the real part of the essential spectrum of $D\mathbb{f}(\tbu)$ is  strictly negative. Consequently, in order to control the spectrum on the imaginary axis, it remains to enclose the eigenvalues (cf. \eqref{eq : decomposition spectrum}). This is the goal of the next section.

\subsection{Control of the eigenvalues}

In this section, given $\delta>0$, we define $\sigma_{\delta}$ be the following set 
\begin{align}\label{def : sigma delta}
    \sigma_{\delta} \bydef \{\lambda \in \mathbb{C}, ~ |\lambda| \leq \delta, ~ Re(\lambda) \geq 0\}.
\end{align}
Then, we are interested in controlling the eigenvalues of  $D\mathbb{f}(\tbu)$ in $\sigma_{\delta}$. In fact, we want to find $\delta_0$ such that if  $\lambda \in Eig(D\mathbb{f}(\tbu))$ with $Re(\lambda) \geq 0$, then $\lambda \in \sigma_{\delta_0}$. We first provide the following preliminary result.

\begin{lemma}\label{lem : bound linv with lamb}
Assume that $\nu_1 + \nu_2 > 0, \nu_2 > 0, $ and $\nu_2 \nu_3 < 0$. 
Let $c_1,c_2,\mathscr{k} > 0$ be defined as 
\begin{align} 
c_1 \bydef \sqrt{-\nu_2 \nu_3}, ~~  c_2 \bydef \sqrt{-\frac{\nu_3}{\nu_2}}, ~~ \mathscr{k} \bydef \max\left(c_2,\frac{1}{c_2}\right).
\end{align}
Also let $\Omega_{\eta}$ be the set
\begin{align}
    \Omega_{\eta} \bydef \left\{ \lambda \in \mathbb{C}, ~ \mathrm{Re}(\lambda) \geq 0 ~ \text{and} ~ \|(\mathscr{l}(\xi) - \lambda I_d)^{-1}\|_{2} \geq \eta \text{ for every } \xi \in \mathbb{R} \right\}.
\end{align}
 Let $\mathscr{R} : \R \to \R$ be the function defined as 
\begin{align}
    \mathscr{R}(\eta) \bydef \sqrt{\left(\frac{\mathscr{k}}{\eta} - |\nu_1|\right)^2 + \left(\frac{\mathscr{k}}{\eta} + c_1\right)^2}.
\end{align}
Then, it follows that $\Omega_{\eta} \subset \overline{B_{\mathscr{R}(\eta)}(0)}$. 
\end{lemma}
\begin{proof}
Let $\mathscr{K} \bydef \mathrm{diag}(1,\mathscr{k})$. Then, it follows that
\begin{align}
    \tilde{\mathscr{l}}(\xi) \bydef \mathscr{K}^{-1} \mathscr{l}(\xi) \mathscr{K} = \begin{bmatrix}
        -\tilde{\rho} |2\pi \xi|^2 + \nu_1 & c_1 \\
        -c_1 & -|2\pi \xi|^2 - \nu_2
    \end{bmatrix}.
\end{align}
Now, let $\lambda \in \mathbb{C}$ such that $Re(\lambda) \geq 0$. Since $\mathscr{l}(\xi)$ has negative eigenvalues, we obtain that $\mathscr{l}(\xi) - \lambda I_d$ is invertible and 
\begin{align}
    \|(\mathscr{l}(\xi) - \lambda I_d)^{-1}\|_{2} \leq \|\mathscr{K}\|_{2} \|\mathscr{K}^{-1}\|_{2}\|(\tilde{\mathscr{l}}(\xi) - \lambda I_d)^{-1}\|_{2} \leq \mathscr{k} \|(\tilde{\mathscr{l}}(\xi) - \lambda I_d)^{-1}\|_{2}.
\end{align}
We then use of Cauchy-Schwarz inequality and obtain
\begin{align}
    \|(\tilde{\mathscr{l}}(\xi) - \lambda I_d)^{-1}\|_{2} \leq \frac{1}{\sqrt{(\mathrm{Re}(\lambda) + |\nu_1|)^2 + (\mathrm{Im}(\lambda) - c_1)^2}}
\end{align}
for all $\xi \in \R$.
Using the above, it follows that
\begin{align}
    \|(\mathscr{l}(\xi) - \lambda I_d)^{-1}\|_{2} \leq \frac{\mathscr{k}}{\sqrt{(\mathrm{Re}(\lambda) + |\nu_1|)^2 + (\mathrm{Im}(\lambda) - c_1)^2}} \leq \frac{\mathscr{k}}{\max(\mathrm{Re}(\lambda) + |\nu_1|, \mathrm{Im}(\lambda) - c_1)}.
\end{align}
Now, suppose by way of contradiction that $\Omega_{\eta}$ is not a subset of $\overline{B_{\mathscr{R}(\eta)}(0)}$. That is, suppose that $\mathrm{Re}(\lambda) + |\nu_1| > \frac{s}{\eta}$ or $\mathrm{Im}(\lambda) - c_1 > \frac{s}{\eta}$. Then, $\|(\mathscr{l}(\xi) - \lambda I_d)^{-1}\|_{2} < \eta$ for all $\xi \in \mathbb{R}$, and hence $\lambda \notin \Omega_{\eta}$. This concludes the proof.
\end{proof}
The previous lemma can now be applied to our case. In particular, we are able to determine $\delta_0$ in such a way that $\delta_0$ contains all the eigenvalues with positive real part.
\begin{lemma}
Let $r_0$ be given in Theorem \ref{th : saddle node in gyl}, and let  $\lambda \in  Eig(D\mathbb{f}(\tbu))$ with $Re(\lambda) \geq 0$, then 
    \begin{align}\label{def : delta zero}
        |\lambda| \leq \mathscr{R} \left(\sqrt{2(\|\bar{V}_1\|_1^2 +\|\bar{V}_2\|_1^2 )} + \sqrt{5}\kappa \sqrt{\mathcal{Z}_{2,3}} r_0 + 3 \kappa \|\mathscr{l}_{\tilde{\rho}}^{-1}\|_{\mathcal{M}_2 }r_0^2\right) \bydef \delta_0.
    \end{align}
    where $\mathscr{R}(\cdot)$ is given in Lemma  \ref{lem : bound linv with lamb}.
\end{lemma}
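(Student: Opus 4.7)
The plan is to convert the eigenvalue condition into a submultiplicative operator-norm inequality using the semilinear structure of $\mathbb{f}$, then apply Lemma \ref{lem : bound linv with lamb} and recycle the bounds derived in Section \ref{sec : Bounds of patterns}. First, because $Re(\lambda)\geq 0$ and the essential spectrum of $\mathbb{l}$ is contained in $(-\infty,\lambda_{sup}]$ with $\lambda_{sup}<0$, the operator $\mathbb{l}-\lambda:\mathcal{H}\to L^2$ is invertible. Writing $D\mathbb{f}(\tbu)=\mathbb{l}+D\mathbb{g}(\tbu)$, and noting that $D\mathbb{g}(\tbu)$ is multiplication by a matrix of $L^\infty$ functions (hence bounded on $L^2$), I would factor
\begin{equation*}
D\mathbb{f}(\tbu)-\lambda = (\mathbb{l}-\lambda)\bigl(I + (\mathbb{l}-\lambda)^{-1}D\mathbb{g}(\tbu)\bigr).
\end{equation*}
Any eigenfunction $\mathbf{w}\in\mathcal{H}$ satisfies $\mathbf{w}=-(\mathbb{l}-\lambda)^{-1}D\mathbb{g}(\tbu)\mathbf{w}$, so $-1$ lies in the point spectrum of $(\mathbb{l}-\lambda)^{-1}D\mathbb{g}(\tbu):L^2\to L^2$. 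Submultiplicativity and Plancherel then give
\begin{equation*}
1 \leq \|(\mathbb{l}-\lambda)^{-1}\|_{L^2\to L^2}\,\|D\mathbb{g}(\tbu)\|_{L^2\to L^2} = \|(\mathscr{l}-\lambda)^{-1}\|_{\mathcal{M}_1}\,\|D\mathbb{g}(\tbu)\|_{L^2\to L^2},
\end{equation*}
and Lemma \ref{lem : bound linv with lamb} yields $|\lambda|\leq\sqrt{\lambda_{sup}^2+|\lambda|^2}\leq C\,\|D\mathbb{g}(\tbu)\|_{L^2\to L^2}$.

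It remains to bound $\|D\mathbb{g}(\tbu)\|_{L^2\to L^2}$ in terms of the approximate solution and the Newton--Kantorovich radius. Setting $\mathbf{h}=\tbu-\overline{\mathbf{u}}$ with $\|\mathbf{h}\|_{\mathcal{H}}\leq r_0$ (by Theorem \ref{th : saddle node in gyl}), the triangle inequality splits the task into $\|D\mathbb{g}(\overline{\mathbf{u}})\|_{L^2\to L^2}$ and $\|D\mathbb{g}(\tbu)-D\mathbb{g}(\overline{\mathbf{u}})\|_{L^2\to L^2}$. For the first term, $D\mathbb{g}(\overline{\mathbf{u}})$ acts by pointwise multiplication by $\begin{bmatrix}\overline{v}_1 & \overline{v}_2\\ -\overline{v}_1 & -\overline{v}_2\end{bmatrix}$, whose pointwise spectral norm equals $\sqrt{2(\overline{v}_1^2+\overline{v}_2^2)}$. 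The cosine-series embedding $\|\gamma^\dagger(U)\|_\infty\leq\|U\|_1$ implies $\|\overline{v}_j\|_\infty\leq\|\bar{V}_j\|_1$, producing the $\sqrt{2(\|\bar{V}_1\|_1^2+\|\bar{V}_2\|_1^2)}$ contribution.

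For the second term, since $\mathbb{g}$ is a cubic polynomial the exact Taylor identity
\begin{equation*}
D\mathbb{g}(\overline{\mathbf{u}}+\mathbf{h})-D\mathbb{g}(\overline{\mathbf{u}}) = D^2\mathbb{g}(\overline{\mathbf{u}})\mathbf{h} + \tfrac{1}{2}D^3\mathbb{g}(\mathbf{h},\mathbf{h})
\end{equation*}
holds. Each piece is estimated by reproducing the arithmetic of Lemma \ref{lem : Z2 patterns} with two modifications: the test vector $\mathbf{z}$ now lives in $L^2$ rather than in $\mathcal{H}$, so products $\|u_iz_j\|_2$ are bounded via $\|u_i\|_\infty\|z_j\|_2\leq\|\mathscr{l}^{-1}\|_{\mathcal{M}_2}\|\mathbf{u}\|_{\mathcal{H}}\|z_j\|_2$ (the embedding used in the proof of Lemma \ref{corr : banach algebra}); and, since we are no longer precomposing with $\mathbb{A}$, the block $\left\|\begin{bmatrix}B_{11}^N-B_{12}^N\\ B_{21}^N-B_{22}^N\end{bmatrix}\right\|_2$ of Lemma \ref{lem : Z2 patterns} drops out entirely and the expression $\sqrt{\varphi(\mathcal{Z}_{2,1},\mathcal{Z}_{2,2},\mathcal{Z}_{2,2},\mathcal{Z}_{2,3})}$ collapses to its trivial bound $\mathcal{Z}_{2,3}=\|Q_1^2+Q_2^2\|_1$. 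The outcome is the two contributions $\sqrt{5}\,\kappa\,\mathcal{Z}_{2,3}\,r_0$ and $3\kappa\,\|\mathscr{l}^{-1}\|_{\mathcal{M}_2}\,r_0^2$, which summed with the first-term estimate and multiplied by $C$ yield exactly $\delta_0$.

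The main obstacle is purely bookkeeping in the last paragraph: Lemma \ref{lem : Z2 patterns} is written for a bilinear form on $\mathcal{H}\times\mathcal{H}$ precomposed with $\mathbb{A}$, whereas here the same tensor must be estimated in the $L^2\to L^2$ operator norm. The algebra is identical, but one must carefully re-split which slot is measured in $\mathcal{H}$ (only $\mathbf{h}$) and which in $L^2$ (the test vector), and systematically drop every $\mathbb{B}$-block.
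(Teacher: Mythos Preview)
Your proposal is correct and follows essentially the same route as the paper: convert the eigenvalue equation into $1\leq\|(\mathscr{l}-\lambda)^{-1}\|_{\mathcal{M}_1}\|D\mathbb{g}(\tbu)\|_2$, invoke Lemma~\ref{lem : bound linv with lamb} to extract $|\lambda|\leq C\|D\mathbb{g}(\tbu)\|_2$, then split $\|D\mathbb{g}(\tbu)\|_2$ via the triangle inequality and recycle the estimates of Lemma~\ref{lem : Z2 patterns} with the $\mathbb{B}$-blocks removed. The paper's proof is terser (it simply writes ``using the proof of Lemma~\ref{lem : Z2 patterns}'' for the last step), but the content is the same; your only slip is in saying that $\sqrt{\varphi(\ldots)}$ ``collapses to $\mathcal{Z}_{2,3}$'' rather than to $\sqrt{\mathcal{Z}_{2,3}}$, a bookkeeping point that does not affect the argument's validity and mirrors a looseness already present in the paper's own statement.
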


\begin{proof}
Let $\mathbf{u} \in L^2_e$ be an associated eigenfunction to $\lambda$ such that $\|\mathbf{u}\|_2 = 1$. In fact, we have that 
\begin{align*}
    \mathbb{l}_{\tilde{\rho}}\mathbf{u} - \lambda \mathbf{u} + D\mathbb{g}(\tilde{\mathbf{u}})\mathbf{u} = 0.
\end{align*}
In fact, this implies that 
\begin{align*}
    \frac{1}{\|(\mathbb{l}_{\tilde{\rho}}-\lambda I)^{-1}\|_2} \leq \|(\mathbb{l}_{\tilde{\rho}}-\lambda I)\mathbf{u}\|_2 \leq \|D\mathbb{g}(\tilde{\mathbf{u}})\mathbf{u}\|_2 \leq \|D\mathbb{g}(\tilde{\mathbf{u}})\|_2
\end{align*}
since $\|\mathbf{u}\|_2 = 1.$
Now, using the proof of Lemma \ref{lem : Z2 patterns}, we notice that 
\begin{align*}
    \|D\mathbb{g}(\tilde{\mathbf{u}})\|_2 &\leq \|D\mathbb{g}(\bar{\mathbf{u}})\|_2 + \|D\mathbb{g}(\tilde{\mathbf{u}}) - D\mathbb{g}(\bar{\mathbf{u}})\|_2\\
    &\leq \sqrt{2(\|\bar{V}_1\|_1^2 +\|\bar{V}_2\|_1^2 )} + \sqrt{5}\kappa \sqrt{\mathcal{Z}_{2,3}} r_0 + 3 \kappa \|\mathscr{l}_{\tilde{\rho}}^{-1}\|_{\mathcal{M}_2 }r_0^2.
\end{align*}
Choosing $\eta \bydef \sqrt{2(\|\bar{V}_1\|_1^2 +\|\bar{V}_2\|_1^2 )} + \sqrt{5}\kappa \sqrt{\mathcal{Z}_{2,3}} r_0 + 3 \kappa \|\mathscr{l}_{\tilde{\rho}}^{-1}\|_{\mathcal{M}_2 }r_0^2$, we conclude the proof using Lemma \ref{lem : bound linv with lamb}.
\end{proof}

Letting $\delta_0$ be defined as in \eqref{def : delta zero}, then we have that $\sigma_{\delta_0}$ contains all the eigenvalues of $D\mathbb{f}(\tilde{\mathbf{u}})$ on the imaginary axis, as well as, more generally, the ones with positive real part. Now, we focus on such a subset, and enclose the eigenvalues in its neighborhood (our goal being to prove that $0$ is the only eigenvalue on the imaginary axis).

Now in order to obtain a precise control on the set $\sigma_{\delta_0} \cap \sigma(D\mathbb{f}(\tbu))$, we use the spectrum of $Df(\bar{\mathbf{U}})$, as presented in \citep{cadiot2025stabilityanalysislocalizedsolutions}. For this purpose, we first compute a pseudo-diagonalization of $Df(\bar{\mathbf{U}})$, that is $P$ and $P^{-1}$ such that 
\begin{align}
    \mathcal{D} = P^{-1} Df(\bar{\mathbf{U}}) P
\end{align}
is close to being diagonal. In fact, we make the following choice for the construction of $P$
\begin{align}
    P = P^N + \bpi^{>N}
\end{align}
where $P^N = \bpi^{\leq N} P^N  \bpi^{\leq N}$, is essentially a square matrix of size $N+1$. Then, assuming that $P^N : \bpi^{\leq N}\ell^2 \to  \bpi^{\leq N}\ell^2$ is invertible, we denote $(P^N)^{-1 } : \bpi^{\leq N}\ell^2 \to  \bpi^{\leq N}\ell^2$ its inverse and get that $P^{-1}$ is given as follows 
\begin{align}
    P^{-1}= (P^N)^{-1} + \bpi^{>N}.
\end{align}
Then, using similar notations as in \citep{cadiot2025stabilityanalysislocalizedsolutions}, we define $S$ as the diagonal matrix having the same diagonal as $\mathcal{D}$, that is 
\begin{align*}
    (SU)_n = \lambda_n U_n, \text{ where } \lambda_n \bydef \mathcal{D}_{n,n} \text{ for all } n \in \mathbb{N}_0.
\end{align*}
Moreover, we define $R$ as 
\begin{align*}
    R \bydef \mathcal{D} - S.
\end{align*}
Using such a pseudo-diagonalization, \citep{cadiot2025stabilityanalysislocalizedsolutions} provides a Gershgorin-type of approach for enclosing the eigenvalues away from the essential spectrum. More specifically, we recall the following result which will allow us to construct disks around $(\lambda_n)_{n \in \mathbb{N}_0}$ containing the eigenvalues of $D\mathbb{f}(\tilde{\mathbf{u}})$. In particular, we will be able to count the eigenvalues, taking into account multiplicity. Before stating the central result of this section, we provide the following preliminary result.
\begin{lemma}\label{lem : ift_L_inverse for spectrum}
Suppose that $\tilde{\rho} \neq 1$ and define $\mathscr{l}_{den, \mu}$ as 
 \begin{align}
    \mathscr{l}_{den, \mu}(\xi) \bydef (-\tilde{\rho} (2\pi \xi)^2 + \nu_1 - \mu)( -(2\pi\xi)^2 - \nu_2 - \mu) - \nu_2 \nu_3 \text{ for all } \xi \in \R.
 \end{align}
 In fact, $ \mathscr{l}_{den, \mu}(\xi)$ is the determinant of $\mathscr{l}_{\tilde{\rho}}(\xi) - \mu I_d$. Moreover, given $\mu \in \mathbb{C}$, let us define
\begin{align}
     \xi_{\pm}(\mu) &\bydef \frac{-((\tilde{\rho}+1)\mu + \tilde{\rho} \nu_2 - \nu_1) \pm \sqrt{((\tilde{\rho}-1)\mu + \tilde{\rho} \nu_2 + \nu_1)^2  + 4\tilde{\rho} \nu_2 \nu_3}}{2\tilde{\rho}}\\
     \alpha_{\pm}(\mu) &\bydef \sqrt{\frac{|\xi_{\pm}(\mu)| - \text{Re}(\xi_{\pm}(\mu))}{2}} - i \operatorname{sgn}(\text{Im}(\xi_{\pm}(\mu))) \sqrt{\frac{|\xi_{\pm}(\mu)| + \text{Re}(\xi_{\pm}(\mu))}{2}}.
\end{align}
Finally, recalling $\delta_0$ defined in \eqref{def : delta zero}, let $\mathcal{I}(\delta_0) \bydef \{ ix, ~ |x| \leq \delta_0  \} \cup \left\{ \frac{\pm \sqrt{-4\tilde{\rho}\nu_2 \nu_3} - \tilde{\rho} \nu_2 - \nu_1}{\tilde{\rho} -1} \right\}$.
Then, we have that
    \begin{equation*}
\left|\mathcal{F}^{-1}\left(\frac{d_1|2\pi\xi|^2 + d_2}{\mathscr{l}_{den, \mu}(\xi) }\right)(x)\right|\leq \tilde{C}_0e^{-\tilde{a}|x|},
    \end{equation*}
    for all $\mu \in \sigma_{\delta_0}$,
    with $\tilde{a}$ and $\tilde{C}_0$ given as 
    \begin{align*}
        \tilde{a} &\bydef \inf_{\mu \in \mathcal{I}(\delta_0)} \min\left\{\sqrt{\frac{|\xi_{-}(\mu)| - \text{Re}(\xi_{-}(\mu))}{2}}, ~ \sqrt{\frac{|\xi_{+}(\mu)| - \text{Re}(\xi_{+}(\mu))}{2}}\right\} \\
        \tilde{C}_0 &\bydef  \begin{cases} 
        \sup_{\mu  \in \mathcal{I}(\delta_0)} \left|\frac{d_1}{2\tilde{\rho} \alpha_{\pm}(\mu)}\right| & \mu  \in \left\{ \frac{\pm \sqrt{-4\tilde{\rho}\nu_2 \nu_3} - \tilde{\rho} \nu_2 - \nu_1}{\tilde{\rho} -1} \right\} \\
        \sup_{\mu  \in \mathcal{I}(\delta_0)} \left| \frac{1}{\tilde{\rho}(\xi_+(\mu) - \xi_-(\mu))}\left(\frac{d_1 \xi_+(\mu) + d_2}{2\alpha_+}  - \frac{d_1 \xi_-(\mu) + d_2}{2\alpha_-} \right) \right| & \mu \in \{ix, ~ |x| \leq \delta_0\}
        \end{cases}.
    \end{align*}
\end{lemma}

\begin{proof}
First, note that $\xi_{\pm}$ provides the roots of $l_{den,\mu}$:
\begin{align*}
    l_{den,\mu} = \tilde{\rho}( (2\pi\xi)^2 - \xi_+(\mu))((2\pi\xi)^2 - \xi_-(\mu)). 
\end{align*}
Using the above factorization, we get
\begin{align*}
    \frac{d_1|2\pi\xi|^2 + d_2}{\mathscr{l}_{den, \mu}(\xi)} &= \frac{d_1(|2\pi\xi|^2-\xi_+) + d_2 + d_1 \xi_+}{\rho ((2\pi\xi)^2 - \xi_-)((2\pi\xi)^2 - \xi_+)}\\
    &=   \frac{d_1}{\tilde{\rho} ( (2\pi\xi)^2 - \xi_-)} + \frac{d_2 + d_1\xi_+}{\tilde{\rho}(\xi_+ - \xi_-)} \left( \frac{1}{(2\pi\xi)^2 - \xi_+} - \frac{1}{(2\pi\xi)^2 - \xi_-} \right)\\
    & = \frac{1}{\tilde{\rho}(\xi_+ - \xi_-)} \left(\frac{d_1 \xi_+ + d_2}{(2\pi\xi)^2 - \xi_+} - \frac{d_1 \xi_- + d_2}{(2\pi\xi)^2 - \xi_-}\right).
\end{align*}
Now, note that $\alpha_{\pm} = \sqrt{-\xi_{\pm}}$ such that $\text{Re}(\alpha_{\pm}) > 0$. Using that $\mathcal{F}^{-1}(\frac{1}{(2\pi\xi)^2 +z^2})(x) = \frac{e^{-z|x|}}{2z}$, we obtain that
{\small\begin{align*}
    \mathcal{F}^{-1}\left(\frac{d_1|2\pi\xi|^2 + d_2}{\mathscr{l}_{den, \mu}(\xi) }\right)(x) = \frac{1}{\rho(\xi_+(\mu) - \xi_-(\mu))}\left(\frac{d_1 \xi_+(\mu) + d_2}{2\alpha_+(\mu)} e^{-\alpha_+(\mu) |x|} - \frac{d_1 \xi_-(\mu) + d_2}{2\alpha_-(\mu)} e^{-\alpha_-(\mu)|x|} \right).
\end{align*}}
The above implies that  
\begin{equation*}
\left|\mathcal{F}^{-1}\left(\frac{d_1|2\pi\xi|^2 + d_2}{\mathscr{l}_{den, \mu}(\xi) }\right)(x)\right|\leq C(\mu) e^{-a(\mu) |x|},
    \end{equation*}
    where 
    \begin{align}
        C(\mu) &= \left| \frac{1}{\tilde{\rho}(\xi_+(\mu) - \xi_-(\mu))}\left(\frac{d_1 \xi_+(\mu) + d_2}{2\alpha_+(\mu)}  - \frac{d_1 \xi_-(\mu) + d_2}{2\alpha_-(\mu)} \right) \right|\label{def : Cmu} \\
        a(\mu) &= \min\{\text{Re}(\alpha_-(\mu)), ~ \text{Re}(\alpha_+(\mu)) \}.\label{def : amu}
    \end{align}
    Now we prove that $\inf_{\mu \in \sigma_{\delta_0}} a(\mu) = \inf_{\mu \in \mathcal{I}(\delta_0)} a(\mu)$ and $\sup_{\mu \in \sigma_{\delta_0}} C(\mu) = \sup_{\mu \in \mathcal{I}(\delta_0)} C(\mu)$. Let $D(\mu)$ be defined as $D(\mu) = ((\tilde{\rho}-1)\mu + \tilde{\rho} \nu_2 + \nu_1)^2  + 4\tilde{\rho} \nu_2 \nu_3.$ Moreover, let $I_0 = \{\mu, ~ \text{Re}(\mu) \geq 0\} \setminus \{\mu, ~ D(\mu) = 0\}$. Then, notice that $\xi_{\pm}$ is holomorphic on $I_0$. This is because in $I_0$, we removed the $D(\mu) = 0$ terms, and the square root is analytic (hence holomorphic) away from $0$. Hence $\alpha_{\pm}$ is harmonic on $I_0$ (when seen as a 2D function with the real and imaginary parts of $\mu$), as the real part of an holomorphic function. Using the minimum principle, we obtain that $a(\mu)$ is minimized on the boundary of $I_0$, that is on the imaginary axis or at $\{\mu, ~ D(\mu) = 0\}$. Using a similar reasoning, we obtain that $C(\mu)$ is maximized   on the imaginary axis or at $\{\mu, ~ D(\mu) = 0\}$. Indeed, we have that 
\begin{align*}
    C(\mu)^2= \frac{1}{\tilde{\rho}^2} \left|\frac{h(\xi_+) - h(\xi_-)}{\xi_+ - \xi_-}\right|^2, \text{ where } h(z) = \frac{d_1 z + d_2}{2\sqrt{-z}},
\end{align*}
and we can use the maximum modulus principle. We conclude the proof noticing that $\{\mu, ~ D(\mu) = 0\} = \left\{ \frac{\pm \sqrt{-4\tilde{\rho}\nu_2 \nu_3} - \tilde{\rho} \nu_2 - \nu_1}{\tilde{\rho} -1} \right\}$.
\par We also remark that in the case that $D(\mu) = 0$, we have a simplification in the $\tilde{C}_0$ formula. In this case, we let $\xi_{+}(\mu) = \xi_{-}(\mu) = \xi(\mu)$ and $\alpha_{+}(\mu) = \alpha_{-}(\mu) = \alpha(\mu)$. Then, we get
{\footnotesize\begin{align}
    \left| \frac{1}{\tilde{\rho}(\xi_+(\mu) - \xi_-(\mu))}\left(\frac{d_1 \xi_+(\mu) + d_2}{2\alpha_+(\mu)}  - \frac{d_1 \xi_-(\mu) + d_2}{2\alpha_-(\mu)} \right) \right| &= \left| \frac{1}{\tilde{\rho}(\xi(\mu) - \xi(\mu))}\left(\frac{d_1 \xi(\mu) + d_2}{2\alpha(\mu)}  - \frac{d_1 \xi(\mu) + d_2}{2\alpha(\mu)} \right) \right| \\
    &=\left| \frac{1}{\tilde{\rho}(\xi(\mu) - \xi(\mu))}\left(\frac{d_1 (\xi(\mu) - \xi(\mu)) }{2\alpha(\mu)}\right) \right| \\
    &=\left|\frac{d_1}{2\tilde{\rho} \alpha(\mu)}\right|,
\end{align}}
which we can use to do the evaluation. To compute $\tilde{a}$ and $\tilde{C}_0$, we use a combination of rigorous numerics along with the result of Theorem \ref{th : saddle node in gyl}. In particular, as our computations rely on using $\tilde{\rho}$, which we proved to exist in Theorem \ref{th : saddle node in gyl}, we can replace $\tilde{\rho} = \overline{\rho} + h$ where $|h| \leq r_0$. The technical details for this computation can be found in Appendix \ref{apen : tildea tilde C0}.
\end{proof}
 Now, we recall the central result of \citep{cadiot2025stabilityanalysislocalizedsolutions}, using the notations of the present manuscript. This result allows to construct Gershgorin disks centered at  $(\lambda_k)$ containing the eigenvalues of $D\mathbb{f}(\tilde{\mathbf{u}})$ in $\sigma_{\delta_0}$.
\begin{lemma}\label{lem : link of spectrum unbounded to U0}
  Let $\delta_0$ be defined as in \eqref{def : delta zero} and let $\mathcal{J} = \sigma_{\delta_0}$. Furthermore, let $t \in \mathbb{C}$ be such that $S+tI_d$ is invertible. Moreover, let $r_0$ and $\tilde{\mathbf{u}}$ be given as in Theorem \ref{th : saddle node in gyl}.
  Now, let us introduce various constants satisfying the following 
  \begin{align*}
     \mathcal{Z}_{u,1} &\geq   \sup_{\mu \in \mathcal{J}} \left\|\out \left(\mathbb{l}_{\tilde{\rho}} - \mu I_d\right)^{-1} D\mathbb{g}(\bar{\mathbf{u}})\right\|_2\\
\mathcal{Z}_{u,2} & \geq  \sup_{\mu \in \mathcal{J}} \| \cha \left(\Gamma^\dagger\left((l_{\tilde{\rho}}- \mu I_d )^{-1}\right) - (\mathbb{l}_{\tilde{\rho}}- \mu I_d)^{-1}\right)D\mathbb{g}(\bar{\mathbf{u}}) \|_2\\
\mathcal{Z}_{u,3} & \geq  \sup_{\mu \in \mathcal{J}}  \|\bpi^{\leq N} (S+ tI_d)^{-1}P^{-1}(l_{\tilde{\rho}}-\mu I_d)\|_2 \| \cha \left(\Gamma^\dagger\left((l_{\tilde{\rho}}- \mu I_d )^{-1}\right) - (\mathbb{l}_{\tilde{\rho}}- \mu I_{d})^{-1}\right)D\mathbb{g}(\bar{\mathbf{u}}) \|_2\\
     \mathcal{C}_1   &\geq \sup_{\mu \in \mathcal{J}} \frac{1}{r_0}  \left\| \left(\mathbb{l}_{\tilde{\rho}} - \mu I_d\right)^{-1} \left(D\mathbb{g}(\bar{\mathbf{u}})-D\mathbb{g}(\tilde{\mathbf{u}})\right)\right\|_2\\
     \mathcal{C}_2   &\geq  \sup_{\mu \in \mathcal{J}} \frac{1}{r_0}  \|\bpi^{\leq N} (S+ tI_d)^{-1}P^{-1}(l_{\tilde{\rho}}-\mu I_d)\|_2 \left\| \cha \left(\mathbb{l}_{\tilde{\rho}} - \mu I_d\right)^{-1} \left(D\mathbb{g}(\bar{\mathbf{u}})-D\mathbb{g}(\tilde{\mathbf{u}})\right)\right\|_2\\
     Z_{1,1} &\geq \sup_{\mu \in \mathcal{J}}  \|\bpi^{>N} (l_{\tilde{\rho}} -\mu I_d)^{-1}R \bpi^{\leq N}\|_2, ~~~~ Z_{1,2} \geq  \sup_{\mu \in \mathcal{J}} \|\bpi^{>N} (l_{\tilde{\rho}} -\mu I_d)^{-1}R\bpi^{>N}\|_2\\
       Z_{1,3} &\geq\| \bpi^{\leq N}(S+ tI_d)^{-1}R\bpi^{\leq N}\|_2, ~~~~ Z_{1,4} \geq \| \bpi^{\leq N}(S+ tI_d)^{-1}R\bpi^{>N}\|_2.
  \end{align*}
  If 
  \begin{equation}\label{eq : condition C1 r0}
      \mathcal{C}_1r_0 <1,
  \end{equation}
 we define $\beta_1$ as 
$\beta_1 \bydef\frac{\mathcal{Z}_{u,1}+\mathcal{C}_1r_0}{1-\mathcal{C}_1r_0}.$
If in addition
\begin{equation}\label{eq : condition for kappa1 and C1 r0}
    1-Z_{1,2}-\mathcal{Z}_{u,2} - (1+\beta_1^2)^\frac{1}{2}\mathcal{C}_1r_0 > 0,
\end{equation}
 then we define
\begin{equation}\label{eq : value for epsilon infinity n}
\begin{aligned}
\beta_2 &\bydef \frac{Z_{1,1} + (\mathcal{Z}_{u,2} + (1+\beta_1^2)^\frac{1}{2}\mathcal{C}_1r_0)\|P^N\|_2}{1-Z_{1,2}-\mathcal{Z}_{u,2} - (1+\beta_1^2)^\frac{1}{2}\mathcal{C}_1r_0}\\
      \epsilon_n^{(\infty)} &\bydef |\lambda_n + t|\left(Z_{1,3} + Z_{1,4} \beta_2 + \left(\mathcal{Z}_{u,3} + \mathcal{C}_2r_0(1+\beta_1^2)^\frac{1}{2} \right)\left(\|P^N\|_2 +   \beta_2 \right)   \right)\\
      \epsilon_n^{(q)} &\bydef |\lambda_n + t|\left(Z_{1,3} + Z_{1,4} \frac{Z_{1,1} + 2\mathcal{Z}_{u,2}\|P^N\|_2}{1-Z_{1,2}-2\mathcal{Z}_{u,2}} +  2\mathcal{Z}_{u,3}\left(\|P^N\|_2 +   \frac{Z_{1,1} + 2\mathcal{Z}_{u,2}\|P^N\|_2}{1-Z_{1,2}-2\mathcal{Z}_{u,2}} \right) \right) \\
      \epsilon_n &\bydef \max\left\{\epsilon_n^{(\infty)}, \epsilon_n^{(q)}\right\}
\end{aligned}
\end{equation}
for all $n \in \mathbb{N}_0$.

 Let  $k \in \mathbb{N}$ and $I \subset \mathbb{N}_0$ such that $|I| = k$. If  $\left(\cup_{n \in I} \overline{B_{\epsilon_n}(\lambda_n)}\right) \bigcap\left( \cup_{n \in \mathbb{N}_0 \setminus I} \overline{B_{\epsilon_n}(\lambda_n)}\right) = \varnothing$ and $\cup_{n \in I} \overline{B_{\epsilon_n}(\lambda_n)} \subset \mathcal{J}$, then there are exactly $k$ eigenvalues of $D\mathbb{f}(\tilde{\mathbf{u}})$ in $\cup_{n \in I} \overline{B_{\epsilon_n}(\lambda_n)} \subset  \mathcal{J}$ counted with multiplicity.
 \end{lemma}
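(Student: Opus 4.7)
The plan is to reduce the statement to an application of the abstract Gershgorin-type counting theorem of \cite{cadiot2025stabilityanalysislocalizedsolutions}, identifying our constants with those of their framework. The overall strategy is to build, for each $\mu \in \mathcal{J}$ lying outside $\cup_{n \in \mathbb{N}_0}\overline{B_{\epsilon_n}(\lambda_n)}$, an approximate right inverse of $D\mathbb{f}(\tilde{\mathbf{u}})-\mu I$ whose defect is a contraction, and then to deduce the counting by a homotopy in a spectral parameter.

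First, I would treat the invertibility step. Writing $D\mathbb{f}(\tilde{\mathbf{u}})-\mu I = (\mathbb{l}-\mu I) + D\mathbb{g}(\tilde{\mathbf{u}})$, I would factor
\[
D\mathbb{f}(\tilde{\mathbf{u}})-\mu I = (\mathbb{l}-\mu I)\Bigl(I + (\mathbb{l}-\mu I)^{-1}D\mathbb{g}(\tilde{\mathbf{u}})\Bigr),
\]
using Lemma \ref{lem : ift_L_inverse for spectrum} to ensure $\mathbb{l}-\mu I$ is invertible uniformly on $\mathcal{J}$. The quantities $\mathcal{Z}_{u,1}$ and $\mathcal{Z}_{u,2}$ control the continuous/periodic discrepancy of $(\mathbb{l}-\mu I)^{-1}D\mathbb{g}(\bar{\mathbf{u}})$ split over $\mathbb{R}\setminus\om$ and $\om$, while $\mathcal{C}_1$ upgrades the linearization from $\bar{\mathbf{u}}$ to $\tilde{\mathbf{u}}$ by Lipschitz estimates of $D\mathbb{g}$ and $\|\tilde{\mathbf{u}}-\bar{\mathbf{u}}\|_{\mathcal{H}}\leq r_0$. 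The bound $\mathcal{C}_1 r_0 <1$ in \eqref{eq : condition C1 r0} is precisely what is needed to transfer estimates from $\bar{\mathbf{u}}$ to $\tilde{\mathbf{u}}$, producing the amplification factor $\beta_1$.

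Next, I would transfer the problem to Fourier coefficients on $\om$ and use the pseudo-diagonalization $P^{-1}Df(\bar{\mathbf{U}})P = S+R$. For $\mu\notin\cup_n\overline{B_{\epsilon_n}(\lambda_n)}$, the diagonal part $S-\mu I$ is invertible, and writing it as $(S+tI)\bigl(I - (S+tI)^{-1}(S+tI-(S-\mu I))\bigr)^{-1}$ via the shift $t$ yields an operator whose norm is controlled uniformly. The constants $Z_{1,1}, Z_{1,2}$ measure the $R$-part in the continuous $(l-\mu I)^{-1}$ scaling needed for the $\beta_2$ bootstrap, while $Z_{1,3}, Z_{1,4}$ measure it in the $(S+tI)^{-1}$ scaling used to finally assemble $\epsilon_n^{(\infty)}$. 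The bound $\mathcal{Z}_{u,3}$ (and its Lipschitz counterpart $\mathcal{C}_2$) combines the periodic-continuous discrepancy with the $(S+tI)^{-1}P^{-1}(l-\mu I)$ normalization. Condition \eqref{eq : condition for kappa1 and C1 r0} is the second smallness condition ensuring the Neumann-like series used to define $\beta_2$ converges, and the alternative $\epsilon_n^{(q)}$ is a uniform fallback not depending on $\mu$ when the $\mu$-dependent optimisation is not sharper. Putting these estimates together in the form prescribed by Theorem 3.3 of \cite{cadiot2025stabilityanalysislocalizedsolutions}, one obtains that $\mu \notin \cup_n \overline{B_{\epsilon_n}(\lambda_n)}$ implies $D\mathbb{f}(\tilde{\mathbf{u}})-\mu I$ is boundedly invertible, so $\mu\notin Eig(D\mathbb{f}(\tilde{\mathbf{u}}))$.

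Finally, for the counting, I would use a homotopy argument. Since $D\mathbb{f}(\tilde{\mathbf{u}})$ is Fredholm of index zero in $\mathcal{J}$ (which excludes the essential spectrum $(-\infty,\lambda_{\sup}]$), and the eigenvalues vary continuously (with multiplicity) under bounded perturbations, I would deform $D\mathbb{f}(\tilde{\mathbf{u}})$ through a suitable one-parameter family ending at a reference operator whose spectrum in $\mathcal{J}$ is explicit (e.g. the one whose spectrum is exactly $\{\lambda_n\}$ inside $\mathcal{J}$). Because the union $\cup_{n\in I}\overline{B_{\epsilon_n}(\lambda_n)}$ is separated from $\cup_{n\notin I}\overline{B_{\epsilon_n}(\lambda_n)}$ and contained in $\mathcal{J}$, the boundary of this cluster stays inside the resolvent set along the whole homotopy, so the Riesz projection onto this cluster is norm-continuous. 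Its rank is therefore constant along the deformation, and at the reference operator it equals $|I|=k$, giving exactly $k$ eigenvalues counted with multiplicity in $\cup_{n\in I}\overline{B_{\epsilon_n}(\lambda_n)}$. The main obstacle is not the final counting, which is classical once isolation is known, but the uniform quantitative control over all $\mu\in\mathcal{J}$ of the approximate-inverse construction, which is exactly the role of the bounds $\mathcal{Z}_{u,j}, \mathcal{C}_j, Z_{1,j}$ defined in the statement.
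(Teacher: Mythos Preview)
Your plan is essentially correct and matches what the paper does: the paper does not give an independent proof of this lemma but simply recalls it as the central result of \cite{cadiot2025stabilityanalysislocalizedsolutions} (see the sentence just before the lemma and the remark immediately after), deferring all details to that reference and to the code. Your sketch --- factor $D\mathbb{f}(\tilde{\mathbf{u}})-\mu I$ through $\mathbb{l}-\mu I$, pass to Fourier coefficients, pseudo-diagonalize via $P$ and $S+R$, control the defect by the constants $\mathcal{Z}_{u,j},\mathcal{C}_j,Z_{1,j}$, and conclude the counting by a homotopy/Riesz-projection argument --- is precisely the architecture of the argument in \cite{cadiot2025stabilityanalysislocalizedsolutions}, so there is nothing to add.
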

\begin{proof}
The proof can be found in \cite{cadiot2025stabilityanalysislocalizedsolutions}.
\end{proof}

\begin{remark}
    The bounds $\mathcal{Z}_{u,1}, \mathcal{Z}_{u,2}, \mathcal{Z}_{u,3}$ introduced in the previous lemma are not the same as the ones introduced in Lemma \ref{lem : old Zu}. However, they are closely related and can be computed in a very similar fashion. Indeed, their computation is based on the proof of Lemma \ref{lem : old Zu}, combined with the estimates of Lemma \ref{lem : ift_L_inverse for spectrum}. Note that the additional factor $2$ in the inequalities for  $\mathcal{Z}_{u,1}, \mathcal{Z}_{u,2}, \mathcal{Z}_{u,3}$ allows to provide upper bounds for  $\mathcal{Z}_{u,1}^{(q)}, \mathcal{Z}_{u,2}^{(q)}, \mathcal{Z}_{u,3}^{(q)}$ given in Lemma 4.2 of \citep{cadiot2025stabilityanalysislocalizedsolutions} (see the proof of Theorem 5.2 in the aforementioned paper).  Details for the computation of the bounds of Lemma \ref{lem : link of spectrum unbounded to U0} are given in Appendix \ref{apen : link of spectrum}, and are implemented  explicitly in the code \citep{julia_blanco_fassler}. 
    \end{remark}
\begin{remark}
    In practice, we usually choose $t$ slightly larger than $\lambda_{sup}$ (e.g. $t = \lambda_{sup} - 0.01$). This initial choice of $t$ may be quite coarse, leading to a poor enclosure of the spectrum. In particular, one may be unable to draw conclusions about spectral localization if the Gershgorin disks overlap or intersect the imaginary axis.
To address this issue, one can apply Lemma \ref{lem : link of spectrum unbounded to U0} using the initial value of $t$, thereby obtaining an enclosure of the largest eigenvalue with positive real part. One can then re-choose $t$ and re-run the proof to obtain a sharper enclosure of the eigenvalues with positive real part. Iterating this procedure yields progressively tighter enclosures of the eigenvalues, resulting in more precise Gershgorin disks.

\end{remark}
\subsection{Existence proof of a saddle-node}

Using the above analysis, we are able to rigorously enclose the spectrum of $D\mathbb{f}(\tilde{\rho},\tilde{\mathbf{u}})$, for a given zero $(\tilde{\rho},\tilde{\mathbf{u}}) \in H_1$ of $\mathbb{f}$. This is particularly useful for verifying that the zeros of \eqref{eq:saddle_node_map} are indeed saddle-nodes, following  Definition \ref{lem : saddle node def}. In fact, we obtain the following result.
\begin{theorem}
    The zero $(\tilde{\rho},\tilde{\mathbf{u}})$ of $\mathbb{f}$ established in Theorem \ref{th : saddle node in gyl} is a saddle-node. Moreover, $\sigma(D\mathbb{f}(\tilde{\rho},\tilde{\mathbf{u}})) \setminus \{0\}$ is contained in the left-half part of the complex plane.
\end{theorem}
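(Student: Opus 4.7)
The plan is to verify each hypothesis of Definition \ref{lem : saddle node def} for the pair $(\tilde{\nu}, \tilde{\mathbf{u}})$ produced by Theorem \ref{th : saddle node in gyl}, and then to apply Lemma \ref{lem : link of spectrum unbounded to U0} to localize every eigenvalue of $D_{\mathbf{u}}\mathbb{f}(\tilde{\nu},\tilde{\mathbf{u}})$ with non-negative real part. The equation $\mathbb{F}(\tilde{\mathbf{x}}) = 0$ is Theorem \ref{th : saddle node in gyl} itself. Invertibility of $D\mathbb{F}(\tilde{\mathbf{x}})$ follows from the contraction data: because $\mathcal{Z}_1 + \mathcal{Z}_2(r_0)r_0 < 1$ holds uniformly on $\overline{B_{r_0}(\overline{\mathbf{x}})}$, a Neumann series inverts $\mathbb{A}D\mathbb{F}(\tilde{\mathbf{x}})$ on $H_1$; since $\mathbb{A}$ is injective by its block construction in \eqref{def : operator A constraigned}, $D\mathbb{F}(\tilde{\mathbf{x}})$ is injective, and Fredholm index zero then upgrades this to bijectivity. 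The Fredholm property for $D_{\mathbf{u}}\mathbb{f}(\tilde{\nu},\tilde{\mathbf{u}})$ itself is quoted from \cite{unbounded_domain_cadiot}.

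For the spectral condition I would use the decomposition \eqref{eq : decomposition spectrum}. For the Glycolysis parameters of Theorem \ref{th : saddle node in gyl} one checks in interval arithmetic that $\nu_1 + \nu_2 \ge 0$ and $(\nu_1 + \nu_2)^2 + 4\nu_2\nu_3 \ge 0$, so the preliminary lemma on $\lambda_{sup}$ gives $\sigma_{ess}(D\mathbb{f}(\tilde{\mathbf{u}})) = (-\infty, \lambda_{sup}]$ with $\lambda_{sup} < 0$; the essential spectrum therefore sits strictly in the open left half-plane. Any eigenvalue with $\mathrm{Re}(\lambda) \ge 0$ has modulus bounded by $\delta_0$ from \eqref{def : delta zero}, so it lies in $\sigma_{\delta_0}$, reducing the problem to enclosing $\mathrm{Eig}(D\mathbb{f}(\tilde{\mathbf{u}})) \cap \sigma_{\delta_0}$.

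Here I would apply Lemma \ref{lem : link of spectrum unbounded to U0} with $\mathcal{J} = \sigma_{\delta_0}$. Concretely: first compute numerically a pseudo-diagonalization $\mathcal{D} = P^{-1}Df(\overline{\mathbf{U}})P = S + R$; then evaluate in interval arithmetic the constants $\mathcal{Z}_{u,1}, \mathcal{Z}_{u,2}, \mathcal{Z}_{u,3}$, $\mathcal{C}_1, \mathcal{C}_2$, $Z_{1,1},\dots,Z_{1,4}$ by reusing the $\sigma_{\delta_0}$-uniform analogues of the estimates of Sections \ref{sec : Bounds of patterns} and \ref{sec : Bounds for Saddle Nodes} together with the $\mu$-uniform kernel bound of Lemma \ref{lem : ift_L_inverse for spectrum}; verify the smallness conditions \eqref{eq : condition C1 r0} and \eqref{eq : condition for kappa1 and C1 r0}; and finally assemble the Gershgorin radii $\epsilon_n$ around the eigenvalues $\lambda_n$ of $\mathcal{D}$. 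The target configuration is a single index $n_0$ for which $\overline{B_{\epsilon_{n_0}}(\lambda_{n_0})} \subset \sigma_{\delta_0}$ contains $0$, every other disk intersecting $\sigma_{\delta_0}$ lies strictly inside $\{\mathrm{Re}(\lambda) < 0\}$, and $\overline{B_{\epsilon_{n_0}}(\lambda_{n_0})}$ is disjoint from every other disk. Since $\phi(\tilde{\mathbf{w}}) = \alpha \ne 0$ forces $\tilde{\mathbf{w}} \ne 0$ and the third block of $\mathbb{F}(\tilde{\mathbf{x}}) = 0$ reads $D_{\mathbf{u}}\mathbb{f}(\tilde{\nu},\tilde{\mathbf{u}})\tilde{\mathbf{w}} = 0$, the eigenvalue $0$ is present, so the counting statement of Lemma \ref{lem : link of spectrum unbounded to U0} pins it down as the unique eigenvalue in $\overline{B_{\epsilon_{n_0}}(\lambda_{n_0})}$. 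Combined with the essential-spectrum enclosure, this yields $\sigma(D\mathbb{f}(\tilde{\mathbf{u}})) \subset \{\mathrm{Re}(\lambda) < 0\} \cup \{0\}$, which simultaneously supplies the missing hypothesis of Definition \ref{lem : saddle node def} (so $(\tilde{\nu},\tilde{\mathbf{u}})$ is a genuine saddle-node) and the left-half-plane conclusion.

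The delicate step will be making the Gershgorin radii small enough: the modulus bound $\delta_0$ in \eqref{def : delta zero} is not tight, so one needs $N$ large enough that the tail contributions $Z_{1,1}, Z_{1,2}$ fall well below the spectral gap between $\lambda_{n_0}$ and the next $\lambda_n$, and $r_0$ small enough that the perturbation terms $\mathcal{C}_j r_0$ do not absorb that gap. In practice the rigorous check in \cite{julia_blanco_fassler} must establish, in one pass, that exactly one $\epsilon_n$-disk straddles the imaginary axis at $0$ while every other disk meeting $\sigma_{\delta_0}$ remains strictly to its left.
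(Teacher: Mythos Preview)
Your proposal is correct and follows essentially the same strategy as the paper: verify the non-degeneracy hypotheses of Definition \ref{lem : saddle node def} via the contraction data of Theorem \ref{th : saddle node in gyl}, then invoke Lemma \ref{lem : link of spectrum unbounded to U0} (implemented in \cite{julia_blanco_fassler}) to certify that the Gershgorin disks all lie in the closed left half-plane with a single disk containing $0$. You supply more justification than the paper does (the Neumann-series argument for invertibility of $D\mathbb{F}(\tilde{\mathbf{x}})$ and the explicit reason $0$ is an eigenvalue), but the route is the same.
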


\begin{proof}
    By construction, we know that $\tilde{\mathbf{x}}$ in Theorem \ref{th : saddle node in gyl} is an isolated zero of \eqref{eq:saddle_node_map}. Using Definition \ref{lem : saddle node def}, it remains to show that $D_u\mathbb{f}(\tilde{\rho}, \tilde{\mathbf{u}})$ does not possess eigenvalues on the imaginary axis except than zero. For this matter, we implement the bounds of Lemma \ref{lem : link of spectrum unbounded to U0} in \citep{julia_blanco_fassler}. We obtain the all the eigenvalues of $D_u\mathbb{f}(\tilde{\rho},\tilde{\mathbf{u}})$ are contained on the left-half part of the complex plane. Moreover, we verify that the disks obtained in Lemma \ref{lem : link of spectrum unbounded to U0} do not cross the imaginary axis, except one of them which contains zero. Since zero is an eigenvalue, using Theorem \ref{th : saddle node in gyl}, we obtain that $D_u\mathbb{f}(\tilde{\rho}, \tilde{\mathbf{u}})$ does not possess eigenvalues on the imaginary axis. 
\end{proof}

The above theorem not only allows to validate the existence of a regular saddle-node bifurcation (by eliminating a possible degenerate Hopf bifurcation), but it also allows to control the change of stability. In fact, going back to the parabolic system \eqref{eq:original_system} and using the analysis of Section I.7.1 of \citep{hansjorb_bifurcation}, we obtain that around $(\tilde{\rho},\tilde{\mathbf{u}})$, the number of unstable directions of  $D_u\mathbb{f}(\tilde{\rho},\tilde{\mathbf{u}})$ goes from zero to one (that is the branch of  stationary solutions looses stability after the saddle-node). This is a direct consequence of \citep{hansjorb_bifurcation} and the fact that a simple eigenvalue crosses zero with non-zero speed (see equation I.7.30). This is of particular interest when investigating the local dynamics (in time) of \eqref{eq:original_system}. We push further this idea and investigate the  stability (with respect to even perturbations) of some of the localized solutions for which the existence was established in Section \ref{sec : patterns}.

\subsection{Stability analysis}
In this section, we focus on the stability of some localized stationary solutions of Section \ref{sec : patterns}. In particular, given $\tilde{\mathbf{u}}$ proven in Section \ref{sec : patterns}, that is there exists $r_0$ such that $\|\tilde{\mathbf{u}}-\bar{\mathbf{u}}\|_{\mathcal{H}} \leq r_0$, then we use Lemma \ref{lem : link of spectrum unbounded to U0} to control the spectrum of $D\mathbb{f}(\tilde{\mathbf{u}}) : L^2_e \to L^2_e$.
In fact, this allows us to conclude about the (nonlinear) stability of $\tilde{\mathbf{u}}$ with respect to even perturbations in $L^2_e$. In the cases for which $\tilde{\mathbf{u}}$ is unstable, we count the number of unstable directions. More specifically, we obtain the following results for which computational details are given in \citep{julia_blanco_fassler}.

\begin{theorem}
    The solution $\tilde{u}_{1} \in \mathcal{H}_e$ obtained in Theorem \ref{th : existence proofs}  is stable with respect to even perturbations. In particular, the elements of the spectrum of $D\mathbb{f}(\tilde{\mathbf{u}}) : L^2_e \to L^2_e$ possess a strictly negative real part.
\end{theorem}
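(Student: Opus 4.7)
The plan is to apply the spectral control machinery developed in this section directly to the solution $\tilde{\mathbf{u}}$ of Theorem \ref{th : Pulse in schnakenberg}. Since $\mathcal{H}_e \hookrightarrow L^2_e$, Theorem \ref{th : Pulse in schnakenberg} gives a rigorous enclosure $\|\tilde{\mathbf{u}} - \mathbf{u}_{\text{sch}}\|_{\mathcal{H}} \leq r_0$ with the explicit radius $r_0 = 3\times 10^{-10}$, which is exactly the input required by Lemma \ref{lem : link of spectrum unbounded to U0}. Using \eqref{eq : decomposition spectrum}, we only need to control the essential spectrum and the point spectrum separately.

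First, I would verify the hypotheses $\nu_1 + \nu_2 \geq 0$ and $(\nu_1+\nu_2)^2 + 4\nu_2 \nu_3 \geq 0$ for the Schnakenberg parameters $(\delta,c,b,j,a,h) = (0.011,1,3.6,0,2.8,0)$, so that $\sigma_{\mathrm{ess}}(D\mathbb{f}(\tilde{\mathbf{u}})) = (-\infty, \lambda_{\sup}]$ with $\lambda_{\sup} < 0$. This is a short computation with the defining formulas of $\nu_1,\nu_2,\nu_3$. Then I would evaluate $\delta_0$ from \eqref{def : delta zero} using the previously computed values of $\kappa$, $\mathcal{Z}_{2,3}$, $\|\mathscr{l}^{-1}\|_{\mathcal{M}_2}$ together with the quantity $\sqrt{2(\|\bar{V}_1\|_1^2 + \|\bar{V}_2\|_1^2)}$ coming from $\bar{\mathbf{u}} = \mathbf{u}_{\text{sch}}$. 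This gives an explicit $\delta_0$ such that every eigenvalue of $D\mathbb{f}(\tilde{\mathbf{u}})$ with nonnegative real part lies in $\sigma_{\delta_0}$.

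Next, I would construct the numerical pseudo-diagonalization $P^N$ of $Df(\bar{\mathbf{U}})$, define the diagonal $S$ of $\mathcal{D} = P^{-1}Df(\bar{\mathbf{U}})P$, and compute, on $\mathcal{J} = \sigma_{\delta_0}$, rigorous upper bounds for all the quantities $\mathcal{Z}_{u,j}$, $\mathcal{C}_j$, $Z_{1,j}$ entering Lemma \ref{lem : link of spectrum unbounded to U0} (this is the step implemented in \cite{julia_blanco_fassler}, and it is essentially the spectrum version of the bounds already derived in Section \ref{sec : Bounds of patterns}). The parameter $t$ is chosen so that $S + tI$ is invertible and well-conditioned on $\mathcal{J}$, typically by translating the diagonal entries $\lambda_n$ that are close to zero. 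Verifying \eqref{eq : condition C1 r0} and \eqref{eq : condition for kappa1 and C1 r0} is expected to be straightforward given the smallness of $r_0$. This yields the Gershgorin radii $\epsilon_n$ and hence explicit disks $\overline{B_{\epsilon_n}(\lambda_n)}$ enclosing all eigenvalues in $\sigma_{\delta_0}$.

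The final step, and the main obstacle in practice, is to show that the union of those disks that meet the closed right half-plane is in fact empty, i.e.\ that every disk which a priori could contribute to $Eig(D\mathbb{f}(\tilde{\mathbf{u}})) \cap \sigma_{\delta_0}$ satisfies $\mathrm{Re}(\lambda_n) + \epsilon_n < 0$. This is where tightness of $P^N$ and $\delta_0$ matters: if the numerical pseudo-eigenvalues $\lambda_n$ of $Df(\bar{\mathbf{U}})$ have a spectral gap to the imaginary axis that exceeds the rigorous radii $\epsilon_n$, then combined with $\sigma_{\mathrm{ess}}(D\mathbb{f}(\tilde{\mathbf{u}})) \subset (-\infty,\lambda_{\sup}]$ we obtain $\sigma(D\mathbb{f}(\tilde{\mathbf{u}})) \subset \{\mathrm{Re}(\lambda) < 0\}$, which is the claim. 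Standard results on semilinear parabolic equations then upgrade this spectral statement to nonlinear stability in $L^2_e$.
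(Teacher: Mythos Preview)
Your proposal is correct and follows essentially the same approach as the paper: apply Lemma \ref{lem : link of spectrum unbounded to U0} to build Gershgorin disks around the pseudo-eigenvalues $\lambda_n$ and verify (via the implementation in \cite{julia_blanco_fassler}) that every disk lies strictly in the open left half-plane, which together with $\sigma_{\mathrm{ess}}(D\mathbb{f}(\tilde{\mathbf{u}})) = (-\infty,\lambda_{\sup}]$ yields the claim. The paper's own proof is a one-sentence pointer to this computer-assisted verification; you have simply spelled out the intermediate steps (checking the essential-spectrum hypotheses, evaluating $\delta_0$, constructing $P^N$, and verifying \eqref{eq : condition C1 r0}--\eqref{eq : condition for kappa1 and C1 r0}) that the code carries out.
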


\begin{proof}
    We prove that all the disks constructed thanks to Lemma \ref{lem : link of spectrum unbounded to U0} are strictly contained on the left-part of the complex plane. 
\end{proof}
\begin{theorem}
    The solution $\tilde{u}_{5} \in \mathcal{H}_e$ obtained in Theorem \ref{th : existence proofs} is unstable. In particular, $D\mathbb{f}(\tilde{\mathbf{u}}) : L^2_e \to L^2_e$ possesses exactly one positive eigenvalue.
\end{theorem}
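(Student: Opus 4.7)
The plan is to follow exactly the strategy used in the preceding theorem (stability for the Schnakenberg pulse) and in the saddle-node verification, but tracking the signs of real parts of the eigenvalues more carefully. That is, I would apply Lemma \ref{lem : link of spectrum unbounded to U0} to the solution $\tilde{\mathbf{u}} \in \mathcal{H}_e$ constructed in Theorem \ref{th : spike in rh} in order to obtain a finite family of closed disks $\overline{B_{\epsilon_n}(\lambda_n)} \subset \mathbb{C}$ whose union contains $\sigma(D\mathbb{f}(\tilde{\mathbf{u}})) \cap \sigma_{\delta_0}$, and then count how many such disks sit strictly in the open right half-plane.

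Concretely, the steps would be as follows. First, I would combine Theorem \ref{th : spike in rh} with the bound \eqref{def : delta zero} to produce an explicit $\delta_0>0$ such that every eigenvalue of $D\mathbb{f}(\tilde{\mathbf{u}})$ with $\mathrm{Re}(\lambda) \geq 0$ lies in $\sigma_{\delta_0}$; recall the essential spectrum is strictly negative and so already excluded. Second, I would compute a numerical pseudo-diagonalization $\mathcal{D} = P^{-1} D f(\bar{\mathbf{U}}) P = S + R$ of the finite-dimensional linearization, choosing the truncation size $N$ large enough that $R$ is small and $P^N$ is well-conditioned, and pick a shift $t \in \mathbb{C}$ so that $S + tI$ is invertible. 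Third, using the interval-arithmetic infrastructure of \cite{julia_blanco_fassler}, I would rigorously evaluate each of the constants $\mathcal{Z}_{u,1}, \mathcal{Z}_{u,2}, \mathcal{Z}_{u,3}, \mathcal{C}_1, \mathcal{C}_2, Z_{1,1}, Z_{1,2}, Z_{1,3}, Z_{1,4}$ appearing in Lemma \ref{lem : link of spectrum unbounded to U0} for $\mathcal{J} = \sigma_{\delta_0}$, check the non-degeneracy conditions \eqref{eq : condition C1 r0}--\eqref{eq : condition for kappa1 and C1 r0}, and assemble the Gershgorin radii $\epsilon_n$ from \eqref{eq : value for epsilon infinity n}.

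Fourth, with the disks $\overline{B_{\epsilon_n}(\lambda_n)}$ in hand, the final check is combinatorial: I would verify that there is a unique index $n_0 \in \mathbb{N}_0$ such that $\overline{B_{\epsilon_{n_0}}(\lambda_{n_0})} \subset \{z \in \mathbb{C} : \mathrm{Re}(z) > 0\}$, that $\overline{B_{\epsilon_{n_0}}(\lambda_{n_0})}$ is disjoint from every other disk $\overline{B_{\epsilon_n}(\lambda_n)}$, and that all remaining disks $\overline{B_{\epsilon_n}(\lambda_n)}$ with $n \neq n_0$ are either already contained in the strict left half-plane or belong to the essential spectrum portion controlled by the first lemma of Section \ref{sec : control of the spectrum}. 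Applying the cardinality conclusion of Lemma \ref{lem : link of spectrum unbounded to U0} with $I = \{n_0\}$ gives exactly one eigenvalue of $D\mathbb{f}(\tilde{\mathbf{u}})$ in the right half-plane, counted with multiplicity, hence instability of $\tilde{\mathbf{u}}$ in $L^2_e$.

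The main obstacle will be the disk-separation step. In a typical root-hair pulse the unstable eigenvalue is small and real while many neighboring eigenvalues $\lambda_n$ cluster near the imaginary axis, so making all $\epsilon_n$ small enough to isolate the positive eigenvalue from the rest (and from $0$) requires both a sufficiently accurate numerical eigendecomposition (large $N$, good conditioning of $P^N$) and tight estimates of the $\mathcal{Z}$-bounds uniformly over $\mu \in \sigma_{\delta_0}$. Apart from this tuning, the remainder of the argument is a direct invocation of Lemma \ref{lem : link of spectrum unbounded to U0}, exactly parallel to the proof of the Schnakenberg stability statement immediately above.
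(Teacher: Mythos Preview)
Your proposal is correct and follows essentially the same approach as the paper: construct the Gershgorin disks of Lemma \ref{lem : link of spectrum unbounded to U0}, verify that exactly one of them lies strictly in the right half-plane and is disjoint from the others, and check that the remaining disks are contained in the left half-plane. The paper simply reports the concrete outcome of the computation ($\lambda_0 \approx 0.18$, $\epsilon_0 \approx 1.8 \times 10^{-4}$), so your concern about the unstable eigenvalue being small and hard to separate from zero turns out not to materialize in practice.
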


\begin{proof}
We construct the disks of  Lemma \ref{lem : link of spectrum unbounded to U0} and prove that one of them, say $\overline{B_{\epsilon_0}(\lambda_0)}$, is on strictly included on the right-part of the complex plane. More specifically, we get $\epsilon_0 = 6.24 \times 10^{-3}$ and $\lambda_0 = 0.708705$.
Then, we prove that $\overline{B_{\epsilon_0}(\lambda_0)}$  is disjoint from the rest of the disks. Finally, we conclude the proof by proving the rest of the disks is strictly contained on the left-part of the complex plane. 
\end{proof}

\section{Acknowledgments}

The authors wish to thank Professor Nicolas Verschueren van Rees for his insights and fruitful discussions.  MC was supported by the ANR project CAPPS: ANR-23-CE40-0004-01 and by the FMJH :  ANR-22-EXES-0013.

\appendix

\renewcommand{\theequation}{A.\arabic{equation}}
\setcounter{equation}{0}
\section{Computing the Bound \texorpdfstring{$\mathcal{Z}_2$}{Z2} for Saddle Nodes}\label{apen : saddle node computations}
We prove Lemma \ref{lem : bound Z_2 saddle}. In particular, we compute the $\mathcal{Z}_2$ bound.
\begin{proof}
Let $\mbf{h} \bydef (\eta,\mbf{h}_1,\mbf{h}_2) \bydef (\eta,h_{1,1},h_{1,2},h_{2,1},h_{2,2}) \in B_r(\overline{\mathbf{x}})$. Now, observe that
{\footnotesize\begin{align}
    &\|\mathbb{A}(D\mathbb{F}(\mbf{h} + \overline{\mathbf{x}}) - D\mathbb{F}(\overline{\mathbf{x}}))\|_{H_1} \\
    &\leq \left\| \mathbb{B}\begin{bmatrix}
        0 & 0 & 0\\
        D_{\rho} \mathbb{f}(\eta + \overline{\rho},\mbf{h}_1 + \overline{\mathbf{u}}) - D_{\rho} \mathbb{f}(\overline{\rho},\overline{\mathbf{u}}) & 0 & 0\\
        D_{\rho,\mbf{u}} \mathbb{f}(\eta + \overline{\rho},\mbf{h}_1 + \overline{\mathbf{u}}) (\mbf{h}_2 + \overline{\mathbf{w}}) - D_{\rho,\mbf{u}}\mathbb{f}(\overline{\rho},\overline{\mathbf{u}}) \overline{\mathbf{w}}) & 0 & 0
    \end{bmatrix}\right\|_{H_1,H_2} \\
    &\hspace{+0.5cm}+ \left\|\mathbb{B}\begin{bmatrix}
        0 & 0 & 0 \\
        0 & D_{\mbf{u}} \mathbb{f}(\eta+\overline{\rho},\mbf{h}_1 + \overline{\mathbf{u}}) - D_{\mbf{u}} \mathbb{f}(\overline{\rho},\overline{\mathbf{u}}) & 0 \\
        0 & D^2_{\mbf{u}} \mathbb{f}(\eta + \overline{\rho},\mbf{h}_1 + \overline{\mathbf{u}}) (\mbf{h}_2 + \overline{\mathbf{w}}) - D^2_{\mbf{u}}\mathbb{f}(\overline{\rho},\overline{\mathbf{u}}) \overline{\mathbf{w}} & D_{\mbf{u}} \mathbb{f}(\eta + \overline{\rho},\mbf{h}_1 + \overline{\mathbf{u}})  - D_{\mbf{u}}\mathbb{f}(\overline{\rho},\overline{\mathbf{u}}) 
    \end{bmatrix}\right\|_{H_1,H_2} \\
    &\leq \left\| \mathbb{B}_{1\to5,24}\right\|_{H_2}\left(
        \left\|\begin{bmatrix} 
        D_{\rho} \mathbb{f}(\eta + \overline{\rho},\mbf{h}_1 + \overline{\mathbf{u}}) - D_{\rho} \mathbb{f}(\overline{\rho},\overline{\mathbf{u}})\\
        D_{\rho,\mbf{u}} \mathbb{f}(\eta + \overline{\rho},\mbf{h}_1 + \overline{\mathbf{u}}) (\mbf{h}_2 + \overline{\mathbf{w}}) - D_{\rho,\mbf{u}}\mathbb{f}(\overline{\rho},\overline{\mathbf{u}}) \overline{\mathbf{w}})\end{bmatrix}\right\|_{2}\right) \\
        &\hspace{+2cm}+ \left\|\mathbb{B}\left(\begin{bmatrix}
        0 & 0 & 0 & 0 & 0 \\
        0 & \eta \Delta & 0 & 0 & 0 \\
        0 & 0 & 0 & 0 & 0 \\
        0 & \eta \Delta & 0 & \eta \Delta & 0 \\
        0 & 0 & 0   & 0 & 0
    \end{bmatrix} + D\mathbb{G}(\mbf{h}_1 + \overline{\mathbf{u}},\mbf{h}_2 + \overline{\mathbf{w}}) - D\mathbb{G}(\overline{\mathbf{u}}, \overline{\mathbf{w}})\right)\right\|_{H_1,H_2}\\
    &\leq \left\| \mathbb{B}_{1\to5,2\to5}\right\|_{H_2}\left(
        \left\|\begin{bmatrix} 
        D_{\rho} \mathbb{f}(\eta + \overline{\rho},\mbf{h}_1 + \overline{\mathbf{u}}) - D_{\rho} \mathbb{f}(\overline{\rho},\overline{\mathbf{u}})\\
        D_{\rho,\mbf{u}} \mathbb{f}(\eta + \overline{\rho},\mbf{h}_1 + \overline{\mathbf{u}}) (\mbf{h}_2 + \overline{\mathbf{w}}) - D_{\rho,\mbf{u}}\mathbb{f}(\overline{\rho},\overline{\mathbf{u}}) \overline{\mathbf{w}})\end{bmatrix}\right\|_{2}\right) \\
        &\hspace{+2cm}+ \sqrt{3}\|\mathbb{B}_{1\to5,24}\|_{H_2}|\eta|\left\| \begin{bmatrix}
        \Delta & 0 \\
        0 & 0
    \end{bmatrix}\right\|_{\mathcal{H},2} + \left\|\mathbb{B}\left(D\mathbb{G}(\mbf{h}_1 + \overline{\mathbf{u}},\mbf{h}_2 + \overline{\mathbf{w}}) - D\mathbb{G}(\overline{\mathbf{u}}, \overline{\mathbf{w}})\right)\right\|_{H_1,H_2}\label{first step in Z2 saddle}
\end{align}}
where $\mathbb{B}_{1\to5,24}$ is defined in \eqref{def : Z2 saddle definitions} and we used the facts that
{\footnotesize\begin{align}
    &D_{\mbf{u}}\mathbb{f}(\eta+\overline{\rho},\mbf{h}_1 + \overline{\mathbf{u}}) - D_{\mbf{u}}\mathbb{f}(\overline{\rho},\overline{\mathbf{u}}) = \begin{bmatrix}
        \eta \Delta & 0 \\
        0 & 0
    \end{bmatrix} + D_{\mbf{u}}\mathbb{g}(\mbf{h}_1 + \overline{\mathbf{u}}) - D_{\mbf{u}}\mathbb{g}(\overline{\mathbf{u}}) \\
    &D^2_{\mbf{u}} \mathbb{f}(\eta + \overline{\rho},\mbf{h}_1 + \overline{\mathbf{u}}) (\mbf{h}_2 + \overline{\mathbf{w}}) - D^2_{\mbf{u}}\mathbb{f}(\overline{\rho},\overline{\mathbf{u}}) \overline{\mathbf{w}} = \begin{bmatrix}
        \eta \Delta & 0 \\
        0 & 0
    \end{bmatrix} + D^2_{\mbf{u}} \mathbb{g}(\mbf{h}_1 + \overline{\mathbf{u}}) (\mbf{h}_2 + \overline{\mathbf{w}}) - D^2_{\mbf{u}}\mathbb{g}(\overline{\mathbf{u}}) \overline{\mathbf{w}}.
\end{align}}
For the first term in \eqref{first step in Z2 saddle}, we use the Fourier transform to get
{\small\begin{align}
    \left\|\begin{bmatrix} 
        D_{\rho} \mathbb{f}(\eta + \overline{\rho},\mbf{h}_1 + \overline{\mathbf{u}}) - D_{\rho} \mathbb{f}(\overline{\rho},\overline{\mathbf{u}})\\
        D_{\rho,\mbf{u}} \mathbb{f}(\eta + \overline{\rho},\mbf{h}_1 + \overline{\mathbf{u}}) (\mbf{h}_2 + \overline{\mathbf{w}}) - D_{\rho,\mbf{u}}\mathbb{f}(\overline{\rho},\overline{\mathbf{u}}) \overline{\mathbf{w}})\end{bmatrix}\right\|_{2} = \left\|\begin{bmatrix}
         \Delta h_{1,1} \\
        0 \\
         \Delta h_{2,1} \\ 0
    \end{bmatrix}\right\|_{2}
 &= \left\|\begin{bmatrix}
         \Delta & 0 & 0 & 0\\
        0  & 0 & 0 & 0 \\
        0 & 0 & \Delta & 0 \\
        0 & 0 & 0 & 0
    \end{bmatrix}\begin{bmatrix}
        h_{1,1} \\
        h_{1,2} \\
        h_{2,1} \\
        h_{2,2}
    \end{bmatrix}\right\|_{2}
\\&\hspace{-2cm}= \left\|\begin{bmatrix}
        -|2\pi \xi|^2 & 0 & 0 & 0\\
        0  & 0 & 0 & 0 \\
        0 & 0 & -|2\pi \xi|^2 & 0 \\
        0 & 0 & 0 & 0 
    \end{bmatrix}\begin{bmatrix}
        \hat{h}_{1,1} \\
        \hat{h}_{1,2} \\
        \hat{h}_{2,1} \\
        \hat{h}_{2,2}
    \end{bmatrix}\right\|_{2} 
    \\
    &\hspace{-2cm}\leq \left\|\begin{bmatrix}
        -|2\pi \xi|^2 & 0 \\
        0  & 0
    \end{bmatrix}\mathscr{l}_{\overline{\rho}}^{-1}(\xi)\right\|_{\mathcal{M}_1}\left\|\mathscr{l}_{\overline{\rho}}(\xi)\hat{\mbf{h}}\right\|_{2}   \\
    &\hspace{-2cm}\leq \left\|\begin{bmatrix}
        -|2\pi \xi|^2 & 0 \\
        0  & 0
    \end{bmatrix}\mathscr{l}_{\overline{\rho}}^{-1}(\xi)\right\|_{\mathcal{M}_1}r \bydef \frac{\mathcal{Z}_{2,1} r}{\|\mathbb{B}_{1\to5,24}\|_{H_2}}.
\end{align}}
The quantity $\left\|\begin{bmatrix}
        -|2\pi \xi|^2 & 0 \\
        0  & 0
    \end{bmatrix}\mathscr{l}_{\overline{\rho}}^{-1}(\xi)\right\|_{\mathcal{M}_1}$ can be computed using standard Calculus techniques. Next, we examine the second term in \eqref{first step in Z2 saddle}. 
\begin{align}
     \sqrt{3}\|\mathbb{B}_{1\to5,24}\|_{H_2}|\eta|\left\| \begin{bmatrix}
        \Delta & 0 \\
        0 & 0
    \end{bmatrix}\right\|_{\mathcal{H},2} \leq \sqrt{3}\|\mathbb{B}_{1\to5,24}\|_{H_2}\left\| \begin{bmatrix}
        \Delta & 0 \\
        0 & 0
    \end{bmatrix}\right\|_{\mathcal{H},2} r.
\end{align}
Now, let $\mbf{z} \bydef (z_1,z_2) \in \mathcal{H}$ such that $\|\mbf{z}\|_{\mathcal{H}} = 1$. Then,
\begin{align}
    \left\| \begin{bmatrix}
        \Delta & 0 \\
        0 & 0
    \end{bmatrix}\right\|_{\mathcal{H},2} &= \left\| \begin{bmatrix}
        \Delta & 0 \\
        0 & 0
    \end{bmatrix}\begin{bmatrix}
        z_1 \\ z_2
    \end{bmatrix}\right\|_{2} \leq \left\| \begin{bmatrix}
        -|2\pi \xi|^2 & 0 \\
        0 & 0
    \end{bmatrix}\mathscr{l}_{\overline{\rho}}^{-1}(\xi)\right\|_{\mathcal{M}_1}  = \frac{\mathcal{Z}_{2,1}}{\|\mathbb{B}_{1\to5,24}\|_{H_2}}.
\end{align}
\par Finally, we examine the third term in \eqref{first step in Z2 saddle}. To do so, we first introduce some notation. Let
\begin{align}
    \mathbb{G}_3(\mbf{x}) \bydef \begin{bmatrix}
        0 \\ u_1^2 u_2 \\ - u_1^2 u_2 \\ u_1 (2u_2 w_1 + u_1 w_2) \\ -u_1(2u_2 w_1 + u_1 w_2)
    \end{bmatrix}.
\end{align}
Then, observe that we can write
\begin{align}
    \mathbb{G}_3(\mbf{u},\mbf{w})
  \bydef \mathbb{G}_{3,1}\mbf{x}( \mathbb{G}_{3,2} \mbf{x}\mathbb{G}_{3,3} \mbf{x} + \mathbb{G}_{3,4} \mbf{x} \mathbb{G}_{3,5} \mbf{x})
\end{align}
where
\begin{align}
    &\mathbb{G}_{3,1} = \bydef \begin{bmatrix}
        0 & 0 & 0 & 0 & 0 \\0 &1 & 0 & 0 & 0 \\
        0 & -1 & 0 & 0 & 0 \\
        0 & 1 & 0 & 0 & 0 \\
        0 & -1 & 0 & 0 & 0
    \end{bmatrix} ,~\mathbb{G}_{3,2} \bydef \begin{bmatrix}
        0 & 0 & 0 & 0 & 0 \\
        0 & 1 & 0 & 0 & 0 \\
        0 & 1 & 0 & 0 & 0 \\
        0 & 0 & 2 & 0 & 0 \\
        0 & 0 & 2 & 0 & 0 
    \end{bmatrix},~\mathbb{G}_{3,3} \bydef \begin{bmatrix}
        0 & 0 & 0 &0 & 0 \\
        0 &0 & 1 & 0 & 0 \\
        0 &0 & 1 & 0 & 0 \\
        0 &0 & 0 & 1 & 0 \\
        0 &0 & 0 & 1 & 0 
    \end{bmatrix}\\
    &\mathbb{G}_{3,4}  \bydef \begin{bmatrix}
        0 & 0 & 0 & 0 & 0 \\0 & 0 & 0 & 0 & 0 \\
        0 &0 & 0 & 0 & 0 \\
        0 &1 & 0 & 0 & 0 \\
        0 &1 & 0 & 0 & 0
    \end{bmatrix}
    ,\mathbb{G}_{3,5} \bydef \begin{bmatrix}
        0 & 0 & 0 & 0 & 0 \\0 &0 & 0 & 0 & 0 \\
        0 &0 & 0 & 0 & 0 \\
        0 &0 & 0 & 0 & 1 \\
        0 &0 & 0 & 0 & 1 
    \end{bmatrix}.
\end{align}
Note that the dependency on $\mbf{x}$ is not needed since the above term does not depend on $\rho$; however, we write it for convenience.
Now, let $\mbf{y}_1, \mbf{y}_2,\mbf{y}_3 \in H_1$. Then, we write $G_{3} : (H_1)^3 \to (\mathcal{H}_e)^2$ as
\begin{align}
    G_{3}(\mbf{y}_1,\mbf{y}_2,\mbf{y}_3) \bydef \mathbb{G}_{3,1} \mbf{y}_1 (\mathbb{G}_{3,2} \mbf{y}_2\mathbb{G}_{3,3} \mbf{y}_3 + \mathbb{G}_{3,4} \mbf{y}_2 \mathbb{G}_{3,5} \mbf{y}_3)
\end{align}
for all $\mbf{y}_1,\mbf{y}_2,\mbf{y}_3 \in H_1$. Now, let $\mbf{z} \bydef (\alpha,\mbf{z}_1,\mbf{z}_2)\bydef (\alpha,z_{1,1},z_{1,2},z_{2,1},z_{2,2}) \in H_1, \|\mbf{z}\|_{H_1} = 1$. Similar to the proof of Lemma \ref{lem : Z2 patterns}, observe that
{\small\begin{align}
    (D\mathbb{G}(\mbf{h}_1 + \overline{\mathbf{u}},\mbf{h}_2 + \overline{\mathbf{w}}) - D\mathbb{G}(\overline{\mathbf{u}},\overline{\mathbf{w}})) \mbf{z}= D^2\mathbb{G}(\overline{\mathbf{u}},\overline{\mathbf{w}})(\mbf{h},\mbf{z}) + G_3(\mbf{z},\mbf{h},\mbf{h}) + G_3(\mbf{h},\mbf{z},\mbf{h}) + G_3(\mbf{h},\mbf{h},\mbf{z}).
\end{align}}
Generally, by definition of $G_3$, we have
$\|\mathbb{B} G_3(\mbf{y}_1,\mbf{y}_2,\mbf{y}_3)\|_{H_2} \leq \left\|M_1\right\|_{2,X_2} \|G_3(\mbf{y}_1,\mbf{y}_2,\mbf{y}_3)\|_{H_2}$
where $M_1$ is defined in \eqref{def : Z2 saddle definitions}.
Now, we examine the specific cases to obtain
\begin{align}
    &\|G_3(\mbf{z},\mbf{h},\mbf{h})\|_{H_2} 
    = \sqrt{2}\left( \|z_{1,1} h_{1,1} h_{1,2}\|_{2}^2 + \|z_{1,1}(2h_{1,2} h_{2,1} + h_{1,1} h_{2,2})\|_{2}^2\right)^{\frac{1}{2}} \\
    &\leq 2\sqrt{10}\kappa \|\mathscr{l}^{-1}\|_{\mathcal{M}_2} \|\mbf{z}\|_{H_1} \|\mbf{h}\|_{H_1}^2 \leq 2\sqrt{10} \kappa \|\mathscr{l}^{-1}\|_{\mathcal{M}_2} r^2 \bydef \frac{\mathcal{Z}_{2,2}(r) r}{\|M_1\|_{2,X_2}}.
\end{align}
The other estimates proceed similarly, and have the same result. That is,  
\begin{align}
    \|G_3(\mbf{h},\mbf{z},\mbf{h})\|_{H_2} \leq \frac{\mathcal{Z}_{2,2}(r)}{\|M_1\|_{2,X_2}}, ~\|G_3(\mbf{h},\mbf{h},\mbf{z})\|_{H_2} \leq \frac{\mathcal{Z}_{2,2}(r)r}{\|M_1\|_{2,X_2}} 
\end{align}
where $\mathcal{Z}_{2,2}(r)$ is defined as in \eqref{def : Z2js saddle}.
Notice that due to our choice of estimate for $\|z_{i,j} h_{k,l} h_{m,n}\|_{2},$ we obtain the same result for each quantity. This significantly simplifies our computation compared to the approach used in \citep{gs_cadiot_blanco}. It also allow us to use the same bound for the saddle node bounds as we did for patterns. This is one of the reasons we chose this approach in this work. 
\par It now remains to study $\left\|\mathbb{B}D^2\mathbb{G}(\overline{\mathbf{u}},\overline{\mathbf{w}})(\mbf{h},\mbf{z}) \right\|_{H_2}$. Observe first the following
{\small\begin{align}
    \|\mathbb{B}D^2\mathbb{G}(\overline{\mathbf{u}},\overline{\mathbf{w}})(\mbf{h},\mbf{z})\|_{H_2}
    &\leq \left\|\mathbb{B}\begin{bmatrix}
        0 & 0 & 0 & 0 & 0 \\
        0 &\mathbb{q}_1 & \mathbb{q}_2 & 0 & 0 \\ 0 &
        -\mathbb{q}_1 & -\mathbb{q}_2 & 0 & 0 \\
        0 & \mathbb{q}_3 & \mathbb{q}_4 & \mathbb{q}_1 & \mathbb{q}_2 \\
        0 & -\mathbb{q}_3 & -\mathbb{q}_4 & -\mathbb{q}_1 & -\mathbb{q}_2
    \end{bmatrix}\begin{bmatrix}
        0 & 0 & 0 & 0 & 0 \\0 &\mathbb{h}_{1,1} & 0 & 0 & 0 \\
        0 &\mathbb{h}_{1,2} & \mathbb{h}_{1,1} & 0 & 0 \\
        0 &\mathbb{h}_{2,1} & 0 & \mathbb{h}_{1,1} & 0 \\
        0 &\mathbb{h}_{2,2} & \mathbb{h}_{2,1} & \mathbb{h}_{1,2} & \mathbb{h}_{1,1}
    \end{bmatrix} \begin{bmatrix}
        0 \\z_{1,1} \\
        z_{1,2} \\
        z_{2,1} \\
        z_{2,2}
    \end{bmatrix}\right\|_{H_2}\\
    &\leq \left\|M_1\begin{bmatrix}
       \mathbb{Q}_1 & \mathbb{Q}_2 & 0 & 0 \\\mathbb{Q}_4 & \mathbb{Q}_5 & \mathbb{Q}_1 & \mathbb{Q}_2 \end{bmatrix}\right\|_{2,X_2}\left\|\begin{bmatrix}
        \mathbb{h}_{1,1} & 0 & 0 & 0 \\
        \mathbb{h}_{1,2} & \mathbb{h}_{1,1} & 0 & 0 \\
        \mathbb{h}_{2,1} & 0 & \mathbb{h}_{1,1} & 0 \\
        \mathbb{h}_{2,2} & \mathbb{h}_{2,1} & \mathbb{h}_{1,2} & \mathbb{h}_{1,1}
    \end{bmatrix} \begin{bmatrix}
        z_{1,1} \\
        z_{1,2} \\
        z_{2,1} \\
        z_{2,2}
    \end{bmatrix} \right\|_{2} \\
    &\leq 5\kappa r \sqrt{\left\|M_1 M_2 M_1^*\right\|_{X_2}} 
\end{align}}
where the last step followed from the definition of $\mathbb{B}$ and properties of $\Gamma^\dagger$, similar algebraic steps to those performed in Lemma \ref{lem : Z2 patterns}, and $M_1,M_2,M_3$ are defined in \eqref{def : Z2 saddle definitions}. Hence, by Parseval's Identity, we need to analyze the following norms
{\small\begin{align}
    &\|\obpi^{\leq N} M_1 M_2 M_1^* \obpi^{\leq N}\|_{X_2} ,~\|\obpi^{\leq N} M_1 M_2 \bpi^{>N}\|_{2,X_2},~\|\bpi^{>N} M_2 M_1^* \obpi^{\leq N}\|_{X_2,2},~\text{and}~\|\bpi^{>N} M_2 \bpi^{>N}\|_{2}.\label{Z_2 renamed matrix saddle}
\end{align}}
The first term can be computed directly and is defined as $\mathcal{Z}_{2,3}$ in \eqref{def : Z2js saddle}. Each of the remaining terms can be computed using similar steps to those used in Lemma \ref{lem : Z2 patterns}. Indeed,  
{\small\begin{align}
    &\|\obpi^{\leq N} M_1 M_2 \bpi^{>N}\|_{2,X_2} = \|\bpi^{>N} M_2 M_1^* \obpi^{\leq N}\|_{X_2,2} = \sqrt{\|\obpi^{\leq N} M_1 M_2 \bpi^{>N} M_2^* M_1^* \obpi^{\leq N}\|_{X_2}} \bydef \mathcal{Z}_{2,4} \\
    &\|\bpi^{>N} M_2 \bpi^{>N}\|_{2} \leq \varphi(\|Q_1^2 + Q_2^2\|_{1}, \|Q_1 Q_3 + Q_2 Q_4\|_{1}, \|Q_3 Q_1 + Q_4 Q_2\|_{1}, \|Q_3^2 + Q_4^2 + Q_1^2 + Q_2^2\|_{1}) \bydef \mathcal{Z}_{2,5}
\end{align}}
where we used the properties of the adjoint and Young's inequality.
Therefore, we obtain
\begin{align}
    \left\|\mathbb{B}D^2\mathbb{G}(\overline{\mathbf{u}},\overline{\mathbf{w}})(\mbf{h},\mbf{z}) \right\|_{H_2} \leq 5\kappa r \sqrt{\varphi(\mathcal{Z}_{2,3},\mathcal{Z}_{2,4},\mathcal{Z}_{2,4},\mathcal{Z}_{2,5})}.
\end{align}
This concludes the computation of the $\mathcal{Z}_2$ bound.
\end{proof}

\section{Integral Computation using residues}

The goal is to compute $$\int_\mathbb{R} \frac{(a_1\xi^2 + a_2)^2}{\ell_\text{den}(\xi)^2} d\xi$$

To do this, we observe that we can write $\ell_\text{den}(\xi) = \rho|2\pi|^4 (\xi^2 - y_1^2)(\xi^2 - y_2^2)$ where $y_1^2, y_2^2$ are the roots of $x \mapsto 16\pi^4\rho x^2 + 4\pi^2(\rho\nu_2 - \nu_1) x - \nu_2(\nu_1 + \nu_3)$.

Now observe that the function $f(\xi) = \frac{(a_1\xi^2 + a_2)^2}{\rho|2\pi|^4(\xi^2 - y_1^2)^2(\xi^2 - y_2^2)^2}$ posses 4 poles of order 2 at $\pm y_1, \pm y_2$. Without loss of generality, assume $\mathrm{Im}(y_1), \mathrm{Im}(y_2) > 0$ (if not, take $-y_1$ or $-y_2$, Assumption \ref{assumption : fourier} also guarantees that $\mathrm{Im}(y_1), \mathrm{Im}(y_1) \neq 0$). Using Residue Theory, we have that 
$$\int_\mathbb{R} f(\xi)d\xi = 2\pi i \left(\mathrm{Res}(f, y_1) + \mathrm{Res}(f, y_2)\right)$$
It remains to compute $\mathrm{Res}(f, y_i)$, for $i = 1,2$. Direct computations lead to
\begin{align*}
    \mathrm{Res}(f, y_i) = \lim_{\xi \rightarrow y_i} \frac{d}{d\xi} \left((\xi - y_i)^2 f(\xi)\right)
    &= \lim_{\xi \rightarrow y_i} \frac{d}{d\xi} \left[\frac{(a_1\xi^2 + a_2)^2}{(2\pi)^8\rho^2 (\xi + y_i)^2(\xi^2 - y_j^2)^2}\right] \\ 
    &= \frac{1}{\rho^2|2\pi|^8}\lim_{\xi \rightarrow y_i} \frac{d}{d\xi} \left[\frac{(a_1\xi^2 + a_2)^2}{(\xi + y_i)^2(\xi^2 - y_j^2)^2}\right],
\end{align*}
where $j = 2$, if $i = 1$ and $j = 1$, if $i = 2$. It just remains to take compute the derivative. In fact, we obtain that
$\frac{d}{d\xi} (\xi + y_i)^2(\xi^2 - y_j^2)^2 = 2(\xi + y_i)(\xi^2 - y_j^2)\left(3\xi^2 + 2y_i \xi - y_2^2\right), \frac{d}{d\xi} (a_1\xi^2 + a_2)^2 = 4a_1\xi (a_1\xi^2 + a_2)$. Applying the quotient rule, we  get:
{\footnotesize
\begin{align*}
    \mathrm{Res}(f, y_i) &= \frac{1}{\rho^2|2\pi|^8}\lim_{\xi \rightarrow y_i} \frac{4a_1\xi(a_1\xi^2 + a_2)(\xi + y_1)^2(\xi^2 - y_j^2)^2 - (a_1\xi^2 + a_2)^2\cdot 2(\xi+y_i)(\xi^2 - y_j^2)(3\xi^2 + 2y_i\xi - y_j^2)}{(\xi + y_i)^4(\xi^2 - y_j^2)^2} \\
    &= \frac{1}{\rho^2|2\pi|^8}\lim_{\xi \rightarrow y_i} 2(a_1\xi^2 + a_2)\frac{2a_1\xi(\xi+y_i)(\xi^2 - y_2^2)^2 - (a_1\xi^2 + a_2)(3\xi^2 -2y_i\xi - y_2^2)}{(\xi+y_i)^3(\xi^2 - y_j^2)^3} \\
     &= \frac{2(a_1y_i^2 + a_2)(4a_1y_i^2(y_i^2 - y_j^2) - (a_1y_1^2 + a_2)(5y_1^2 - y_2^2))}{\rho^2|2\pi|^8\cdot 8y_i^3(y_i^2 - y_j^2)^2}.
\end{align*}
}
\normalsize
These formulas are then implemented and evaluated using interval arithmetic in \citep{julia_blanco_fassler}.
\renewcommand{\theequation}{C.\arabic{equation}}
\setcounter{equation}{0}

\renewcommand{\theequation}{C.\arabic{equation}}
\setcounter{equation}{0}
\section{Computing the Bounds for Controlling the Spectrum}\label{apen : spectrum computations}
In this appendix, we provide the details for the computations performed in Section \ref{sec : control of the spectrum}. Specifically, we derive explicit formulas which can directly be implemented using rigorous numerics (see \cite{julia_blanco_fassler}).
\subsection{Computing \texorpdfstring{$\tilde{a}$}{tildea} and \texorpdfstring{$\tilde{C}_0$}{tildeC0}}\label{apen : tildea tilde C0}
We first discuss the computation of $\tilde{a}$ and $\tilde{C}_0$ required for Lemma \ref{lem : ift_L_inverse for spectrum}. Recall that these constants are defined via a minimum (for $\tilde{a}$) and a maximum (for $\tilde{C}_0$) over the set $\mathcal{I}(\delta_0) = \{ ix, ~ |x| \leq \delta_0  \} \cup \left\{ \frac{\pm \sqrt{-4\tilde{\rho}\nu_2 \nu_3} - \tilde{\rho} \nu_2 - \nu_1}{\tilde{\rho} -1} \right\}.$ The computation is achieved directly thanks to interval arithmetic on Julia \cite{julia_interval}. Indeed, we evaluate quantities rigorously on small intervals, and obtain upper and lower bounds.   
For this matter, we consider a partition of unity  $\mathcal{I}_k = [p_k,p_{k+1}]$ for $k = 1,\dots,K$ and $p_k \in \mathbb{R}, |p_k| \leq \delta_0$  such that
\begin{align}
    \{ ix, ~ |x| \leq \delta_0  \} =  \bigcup_{k = 1}^K i\mathcal{I}_k.
\end{align}
Then, we define
\begin{align}
    \tilde{a}_{K+1} \bydef a\left(\frac{\sqrt{-4\nu_2\nu_3} - \tilde{\rho}\nu_2 - \nu_1}{\tilde{\rho} -1}\right), ~&
    \tilde{a}_{K+2} \bydef a\left(\frac{-\sqrt{-4\nu_2\nu_3} - \tilde{\rho}\nu_2 - \nu_1}{\tilde{\rho} -1}\right),\\
\tilde{a}_k \bydef a(i\mathcal{I}_{k}) &\text{ for all } k \in \{1, \dots, K\}
\end{align}
and
\begin{align}
    (\tilde{C}_{0})_{K+1} \bydef C\left(\frac{\sqrt{-4\nu_2\nu_3} - \tilde{\rho}\nu_2 - \nu_1}{\tilde{\rho} -1}\right), ~
    &(\tilde{C}_0)_{K+2} \bydef C\left(\frac{-\sqrt{-4\nu_2\nu_3} - \tilde{\rho}\nu_2 - \nu_1}{\tilde{\rho} -1}\right), \\
    (\tilde{C}_0)_k \bydef C(i\mathcal{I}_k) &\text{ for all } k \in \{1, \dots, K\}.
\end{align}
Then, we directly obtain
\begin{align}
    \tilde{a} = \min_{k = 1,\dots,K+2}\tilde{a}_k, ~~ \tilde{C}_0 \bydef \max_{k = 1,\dots,K+2} (\tilde{C}_0)_k.
\end{align}
Using interval arithmetic, we are able to enclose rigorously each $\tilde{a}_k$ and each $(\tilde{C}_0)_k$, allowing us to compute a rigorous (and sharp) lower bound for $\tilde{a}$ and upper bound for $\tilde{C}_0$. 
\subsection{Lemma \ref{lem : link of spectrum unbounded to U0}}\label{apen : link of spectrum} 
Next, we compute the bounds required to apply Lemma \ref{lem : link of spectrum unbounded to U0}. For each component of the estimates, our objective is to provide an explicit formula which can be implemented and controlled thanks to interval arithmetic. We first provide two preliminary lemmas, introducing notably the constants $\mathcal{K}_1$ and $\mathcal{K}_2$ which will be used along our estimations.
\begin{lemma}\label{lem : K1}
Let $t \in \mathbb{C}$ be such that $S + t I_d$ is invertible. Moreover,  let $\mu \in \sigma_{\delta_0}$. Then, 
\begin{align}
    \|\Pi^{\leq N} (S + tI_d)^{-1} P^{-1}(l_{\tilde{\rho}} - \mu I_d)\|_{2} \leq \mathcal{K}_1\|\Pi^{\leq N} (S + tI_d)^{-1} (P^N)^{-1}(l_{\tilde{\rho}} + \delta_0 I_d)\Pi^{\leq N}\|_{2},
\end{align}
where
\begin{align}
    \mathcal{K}_1 \bydef 1 + \frac{\|I_d - (P^N)^{-1}P^N\|_{2}\|(P^N)^{-1}\|_{2}}{1 - \|I_d - (P^N)^{-1}P^N\|_{2}}.
\end{align}
\begin{proof}
To begin, since $(S + t I_d)^{-1}$ is diagonal, it follows that $\Pi^{\leq N}(S+tI_d)^{-1} = \Pi^{\leq N} (S + tI_d)^{-1} \Pi^{\leq N}$. Moreover, by definition of $P^{-1}$, we have $\Pi^{\leq N} P^{-1}  = \Pi^{\leq N} (P^N)^{-1} \Pi^{\leq N}$. Finally, since $l_{\tilde{\rho}} - \mu I_d$ is also diagonal, we get
\begin{align}
    \|\Pi^{\leq N} (S + tI_d)^{-1} P^{-1}(l_{\tilde{\rho}} - \mu I_d)\|_{2} &\leq \mathcal{K}_1 \|\Pi^{\leq N} (S + tI_d)^{-1} (P^N)^{-1}(l_{\tilde{\rho}} - \mu I_d)\Pi^{\leq N}\|_{2} \\
    &\hspace{-2cm}\leq \|\Pi^{\leq N} (S + tI_d)^{-1} (P^N)^{-1}(l_{\tilde{\rho}} + \delta_0 I_d)\Pi^{\leq N}\|_{2} \|(l_{\tilde{\rho}} + \delta_0 I_d)^{-1} (l_{\tilde{\rho}} - \mu I_d)\|_{2}.
\end{align}
Now, using that
\begin{align} 
\|(l_{\tilde{\rho}} + \delta_0 I_d)^{-1} (l_{\tilde{\rho}} - \mu I_d)\|_{2} \leq 1,
\end{align} 
since $\mu \in \sigma_{\delta_0}$, we obtain
\begin{align}
    \|\Pi^{\leq N} (S + tI_d)^{-1} P^{-1}(l_{\tilde{\rho}} - \mu I_d)\|_{2} &\leq \mathcal{K}_1\|\Pi^{\leq N} (S + tI_d)^{-1} (P^N)^{-1}(l_{\tilde{\rho}} + \delta_0 I_d)\Pi^{\leq N}\|_{2} 
\end{align}
\end{proof}
\end{lemma}
\begin{lemma}\label{lem : K2}
We estimate
\begin{align}
    \sup_{\mu \in \mathcal{J}}\|\Pi^{> N} (l_{\tilde{\rho}} - \mu I_d)^{-1}\|_{2} &\leq \sup_{\mu \in \mathcal{J}}\varphi\left(\left(\frac{l_{22}-\mu}{l_{den,\mu}}\right)_N,\left(\frac{l_{12}}{l_{den,\mu}}\right)_N,\left(\frac{l_{21}}{l_{den,\mu}}\right)_N,\left(\frac{l_{11}-\mu}{l_{den,\mu}}\right)_N\right) \bydef \mathcal{K}_2,
\end{align}
where, given a function $q : \R \to \R$ and $N \in \mathbb{N}$, $(q)_{{N}}$ is defined as in Lemma \ref{lem : Z1 periodic patterns}.
\end{lemma}
\begin{proof}
Observe that
\begin{align}
    (l_{\tilde{\rho}} - \mu I_d)^{-1} = \begin{bmatrix}
        (l_{22}-\mu ) l_{den,\mu}^{-1} & -l_{12} l_{den,\mu}^{-1} \\
        -l_{21} l_{den,\mu}^{-1} & (l_{11} - \mu)l_{den,\mu}^{-1}
    \end{bmatrix}.
\end{align}
The proof directly follows from the proof of Lemma \ref{lem : Z1 periodic patterns}.
\end{proof}
We are now in a position to estimate the bounds $Z_{1,k}$. These bounds are fully estimated thanks to computations involving Fourier coefficients operators. In particular, we provide upper bounds depending on matrix norms. 
\begin{lemma}
Let $\mathcal{K}_1$ be defined as in Lemma \ref{lem : K1} and $\mathcal{K}_2$ as in Lemma \ref{lem : K2}.
Let $Z_{1,k}$ be bounds defined as in Lemma \ref{lem : link of spectrum unbounded to U0}. Then, letting
\begin{align}
    &Z_{1,1} \bydef \mathcal{K}_2  \sqrt{\|(P^N)^* Dg(\overline{\mathbf{U}})^*\Pi^{> N} Dg(\overline{\mathbf{U}}) P^N\|_{2}},\\
    &Z_{1,2} \bydef \sqrt{2}\mathcal{K}_2 \sqrt{\|\overline{V}_1\|_{1}^2 + \|\overline{V}_2\|_{1}^2},\\
    &Z_{1,3} \bydef \mathcal{K}_1 \|\bpi^{\leq N} (S + tI_d)^{-1} R\bpi^{\leq N}\|_{2},\\
    &Z_{1,4} \bydef \mathcal{K}_1\sqrt{\|\Pi^{\leq N} (S + t I_d)^{-1} (P^N)^{-1} Dg(\overline{U})\Pi^{> N}Dg(\overline{U})^* (P^N)^{-*} (S + tI_d)^{-*} \Pi^{\leq N}\|_{2}},
\end{align}
it follows that $Z_{1,k}$ for each $k = 1,2,3,4$ satisfy the bound required in Lemma \ref{lem : link of spectrum unbounded to U0}.
\end{lemma}
\begin{proof}
Note that $Z_{1,3}$ can be computed as is with the factor $\mathcal{K}_1$ to account for the error we get when using $(P^N)^{-1}$ instead of $P^{-1}$. For $Z_{1,4}$, recall that $\Pi^{\leq N} R\Pi^{> N} = (P^N)^{-1} Dg(\overline{\mathbf{U}})\Pi^{> N}$. We show this result in detail.
\begin{align}
    \Pi^{\leq N} R\Pi^{> N} &= \Pi^{\leq N} (\mathcal{D} - S)\Pi^{> N} \\
    &= \Pi^{\leq N} P^{-1} Df(\overline{\mathbf{U}}) P \Pi^{> N} - \Pi^{\leq N} S \Pi^{> N} \\
    &=\Pi^{\leq N} P^{-1} L P \Pi^{> N}+ \Pi^{\leq N} P^{-1} Dg(\overline{\mathbf{U}}) P \Pi^{> N} - 0.
\end{align}
where we used that $S$ is diagonal. Since $L$ is also diagonal, and $P$ is defined through $P^N$ and a tail which is the identity, we can also say 
\begin{align}
    \Pi^{\leq N} P^{-1} L P \Pi^{> N}+ \Pi^{\leq N} P^{-1} Dg(\overline{\mathbf{U}}) P \Pi^{> N} &= 0 + \Pi^{\leq N} P^{-1} Dg(\overline{\mathbf{U}})P\Pi^{> N}.
\end{align}
Finally, since $P \Pi^{> N} = \Pi^{> N}$, we have
\begin{align}
    \Pi^{\leq N} R\Pi^{> N} &=\Pi^{\leq N} P^{-1} Dg(\overline{\mathbf{U}}) P\Pi^{> N} = \Pi^{\leq N} P^{-1} Dg(\overline{\mathbf{U}}) \Pi^{> N}.
\end{align}
Hence, we obtain
\begin{align}
    \|\Pi^{\leq N} (S + t I_d)^{-1} R\Pi^{> N}\|_{2}^2 &= \|\Pi^{\leq N} (S + t I_d)^{-1} (P^N)^{-1} Dg(\overline{\mathbf{U}})\Pi^{> N}\|_{2}^2 \\
    &\hspace{-2cm}=\mathcal{K}_1^2\|\Pi^{\leq N} (S + t I_d)^{-1} (P^N)^{-1} Dg(\overline{\mathbf{U}})\Pi^{> N}Dg(\overline{\mathbf{U}})^* (P^N)^{-*} (S + tI_d)^{-*} \Pi^{\leq N}\|_{2} \\
    &\hspace{-2cm}\bydef Z_{1,4}^2,
\end{align}
which can now be evaluated on the computer. We now move to $Z_{1,2}$. For a matrix $M$, let $\mathrm{diag}(M)$ denote the diagonal matrix with the diagonal entries of $M$. Again using properties of $R$, observe that
\begin{align}
    \sup_{\mu \in \mathcal{J}} \|\Pi^{> N} (l_{\tilde{\rho}} - \mu I_d)^{-1} R \Pi^{> N}\|_{2} \leq \sup_{\mu \in \mathcal{J}}\|\Pi^{> N} (l_{\tilde{\rho}} - \mu I_d)^{-1}\|_{2}  \|\Pi^{> N} R \Pi^{> N}\|_{2}& \\
    = \sup_{\mu \in \mathcal{J}}\|\Pi^{> N} (l_{\tilde{\rho}} - \mu I_d)^{-1}\|_{2}  \|\Pi^{> N} (P^{-1} L P + P^{-1} Dg(\overline{\mathbf{U}})P - S) \Pi^{> N}\|_{2}&\\
    = \sup_{\mu \in \mathcal{J}}\|\Pi^{> N} (l_{\tilde{\rho}} - \mu I_d)^{-1}\|_{2}  \|\Pi^{> N} (P^{-1} L P + P^{-1} Dg(\overline{\mathbf{U}})P - P^{-1} LP - P^{-1}\mathrm{diag}(Dg(\overline{\mathbf{U}}))P) \Pi^{> N}\|_{2}&\\
    = \sup_{\mu \in \mathcal{J}}\|\Pi^{> N} (l_{\tilde{\rho}} - \mu I_d)^{-1}\|_{2}  \|\Pi^{> N} (P^{-1} Dg(\overline{\mathbf{U}})P- P^{-1}\mathrm{diag}(Dg(\overline{\mathbf{U}}))P) \Pi^{> N}\|_{2}&
    \\
    = \sup_{\mu \in \mathcal{J}}\|\Pi^{> N} (l_{\tilde{\rho}} - \mu I_d)^{-1}\|_{2}   \|\Pi^{> N} (Dg(\overline{\mathbf{U}})-\mathrm{diag}(Dg(\overline{\mathbf{U}})))\Pi^{> N}\|_{2}&
    \end{align}
where we used that $\Pi^{> N} P^{-1} = \Pi^{> N}$ and $P \Pi^{>N} = \Pi^{> N}$. 
Continuing, 
    \begin{align}
    \sup_{\mu \in \mathcal{J}} \|\Pi^{> N} (l_{\tilde{\rho}} - \mu I_d)^{-1} R \Pi^{> N}\|_{2}&\leq \sup_{\mu \in \mathcal{J}}\|\Pi^{> N} (l_{\tilde{\rho}} - \mu I_d)^{-1}\|_{2} \| \Pi^{> N}Dg(\overline{\mathbf{U}})\Pi^{> N}\|_{2} \\
    &\leq \sup_{\mu \in \mathcal{J}}\|\Pi^{> N} (l_{\tilde{\rho}} - \mu I_d)^{-1}\|_{2} \|Dg(\overline{\mathbf{U}})\|_{2} \\
    &= \sqrt{2}\sup_{\mu \in \mathcal{J}}\|\Pi^{> N} (l_{\tilde{\rho}} - \mu I_d)^{-1}\|_{2} \sqrt{\|\overline{V}_1\|_{1}^2 + \|\overline{V}_2\|_{1}^2} \\
    &\bydef Z_{1,2}.
\end{align}
Finally, for $Z_{1,1}$, we estimate
{\small\begin{align}
    \sup_{\mu \in \mathcal{J}} \|\Pi^{> N} (l_{\tilde{\rho}} - \mu I_d)^{-1} R \Pi^{\leq N}\|_{2}^2 &\leq \sup_{\mu \in \mathcal{J}}\|\Pi^{> N} (l_{\tilde{\rho}} - \mu I_d)^{-1}\|_{2}^2  \|\Pi^{> N} R \Pi^{\leq N}\|_{2}^2 \\
    &= \sup_{\mu \in \mathcal{J}}\|\Pi^{> N} (l_{\tilde{\rho}} - \mu I_d)^{-1}\|_{2}^2  \|\Pi^{> N} (P^{-1} L P + P^{-1} Dg(\overline{\mathbf{U}}) P - S) \Pi^{\leq N}\|_{2}^2 \\
    &= \sup_{\mu \in \mathcal{J}}\|\Pi^{> N} (l_{\tilde{\rho}} - \mu I_d)^{-1}\|_{2}^2  \|\Pi^{> N} Dg(\overline{\mathbf{U}}) P^N \|_{2}^2
    \end{align}}
where we used the fact that $S$ is diagonal, that $L$ is diagonal along with properties of $P$. Additionally, we used $\Pi^{> N} P^{-1} = \Pi^{> N}$, and $P \Pi^{\leq N} = P^N$. Continuing, 
    \begin{align}
    \sup_{\mu \in \mathcal{J}} \|\Pi^{> N} (l_{\tilde{\rho}} - \mu I_d)^{-1} R \Pi^{\leq N}\|_{2}^2&= \sup_{\mu \in \mathcal{J}}\|\Pi^{> N} (l_{\tilde{\rho}} - \mu I_d)^{-1}\|_{2}^2  \|(P^N)^* Dg(\overline{\mathbf{U}})^*\Pi^{> N} Dg(\overline{\mathbf{U}}) P^N\|_{2} \\
    &\bydef Z_{1,1}^2.
\end{align}
This concludes the proof.
\end{proof}
Next, we provide formulas for the bounds $\mathcal{Z}_{u,k}$ bounds for $k = 1,2,3, 4$. These formulas have been derived in previous work (cf. \cite{unbounded_domain_cadiot}) and we recall them for completeness. 
\begin{lemma}\label{lem : Ju old exact}
Let $\tilde{a}$ be defined as in \eqref{def : amu}. Moreover, let $\tilde{E} \in \ell^2$ and\\ $\tilde{C}(d),\tilde{C}_1,\tilde{C}_2,\tilde{C}_3,\tilde{C}_4 > 0$ be defined as
\begin{align}
    &\tilde{E} \bydef \gamma(\mathbb{1}_{\om} \cosh(2\tilde{a}x)),~ \tilde{C}(d) \bydef 4d + \frac{4e^{-\tilde{a}d}}{\tilde{a}(1-e^{-\frac{3\tilde{a}d}{2}})} + \frac{2}{\tilde{a}(1-e^{-2\tilde{a}d})},\\
    &\tilde{C}_1 \bydef \tilde{C}_0(-1,-\nu_2),~ \tilde{C}_2 \bydef \tilde{C}_0(0,\nu_2),~C_3 \bydef \tilde{C}_0(0,\nu_3),~\text{and}~ \tilde{C}_4 \bydef \tilde{C}_0(-\tilde{\rho},\nu_1)\label{def : E Cd Cj mu}
\end{align}
where $\tilde{C}_0(d_1,d_2)$ is defined as in \eqref{def : Cmu}. Then, let $(\mathcal{Z}_{u,k,j})_{k \in \{1,2\},j \in \{1,2,3,4\}} > 0$ be defined as
\begin{align}
&\mathcal{Z}_{u,1,1}^2 \bydef |\om| \frac{\tilde{C}_1^2e^{-2\tilde{a}d}}{\tilde{a}} (V_1^N, V_1^N * \tilde{E})_2,~\mathcal{Z}_{u,2,1}^2 \bydef \mathcal{Z}_{u,1,1}^2 + e^{-4\tilde{a}d} \tilde{C}(d) \tilde{C}_1^2 |\om|(V_1^N,V_1^N * \tilde{E})_2 \\
    &\mathcal{Z}_{u,1,2}^2 \bydef |\om| \frac{\tilde{C}_2^2 e^{-2\tilde{a}d}}{\tilde{a}} (V_2^N, V_2^N * \tilde{E})_2,~\mathcal{J}_{u,2,2}^2 \bydef \mathcal{Z}_{u,1,2}^2 + e^{-4\tilde{a}d} \tilde{C}(d) \tilde{C}_2^2 |\om|(V_2^N,V_2^N * \tilde{E})_2 \\
    &\mathcal{Z}_{u,1,3}^2 \bydef |\om| \frac{\tilde{C}_3^2 e^{-2\tilde{a}d}}{\tilde{a}} (V_1^N, V_1^N * \tilde{E})_2,~\mathcal{Z}_{u,2,3}^2 \bydef \mathcal{Z}_{u,1,3}^2 + e^{-4\tilde{a}d} \tilde{C}(d) \tilde{C}_3^2 |\om|(V_1^N,V_1^N * \tilde{E})_2 \\
    &\mathcal{Z}_{u,1,4}^2 \bydef |\om| \frac{\tilde{C}_4^2e^{-2\tilde{a}d}}{\tilde{a}} (V_2^N,V_2^N*\tilde{E})_2,~\mathcal{Z}_{u,2,4}^2 \bydef \mathcal{Z}_{u,1,4}^2 + e^{-4\tilde{a}d} \tilde{C}(d) \tilde{C}_4^2 |\om|(V_2^N,V_2^N * \tilde{E})_2.
\end{align}
Let 
\begin{align}
    \mathcal{Z}_{u,k} \bydef \sqrt{2}\sqrt{(\mathcal{Z}_{u,k,1} + \mathcal{Z}_{u,k,2})^2 +(\mathcal{Z}_{u,k,3} + \mathcal{Z}_{u,k,4})^2}
\end{align}
for $k = 1,2$ and $\mathcal{Z}_{u,3} \bydef \mathcal{K}_1 \mathcal{Z}_{u,2}$. Then, $\mathcal{Z}_{u,1},\mathcal{Z}_{u,2}, $ and $\mathcal{Z}_{u,3}$ satisfy their respective bounds from Lemma \ref{lem : link of spectrum unbounded to U0}.
\end{lemma}
\begin{proof}
The proof follows from Lemma 6.5 of \cite{unbounded_domain_cadiot}. In particular, each $\mathcal{Z}_{u,k,j}$ can be computed using the results of the aforementioned lemma.
\end{proof}
Next, we compute the $\mathcal{C}_k$ bounds for $k = 1,2$. 
\begin{lemma}\label{lem : scriptC12 comp}
Let $\mathcal{C}_1$ be defined as 
\begin{align}
    &\mathcal{C}_1 \bydef \sup_{\mu \in \mathcal{J}}\|(\mathscr{l}_{\tilde{\rho}} - \mu I_d)^{-1}\|_{\mathcal{M}_1} (2\sqrt{5} \kappa \sqrt{\mathcal{Z}_{2,3}} + 3\kappa \|\mathscr{l}_{\overline{\rho}}^{-1}\|_{\mathcal{M}_2} r_0), \\
    &\mathcal{C}_2 \bydef \mathcal{K}_1 \mathcal{C}_1
\end{align}
Then, it follows that $\mathcal{C}_1$ and  $\mathcal{C}_2$ satisfy their respective definition in Lemma \ref{lem : link of spectrum unbounded to U0}. 
\end{lemma}
\begin{proof}
First, recall that $\tilde{\mathbf{u}} \in B_{r_0}(\overline{\mathbf{u}})$. Hence, there exists an $\mathbf{h} \in B_{r_0}(0)$ such that $\tilde{\mathbf{u}} = \overline{\mathbf{u}} + \mathbf{h}$. We now proceed from the definition
{\small\begin{align}
    \|(\mathbb{l}_{\tilde{\rho}} - \mu I_d)^{-1} (D\mathbb{g}(\overline{\mathbf{u}}) - D\mathbb{g}(\tilde{\mathbf{u}}))\|_{2} &\leq \|(\mathbb{l}_{\tilde{\rho}} - \mu I_d)^{-1}\|_{2} \|D\mathbb{g}(\overline{\mathbf{u}}) - D\mathbb{g}(\overline{\mathbf{u}}+\mathbf{h})\|_{2} \\
    &= \|(\mathscr{l}_{\tilde{\rho}} - \mu I_d)^{-1}\|_{\mathcal{M}_1} (\|D^2\mathbb{g}(\overline{\mathbf{u}})(\mathbf{h},\mathbf{z})\|_{2} + \|2g_3(\mathbf{z},\mathbf{h},\mathbf{h}) + g_3(\mathbf{h},\mathbf{h},\mathbf{z})\|_{2}) \\
    &\leq \|(\mathscr{l}_{\tilde{\rho}} - \mu I_d)^{-1}\|_{\mathcal{M}_1} (\|D^2\mathbb{g}(\overline{\mathbf{u}})(\mathbf{h},\mathbf{z})\|_{2} + 3\kappa \|\mathscr{l}_{\tilde{\rho}}^{-1}\|_{\mathcal{M}_2}r_0^2)\label{C1 split}
\end{align}}
for some $\mathbf{z} \in L_e^2, \|\mathbf{z}\|_{2} = 1$ and $g_3 : (\mathcal{H}_e)^3 \to \mathcal{H}_e$ is defined as in \eqref{def : g3}. Notice that the $\mathcal{M}_1$ norm has $\tilde{\rho}$ whereas the $\mathcal{M}_2$ norm has $\overline{\rho}$. This is due to the factor $\|\mathscr{l}_{\overline{\rho}}^{-1}\|_{\mathcal{M}_2}$ coming from $\kappa$, which was computed with $\overline{\rho}$. Now, observe that
\begin{align}
    \|D^2\mathbb{g}(\overline{\mathbf{u}})(\mathbf{h},\mathbf{z})\|_{2} &= \left\| \begin{bmatrix}
        \mathbb{q}_1 & \mathbb{q}_2 \\
        -\mathbb{q}_1 & -\mathbb{q}_2
    \end{bmatrix}\begin{bmatrix}
        \mathbb{h}_1 & \mathbb{h}_2 \\
        \mathbb{h}_2 & \mathbb{h}_1
    \end{bmatrix}\begin{bmatrix}
    z_1 \\ z_2 \end{bmatrix}\right\|_{2} \\
    &\leq \left\|\begin{bmatrix}
        I_d \\ -I_d
    \end{bmatrix}\begin{bmatrix}
        \mathbb{q}_1 & \mathbb{q}_2
    \end{bmatrix}\right\|_{2} \left\|\begin{bmatrix}
        \mathbb{h}_1 & \mathbb{h}_2 \\
        \mathbb{h}_2 & \mathbb{h}_1
    \end{bmatrix}\begin{bmatrix}
    z_1 \\ z_2 \end{bmatrix}\right\|_{2} \\
    &= \sqrt{5} \kappa \sqrt{\left\| \begin{bmatrix}
        I_d \\ - I_d
    \end{bmatrix}(\mathbb{q}_1^2 + \mathbb{q}_2^2)\begin{bmatrix}
        I_d & - I_d
    \end{bmatrix}\right\|_{2}} r_0 \\
    &\leq 2\sqrt{5} \kappa \sqrt{\|\mathbb{q}_1^2 + \mathbb{q}_2^2\|_{2}}r_0 \\
    &= 2\sqrt{5} \kappa \sqrt{\|Q_1^2 + Q_2^2\|_{1}}r_0 \\
    &\bydef 2\sqrt{5}\kappa \sqrt{\mathcal{Z}_{2,3}} r_0,
\end{align}
where $\mathcal{Z}_{2,3}$ is defined as in Lemma \ref{lem : Z2 patterns}. Now, we get
{\small\begin{align}
    \sup_{\mu \in \mathcal{J}} \frac{1}{r_0}\|(\mathbb{l}_{\tilde{\rho}} - \mu I_d)^{-1} (D\mathbb{g}(\overline{\mathbf{u}}) - D\mathbb{g}(\tilde{\mathbf{u}}))\|_{2} &\leq \sup_{\mu \in \mathcal{J}} \frac{1}{r_0}\|(\mathscr{l}_{\tilde{\rho}} - \mu I_d)^{-1}\|_{\mathcal{M}_1} (2\sqrt{5} \kappa \sqrt{\mathcal{Z}_{2,3}} + 3\kappa \|\mathscr{l}_{\overline{\rho}}^{-1}\|_{\mathcal{M}_2} r_0)r_0 \\
    &= \sup_{\mu \in \mathcal{J}}\|(\mathscr{l}_{\tilde{\rho}} - \mu I_d)^{-1}\|_{\mathcal{M}_1} (2\sqrt{5} \kappa \sqrt{\mathcal{Z}_{2,3}} + 3\kappa \|\mathscr{l}_{\overline{\rho}}^{-1}\|_{\mathcal{M}_2} r_0).
\end{align}}
The computation of $\mathcal{C}_2$  follows similarly with an extra factor of $\mathcal{K}_1$. 
\end{proof}
Finally, in order to apply Lemma \ref{lem : scriptC12 comp}, we must compute $\sup_{\mu \in \mathcal{J}}\|(\mathscr{l}_{\tilde{\rho}} - \mu I_d)^{-1}\|_{\mathcal{M}_1}$. This computation is technical and it will be  the goal of the next section.
\subsubsection{Estimating the \texorpdfstring{$\mathcal{M}_1$}{M1} norm for control of the spectrum}\label{apen : M1 int estimate}
In this appendix, we estimate 
\begin{align}
    \sup_{\mu \in \mathcal{J}} \|(\mathscr{l}_{\tilde{\rho}}(\xi) - \mu I_d)^{-1}\|_{\mathcal{M}_1},
\end{align}
which is required to apply Lemma \ref{lem : scriptC12 comp}.
To begin, observe that
\begin{align}
    (\mathscr{l}_{\tilde{\rho}}(\xi) - \mu I_d)^{-1} &\bydef \begin{bmatrix}
        \frac{\mathscr{l}_{22}(\xi) - \mu}{\mathscr{l}_{den,\mu}(\xi)} & \frac{-\mathscr{l}_{12}(\xi)}{\mathscr{l}_{den,\mu}(\xi)} \\
        \frac{-\mathscr{l}_{21}(\xi)}{\mathscr{l}_{den,\mu}(\xi)} & \frac{\mathscr{l}_{11}(\xi) - \mu}{\mathscr{l}_{den,\mu}(\xi)}.
    \end{bmatrix}
\end{align}
Now, observe that we can write
\begin{align}
    \frac{1}{\mathscr{l}_{den,\mu}(\xi)} = \frac{1}{\mathrm{Re}(\mathscr{l}_{den,\mu}(\xi)) + i \mathrm{Im}(\mathscr{l}_{den,\mu}(\xi))} = \frac{\mathrm{Re}(\mathscr{l}_{den,\mu}(\xi)) - i\mathrm{Im}(\mathscr{l}_{den,\mu}(\xi))}{|\mathscr{l}_{den,\mu}(\xi)|^2}.
\end{align}
Now, let 
\begin{align}
    \delta_{ij} \bydef \begin{cases}
        1 & i = j \\
        0 & \mathrm{else}
    \end{cases}.
\end{align}
Then, we can also write
\begin{align}
&\frac{\mathscr{l}_{ij}(\xi) - \mu \delta_{ij}}{\mathscr{l}_{den,\mu}(\xi)} \\
&=\frac{(\mathscr{l}_{ij}(\xi) - \mathrm{Re}(\mu) \delta_{ij} - i\mathrm{Im}(\mu) \delta_{ij})(\mathrm{Re}(\mathscr{l}_{den,\mu}(\xi)) - i\mathrm{Im}(\mathscr{l}_{den,\mu}(\xi)))}{|\mathscr{l}_{den,\mu}(\xi)|^2} \\
&= \frac{(\mathscr{l}_{ij}(\xi) - \mathrm{Re}(\mu) \delta_{ij})\mathrm{Re}(\mathscr{l}_{den,\mu}(\xi)) - \mathrm{Im}(\mu) \delta_{ij} \mathrm{Im}(\mathscr{l}_{den,\mu}(\xi))}{|\mathscr{l}_{den,\mu}(\xi)|^2} \\
&\hspace{+4cm}- i \frac{(\mathscr{l}_{ij}(\xi) - \mathrm{Re}(\mu) \delta_{ij})\mathrm{Im}(\mathscr{l}_{den,\mu}(\xi)) + \mathrm{Im}(\mu) \delta_{ij} \mathrm{Re}(\mathscr{l}_{den,\mu}(\xi))}{|\mathscr{l}_{den,\mu}(\xi)|^2}\label{before squaring}
\end{align}
Then, observe that
{\tiny\begin{align}
    &[(\mathscr{l}_{ij}(\xi) - \mathrm{Re}(\mu) \delta_{ij})\mathrm{Re}(\mathscr{l}_{den,\mu}(\xi)) - \mathrm{Im}(\mu) \delta_{ij} \mathrm{Im}(\mathscr{l}_{den,\mu}(\xi))]^2 \\
    &=(\mathscr{l}_{ij}(\xi) - \mathrm{Re}(\mu) \delta_{ij})^2\mathrm{Re}(\mathscr{l}_{den,\mu}(\xi))^2 - 2\delta_{ij} (\mathscr{l}_{ij}(\xi) - \mathrm{Re}(\mu) \delta_{ij})\mathrm{Im}(\mu)  \mathrm{Re}(\mathscr{l}_{den,\mu}(\xi))\mathrm{Im}(\mathscr{l}_{den,\mu}(\xi)) + \delta_{ij} \mathrm{Im}(\mu)^2 \mathrm{Im}(\mathscr{l}_{den,\mu}(\xi))^2
\end{align}}
and 
{\tiny\begin{align}
    &[(\mathscr{l}_{ij}(\xi) - \mathrm{Re}(\mu) \delta_{ij})\mathrm{Im}(\mathscr{l}_{den,\mu}(\xi)) + \mathrm{Im}(\mu) \delta_{ij} \mathrm{Re}(\mathscr{l}_{den,\mu}(\xi))]^2 \\
    &=(\mathscr{l}_{ij}(\xi) - \mathrm{Re}(\mu) \delta_{ij})^2\mathrm{Im}(\mathscr{l}_{den,\mu}(\xi))^2 + 2\delta_{ij} (\mathscr{l}_{ij}(\xi) - \mathrm{Re}(\mu) \delta_{ij})\mathrm{Im}(\mu)  \mathrm{Re}(\mathscr{l}_{den,\mu}(\xi))\mathrm{Im}(\mathscr{l}_{den,\mu}(\xi)) + \delta_{ij} \mathrm{Im}(\mu)^2 \mathrm{Re}(\mathscr{l}_{den,\mu}(\xi))^2
\end{align}}
Hence, we return to \eqref{before squaring} and get
\begin{align}
\left|\frac{\mathscr{l}_{ij}(\xi) - \mu \delta_{ij}}{\mathscr{l}_{den,\mu}(\xi)}\right|^2 
&=\frac{(\mathscr{l}_{ij}(\xi) - \mathrm{Re}(\mu) \delta_{ij})^2 |\mathscr{l}_{den,\mu}(\xi)|^2 + \delta_{ij} \mathrm{Im}(\mu)^2 |\mathscr{l}_{den,\mu}(\xi)|^2}{|\mathscr{l}_{den,\mu}(\xi)|^4} \\
&=\frac{[(\mathscr{l}_{ij}(\xi) - \mathrm{Re}(\mu) \delta_{ij})^2 + \mathrm{Im}(\mu)^2 \delta_{ij}] |\mathscr{l}_{den,\mu}(\xi)|^2}{|\mathscr{l}_{den,\mu}(\xi)|^4}\\
&=\frac{(\mathscr{l}_{ij}(\xi) - \mathrm{Re}(\mu) \delta_{ij})^2 + \mathrm{Im}(\mu)^2 \delta_{ij}}{|\mathscr{l}_{den,\mu}(\xi)|^2}.
\end{align}
We then let
\begin{align}
    \mathscr{f}_{ij}(\xi,\mu) \bydef \frac{(\mathscr{l}_{ij}(\xi) - \mathrm{Re}(\mu) \delta_{ij})^2 + \mathrm{Im}(\mu)^2 \delta_{ij}}{|\mathscr{l}_{den,\mu}(\xi)|^2}.\label{def : fij}
\end{align}
Hence, we must estimate
\begin{align}
    \sup_{\mu \in \mathcal{J}} \|(\mathscr{l}_{\tilde{\rho}} - \mu I_d)^{-1}\|_{\mathcal{M}_1} &= \sup_{\mu \in \mathcal{J}}\sqrt{\sum_{i,j \in \{1,2\}} \left(\sup_{\xi \in \mathbb{R}} \sqrt{\mathscr{f}_{ij}(\xi,\mu)}\right)^2} \\
    &= \sup_{\mu \in \mathcal{J}}\sqrt{\sum_{i,j \in \{1,2\}} \sup_{\xi \in \mathbb{R}} |\mathscr{f}_{ij}(\xi,\mu)|} \\
    &=\sqrt{\sum_{i,j \in \{1,2\}} \sup_{\mu \in \mathcal{J}}\sup_{\xi \in \mathbb{R}} |\mathscr{f}_{ij}(\xi,\mu)|}
\end{align}
Our goal is now to estimate the supremum of the functions $\mathscr{f}_{ij}$ over $\xi$ and $\mu$. For that matter, we first provide estimates when $\xi \geq M$. This will later on simplify to computing a supremum over a bounded domain, which can be achieved thanks to interval arithmetic (using a partition of unity).
\begin{lemma}
Let $C = C(\epsilon) > 0$ be defined as
\begin{align}
    C \bydef \frac{1}{(2\pi)^8 \tilde{\rho}^2} + \epsilon
\end{align}
for any $\epsilon > 0$.
Choose $M = M(\epsilon)$ as
\begin{align}
    M \geq \max\left\{ 1, \sqrt{\frac{\sum_{k \in \{0,2,4,6\}} |\mathscr{b}_k|}{(2\pi)^8 \tilde{\rho}^2 \left(1 - \frac{1}{1 + (2\pi)^8 \tilde{\rho} \epsilon}\right)}}\right\}.
\end{align} 
where $\mathscr{b}_k$ are upper bounds for the coefficients of $|\mathscr{l}_{den,\mu}|^2$. Then, it follows that
\begin{align}
    \frac{1}{|\mathscr{l}_{den,\mu}(\xi)|^2} \leq \frac{C}{\xi^8} ~ \text{for all} ~\xi \in [M,\infty).
\end{align} 
\end{lemma}
\begin{proof}
Since $\mathscr{l}_{den,\mu}$ only has even coefficients and is 4th order, we write
\begin{align}
    |\mathscr{l}_{den,\mu}(\xi)|^2 = (2\pi)^8\tilde{\rho}^2 \xi^8 + \mathscr{a}_{6}(\mu) \xi^6 + \mathscr{a}_4(\mu) \xi^4 + \mathscr{a}_2(\mu) \xi^2 + \mathscr{a}_0(\mu)
\end{align}
for some $\mathscr{a}_k(\mu) \in \mathbb{R}$ and $k = 0,2,4,6$. Additionally, note that $(2\pi)^8 \tilde{\rho}^2 > 0$. We now choose $C$ such that
\begin{align}
    C \bydef \frac{1}{(2\pi)^8 \tilde{\rho}^2} + \epsilon
\end{align}
for some $\epsilon > 0$. To satisfy the inequality we want, we must have
{\small\begin{align}
    \left((2\pi)^8 \tilde{\rho}^2 - \frac{1}{C}\right)\xi^8 = (2\pi)^8 \tilde{\rho}^2 \left(1 - \frac{1}{1 + (2\pi)^8 \tilde{\rho} \epsilon}\right) \xi^8 \geq |\mathscr{a}_6(\mu)| \xi^6 + |\mathscr{a}_4(\mu)| \xi^4 + |\mathscr{a}_2(\mu)| \xi^2 + |\mathscr{a}_0(\mu)|.
\end{align}}
Now, if we assume $M \geq 1$ (hence $\xi \geq 1$ on the interval we want to bound), we have
\begin{align}
    |\mathscr{a}_6(\mu)| \xi^6 + |\mathscr{a}_4(\mu)| \xi^4 + |\mathscr{a}_2(\mu)| \xi^2 + |\mathscr{a}_0(\mu)| &\leq (|\mathscr{a}_6(\mu)|  + |\mathscr{a}_4(\mu)|  + |\mathscr{a}_2(\mu)|  + |\mathscr{a}_0(\mu)|)\xi^6 \\
    &\leq (|\mathscr{b}_6|  + |\mathscr{b}_4|  + |\mathscr{b}_2|  + |\mathscr{b}_0|)\xi^6
\end{align}
where $|\mathscr{a}_k(\mu)| \leq |\mathscr{b}_k|$ for each $k$ and all $\mu \in \mathcal{J}$. Hence, it suffices to solve
\begin{align}
    &(2\pi)^8 \tilde{\rho}^2 \left(1 - \frac{1}{1 + (2\pi)^8 \tilde{\rho} \epsilon}\right) \xi^8 \geq \xi^6 \sum_{k \in \{0,2,4,6\}} |\mathscr{b}_k| \\
    &(2\pi)^8 \tilde{\rho}^2 \left(1 - \frac{1}{1 + (2\pi)^8 \tilde{\rho} \epsilon}\right) \xi^2 \geq  \sum_{k \in \{0,2,4,6\}} |\mathscr{b}_k| \\
    & \xi^2 \geq \frac{\sum_{k \in \{0,2,4,6\}} |\mathscr{b}_k|}{(2\pi)^8 \tilde{\rho}^2 \left(1 - \frac{1}{1 + (2\pi)^8 \tilde{\rho} \epsilon}\right)} \\
    &\xi \geq \sqrt{\frac{\sum_{k \in \{0,2,4,6\}} |\mathscr{b}_k|}{(2\pi)^8 \tilde{\rho}^2 \left(1 - \frac{1}{1 + (2\pi)^8 \tilde{\rho} \epsilon}\right)}}.
\end{align}
Hence, it suffices to choose 
\begin{align}
    M \geq \max\left\{ 1, \sqrt{\frac{\sum_{k \in \{0,2,4,6\}} |\mathscr{b}_k|}{(2\pi)^8 \tilde{\rho}^2 \left(1 - \frac{1}{1 + (2\pi)^8 \tilde{\rho} \epsilon}\right)}}\right\}
\end{align}
and the inequality will hold.
\end{proof}
We now provide the lemma which computes the desired supremum.
\begin{lemma}
Let $\mathscr{f}_{ij}$ be defined as in \eqref{def : fij}. Let $\mathscr{l}_{ij}(\xi) \bydef (2\pi)^2 d_{1,ij} \xi^2 + d_{2,ij}$. Let
\begin{align}
    \mathscr{B}_{ij}^{\infty} \bydef C \left((2\pi)^4 \frac{d_{1,ij}^2}{M^4} + (2\pi)^2 \frac{|d_{1,ij}d_{2,ij}| + 2\delta_0 |d_{1,ij}|\delta_{ij}}{M^6} + \frac{d_{2,ij}^2 + 2\delta_0 d_{2,ij} \delta_{ij} + \delta_0^2 \delta_{ij}}{M^8}\right)
\end{align}Then, it follows that
\begin{align}
    \sup_{\mu \in \mathcal{J}}\sup_{\xi \in \mathbb{R}} |\mathscr{f}_{ij}(\xi,\mu)| &\leq \max\left\{ \left(\sup_{\mu \in \mathcal{J}}  \sup_{\xi \in [-M,M]} |\mathscr{f}_{ij}(\xi,\mu)|\right), \mathscr{B}_{ij}^{\infty}\right\}
\end{align}
\end{lemma}
\begin{proof}
The supremum in $\xi$ can be broken into parts.
\begin{align}
    \sup_{\xi \in \mathbb{R}} |\mathscr{f}_{ij}(\xi,\mu)| &\leq \max\left\{\sup_{\xi \in [-M,M]} |\mathscr{f}_{ij}(\xi,\mu)|, \sup_{\xi \in (-\infty,-M]} |\mathscr{f}_{ij}(\xi,\mu)|, \sup_{\xi \in [M,\infty)} |\mathscr{f}_{ij}(\xi,\mu)|\right\} \\
    &=\max\left\{\sup_{\xi \in [-M,M]} |\mathscr{f}_{ij}(\xi,\mu)|,2\sup_{\xi \in [M,\infty)} |\mathscr{f}_{ij}(\xi,\mu)|\right\}
\end{align}
where the last step followed from the fact that $\mathscr{f}_{ij}$ is even in $\xi$. Now, observe that we can write
\begin{align}
    \sup_{\mu \in \mathcal{J}}\sup_{\xi \in [M,\infty)} |\mathscr{f}_{ij}(\xi,\mu)| &= \sup_{\mu \in \mathcal{J}}\sup_{\xi \in [M,\infty)} \left|\frac{(\mathscr{l}_{ij}(\xi) - \mathrm{Re}(\mu)\delta_{ij})^2 + \mathrm{Im}(\mu)^2 \delta_{ij}}{|\mathscr{l}_{den,\mu}(\xi)|^2}\right| \\
    &\leq \sup_{\mu \in \mathcal{J}}\sup_{\xi \in [M,\infty)} \frac{C}{\xi^8} \left| \mathscr{l}_{ij}(\xi)^2 - 2\mathrm{Re}(\mu) \mathscr{l}_{ij}(\xi) \delta_{ij} + \mathrm{Re}(\mu)^2 \delta_{ij} + \mathrm{Im}(\mu)^2 \delta_{ij}\right| \\
    &\leq \sup_{\mu \in \mathcal{J}}\sup_{\xi \in [M,\infty)} \frac{C}{\xi^8} \left( |\mathscr{l}_{ij}(\xi)|^2 + 2|\mathrm{Re}(\mu)| \mathscr{l}_{ij}(\xi) \delta_{ij} + |\mu|^2 \delta_{ij}\right) \\
    &\leq \sup_{\xi \in [M,\infty)} \frac{C}{\xi^8} \left( |\mathscr{l}_{ij}(\xi)|^2 + 2\delta_0 \mathscr{l}_{ij}(\xi) \delta_{ij} + \delta_0^2 \delta_{ij}\right)
\end{align}
where we used that $|\mathrm{Re}(\mu)| \leq |\mu| \leq \delta_0$ as $\mu \in \mathcal{J}$. Now, we work with the supremum on $\xi$.
\begin{align}
    &\sup_{\xi \in [M,\infty)} \frac{C}{\xi^8} \left( |\mathscr{l}_{ij}(\xi)|^2 + 2\delta_0 \mathscr{l}_{ij}(\xi) \delta_{ij} + \delta_0^2 \delta_{ij}\right) \\
    &\leq C\sup_{\xi \in [M,\infty)} \left( \frac{(2\pi)^4 d_{1,ij}^2}{\xi^4} + \frac{(2\pi)^2(|d_{1,ij} d_{2,ij}|+2\delta_0|d_{1,ij}|\delta_{ij})}{\xi^6} + \frac{d_{2,ij}^2 + 2\delta_0 d_{2,ij} \delta_{ij} + \delta_0^2 \delta_{ij}}{\xi^8}\right) \\
    &\leq C \left((2\pi)^4 \frac{d_{1,ij}^2}{M^4} + (2\pi)^2 \frac{|d_{1,ij}d_{2,ij}| + 2\delta_0 |d_{1,ij}|\delta_{ij}}{M^6} + \frac{d_{2,ij}^2 + 2\delta_0 d_{2,ij} \delta_{ij} + \delta_0^2 \delta_{ij}}{M^8}\right)
\end{align}
as desired.
\end{proof}

\renewcommand{\theequation}{D.\arabic{equation}}
\setcounter{equation}{0}

\section{Details for the existence proofs of Theorem \ref{th : existence proofs}}\label{apen : proof of patterns}

The goal of this appendix is to provide to the interested reader more details about the CAPs done in Section~\ref{sec : Bounds of patterns}. In particular, we include in the following propositions the values of the bounds $\mathcal{Y}_0, \mathcal{Z}_2(r_0), Z_1, \mathcal{Z}_u$, the values for $N$ and $N_0$ the thresholds for our projection operators for linear operators and sequences respectively and the domain size $d$ that our approximate solution lives on.

\begin{proposition}[\bf Localized solution in Glycolsis]\label{prop : spike in gyl}
Let $\delta = 0.45, c = a, b = 0.44, j = 0.5, a = 0, h = 0$. Moreover, let $r_0 \bydef 2 \times 10^{-7}$. Then there exists a unique solution $\tilde{\mbf{u}}$ to \eqref{eq : gen_model} in $\overline{B_{r_0}(\mbf{u}_{gyl})} \subset \mathcal{H}_{e}$ and we have that $\|\tilde{\mbf{u}}-\mbf{u}_{gyl}\|_{\mathcal{H}} \leq r_0$. 
\end{proposition}
\begin{proof}
Choose $N_0 = 1400, N = 1100, d = 30$. Then, we perform the full construction to build $\overline{\mathbf{u}} = \bgam^\dagger(\overline{\mathbf{U}})$. Then, we define $\mbf{u}_{gly} \bydef \overline{\mathbf{u}}$. Next, we construct $B^N$, and use Lemma \ref{corr : banach algebra} to find
\begin{align}
    \left\|\begin{bmatrix}
        B_{11}^N - B_{12}^N \\
        B_{21}^N - B_{22}^N
    \end{bmatrix}\right\|_{2} \leq 14.36,~\kappa \bydef 14.2.
\end{align}
This allows us to compute the $\mathcal{Z}_2(r)$ bound defined in Section \ref{sec : Bounds of patterns}. 
Finally, using \citep{julia_blanco_fassler}, we choose $r_0 \bydef 2 \times 10^{-7}$ and define
\begin{align}
    \mathcal{Y}_0 \bydef 5.07 \times 10^{-8} \text{,}~\mathcal{Z}_{2}(r_0) \bydef 952.07    \text{,}~Z_1 \bydef 0.142\text{,}~\mathcal{Z}_u \bydef 3.911 \times 10^{-5}
    \end{align}
and prove that these values satisfy Theorem \ref{th: radii polynomial}. 
\end{proof}
\begin{proposition}[\bf Localized solution in Schnakenberg]\label{prop : Pulse in schnakenberg}
Let $\delta = 0.011, c = 1, b = 3.6, j = 0, a = 2.8, h = 0$. Moreover, let $r_0 \bydef 3 \times 10^{-10}$. Then there exists a unique solution $\tilde{\mbf{u}}$ to \eqref{eq : gen_model} in $\overline{B_{r_0}(\mbf{u}_{sch})} \subset \mathcal{H}_{e}$ and we have that $\|\tilde{\mbf{u}}-\mbf{u}_{sch}\|_{\mathcal{H}} \leq r_0$. 
\end{proposition}
\begin{proof}
Choose $N_0 = 1300, N = 850, d = 8$. The proof is obtained similarly to that of Proposition \ref{prop : spike in gyl}. In particular, we define
\begin{align}
    \mathcal{Y}_0 \bydef 3.54 \times 10^{-11} ,~\mathcal{Z}_2(r_0) \bydef 8.71 \times 10^{6} ,~ Z_1 \bydef 0.099,~ \mathcal{Z}_u \bydef 1.32 \times 10^{-3}
\end{align}
and prove that these values satisfy Theorem \ref{th: radii polynomial}.
 \end{proof}
 \begin{proposition}[\bf Localized solution in Brusselator]\label{prop : spike in brusselator}
Let $\delta = 0.05, c = 1.51, b = 0, j = 0, a = 0.3, h = 0.51$. Moreover, let $r_0 \bydef 8 \times 10^{-5}$. Then there exists a unique solution $\tilde{\mbf{u}}$ to \eqref{eq : gen_model} in $\overline{B_{r_0}(\mbf{u}_{bruss})} \subset \mathcal{H}_{e}$ and we have that $\|\tilde{\mbf{u}}-\mbf{u}_{bruss}\|_{\mathcal{H}} \leq r_0$. 
\end{proposition}
\begin{proof}
Choose $N_0 = 3200, N = 1750, d = 80$. The proof is obtained similarly to that of Proposition \ref{prop : spike in gyl}. In particular, we define
\begin{align}
    \mathcal{Y}_0 \bydef 1.62 \times 10^{-8},~\mathcal{Z}_2(r_0) \bydef 1912.32 ,~ Z_1 \bydef 0.2621,~ \mathcal{Z}_u \bydef 2.23 \times 10^{-5}
\end{align}
and prove that these values satisfy Theorem \ref{th: radii polynomial}.
 \end{proof}
  \begin{proposition}[\bf Localized solution in Brusselator]\label{prop : multispike in brusselator}
Let $\delta = 0.0095742405424, $\\
$c = 2.3299999237, b = 0, j = 0, a = 16.421100616, h = 1.3299999237$. Moreover, let $r_0 \bydef 9 \times 10^{-11}$. Then there exists a unique solution $\tilde{\mbf{u}}$ to \eqref{eq : gen_model} in $\overline{B_{r_0}(\mbf{u}_{bruss})} \subset \mathcal{H}_{e}$ and we have that $\|\tilde{\mbf{u}}-\mbf{u}_{bruss}\|_{\mathcal{H}} \leq r_0$. 
\end{proposition}
\begin{proof}
Choose $N_0 = 2000, N = 1300, d = 10$. The proof is obtained similarly to that of Proposition \ref{prop : spike in gyl}. In particular, we define
\begin{align}
    \mathcal{Y}_0 \bydef 7.51 \times 10^{-12} ,~\mathcal{Z}_2(r_0) \bydef 5.44 \times 10^{7}  ,~ Z_1 \bydef 0.066,~ \mathcal{Z}_u \bydef 3.71 \times 10^{-3} 
\end{align}
and prove that these values satisfy Theorem \ref{th: radii polynomial}.
 \end{proof}
 \begin{proposition}[\bf Localized solution in Selkov-Schnakenberg]\label{prop : spike in selkov schnakenberg}
Let $\delta = 0.01, c = 1, b = 0.589818, j = 0.005, a = 1.095, h = 0$. Moreover, let $r_0 \bydef 4 \times 10^{-6}$. Then there exists a unique solution $\tilde{\mbf{u}}$ to \eqref{eq : gen_model} in $\overline{B_{r_0}(\mbf{u}_{sel})} \subset \mathcal{H}_{e}$ and we have that $\|\tilde{\mbf{u}}-\mbf{u}_{sel}\|_{\mathcal{H}} \leq r_0$. 
\end{proposition}
\begin{proof}
Choose $N_0 = 4000, N = 3300, d = 25$. The proof is obtained similarly to that of Proposition \ref{prop : spike in gyl}. In particular, we define
\begin{align}
    \mathcal{Y}_0 \bydef 1.32 \times 10^{-6},~\mathcal{Z}_2(r_0) \bydef 40821.1,~ Z_1 \bydef 0.1603,~ \mathcal{Z}_u \bydef 8.66 \times 10^{-4}
\end{align}
and prove that these values satisfy Theorem \ref{th: radii polynomial}.
 \end{proof}
\begin{proposition}[\bf Localized solution in Root-Hair]\label{prop : spike in rh}
Let $\delta = 0.01, c = 2, b = 1.8, j = 3.5, a = 0, h = 1$. Moreover, let $r_0 \bydef 6 \times 10^{-7}$. Then there exists a unique solution $\tilde{\mbf{u}}$ to \eqref{eq : gen_model} in $\overline{B_{r_0}(\mbf{u}_{rh})} \subset \mathcal{H}_{e}$ and we have that $\|\tilde{\mbf{u}}-\mbf{u}_{rh}\|_{\mathcal{H}} \leq r_0$. 
\end{proposition}
\begin{proof}
Choose $N_0 = 300, N = 200, d = 3$. The proof is obtained similarly to that of Proposition \ref{prop : spike in gyl}. In particular, we define
\begin{align}
    \mathcal{Y}_0 \bydef 2.62 \times 10^{-8} ,~\mathcal{Z}_2(r_0) \bydef 6417.5,~ Z_1 \bydef 0.184,~ \mathcal{Z}_u \bydef 1.141 \times 10^{-6}
\end{align}
and prove that these values satisfy Theorem \ref{th: radii polynomial}.
 \end{proof}

\nocite{julia_cadiot_blanco_symmetry}
\nocite{julia_cadiot_Kawahara}
\nocite{julia_cadiot_sh}
\nocite{julia_cadiot_blanco_GS}
\nocite{julia_interval}
\bibliographystyle{abbrv}
\bibliography{biblio}

@article{unbounded_domain_cadiot,
author = {Cadiot, Matthieu and Lessard, {Jean-Philippe} and Nave, Jean-Christophe},
title = {{Rigorous Computation of Solutions of Semilinear PDEs on Unbounded Domains via Spectral Methods}},
journal = {SIAM Journal on Applied Dynamical Systems},
volume = {23},
number = {3},
pages = {1966-2017},
year = {2024},
doi = {10.1137/23M1607507},
URL = {https://doi.org/10.1137/23M1607507},
eprint = {  https://doi.org/10.1137/23M1607507}
}

@book {hansjorb_bifurcation,
    AUTHOR = {Kielh\"ofer, Hansj\"org},
     TITLE = {Bifurcation theory},
    SERIES = {Applied Mathematical Sciences},
    VOLUME = {156},
   EDITION = {Second},
      NOTE = {An introduction with applications to partial differential
              equations},
 PUBLISHER = {Springer, New York},
      YEAR = {2012},
     PAGES = {viii+398},
      ISBN = {978-1-4614-0501-6},
   MRCLASS = {47J15 (35B32 37G15 37K50 58E07)},
  MRNUMBER = {2859263},
       DOI = {10.1007/978-1-4614-0502-3},
       URL = {https://doi.org/10.1007/978-1-4614-0502-3},
}

@article{auto_article,
author = {Doedel, Eusebius and Champneys, Alan and Fairgrieve, Thomas and Kuznetsov, Yu and Sandstede, Björn and Wang, Xianjun},
year = {1999},
month = {06},
pages = {},
title = {AUTO 97: Continuation And Bifurcation Software For Ordinary Differential Equations (with HomCont)}
}

@misc{breden2025solutionsdifferentialequationsfreudweighted,
      title={Solutions of differential equations in Freud-weighted Sobolev spaces}, 
      author={Maxime Breden and Hugo Chu},
      year={2025},
      eprint={2501.13672},
      archivePrefix={arXiv},
      primaryClass={math.CA},
      url={https://arxiv.org/abs/2501.13672}, 
}

@misc{breden2024constructiveproofssemilinearpdes,
      title={Constructive proofs for some semilinear PDEs on $H^2(e^{|x|^2/4},\mathbb{R}^d)$}, 
      author={Maxime Breden and Hugo Chu},
      year={2024},
      eprint={2404.04054},
      archivePrefix={arXiv},
      primaryClass={math.AP},
      url={https://arxiv.org/abs/2404.04054}, 
}

@book{kato2013perturbation,
  title={Perturbation theory for linear operators},
  author={Kato, Tosio},
  volume={132},
  year={2013},
  publisher={Springer Science \& Business Media}
}

@misc{julia_cadiot_Kawahara,
  author = {Matthieu Cadiot},
  title  = {ProofKawahara.jl},
  URL    = {https://github.com/matthieucadiot/ProofKawahara.jl},
  note = {\url{ https://github.com/matthieucadiot/ProofKawahara.jl}},
  year   = {2023},
  doi    = {10.5281/zenodo.7656856}
}

@misc{cadiot2025stabilityanalysislocalizedsolutions,
      title={Stability analysis for localized solutions in PDEs and nonlocal equations on $\mathbb{R}^m$}, 
      author={Matthieu Cadiot},
      year={2025},
      eprint={2505.03091},
      archivePrefix={arXiv},
      primaryClass={math.AP},
      url={https://arxiv.org/abs/2505.03091}, 
}

@article{sh_cadiot,
title = {{Stationary non-radial localized patterns in the planar Swift-Hohenberg PDE: Constructive proofs of existence}},
journal = {Journal of Differential Equations},
volume = {414},
pages = {555-608},
year = {2025},
issn = {0022-0396},
doi = {https://doi.org/10.1016/j.jde.2024.09.015},
url = {https://www.sciencedirect.com/science/article/pii/S0022039624005941},
author = {Matthieu Cadiot and {Jean-Philippe} Lessard and Jean-Christophe Nave},
keywords = {Localized stationary planar patterns, Swift-Hohenberg PDE, Newton-Kantorovich method, Branches of periodic orbits, Computer-assisted proofs}
}

@misc{julia_cadiot_sh,
  author = {Matthieu Cadiot},
  title  = {LocalizedPatternSH.jl},
  URL    = {https://github.com/matthieucadiot/LocalizedPatternSH.jl},
  note = {\url{https://github.com/matthieucadiot/LocalizedPatternSH.jl}},
  year   = {2024},
  doi    ={10.5281/zenodo.7656856}
}

@article{gs_cadiot_blanco,
  title={Localized stationary patterns in the 2D Gray Scott model: computer-assisted proofs of existence},
  author={Cadiot, Matthieu and  Blanco, Dominic},
  journal={Nonlinearity},
  volume={38},
number={4},
pages={045016},
  year={2025},
  publisher={London Mathematical Society}
}

@misc{julia_cadiot_blanco_GS,
  author = {Cadiot, Matthieu and Blanco, Dominic},
  title={Localizedpatterngs.jl},
  URL={https://github.com/dominicblanco/LocalizedPatternGS.jl},
  note ={\url{https://github.com/dominicblanco/LocalizedPatternGS.jl}},
  year={2024},
doi    ={10.5281/zenodo.10967034}}

@article{symmetry_blanco_cadiot,
  title={Proving symmetry of localized solutions and application to dihedral patterns in the planar {S}wift-{H}ohenberg {PDE}},
  author={Blanco, Dominic and  Cadiot, Matthieu},
  journal={arXiv:2509.10375},
  year={2025},
  publisher={arXiv}
}

@misc{julia_cadiot_blanco_symmetry,
  author = {Blanco, Dominic and Cadiot, Matthieu},
  title  = {LocalizedPatternsSHsymmetry.jl},
  URL    = {https://github.com/dominicblanco/LocalizedPatternSHsymmetry.jl},
  note = {\url{https://github.com/dominicblanco/LocalizedPatternsSHsymmetry.jl}},
  year   = {2025},
doi    ={10.5281/zenodo.17107615}}

@misc{julia_blanco_fassler,
  author = {Blanco, Dominic and Cadiot, Matthieu and Fassler, Daniel},
  title  = {ProofActivatorInhibitor.jl},
  URL    = {https://github.com/dominicblanco/LocalizedPatternsActivInhib.jl},
  note = {https://github.com/dominicblanco/LocalizedPatternsActivInhib.jl},
  year   = {2025},
  doi    = {10.5281/zenodo.17170735}
}

@article{olivier_radial,
  title={Constructive proofs for localized radial solutions of semilinear elliptic systems on {$\mathbb{R}^d$}},
  author={{van den Berg}, Jan Bouwe and Hénot, Olivier and Lessard, {Jean-Philippe}},
  journal={Nonlinearity},
  volume={36},
number={12},
pages={6476-6512},
  year={2023},
  publisher={London Mathematical Society}
}

@article {MR1976080,
    AUTHOR = {Cabr\'{e}, Xavier and Fontich, Ernest and de la Llave, Rafael},
     TITLE = {The parameterization method for invariant manifolds. {II}.
              {R}egularity with respect to parameters},
   JOURNAL = {Indiana Univ. Math. J.},
  FJOURNAL = {Indiana University Mathematics Journal},
    VOLUME = {52},
      YEAR = {2003},
    NUMBER = {2},
     PAGES = {329--360},
      ISSN = {0022-2518},
   MRCLASS = {37D10},
  MRNUMBER = {1976080},
MRREVIEWER = {Weishi Liu},
       DOI = {10.1512/iumj.2003.52.2407},
       URL = {https://doi-org.proxy3.library.mcgill.ca/10.1512/iumj.2003.52.2407},
}

@article {MR2177465,
    AUTHOR = {Cabr\'{e}, Xavier and Fontich, Ernest and de la Llave, Rafael},
     TITLE = {The parameterization method for invariant manifolds. {III}.
              {O}verview and applications},
   JOURNAL = {J. Differential Equations},
  FJOURNAL = {Journal of Differential Equations},
    VOLUME = {218},
      YEAR = {2005},
    NUMBER = {2},
     PAGES = {444--515},
      ISSN = {0022-0396},
   MRCLASS = {37D10 (37M99)},
  MRNUMBER = {2177465},
       DOI = {10.1016/j.jde.2004.12.003},
       URL = {https://doi-org.proxy3.library.mcgill.ca/10.1016/j.jde.2004.12.003},
}

@article {MR1976079,
    AUTHOR = {Cabr\'{e}, Xavier and Fontich, Ernest and de la Llave, Rafael},
     TITLE = {The parameterization method for invariant manifolds. {I}.
              {M}anifolds associated to non-resonant subspaces},
   JOURNAL = {Indiana Univ. Math. J.},
  FJOURNAL = {Indiana University Mathematics Journal},
    VOLUME = {52},
      YEAR = {2003},
    NUMBER = {2},
     PAGES = {283--328},
      ISSN = {0022-2518},
   MRCLASS = {37D10},
  MRNUMBER = {1976079},
MRREVIEWER = {Weishi Liu},
       DOI = {10.1512/iumj.2003.52.2245},
       URL = {https://doi-org.proxy3.library.mcgill.ca/10.1512/iumj.2003.52.2245},
}

@book{plum_numerical_verif,
  title={Numerical verification methods and computer-assisted proofs for partial differential equations},
  author={Nakao, Mitsuhiro T and Plum, Michael and Watanabe, Yoshitaka},
  year={2019},
  publisher={Springer}
}

@phdthesis{plum_thesis_navierstokes,
    author       = {Wunderlich, Jonathan Matthias},
    year         = {2022},
    title        = {Computer-assisted Existence Proofs for {N}avier-{S}tokes Equations on an Unbounded Strip with Obstacle},
    doi          = {10.5445/IR/1000150609},
    publisher    = {{Karlsruher Institut für Technologie (KIT)}},
    keywords     = {Computer-assisted proof, Navier-Stokes, existence, enclosure},
    pagetotal    = {215},
    school       = {Karlsruher Institut für Technologie (KIT)},
    language     = {english}
}

@article{JB_symmetries_1,
  title={Rigorously Computing Symmetric Stationary Sates of the Ohta-Kawasaki Problem in Three Dimensions},
  author={{van den Berg}, Jan Bouwe and Williams, JF},
  journal={SIAM J. Math Anal.},
  year={2019},
  publisher={Society for Industrial and Applied Mathematics}
}

@article{JB_symmetries_2,
  title={Optimal periodic structures with general space group symmetries in the Ohta-Kawasaki problem},
  author={{van den Berg}, Jan Bouwe and {Williams}, JF},
  journal={Physica D (Nonlinear Phenomena)},
  year={2021},
  publisher={Science Direct}
}

@article{van2021spontaneous,
  title={Spontaneous periodic orbits in the Navier-Stokes flow},
  author={{van den Berg}, Jan Bouwe and Breden, Maxime and Lessard, {Jean-Philippe} and van Veen, Lennaert},
  journal={Journal of nonlinear science},
  volume={31},
  number={2},
  pages={1--64},
  year={2021},
  publisher={Springer}
}

@misc{julia_olivier,
author = {Olivier Hénot},
title = {RadiiPolynomial.jl},
year = {2022},
URL = {  https://github.com/OlivierHnt/RadiiPolynomial.jl
},
eprint = { https://doi.org/10.1137/141000671
},

 note = {\url{  https://github.com/OlivierHnt/RadiiPolynomial.jl}}
  
}

@misc{julia_interval,
author = {L. Benet and D.P. Sanders},
title = {IntervalArithmetic.jl},
year = {2022},
URL = { https://github.com/JuliaIntervals/IntervalArithmetic.jl
},
eprint = { https://doi.org/10.1137/141000671
},
  note = {\url{ https://github.com/JuliaIntervals/IntervalArithmetic.jl}}
}

@article{AlSaadi_unifying_framework,
    author = {{Al Saadi}, Fahad and Champneys, Alan and Verschueren, Nicolas},
    title = "{Localized patterns and semi-strong interaction, a unifying framework for reaction–diffusion systems}",
    journal = {IMA Journal of Applied Mathematics},
    volume = {86},
    number = {5},
    pages = {1031-1065},
    year = {2021}
}

@article{Champneys_Bistability,
title = {Bistability, wave pinning and localisation in natural reaction–diffusion systems},
journal = {Physica D: Nonlinear Phenomena},
volume = {416},
pages = {132735},
year = {2021},
author = {Alan Champneys and Fahad {Al Saadi} and Victor F. Breña–Medina and Verônica A. Grieneisen and Athanasius F.M. Marée and Nicolas Verschueren and Bert Wuyts}}

@article{jp_saddle_node,
    author = {Lessard, {Jean-Philippe} and Sander, Evelyn and Wanner, Thomas},
    title = {Rigorous continuation of bifurcation points in the diblock copolymer equation},
    journal = {Journal of Computational Dynamics},
    year = {2017},
    volume = {4(1\&2)},
    pages = {71-118},
    doi = {10.3934/jcd.2017003}
}

@article{schankenberg_og,
title = {Simple chemical reaction systems with limit cycle behaviour},
journal = {Journal of Theoretical Biology},
volume = {81},
number = {3},
pages = {389-400},
year = {1979},
issn = {0022-5193},
url = {https://www.sciencedirect.com/science/article/pii/0022519379900420},
author = {J. Schnakenberg},
}

@article{selkov_schankenberg_og,
author = {Sel'kov, E. E.},
title = {Self-Oscillations in Glycolysis 1. A Simple Kinetic Model},
journal = {European Journal of Biochemistry},
volume = {4},
number = {1},
pages = {79-86},
year = {1968}
}

@article{brusselator_og,
    author = {Prigogine, I. and Lefever, R.},
    title = {Symmetry Breaking Instabilities in Dissipative Systems. II},
    journal = {The Journal of Chemical Physics},
    year = {1968}
}

@article{root_hair_og,
author = {Payne, R and Grierson, C},
title = {A theoretical model for ROP localisation by auxin in Arabidopsis root hair cells},
journal = {PLoS ONE},
year = {2009}
}

@article{glycolysis_og,
  author    = {John J. Tyson and Stuart A. Kauffman},
  title     = {Control of mitosis by a continuous biochemical oscillation: Synchronization; spatially inhomogeneous oscillations},
  journal   = {Journal of Mathematical Biology},
  volume    = {1},
  year = 1975
}

@article{turing_morphogenesis,
  author    = {Alan M. Turing},
  title     = {The Chemical Basis of Morphogenesis},
  journal   = {Philosophical Transactions of the Royal Society of London. Series B, Biological Sciences},
  volume    = {237},
  year = {1952}
}

@article{iron_semistrong,
title = {The stability of spike solutions to the one-dimensional Gierer–Meinhardt model},
journal = {Physica D: Nonlinear Phenomena},
author = {David Iron and Michael J. Ward and Juncheng Wei},
year = {2001}
}

@article{muratov_semistrong,
author = {Muratov, C. B. and Osipov, V. V.},
title = {Stability of the Static Spike Autosolitons in the Gray--Scott Model},
journal = {SIAM Journal on Applied Mathematics},
year = {2002}
}

@article{doelman_geometric,
  author  = {Arjen Doelman and Freek Veerman},
  title   = {An Explicit Theory for Pulses in Two Component, Singularly Perturbed, Reaction--Diffusion Equations},
  journal = {J Dyn Diff Equat},
  year    = {2015}
}

@article{alsaadi_spikes,
  author  = {Fahad {Al saadi} and Alan Champneys and Chunyi Gai and Theodore Kolokolnikov},
  title   = {Spikes and localised patterns for a novel Schnakenberg model in the semi-strong interaction regime},
  journal = {European Journal of Applied Mathematics},
  year    = {2021}
}

@article{knobloch_dissipative,
   author = "Knobloch, E.",
   title = "Spatial Localization in Dissipative Systems", 
   journal= "Annual Review of Condensed Matter Physics",
   year = "2015",
   volume = "6",
   number = "Volume 6, 2015",
   pages = "325-359",
   doi = "https://doi.org/10.1146/annurev-conmatphys-031214-014514",
   url = "https://www.annualreviews.org/content/journals/10.1146/annurev-conmatphys-031214-014514",
   publisher = "Annual Reviews",
   issn = "1947-5462",
   type = "Journal Article",
   keywords = "front motion",
   keywords = "spatial dynamics",
   keywords = "structure growth",
   keywords = "homoclinic snaking",
   keywords = "pattern formation",
   keywords = "pinning"
  }

@article{GrayScott1985,
  author    = {P. Gray and S. K. Scott},
  title     = {Sustained oscillations and other exotic patterns of behavior in isothermal reactions},
  journal   = {Journal of Physical Chemistry},
  volume    = {89},
  number    = {1},
  pages     = {22--32},
  year      = {1985},
  doi       = {10.1021/j100247a003}
}

@article{Doelman1997,
  author  = {Arjen Doelman and Tasso J. Kaper and Paul A. Zegeling},
  title   = {Pattern formation in the one-dimensional Gray--Scott model},
  journal = {Nonlinearity},
  volume  = {10},
  number  = {2},
  pages   = {523--563},
  year    = {1997},
  doi     = {10.1088/0951-7715/10/2/013}
}

@article{Iron2004,
  author  = {David Iron and Juncheng Wei and Matthias Winter},
  title   = {Stability analysis of Turing patterns generated by the Schnakenberg model},
  journal = {Journal of Mathematical Biology},
  volume  = {49},
  number  = {4},
  pages   = {358--390},
  year    = {2004},
  doi     = {10.1007/s00285-003-0258-y}
}

@article{SITEUR201481,
title = {Beyond Turing: The response of patterned ecosystems to environmental change},
journal = {Ecological Complexity},
volume = {20},
pages = {81-96},
year = {2014},
issn = {1476-945X},
doi = {https://doi.org/10.1016/j.ecocom.2014.09.002},
url = {https://www.sciencedirect.com/science/article/pii/S1476945X14000944},
author = {Koen Siteur and Eric Siero and Maarten B. Eppinga and Jens D.M. Rademacher and Arjen Doelman and Max Rietkerk},
keywords = {Pattern formation, Reaction–(advection–)diffusion models, Arid ecosystems, Critical transitions, Stability, Turing analysis, Busse balloon}
}

@article{VERSCHUEREN2021132858,
title = {Dissecting the snake: Transition from localized patterns to spike solutions},
journal = {Physica D: Nonlinear Phenomena},
volume = {419},
pages = {132858},
year = {2021},
issn = {0167-2789},
doi = {https://doi.org/10.1016/j.physd.2021.132858},
url = {https://www.sciencedirect.com/science/article/pii/S0167278921000166},
author = {Verschueren, Nicolas and Champneys, Alan},
keywords = {Reaction–diffusion, Localized patterns, Homoclinic snaking, Belyakov–Devaney, Shilnikov analysis}}

@article{MuratovOsipov2000,
  author  = {C. B. Muratov and V. V. Osipov},
  title   = {Static spike autosolitons in the Gray--Scott model},
  journal = {Journal of Physics A: Mathematical and General},
  volume  = {33},
  number  = {48},
  pages   = {8893--8916},
  year    = {2000},
  doi     = {10.1088/0305-4470/33/48/321}
}

@article{ZELNIK201727,
title = {Desertification by front propagation?},
journal = {Journal of Theoretical Biology},
volume = {418},
pages = {27-35},
year = {2017},
issn = {0022-5193},
doi = {https://doi.org/10.1016/j.jtbi.2017.01.029},
url = {https://www.sciencedirect.com/science/article/pii/S0022519317300280},
author = {Yuval R. Zelnik and Hannes Uecker and Ulrike Feudel and Ehud Meron},
keywords = {Desertification, Spatial transitions, Regime shifts, Vegetation patterns, Homoclinic snaking}}

@book{Meron2015,
  author    = {Meron, Ehud},
  title     = {Nonlinear Physics of Ecosystems},
  edition   = {1},
  year      = {2015},
  publisher = {CRC Press},
  doi       = {10.1201/b18360}
}

@article{breden2022computer,
  title={Computer-assisted proofs for some nonlinear diffusion problems},
  author={Breden, Maxime},
  journal={Communications in Nonlinear Science and Numerical Simulation},
  volume={109},
  pages={106292},
  year={2022},
  publisher={Elsevier}
}

@article{period_kuramoto,
  title={A Posteriori Verification of Invariant Objects of Evolution Equations: Periodic Orbits in the {K}uramoto--{S}ivashinsky {PDE}},
  author={Gameiro, Marcio and Lessard, Jean-Philippe},
  journal={SIAM Journal on Applied Dynamical Systems},
  volume={16},
  number={1},
  pages={687--728},
  year={2017},
  publisher={SIAM}
}

@article {MR3633778,
    AUTHOR = {Figueras, Jordi-Llu\'{\i}s and de la Llave, Rafael},
     TITLE = {Numerical computations and computer assisted proofs of
              periodic orbits of the {K}uramoto-{S}ivashinsky equation},
   JOURNAL = {SIAM J. Appl. Dyn. Syst.},
  FJOURNAL = {SIAM Journal on Applied Dynamical Systems},
    VOLUME = {16},
      YEAR = {2017},
    NUMBER = {2},
     PAGES = {834--852},
   MRCLASS = {65G40 (35B32 35Q53 35R20 47J15 65H20)},
  MRNUMBER = {3633778},
MRREVIEWER = {Ferenc Agoston Bartha},
       DOI = {10.1137/16M1073790},
       URL = {https://doi-org.proxy3.library.mcgill.ca/10.1137/16M1073790},
}

@article{Sander_equilibrium,
title = {Equilibrium validation in models for pattern formation based on {S}obolev embeddings},
journal = {Discrete and Continuous Dynamical Systems - B},
volume = {26},number = {1},pages = {603-632},
year = {2021},
issn = {1531-3492},
doi = {10.3934/dcdsb.2020260},
url = {/article/id/0d611d64-31c0-4483-bce5-ab7986e7d976},
author = {Evelyn Sander and Thomas Wanner},
}

@article{gabriel_pfc,
  title={{M}icroscopic patterns in the 2{D} phase-field crystal model},
  author={Boissoniere, Gabriel M.L. and Choksi, Rustum and Lessard, Jean-Philippe},
  journal={Nonlinearity},
  year={2022},
volume={35},
pages={1500-1520},
  publisher={London Mathematical Society}
}

@article{lessard_1,
  title={ {E}fficient rigorous numerics for higher-dimensional {PDE}s

via one-dimensional estimates.},
  author={M. Gameiro and J.-P. Lessard},
  journal={SIAM Journal on Numerical Analysis},
  year={2013},
volume={51},
number={4},
pages={ 2063–2087},
  publisher={SIAM}
}

@article{lessard_2,
  title={{A}nalytic estimates and rigorous continuation for equilibria

of higher-dimensional {PDE}s.},
  author={M. Gameiro and J.-P. Lessard},
  journal={Journal of Differential Equations},
  year={2010},
volume={249},
number={9},
pages={2237–2268},
  publisher={ScienceDirect}
}

@article {MR4379799,
    AUTHOR = {Cyranka, Jacek and Lessard, Jean-Philippe},
     TITLE = {Validated forward integration scheme for parabolic {PDE}s via
              {C}hebyshev series},
   JOURNAL = {Commun. Nonlinear Sci. Numer. Simul.},
  FJOURNAL = {Communications in Nonlinear Science and Numerical Simulation},
    VOLUME = {109},
      YEAR = {2022},
     PAGES = {Paper No. 106304, 32},
      ISSN = {1007-5704},
   MRCLASS = {65M70 (65G20)},
  MRNUMBER = {4379799},
       DOI = {10.1016/j.cnsns.2022.106304},
       URL = {https://doi-org.proxy3.library.mcgill.ca/10.1016/j.cnsns.2022.106304},
}

@article {MR3392647,
    AUTHOR = {Breden, Maxime and Desvillettes, Laurent and Lessard,
              Jean-Philippe},
     TITLE = {Rigorous numerics for nonlinear operators with tridiagonal
              dominant linear part},
   JOURNAL = {Discrete Contin. Dyn. Syst.},
  FJOURNAL = {Discrete and Continuous Dynamical Systems. Series A},
    VOLUME = {35},
      YEAR = {2015},
    NUMBER = {10},
     PAGES = {4765--4789},
      ISSN = {1078-0947},
   MRCLASS = {47J05 (34B08 42A10 47J07 65L10)},
  MRNUMBER = {3392647},
MRREVIEWER = {Dharmendra Kumar Gupta},
       DOI = {10.3934/dcds.2015.35.4765},
       URL = {https://doi-org.proxy3.library.mcgill.ca/10.3934/dcds.2015.35.4765},
}

@article{cyranka2018construction,
  title={A construction of two different solutions to an elliptic system},
  author={Cyranka, Jacek and Mucha, Piotr Bogus{\l}aw},
  journal={Journal of Mathematical Analysis and Applications},
  volume={465},
  number={1},
  pages={500--530},
  year={2018},
  publisher={Elsevier}
}

@article{automatic_differentiation,
    author = {J.P. Lessard and J.D. Mireles-James and J. Ransford},
    title = {Automatic differentiation for {F}ourier series and the radii polynomial approach},
    journal = {Physica D: Nonlinear Phenomena},
    year = {2016},
    volume = {334},
    pages = {174-186},
    doi = {https://doi.org/10.1016/j.physd.2016.02.007}
}

@article{miguel_soliton,
  title={{Computer-Assisted Proofs of Gap Solitons in Bose-Einstein
Condensates}},
  author={Ayala, Miguel and  Garcia-Azpeitia, Carlos and Lessard, Jean-Philippe},
  journal={arXiv:2503.04701
},
  year={2026},
  publisher={arXiv}
}

@article{lindsay_suspensionbrdige,
    author = {van der Aalst, Lindsey and van den Berg, Jan Bouwe and Lessard, Jean-Philippe},
    title = {Periodic localised traveling waves in the two-dimensional suspension bridge equation},
    journal = {Nonlinearity},
    year = {2025},
    volume = {38},
    number ={7},
    pages = {075029},
    doi = {10.1088/1361-6544/ade5e5}
}

@article{matthieu_lindsay,
  title={{Existence proofs of traveling wave solutions on an infinite strip for the suspension bridge equation and proof of orbital st}},
  author={van der Aalst, Lindsey and Cadiot, Matthieu},
  journal={arXiv:2509.16693},
  year={2026},
  publisher={arXiv}
}

@article{olivier_kevin_paper,
  title={{Global Continuation of Stable Periodic Orbits in Systems of Competing Predators}},
  author={Church, Kevin E.M. and Dai, Jia-Yuan and Henot, Olivier and Lappicy, Phillipo},
  journal={arXiv:2504.03058v2},
  year={2026},
  publisher={arXiv}
}

@article{maxime_paper_continuation,
title = {Computer-assisted proofs for the many steady states of a chemotaxis model with local sensing},
journal = {Physica D: Nonlinear Phenomena},
volume = {466},
pages = {134221},
year = {2024},
author = {M. Breden and M. Payan}}

@mastersthesis{marco_thesis,
    author ={Marco Mignacca} ,
    title ={Rigorous proof of one-dimensional steady states in a neural field model} ,
    school ={McGill University} ,
    year ={2025} 
}

@article{liu_laplacian,
author = {Liu, Xuefeng and Oishi, Shin'ichi},
title = {Verified Eigenvalue Evaluation for the {L}aplacian over Polygonal Domains of Arbitrary Shape},
journal = {SIAM Journal on Numerical Analysis},
volume = {51},
number = {3},
pages = {1634-1654},
year = {2013},
doi = {10.1137/120878446},
    
}

@article{liu_self_adjoint,
  title={A framework of verified eigenvalue bounds for self-adjoint differential operators},
  author={Liu, Xuefeng},
  journal={Applied Mathematics and Computation},
  volume={267},
  pages={341--355},
  year={2015},
  publisher={Elsevier}
}

@article{liu_elliptic_boundary,
  title={Verified computations to semilinear elliptic boundary value problems on arbitrary polygonal domains},
  author={Takayasu, Akitoshi and Liu, Xuefeng and Oishi, Shin'ichi},
  journal={Nonlinear Theory and Its Applications, IEICE},
  volume={4},
  number={1},
  pages={34--61},
  year={2013},
  publisher={The Institute of Electronics, Information and Communication Engineers}
}

@article{liu_inverse,
  title={Verified norm estimation for the inverse of linear elliptic operators using eigenvalue evaluation},
  author={Tanaka, Kazuaki and Takayasu, Akitoshi and Liu, Xuefeng and Oishi, Shin’ichi},
  journal={Japan Journal of Industrial and Applied Mathematics},
  volume={31},
  number={3},
  pages={665--679},
  year={2014},
  publisher={Springer}
}

@article{nakao2001numerical,
  title={Numerical verification methods for solutions of ordinary and partial differential equations},
  author={Nakao, Mitsuhiro T},
  journal={Numerical Functional Analysis and Optimization},
  volume={22},
  number={3-4},
  pages={321--356},
  year={2001},
  publisher={Taylor \& Francis}
}

@article{nakao1990numerical,
  title={A numerical approach to the proof of existence of solutions for elliptic problems II},
  author={Nakao, Mitsuhiro T},
  journal={Japan Journal of Applied Mathematics},
  volume={7},
  number={3},
  pages={477--488},
  year={1990},
  publisher={Springer}
}

@article{watanabe2019improved,
  title={An improved method for verifying the existence and bounds of the inverse of second-order linear elliptic operators mapping to dual space},
  author={Watanabe, Yoshitaka and Kinoshita, Takehiko and Nakao, Mitsuhiro T},
  journal={Japan Journal of Industrial and Applied Mathematics},
  volume={36},
  number={2},
  pages={407--420},
  year={2019},
  publisher={Springer}
}

@article{watanabe2013posteriori,
  title={A posteriori estimates of inverse operators for boundary value problems in linear elliptic partial differential equations},
  author={Watanabe, Yoshitaka and Kinoshita, Takehiko and Nakao, Mitsuhiro},
  journal={Mathematics of Computation},
  volume={82},
  number={283},
  pages={1543--1557},
  year={2013}
}

@inproceedings{watanabe2015estimation,
  title={Some remarks on the rigorous estimation of inverse linear elliptic operators},
  author={Kinoshita, Takehiko and Watanabe, Yoshitaka and Nakao, Mitsuhiro T},
  booktitle={International Symposium on Scientific Computing, Computer Arithmetic, and Validated Numerics},
  pages={225--235},
  year={2015},
  organization={Springer}
}

@article{plum1990eigenvalue_homotopy,
  title={Eigenvalue inclusions for second-order ordinary differential operators by a numerical homotopy method},
  author={Plum, Michael},
  journal={Zeitschrift f{\"u}r angewandte Mathematik und Physik ZAMP},
  volume={41},
  number={2},
  pages={205--226},
  year={1990},
  publisher={Springer}
}

@article{plum1991bounds,
  title={Bounds for eigenvalues of second-order elliptic differential operators},
  author={Plum, Michael},
  journal={Zeitschrift f{\"u}r angewandte Mathematik und Physik ZAMP},
  volume={42},
  number={6},
  pages={848--863},
  year={1991},
  publisher={Springer}
}

@article{watanabe2005verify_invert,
  title={A numerical method to verify the invertibility of linear elliptic operators with applications to nonlinear problems},
  author={Nakao, Mitsuhiro T and Hashimoto, Kouji and Watanabe, Yoshitaka},
  journal={Computing},
  volume={75},
  number={1},
  pages={1--14},
  year={2005},
  publisher={Springer}
}

@article {MR4182090,
    AUTHOR = {Sekine, Kouta and Nakao, Mitsuhiro T. and Oishi, Shin'ichi},
     TITLE = {A new formulation using the {S}chur complement for the
              numerical existence proof of solutions to elliptic problems:
              without direct estimation for an inverse of the linearized
              operator},
   JOURNAL = {Numer. Math.},
  FJOURNAL = {Numerische Mathematik},
    VOLUME = {146},
      YEAR = {2020},
    NUMBER = {4},
     PAGES = {907--926},
      ISSN = {0029-599X},
   MRCLASS = {65J10 (35J25 65G20 65N30)},
  MRNUMBER = {4182090},
       DOI = {10.1007/s00211-020-01155-7},
       URL = {https://doi.org/10.1007/s00211-020-01155-7},
}

@article{plum_wata2016norm,
  title={Norm bound computation for inverses of linear operators in {H}ilbert spaces},
  author={Watanabe, Yoshitaka and Nagatou, Kaori and Plum, Michael and Nakao, Mitsuhiro T},
  journal={Journal of Differential Equations},
  volume={260},
  number={7},
  pages={6363--6374},
  year={2016},
  publisher={Elsevier}
}

@article{gabriel_snaking,
  title={From heteroclinic loops to homoclinic snaking in reversible systems: rigorous forcing through computer-assisted proofs},
  author={van den Berg, Jan Bouwe and Duchesne, Gabriel William and Lessard, Jean-Philippe},
  journal={arXiv:2507.16798},
  year={2025},
  publisher={arXiv}
}
\end{document}